\newif\ifshort
\newcommand\ocK{\overline{\cK}}
\pretocmd{\section}{\addtocontents{toc}{\protect\addvspace{5\p@}}}{}{}
\date{March 31, 2023} 
\title{Compact moduli of K3 surfaces}
\author{Valery Alexeev}
\email{valery@uga.edu}
\address{Department of Mathematics, University of Georgia, Athens GA
  30602, USA}
\author{Philip Engel}
\email{philip.engel@uga.edu}
\address{Department of Mathematics, University of Georgia, Athens GA
  30602, USA}
\begin{document}

\begin{abstract}
We construct geometric compactifications of the moduli space
$F_{2d}$ of polarized K3 surfaces, in any degree $2d$. 
Our construction is via KSBA theory, by considering canonical choices of divisor
$R\in |nL|$ on each polarized K3 surface $(X,L)\in F_{2d}$.
The main new notion is that of a {\it recognizable
divisor} $R$, a choice which can be consistently extended to all
central fibers of Kulikov models. We prove that any choice
of recognizable divisor leads to a semitoroidal
compactification of the period space, at least up to normalization.
Finally, we prove that the rational curve divisor is
recognizable for all degrees.
\end{abstract}

\maketitle

\ifshort 
\setcounter{tocdepth}{2}
\else
\setcounter{tocdepth}{2}  
\fi
\tableofcontents


\section{Introduction}
\label{sec:introduction}

Let $F_{2d}$ be the coarse moduli space
of complex K3 surfaces $X$ having ADE singularities 
with an ample line bundle $L$ of degree $L^2=2d$.
A well known corollary of the Torelli theorem \cite{piateski-shapiro1971torelli} is
that $F_{2d}=\bD/\Gamma$ is the
quotient of a $19$-dimensional symmetric type IV domain $\bD$
by an arithmetic group $\Gamma\subset O(2,19)$. 
In this capacity, $F_{2d}$ admits a Baily-Borel $\oF_{2d}^\bb$
\cite{baily1966compactification-of-arithmetic}
and infinitely many toroidal 
$\oF_{2d}^\mathfrak{F}$
\cite{ash1975smooth-compactifications}
compactifications.
An admissible fan $\mathfrak{F}$ consists of
polyhedral decompositions of the positive cones of finitely many
hyperbolic signature lattices (Def.~\ref{def:fan}). Looijenga
\cite{looijenga2003compactifications-defined2} simultaneously
generalized the Baily-Borel and toroidal compactifications
to the \emph{semitoroidal} compactifications,
where $\mathfrak{F}$ may only be locally rational polyhedral
(Def.~\ref{def:semifan}).

Toroidal compactifications enjoy a number of geometric
properties by virtue of the fact that they are analytically-locally
modeled at the boundary by finite
quotients of toric varieties. But there are infinitely many,
with seemingly no one being distinguished.
An old and deep question is whether any toroidal, or semitoroidal,
compactifications can be understood as moduli spaces parameterizing
geometric objects---some generalized
``stable'' K3 surfaces, similar to the Deligne-Mumford's compactifications
$\oM_{g,n}$ of stable curves.

For the moduli space $A_g$ of principally polarized abelian varieties (PPAVs)
the answer is \emph{yes} by \cite{alexeev2002complete-moduli}. A PPAV
$(A,\lambda)\in A_g$ determines uniquely an
abelian torsor $A\acts X$
together with a theta divisor $\Theta\subset X$.
For  pairs $(X,\Theta)$ or, even better $(X,\epsilon \Theta)$,
there is a generalization of $\oM_{g,n}$. It
is the moduli space of KSBA stable pairs $(X,\epsilon \Theta)$ with slc
singularities 
\cite{kollar1988threefolds-and-deformations, kollar2023families-of-varieties},
\cite{alexeev1996log-canonical-singularities,
  alexeev2006higher-dimensional-analogues}. 
This moduli space is projective, and gives
a geometrically meaningful compactification~$\oA_g^\Theta$. 
Furthermore, the normalization of $\oA_g^\Theta$ is the toroidal compactification
$\oA_g^\mathfrak{F}$ associated to the 2nd Voronoi fan,
and so admits a purely period-theoretic definition.
Thus, among the infinitely many toroidal
compactifications, this one
has a clear geometric meaning.

To extend this construction to polarized K3 surfaces $(X,L)$, first 
one needs a \emph {canonical choice of polarizing divisor} (Def.~\ref{def:can-choice}),
an effective divisor $R\in |nL|$ in a fixed multiple of the polarization.
Given this choice, the general theory
produces a modular compactification $\oF_{2d}^R$ (Def.~\ref{stable-pair-comp})
via slc stable pairs $(X,\epsilon R)$,
see \cite{kollar2019moduli-of-polarized, kollar2023families-of-varieties}, 
\cite[Sec.~3]{alexeev2019stable-pair}.
The divisor is needed because for the general theory to work, the divisor
$K_X+\epsilon R$ must be ample. One can work with all divisors
in $|L|$, without making a canonical choice, e.g.
\cite{laza2016ksba-compactification},
but that gives a
larger moduli space $P_{2d}$ of dimension $20+d$.

At least two canonical choices for ample divisors on polarized
K3 surfaces have been identified. About 15 years ago, Sean Keel
proposed to consider, for a general polarized K3 surface $(X,L)$, the
sum $R^{\rm rc}=\sum C_i$ of rational curves in $|L|$. 
We call this the \emph{rational curve divisor} (Def.~\ref{naive-rc-def}).
One has $R^{\rm rc}\in |n_dL|$,
where the multiple $n_d$ is given by the Yau-Zaslow formula. For instance
$n _1=324$, $n_2=3200$, etc. 
The second choice, suggested to the authors by Claire Voisin,
is called the \emph{flex divisor} $R^{\rm flex}$ \cite{alexeev2021flex-divisor}.
It generalizes to all degrees the fixed locus $R\in |3L|$ of the involution
on a K3 surface of degree 2.

By the Kulikov-Persson-Pinkham theorem
\cite{kulikov1977degenerations-of-k3-surfaces,
persson1981degeneration-of-surfaces},
any one-parameter degeneration of K3 surfaces $X\to (C,0)$
admits a {\it Kulikov model}: a $K$-trivial model with smooth total
space and reduced normal crossings central fiber $X_0$ (Def.~\ref{def:kulikov}).
The key notion of this paper is a {\it recognizable
divisor} (Def.~\ref{recog-def}). Heuristically, it is a canonical choice of polarizing divisor
which can be extended to any such $X_0$. More precisely:
Given any Kulikov surface $X_0$ appearing as a one-parameter degeneration of
polarized K3 surfaces $(X_t,L_t)$, the limit of the canonically chosen
divisors $R_t\subset X_t$, $R_t\in |nL_t|$ is a unique curve
$R_0 = \lim_{t\to 0}R_t\subset X_0$.
Such a limit $R_0$ exists on any fixed
Kulikov model, but recognizability additionally
states that {\it $R_0$ is independent of how $X_0$ gets
smoothed}.

Our two main theorems are:

\begin{theorem*}[Thm.~\ref{main-thm}]
\label{thm:recognizable-semitoroidal}Suppose that $R$ is a recognizable choice of polarizing divisor.
There is a unique semifan $\mathfrak{F}_R$ for which
$\oF_{2d}^{\mathfrak{F}_R}$ is the normalization of $\oF_{2d}^R$.
\end{theorem*}
\begin{theorem*}[Thm.~\ref{thm:rc-divisor}]\label{thm:main-rc-divisor}
  The rational curve divisor $R^{\rm rc}$ is recognizable for $F_{2d}$.
\end{theorem*}

Theorem \ref{thm:recognizable-semitoroidal} holds more generally for moduli of
lattice-polarized K3 surfaces. Combined, Theorems \ref{thm:recognizable-semitoroidal}
and \ref{thm:main-rc-divisor} give an affirmative answer to the existence of a
compactification of K3 moduli
which is simultaneously geometric and period-theoretic:

\begin{corollary*}\label{cor:some-semitoroidal}
For all degrees $2d$, there is a KSBA compactification 
of $F_{2d}$ by slc stable pairs, whose normalization is 
semitoroidal. \end{corollary*} 

Theorem \ref{thm:recognizable-semitoroidal} is proven as follows: Kulikov 
models $X\to (C,0)$ with a given Picard-Lefschetz transformation, encoded by
a lattice vector $\lambda$, can be packaged into a $19$-dimensional family of
polarized surfaces $\cX\to S_\lambda$ we call {\it $\lambda$-families} (Def.~\ref{lambda-fam}).
The general fiber is smooth, and the discriminant $\Delta_\lambda\subset S_\lambda$
is a smooth divisor, isomorphic to $(\C^*)^{18}$ for Type III Kulikov models.
The discriminant parameterizes the equisingular, quasipolarized, smoothable deformations of $X_0$.

Recognizability implies that the divisor $R$ extends over the boundary
$\Delta_\lambda$ to give a family of pairs $(\cX,\cR)\to S_\lambda$ (Prop.~\ref{stronger-rec}).
We modify $\cX$ until $\cR\subset \cX$ is relatively nef and contains no singular
strata of any fiber (Prop.~\ref{divisor-lambda-rec},
Sec.~\ref{sec:global-fs}). Taking the relative canonical model shows that all degenerations
with a given Picard-Lefschetz transformation limit to stable pairs
 $(X_0,\epsilon R_0)$ of a fixed combinatorial type (Cor.~\ref{cor:slc-type-defined-by-lambda}).
 This fact, together with an argument involving resolution of indeterminacy (Lem.~\ref{toric-resolution})
 and quasi-affineness of the strata of the KSBA compactification (Thm.~\ref{stable-all-possible}), imply
 that there is a toroidal compactification
 $$\oF_{2d}^{\mathfrak{G}}\to (\oF_{2d}^R)^{\nu}$$ dominating
 the normalization of the KSBA compactification. The proof concludes
 with a new characterization: semitoroidal compactifications are exactly 
 those normal compactifications of $F_{2d}$ that are dominated
 by a toroidal compactification, and dominating Baily-Borel (Thm.~\ref{thm:semi-is-normal}).

Theorem \ref{thm:main-rc-divisor} is proven by borrowing ideas from
Gromov-Witten theory and degenerations of stable maps.
$R^{\rm rc}$ is recognizable because any limit $R_{i,0}\subset X_0$ of
a family of rational curves $R_{i,t}\subset R^{\rm rc}_t\subset X_t$ in a Kulikov model $X\to (C,0)$ enjoys
a geometric property which ensures its rigidity:
$R_{i,0}$ is the image of an admissible stable genus zero
map $f:T\to X_0$.
Using $K$-triviality of $X_0$ and adjunction,
we prove that $f(T)$ is locally constant on the
Kontsevich space of admissible stable maps (Lem.~\ref{rc-rec2}).

\subsection*{Relation to earlier work} The notion
of a recognizable divisor presented here arose from generalizing certain
specific examples, for moduli of degree $2$ \cite{alexeev2019stable-pair}
and elliptic K3 surfaces \cite{alexeev2022compactifications-moduli}.
In both of these papers, Kulikov models with divisor are constructed explicitly for all possible
$\lambda$, providing the necessary input for the general theory to work.

Every degree~2 K3 surface $(X,L)$ with ADE singularities admits an involution,
and the fixed locus $R\in |3L|$ can be taken as a canonical choice of polarizing divisor.
Every elliptic K3 surface admits the polarizing divisor $R:=s+\sum_{i=1}^{24} f_i\in |s+24f|$ formed from
the section, plus the sum of the singular fibers counted with appropriate multiplicity.

Using the theory of integral-affine structures on the two-sphere $S^2$
\cite{engel2018looijenga,engel2021smoothings}, one can, in both of these cases,
explicitly construct a family of Kulikov surfaces $\cX_0\to \Delta_\lambda$ which smooths
 to a $\lambda$-family $(\cX,\cR)\to S_\lambda$
with $\cR$ relatively big and nef, and not containing strata of any fiber.
This is achieved by building a continuously
varying family of ``polarized integral-affine spheres" 
 $(\cB,\cR_{\rm trop}) \to C_\delta^+$ over a cone associated to
 an appropriate hyperbolic lattice (Def.~\ref{def:c-plus}).
 The cone $C_\delta^+$ contains all possible $\lambda$. Once triangulated,
 the integral-affine sphere $\mathcal{B}_\lambda=\Gamma(X_0)$ can be 
 identified with the dual complex of $X_0$ for some Kulikov model
 $X\to (C,0)$ with monodromy $\lambda$. The tropical divisor $\cR_{{\rm trop},\lambda}\subset \cB_\lambda$
 describes the dual complex $\Gamma(R_0)$ of $R_0\subset X_0$.

The upshot of these constructions is that recognizability is verified explicitly,
and all degenerations with a fixed Picard-Lefschetz transformation admit a
stable model of a fixed combinatorial type.
Furthermore, the family $(\mathcal{B},\cR_{\rm trop})\to C_\delta^+$
determines which monodromy invariants $\lambda$
give rise to degenerations into a specified stratum of slc stable pairs---it is those
$\lambda$ on which the family $(\cB,\cR_{\rm trop})$ is combinatorially constant.
In the above examples, these loci of combinatorial constancy in $C_\delta^+$ are
the cones of a semifan $\fF_R$.
The semifan of Theorem \ref{thm:recognizable-semitoroidal} is exactly this one.
In fact, we prove here that for any recognizable divisor, the cones of $\fF_R$ are the loci on
which a well-defined ``stratum" function $\mathbb{S}\colon C_\delta^+\to \{\textrm{slc strata of }\oF_{2d}^R\}$
is constant (Thm.~\ref{stratum-function}).

For degree $2$ K3 surfaces with ramification divisor, $\fF_R$ is a semifan but not a fan. It is
a coarsening of the Coxeter fan. For elliptic K3 surfaces with divisor $s+\sum_{i=1}^{24}f_i$
it is a fan which refines the maximal cone of the Coxeter fan into $9$ subcones.
Theorems \ref{thm:recognizable-semitoroidal} and \ref{thm:main-rc-divisor} imply the
existence of a semifan $\mathfrak{F}^{\rm rc}$ for all degrees $2d$. But
unlike for $A_g$ and the 2nd Voronoi fan, or the above two examples,
we have no explicit description, because the structure of a hypothetical ``tropical 
K3-rational curves pair" $(\cB, \cR^{\rm rc}_{\rm trop})$ is unknown.
Such a description
is an interesting open question. Integral-affine structures make no appearance
in this paper because we do not explicitly construct $\fF_R$ for any given $R$---we
prove a general existence result.

After this work appeared, the authors, with Han \cite{alexeev2021nonsymplectic},
proved recognizability for fixed curves of non-symplectic automorphisms.
Explicit semifans $\fF_R$ for the fixed divisor $R$ were given
in \cite{alexeev2022mirror-symmetric}, for moduli spaces of K3 surfaces
with nonsymplectic involution.

\subsection*{Summary of contents}

 In Section~\ref{sec:k3-moduli}, we recall different notions of
 moduli of K3 surfaces, such as smooth analytic,
$M$-quasipolarized, and polarized with ADE singularities.
In Section~\ref{sec:one-param-degens} we study one-parameter
degenerations: Kulikov models, as well as nef, divisor, and stable
models, by adding nef line bundles, effective nef divisors, and by
taking the canonical models of the latter, respectively.
In Section~\ref{sec:period-map} we define the periods of Kulikov
surfaces. These sections compile
known results about K3 moduli, giving a unified
treatment of Type II and III degenerations.

Section~\ref{sec:arithmetic-quotients} recalls the combinatorially
defined, period-theoretic compactifications of arithmetic quotients:
Baily-Borel, toroidal, and semitoroidal.
A major result is
Theorem \ref{thm:semi-is-normal} which states that, for Type IV
arithmetic quotients, a semitoroidal
compactification is precisely a normal compactification which is
sandwiched between the Baily-Borel and some toroidal compactification.
Section~\ref{sec:slc-pairs} discusses the compactifications
of $F_{2d}$ via stable pairs, associated to a
canonical choice of divisor $R$. Here, we introduce the critical
notion of recognizable divisors (Def.~\ref{recog-def}).

In Section~\ref{sec:lambda-families}, we construct the $\lambda$-families
which appear in the proof of Theorem \ref{thm:recognizable-semitoroidal}.
A new result (Theorem \ref{global-fs}) globalizes the main
result of Friedman-Scattone \cite{friedman1986type-III}: Any two $\lambda$-families
with the same values of $\lambda^2$ and
imprimitivity of $\lambda$ are connected by a series of birational
modifications falling into three special forms. 
In Section~\ref{sec:recognizable-divisors}, we prove the
 main properties of recognizable divisors with respect to $\lambda$-families.
Theorem \ref{thm:equiv-rec} summarizes equivalent formulations of
recognizability.

Sections~\ref{sec:main-thm-recognizable} and \ref{sec:rc-divisor}
contain the proofs of Theorems
\ref{thm:recognizable-semitoroidal} and \ref{thm:main-rc-divisor}, respectively.

\section{Moduli of K3 surfaces}
\label{sec:k3-moduli}

\subsection{Analytic moduli}\label{analyticK3}
We begin by setting notation and reviewing fundamental
results about K3 surfaces. For general references,
see \cite{huybrechts2016lectures-on-k3} or 
\cite{asterisque1985geometrie-des-surfaces}.

\begin{definition}
A {\it K3 surface} $X$ is a compact, complex surface 
with $h^1(X)=0$ and $K_X=\mathcal{O}_X$. \end{definition}

\begin{definition}
Let $L_{K3}:=I\!I_{3,19}=H^{\oplus 3}\oplus E_8^{\oplus 2}$
be a fixed copy of the unique even unimodular lattice of
signature $(3,19)$. 
\end{definition}

Endowed with the cup product, $H^2(X,\Z)$ is isometric to $L_{K3}$
for any K3 surface $X$. Let $\mathcal{K}_X\subset H^{1,1}(X,\R)$
denote the K\"ahler cone of $X$. It is a fundamental chamber 
for the group $W_X=\langle r_\beta\rangle\subset O(H^2(X,\Z))$
generated by reflections in the roots $\beta\in NS(X)$, $\beta^2=-2$ acting
on the positive cone of $H^{1,1}(X,\R)$.

\begin{theorem}[\cite{piateski-shapiro1971torelli,looijenga1980torelli}]
\label{torelli-i}
  Two K3 surfaces $X$, $X'$ are isomorphic if and only if
  they are {\rm Hodge-isometric}: there is an isometry
  $i : H^2(X',\Z)\to H^2(X,\Z)$ for which
  $i(H^{2,0}(X'))=H^{2,0}(X)$. Furthermore, $i=f^*$ for a unique
  isomorphism $f:X\to X'$ if and only if
  $i(\mathcal{K}_{X'})=\mathcal{K}_X$.
\end{theorem}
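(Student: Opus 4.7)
The plan is to dispatch the easy direction, reduce the first ``iff'' to the second ``iff'' via a Weyl-group chamber argument, and then prove the strong statement by the classical route of Piateski-Shapiro-Shafarevich and Burns-Rapoport.

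\emph{Easy directions and Weyl reduction.} If $f\colon X\to X'$ is an isomorphism, then $f^*$ is visibly a Hodge isometry and pulls K\"ahler forms back to K\"ahler forms. Conversely, given any Hodge isometry $i\colon H^2(X',\Z)\to H^2(X,\Z)$, the isometry sends $\mathcal{C}_{X'}$ to $\pm\mathcal{C}_X$; after composing with $-\mathrm{id}$ if needed, I may assume $i(\mathcal{C}_{X'})=\mathcal{C}_X$. The walls of $\mathcal{C}_X$ are cut out by root hyperplanes, and $W_X$ acts simply transitively on the resulting chambers with $\ocK_X$ a fundamental chamber. Hence there is a unique $w\in W_X$ with $(w\circ i)(\mathcal{K}_{X'})=\mathcal{K}_X$, reducing the existence of $f$ in the first assertion to the existence of $f$ in the second.

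\emph{Strong Torelli.} Now assume $i(\mathcal{K}_{X'})=\mathcal{K}_X$. I would break the argument into three pieces. \emph{(a) Uniqueness of $f$:} any automorphism $g$ of a K3 with $g^*=\mathrm{id}$ on $H^2(X,\Z)$ is the identity. The holomorphic Lefschetz fixed point formula shows the fixed locus is nonempty, and triviality of $g^*$ on $H^{2,0}(X)$ together with local analysis at the fixed points forces $g=\mathrm{id}$. \emph{(b) Kummer case:} for $X=\mathrm{Km}(A)$, $X'=\mathrm{Km}(A')$, the lattice $H^2(X,\Z)$ admits a canonical description in terms of $H^2(A,\Z)$ and the sixteen exceptional $(-2)$-classes, so a Hodge isometry normalized by $W_X$ to respect this splitting descends to a Hodge isometry $H^2(A',\Z)\to H^2(A,\Z)$; the classical Torelli theorem for abelian surfaces then produces an isomorphism $A\cong A'$ (possibly after translation), which lifts to the Kummers. \emph{(c) Density/deformation:} Kummer periods are dense in the period domain $\bD$. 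Using surjectivity of the period map and Kuranishi theory, one embeds $(X,X',i)$ into a one-parameter family of marked Hodge-isometric K3 pairs whose generic fiber is Kummer, applies (b) fiberwise over a punctured neighborhood, and extends the resulting isomorphism to the central fiber using properness of the relative Isom scheme. The K\"ahler-cone hypothesis, combined with local Torelli, ensures that the extension realizes $i$ itself rather than some other element of its $W_X$-orbit.

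\emph{Main obstacle.} The delicate step is (c): producing a deformation whose marking remains Hodge-isometric throughout and controlling limits of the generic-fiber isomorphisms. This is where Siu's theorem that every K3 surface is K\"ahler must enter, since otherwise the K\"ahler-cone condition in the hypothesis is ill-posed for non-algebraic K3s. A technically cleaner substitute, which I would prefer in a write-up, is Burns-Rapoport's twistor interpolation: one joins $X$ to $X'$ through a chain of twistor lines in $\bD$ along which the K\"ahler class varies in a prescribed way and isomorphism classes are locally constant. Either route ultimately hinges on simultaneously controlling the K\"ahler class along the path in order to single out the unique $f$ with $f^*=i$ from among its Weyl-equivalent alternatives.
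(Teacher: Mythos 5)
The paper does not prove this theorem: it cites it as a classical black box, with references to exactly Piateski-Shapiro--Shafarevich and Looijenga--Peters, so there is no in-paper argument to compare against. Your sketch correctly recapitulates the route those references take --- reduce the weak statement to the strong one by composing $i$ with $\pm 1$ and an element of $W_X$ (using that $\ocK_X$ is a fundamental chamber for $\{\pm 1\}\times W_X$ on the set of positive-norm vectors), establish uniqueness via a Lefschetz fixed-point argument, prove the Kummer case, and propagate by density of Kummer periods with a deformation or twistor interpolation, with Siu's theorem guaranteeing $\cK_X\ne\emptyset$ in the non-algebraic setting.

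One genuine slip in step (b): you write ``a Hodge isometry normalized by $W_X$ to respect this splitting descends,'' but in the strong statement the isometry $i$ is \emph{already} assumed to carry $\cK_{X'}$ to $\cK_X$, and composing with a nontrivial $w\in W_X$ would destroy this hypothesis. You have no normalization freedom left. The actual content of the Kummer step is the opposite: one must \emph{prove} that a K\"ahler-cone-preserving Hodge isometry between Kummer surfaces automatically respects the Kummer sublattice, which follows because the sixteen exceptional $(-2)$-classes are cohomologically distinguished (they are the classes of the irreducible $(-2)$-curves with a specific pairwise-disjoint intersection pattern, and effectivity/irreducibility is read off from the K\"ahler cone). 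As stated, your step (b) proves only the weak Kummer Torelli, which would not suffice for the extension in (c). Your worry about ``properness of the relative Isom scheme'' in (c) is well placed: in the analytic category this is not automatic and is precisely the technical content that Burns--Rapoport and Looijenga--Peters add beyond the projective case of Piateski-Shapiro--Shafarevich.
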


Note that $\pm 1$ and $g\in W_X$ act by Hodge isometries on
$H^2(X,\Z)$. For any Hodge isometry $i$ between $X'$ and $X$, there is
a unique sign and unique element $g\in W_X$ such that
$\pm g\circ i(\mathcal{K}_{X'})=\mathcal{K}_X$. Thus, the
group of Hodge isometries of $X$ fits into a split exact sequence of groups
$$0\to \{\pm 1\}\times W_X\to {\rm HodgeIsom}(X)\to {\rm Aut}(X)\to
0.$$

\begin{definition}
Let $\pi \colon \mathcal{X}\to S$ be a family of
  smooth analytic K3 surfaces over an analytic space $S$. A {\it
    marking} is an isometry of local systems
  $\sigma\colon R^2\pi_*\underline{\Z}\to \underline{L}_{K3}$.
\end{definition}

\begin{definition} 
The {\it period domain} of analytic K3 surfaces is
  $$\mathbb{D}:=\mathbb{P}\{x\in L_{K3}\otimes \C\,\big{|}\, x\cdot x=0,
  \,x\cdot \overline{x}>0\}.$$ It is an analytic open subset of a
  $20$-dimensional quadric in $\mathbb{P}^{21}$. Let
  $(\mathcal{X}\to S, \,\sigma)$ be a marked family of K3
  surfaces. The {\it period map} $P\colon S\to \mathbb{D}$ is defined by
$s\mapsto \sigma(H^{2,0}(X_s)).$ 
\end{definition}

By \cite[Exp. XIII]{asterisque1985geometrie-des-surfaces},
there is a non-Hausdorff complex manifold $\cM$ of dimension~$20$,
forming a fine moduli space of marked K3 surfaces, together with a period
map $P:\cM\to \bD$. For $x\in \mathbb{D}$ a period, let $W_x$
be the group generated by reflections in roots of $x^\perp\cap L_{K3}$.
Then $P^{-1}(x)$ is a torsor over $\{\pm 1\}\times W_x$ with action given by
$(X,\sigma)\mapsto (X,g\circ \sigma)$.

\subsection{Quasipolarized moduli}\label{sec:qpol-moduli}
We now give analogous definitions to Section \ref{analyticK3} in the
polarized case. The standard reference is
\cite{dolgachev1996mirror-symmetry}. However, Thm. 3.1 in \emph{ibid}
is incorrect. We modify the definition in a way that
this theorem remains true.

Let $J\colon M\hookrightarrow L_{K3}$ be a primitive 
hyperbolic sublattice of signature $(1,r-1)$ with $r\leq 20$. 
A vector $h\in M\otimes \R$ is {\it very irrational} if $h\notin
M'\otimes \bR$ for any proper sublattice $M'\subsetneq M$.
We will henceforth fix one such, of positive norm $h^2>0$.

\begin{definition}\label{def:mqpol}
  An {\it $M$-quasipolarized K3 surface} $(X,j)$ is a K3 surface
  $X$, and a primitive lattice embedding
  $j\colon M\hookrightarrow NS(X)$ for which $j(h)\in \ocK_X$ is big
  and nef. Two such $(X,j)$, $(X',j')$ are {\it isomorphic} if there is
  an isomorphism $f\,:X\to X'$ of K3 surfaces for which
  $j=f^*\circ j'$.
\end{definition}

\begin{definition}
A {\it marking} of $(X,j)$ is an isometry
  $\sigma:H^2(X,\Z)\to L_{K3}$ for which $J = \sigma\circ j$.
\end{definition}

The $M$-quasipolarized period domain is
$$\mathbb{D}_M:=\mathbb{P}\{x\in M^\perp \otimes \C \,\big{|}\,x\cdot
x=0,\,x\cdot \overline{x}>0\}.$$ Define the {\it Weyl group} of a
point $x\in \mathbb{D}_M$ to be
  $W_x(M^\perp):=\langle r_\beta\colon \beta \in x^\perp\cap
  M^\perp\rangle$. Note that now $W_x(M^\perp)$ is finite because
$x^\perp\cap M^\perp$ is negative-definite.

\begin{theorem}\label{qpol-thm}
There is a non-Hausdorff complex manifold
$\mathcal{M}_M\subset \mathcal{M}$
of dimension $20-r$, 
admitting a universal family of marked $M$-quasipolarized
K3 surfaces. The fiber $P\inv(x)$ of the
  period map $P\colon \mathcal{M}_M \to \mathbb{D}_M$ is a
  torsor over $W_x(M^\perp)$.
\end{theorem}

\begin{proof} The proof follows that of
\cite[Thm. 3.1]{dolgachev1996mirror-symmetry}, which now works
because of the modified Definition \ref{def:mqpol} for
an $M$-quasipolarization.
\end{proof}

Let $\mathcal{F}_M^{\rm q}$ denote the moduli stack of (unmarked)
$M$-quasipolarized K3 surfaces.

\begin{corollary}
There is an isomorphism of stacks
$\mathcal{F}_M^{\rm q} = [\mathcal{M}_M:\Gamma]$ where
$$\Gamma:=\{\gamma\in O(L_{K3})\colon \gamma\big{|}_M = {\rm id}_M \}$$
is the group of changes-of-marking. The quotiented period map
$\mathcal{M}_M/\Gamma\to \mathbb{D}_M/\Gamma$ is a bijection.
\end{corollary}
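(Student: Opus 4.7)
The plan is to first establish the stack-level isomorphism, then deduce bijectivity of the quotiented period map from Theorem \ref{qpol-thm}. I would begin by observing that $\Gamma$ acts on $\mathcal{M}_M$ by change of marking, $\gamma\cdot(X,\sigma):=(X,\gamma\circ\sigma)$. The condition $\gamma|_M={\rm id}_M$ forces $(\gamma\circ\sigma)\circ j=\sigma\circ j=J$, so the underlying $M$-quasipolarized pair $(X,j)$ is unchanged. The forgetful functor $(X,\sigma)\mapsto (X,j)$ is therefore $\Gamma$-invariant and descends to a morphism $\Phi\colon[\mathcal{M}_M:\Gamma]\to \mathcal{F}_M^{\rm q}$.

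Next I would check that $\Phi$ is an equivalence of stacks. Essential surjectivity amounts to the existence, for any $M$-quasipolarized $(X,j)$, of at least one marking $\sigma$ with $\sigma\circ j=J$; this is part of the content of Theorem \ref{qpol-thm}, since the fibers of $P$ over $\mathbb{D}_M$ are nonempty $W_x(M^\perp)$-torsors. Fully faithfulness is essentially tautological: any two markings of the same $(X,j)$ differ by $\gamma:=\sigma'\circ\sigma^{-1}\in O(L_{K3})$ with $\gamma|_M={\rm id}_M$, hence $\gamma\in\Gamma$; conversely any isomorphism $f\colon(X,j)\to(X',j')$ of unmarked quasipolarized pairs lifts, after choosing markings on each side, to an isomorphism of marked objects up to a unique change of marking in $\Gamma$. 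Since marked quasipolarized K3s have no nontrivial automorphisms (Theorem \ref{torelli-i}), these lifts are canonical, and $\Phi$ is an equivalence.

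For the induced map $\overline{P}\colon\mathcal{M}_M/\Gamma\to\mathbb{D}_M/\Gamma$, note that $\Gamma$ preserves $M^\perp$ (it fixes $M$), hence acts on $\mathbb{D}_M$, and $P$ is manifestly $\Gamma$-equivariant. Surjectivity of $\overline{P}$ is then immediate from surjectivity of $P$ (Theorem \ref{qpol-thm}). The one substantive step — and the main obstacle — is the inclusion $W_x(M^\perp)\subset\Gamma$, needed for injectivity: each generating reflection $r_\beta$ with $\beta\in x^\perp\cap M^\perp$ acts trivially on $M$ because $\beta\perp M$, so $r_\beta\in\Gamma$. Consequently, any two points of $\mathcal{M}_M$ in the same fiber $P^{-1}(x)$ already differ by an element of $\Gamma$, and combined with equivariance any two points mapping to the same $\Gamma$-orbit in $\mathbb{D}_M$ lie in the same $\Gamma$-orbit in $\mathcal{M}_M$. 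This gives the desired bijection; everything else is formal once this inclusion is noted.
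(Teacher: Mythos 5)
Your proof is correct and follows the route implicit in the paper, which states this corollary without further argument immediately after Theorem \ref{qpol-thm}. The two substantive observations you isolate—that isomorphisms of marked objects are unique (no nontrivial automorphisms, via Theorem \ref{torelli-i}), and that $W_x(M^\perp)\subset\Gamma$ because reflections in $\beta\in M^\perp$ fix $M$ pointwise—are exactly the points the paper relies on.

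One small imprecision worth noting: for essential surjectivity you cite Theorem \ref{qpol-thm} and the nonemptiness of fibers $P^{-1}(x)$, but that theorem concerns the already-marked space $\mathcal{M}_M$, so it does not by itself produce a marking for an arbitrary unmarked pair $(X,j)$. What is actually needed is that the primitive embedding $j\colon M\hookrightarrow H^2(X,\Z)\cong L_{K3}$ is $O(L_{K3})$-conjugate to the fixed reference embedding $J$; this is a lattice-theoretic uniqueness statement (Nikulin-type), or alternatively is built into the intended meaning of $\mathcal{F}_M^{\rm q}$ as the unmarked version of $\mathcal{M}_M$. Either way the conclusion holds and the rest of your argument—the $\Gamma$-equivariance of $P$, surjectivity of $\overline{P}$ from surjectivity of $P$, and injectivity from $W_x(M^\perp)\subset\Gamma$ together with the torsor structure on the fibers—is accurate and complete.
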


\subsection{ADE moduli}
We now modify the above moduli problems to produce
a Hausdorff moduli space. Theorem \ref{smooth-coarse} below is
well-known. 

\begin{definition}
An {\it $M$-polarized K3 surface} $(\oX,j)$ is a
surface $\oX$ with at worst rational
double point (ADE) singularities
whose minimal resolution $X\to \oX$ is a smooth
K3 surface, together with an isometric embedding
$j:M\hookrightarrow {\rm Pic}(\oX)$ for which
$j(h)$ is ample.
\end{definition}

\begin{theorem}\label{smooth-coarse}
The coarse moduli space of
$M$-polarized K3 surfaces is $F_M=\mathbb{D}_M/\Gamma$. The moduli
stack $\cF_M$ is the separated quotient
of the stack $[\mathcal{M}_M:\Gamma]$.
\end{theorem}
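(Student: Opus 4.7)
The plan is to identify isomorphism classes of $M$-polarized K3 surfaces with $\mathbb{D}_M/\Gamma$ by funneling the problem through the already-analyzed space $\mathcal{M}_M$ of marked $M$-quasipolarized K3s, and to show that the non-Hausdorff failure of $\mathcal{M}_M$ collapses exactly when we pass to ADE surfaces. First I would construct mutually inverse constructions between $M$-quasipolarized and $M$-polarized K3 surfaces. Given $(X,j)$ quasipolarized, the effective positive roots in $M^\perp \cap {\rm Pic}(X)$ form a finite negative-definite ADE configuration of smooth $(-2)$-curves (as noted in the excerpt), which simultaneously contract to give an ADE surface $\oX$. Since the contracted curves are orthogonal to $M$, the embedding $j$ descends to $\overline{j}\colon M \hookrightarrow {\rm Pic}(\oX)$, and $\overline{j}(h)$ is ample because $j(h)$ is big and nef and is orthogonal to every contracted curve. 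Conversely, the minimal resolution $X \to \oX$ of any $M$-polarized surface is an $M$-quasipolarized K3 by pullback of $j$. Isomorphisms lift uniquely between minimal resolutions, giving a bijection on isomorphism classes.

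Next I would pin down the isomorphism classes. By Theorem \ref{qpol-thm}, the fiber $P^{-1}(x) \subset \mathcal{M}_M$ is a $W_x(M^\perp)$-torsor. Two points related by a reflection $r_\beta$ with $\beta \in x^\perp \cap M^\perp$ correspond to two markings of the same K3 that differ by the sign choice on the exceptional $(-2)$-curves; contraction to $\oX$ forgets this choice, and both markings yield isomorphic $M$-polarized surfaces. Hence isomorphism classes of $M$-polarized K3s are in bijection with $\mathcal{M}_M/W$ (where $W$ acts by the fiberwise Weyl group), and further quotienting by change of marking $\Gamma$ gives $\mathbb{D}_M/\Gamma$ by Theorem \ref{qpol-thm}.

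For the coarse moduli and stack statements, the essential input is Brieskorn's simultaneous resolution: any family of $M$-polarized K3s $\overline{\mathcal{X}} \to S$ admits, after a finite cover of $S$, a simultaneous resolution $\mathcal{X} \to \overline{\mathcal{X}}$, producing an $M$-quasipolarized family whose two possible simultaneous resolutions differ precisely by a fiberwise Weyl group action. Thus the induced period morphism $S \to \mathbb{D}_M/\Gamma$ is well defined, and universality transfers from $\mathcal{M}_M$ (via Proposition \ref{glue-def-smooth} and Theorem \ref{per-iso}) to show that $\mathbb{D}_M/\Gamma$ is a coarse moduli space. The non-separated pairs in $[\mathcal{M}_M : \Gamma]$ are precisely those identified under contraction, which identifies the moduli stack with the separated quotient. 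The main obstacle is verifying at the family level, rather than pointwise, that Weyl-related markings exhaust the non-separation; this requires controlling how the simultaneous resolution behaves under base change and how deformations of $\oX$ pull back to deformations of $X$ along the crepant resolution.
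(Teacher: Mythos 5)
Your proposal covers the same essential points as the paper --- the contraction/resolution bijection on isomorphism classes, and the identification of the $W_x(M^\perp)$-torsor $P^{-1}(x)$ with distinct crepant resolutions of a single ADE surface --- but at the family level it runs in the opposite direction, which is exactly where your self-identified obstacle lives. You start from a family of $M$-polarized surfaces and invoke Brieskorn's simultaneous resolution (after a finite base change) to land in $\mathcal{M}_M$; you must then argue that this process is well-defined independently of the chosen cover and commutes with further base change, which is genuinely delicate. The paper's sketch goes in the contraction direction: starting from a family of $M$-quasipolarized K3 surfaces it simultaneously contracts via the relative linear system $|h'|$ for an integral $h'\in\cK_M^{\rm small}$, a construction that commutes with arbitrary base change and needs no finite cover. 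Separation is then settled by a Hilbert scheme argument: given a curve $(C,0)$ with a map $C^*\to\mathcal{M}_M/\Gamma$, the limiting polarized surface $\oX_0$ is uniquely determined by the properness of the Hilbert scheme, so the polarized moduli functor factors through the separated quotient of $[\mathcal{M}_M:\Gamma]$. Your route can likely be made to work, but closing the base-change gap would essentially amount to reconstructing the Hilbert-scheme properness argument in different clothing, whereas the paper's choice of direction dissolves the issue from the start.
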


\begin{remark}\label{stack-difference}
The stack $\mathcal{F}_M$ and the
quotient stack $[\mathbb{D}_M:\Gamma]$ are not equal.
In the latter stack, the inertia group at $x\in \mathbb{D}_M$
is the stabilizer $\Gamma_x$. In the former stack, the Torelli Theorem \ref{torelli-i}
implies the inertia group is rather $\Gamma_x/W_x(M^\perp)={\rm Aut}(\oX,j)$.

Consider an open neighborhood
$U_x\ni x$ in $\mathbb{D}_M$
preserved by $\Gamma_x$. First, quotient
$U_x$ by $W_x(M^\perp)$. Since $W_x(M^\perp)\subset\Gamma_x$
is normal, the quotient group
acts on the coarse space $U_x/W_x(M^\perp)$, which is a smooth
complex manifold. The stack
quotient $[U_x/W_x(M^\perp):\Gamma_x/W_x(M^\perp)]$ defines 
orbifold charts for the smooth DM stack $\mathcal{F}_M$. \end{remark}

\section{One-parameter degenerations}
\label{sec:one-param-degens}

\subsection{Kulikov models}
\label{sec:kulikov-models}

We now examine degenerations of K3 surfaces over a curve.
Let $(C,0)$ denote the analytic germ of a smooth curve at a
point $0\in C$ and let $C^*=C\setminus 0$. Let $X^*\to C^*$ be
a family of smooth analytic K3 surfaces. 

\begin{definition}\label{def:kulikov}
A {\it Kulikov model} $X\to (C,0)$ is an
extension of $X^*\to C^*$ for which $X$ is smooth,
$K_X\sim_C 0$, and $X_0$ has reduced normal crossings
with all components K\"ahler.
We say $X$ is {\it Type I, II,} or {\it III}, respectively, depending
on whether $X_0$ is smooth, has double curves but no triple
points, or has triple points, respectively. \end{definition}

A key result is the theorem of
Kulikov \cite{kulikov1977degenerations-of-k3-surfaces}
and Persson-Pinkham \cite{persson1981degeneration-of-surfaces}:

\begin{theorem}
Let $Y^*\to C^*$ be a family of
analytic K3 surfaces admitting an extension
$Y\to (C,0)$ for which every component of $Y_0$ is K\"ahler.
There is a base change $(C',0)\to (C,0)$ and a sequence of
bimeromorphic modifications $Y'\dashrightarrow X$
of the pullback, such that $X$ is a Kulikov model.
\end{theorem}


Assume for notational
convenience that the strata of $X_0$ are globally normal crossings.
Let $V_i\subset X_0$ denote the irreducible components, $D_{ij}=V_i\cap V_j$ and $T_{ijk}=V_i\cap V_j\cap V_k$ the double curves and triple points, respectively.
By convention, we write $D_{ij}\subset V_i$ and $D_{ji}\subset V_j$.

\begin{proposition}\label{prop:kulikov}
Let $X\to (C,0)$ be a Kulikov
model. Let $D_i=\sum_j D_{ij}$ be the part of the double locus
of $X_0$ lying on $V_i$. Then:
\begin{enumerate}
\item $D_i\in |-K_{V_i}|$ is an anticanonical cycle of
rational curves in Type III, and an elliptic curve or the disjoint union
of two elliptic curves in Type II.
\item $D_{ij}^2+D_{ji}^2=-2+2g$ where $g$ is the arithmetic genus
of $D_{ij}$ in $X_0$.
\item The dual complex $\Gamma(X_0)$ is a triangulation
of $S^2$ in Type III, and a segment decomposed
into subsegments in Type II.
\end{enumerate}
\end{proposition}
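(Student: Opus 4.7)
The plan is to prove (2) and the anticanonical formula $D_i\in|-K_{V_i}|$ in (1) directly from adjunction on the smooth threefold $X$, then to deduce the local structure of each $D_{ij}$ from a triple-point count, and finally to invoke the Clemens--Schmid / limit mixed Hodge structure package to pin down the global topology of $\Gamma(X_0)$ and the remaining combinatorial refinements. The last step is where the real work lies.

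Begin with (2). Since $D_{ij}$ is a smooth curve in the smooth threefold $X$, adjunction gives $\omega_{D_{ij}}\cong\omega_X|_{D_{ij}}\otimes\det N_{D_{ij}/X}$, and $K_X\sim_C 0$ trivializes the first factor, leaving $K_{D_{ij}}=\det N_{D_{ij}/X}$. The normal bundle sequence $0\to N_{D_{ij}/V_i}\to N_{D_{ij}/X}\to N_{V_i/X}|_{D_{ij}}\to 0$ splits its determinant into two line bundles of degrees $D_{ij}^2$ (self-intersection on $V_i$) and $D_{ji}^2$ respectively; the latter identification comes from restricting $\mathcal{O}_X(V_i)$ first to $V_j$, where it becomes $\mathcal{O}_{V_j}(D_{ji})$, and then to $D_{ji}$, yielding $N_{D_{ji}/V_j}$. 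Taking degrees gives $2g(D_{ij})-2=D_{ij}^2+D_{ji}^2$. For (1), apply adjunction to $V_i\subset X$ to obtain $K_{V_i}=(K_X+V_i)|_{V_i}=V_i|_{V_i}$, and use that $X_0=\sum_j V_j$ is a principal divisor to conclude $V_i|_{V_i}=-D_i$, hence $D_i\in|-K_{V_i}|$. Adjunction for $D_{ij}\subset V_i$ combined with $K_{V_i}=-D_i$ then shows that the number of triple points on $D_{ij}$ equals $2-2g(D_{ij})$, so $g(D_{ij})\in\{0,1\}$: in Type II every $D_{ij}$ is elliptic and the components of $D_i$ are disjoint (they could only meet at triple points), while in Type III any $D_{ij}$ carrying a triple point is a $\mathbb{P}^1$ with exactly two such points.

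The main obstacle is the global topology of $\Gamma(X_0)$ in (3), which in turn yields the refinements needed in (1): in Type III, that every $V_i$ carries triple points so that every $D_{ij}$ is rational and $D_i$ closes up into a cycle; and in Type II, that the valency of every $V_i$ in the dual graph is at most $2$, so $D_i$ has at most two elliptic components. Local adjunction cannot see this, and one must supply global input. The approach is the Clemens--Schmid exact sequence: the graded pieces of the monodromy weight filtration on the limit mixed Hodge structure $H^2_{\lim}(X_t,\mathbb{Q})$ are expressible via $H^\bullet(\Gamma(X_0),\mathbb{Q})$ together with the cohomologies of the strata $V_i$ and $D_{ij}$. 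Pinning these against the Hodge diamond of a K3, the Euler characteristic identity $\chi(X_0)=\chi(X_t)=24$, simple connectivity of $X_t$, and the Jordan structure of the log-monodromy $N$ (Types I, II, III corresponding to $N=0$, $N\neq 0=N^2$, and $N^2\neq 0$) forces $\Gamma(X_0)$ to be a connected, simply connected closed pseudomanifold with $H^2(\Gamma,\mathbb{Q})=\mathbb{Q}$ in Type III, hence $S^2$; and a connected contractible graph of valency $\leq 2$ in Type II, hence a segment. This last step is essentially the classical Kulikov--Persson--Pinkham theorem, which I would invoke directly rather than reprove.
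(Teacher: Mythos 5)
The paper states Proposition~\ref{prop:kulikov} without proof; it is presented as part of the classical Kulikov--Persson--Pinkham structure theory for semistable degenerations of K3 surfaces, so there is no in-paper argument to compare against. Your proof is correct and is the standard one. The adjunction computations on the smooth threefold $X$ are right: since $\mathcal{O}_X(X_0)|_{V_i}$ is trivial and $K_X\sim_C 0$, one gets $K_{V_i}=(K_X+V_i)|_{V_i}=V_i|_{V_i}=-D_i$, and the normal bundle sequence for $D_{ij}\subset V_i\subset X$, together with the identification $N_{V_i/X}|_{D_{ij}}\cong N_{D_{ji}/V_j}$, yields $2g(D_{ij})-2=\deg\det N_{D_{ij}/X}=D_{ij}^2+D_{ji}^2$. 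The local count of triple points on $D_{ij}$ via adjunction in $V_i$ correctly shows $g(D_{ij})\le 1$, with $g=1$ forcing no triple points and $g=0$ forcing exactly two. You are also right to identify the remaining global statements --- that $\Gamma(X_0)$ is a triangulation of $S^2$ in Type III (hence that every double curve carries triple points and each $D_i$ closes into a cycle) and a segment in Type II --- as requiring genuine global input (Clemens--Schmid, $\chi(X_0)=\chi(X_t)=24$, simple connectivity of $X_t$, the Jordan type of $N$), which is exactly the content of the Kulikov--Persson--Pinkham structure theorem and is appropriate to invoke rather than reprove.
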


\begin{definition} A reduced normal crossings surface $X_0$ satisfying (1), (2), (3)
is a {\it Kulikov surface}. \end{definition}

There is a converse to Proposition \ref{prop:kulikov}
due to Friedman \cite{friedman1983global-smoothings}:

\begin{theorem}\label{dss:def}
Let $X_0$ be a Kulikov surface.
Then, $X_0$ deforms to a smooth K3 surface if and only
if it satisfies an additional property called {\rm $d$-semistability}:
$$\mathcal{E}xt^1(\Omega_{X_0},\,\mathcal{O}_{X_0})\cong \mathcal{O}_{(X_0)_{\rm sing}}.$$
\end{theorem}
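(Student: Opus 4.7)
The plan is to analyze both directions through the local sheaf $T^1_{X_0} := \mathcal{E}xt^1(\Omega_{X_0},\mathcal{O}_{X_0})$, which measures the failure of $X_0$ to be locally embeddable as a Cartier divisor in a smooth threefold. First I would establish via an \'etale-local computation (taking $X_0 = \{xy=0\} \subset \mathbb{A}^3$ on a transverse slice near a smooth point of $D := (X_0)_{\rm sing}$) that $T^1_{X_0}$ is a line bundle on the normalization $D^\nu$ whose restriction to each irreducible double curve $D_{ij}$ is canonically $N_{D_{ij}/V_i}\otimes N_{D_{ij}/V_j}$, with a standard modification at the triple points. By Proposition~\ref{prop:kulikov}(2), this line bundle has degree zero on every component of $D$, so the d-semistability hypothesis $T^1_{X_0} \cong \mathcal{O}_{(X_0)_{\rm sing}}$ is precisely the strictly stronger requirement that it be trivial as a bundle, not merely of degree zero.

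The ``only if'' direction is essentially formal. Given a smoothing $\mathcal{X} \to (C,0)$ with smooth total space, $X_0 \hookrightarrow \mathcal{X}$ is a reduced Cartier divisor in a smooth threefold. Applying $\mathcal{H}om(-,\mathcal{O}_{X_0})$ to the corrected conormal sequence
$$0 \to \mathcal{O}_{X_0} \to \Omega_{\mathcal{X}}|_{X_0} \to \Omega_{X_0} \to \mathcal{O}_D \to 0$$
together with the local freeness of $\Omega_{\mathcal{X}}|_{X_0}$ identifies $T^1_{X_0}$ with $\mathcal{O}_D$ directly, so d-semistability follows.

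For the substantive ``if'' direction, I would use logarithmic deformation theory in the spirit of Kawamata-Namikawa. The key reinterpretation is that d-semistability is exactly equivalent to the existence of a fine saturated log structure on $X_0$ making the map to the standard log point log smooth; the trivialization of $T^1_{X_0}$ provides the global section needed to glue the local log charts. Log smooth deformations are then governed by the log tangent sheaf $T_{X_0}^{\log}$, and the construction of an analytic smoothing splits into three steps: (i) prove the obstruction-theoretic vanishing $H^2(X_0, T_{X_0}^{\log}) = 0$; (ii) inductively build a formal smoothing over $\mathrm{Spf}\,\C[\![t]\!]$ using $H^1(X_0,T_{X_0}^{\log})$ as the tangent space, then algebraize via Artin approximation to obtain an analytic family over $(C,0)$; (iii) use Proposition~\ref{prop:kulikov}(1) to transport the relative $K$-triviality to the smooth fibers and check the Betti numbers to conclude they are K3 rather than abelian or Enriques.

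The main obstacle is step~(i), where the vanishing $H^2(X_0,T_{X_0}^{\log}) = 0$ rests on an $E_1$-degeneration of the logarithmic Hodge-to-de Rham spectral sequence for the log-smooth pair, together with a Serre-duality reduction using $\omega_{X_0} \cong \mathcal{O}_{X_0}$ (which comes from (1)). All three conditions of Proposition~\ref{prop:kulikov} enter precisely at this step: (1) trivializes the dualizing sheaf; (2) makes $T^1_{X_0}$ numerically trivial so that a log structure can even exist; and the $S^2$-or-segment shape of $\Gamma(X_0)$ in~(3) forces the limiting mixed Hodge structure to have the K3 Hodge diamond, so that the relevant graded pieces vanish for dimensional reasons and step~(iii) succeeds.
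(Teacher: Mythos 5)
The paper itself gives no proof of this theorem; it simply cites Friedman \cite{friedman1983global-smoothings}. Your sketch is a correct \emph{modern} proof, but it runs along the lines of Kawamata--Namikawa and F.\ Kato rather than Friedman's original 1983 argument, which predates the logarithmic framework and instead works directly with the cotangent complex, the sheaves $\tau^1_X$, and an explicit proof of $E_1$-degeneration. The two routes are equivalent in substance: your $T^{\log}_{X_0}$ is Friedman's $\tau^1_X$, and the $E_1$-degeneration you invoke is exactly Friedman's Theorem~1.5. The log formulation is cleaner and more modular, at the cost of assuming as black boxes the log-smoothness characterization of $d$-semistability and the log Hodge-to-de Rham degeneration; Friedman's version is self-contained but more laborious. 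Your identification of where each of (1), (2), (3) enters the argument is accurate.

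One genuine error in the ``only if'' direction: the four-term sequence
\[
0 \to \mathcal{O}_{X_0} \to \Omega_{\mathcal{X}}|_{X_0} \to \Omega_{X_0} \to \mathcal{O}_D \to 0
\]
is not exact — the middle map $\Omega_{\mathcal{X}}|_{X_0}\to\Omega_{X_0}$ is already surjective, so a nonzero cokernel $\mathcal{O}_D$ cannot appear. The correct input is the short exact conormal sequence $0\to\mathcal{O}_{X_0}\xrightarrow{\cdot dt}\Omega_{\mathcal{X}}|_{X_0}\to\Omega_{X_0}\to 0$ (with $dt\in\Omega_{\mathcal{X}}$ vanishing along $D$). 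Applying $\mathcal{H}om(-,\mathcal{O}_{X_0})$ and using local freeness of $\Omega_{\mathcal{X}}|_{X_0}$, one finds $T^1_{X_0}=\operatorname{coker}\bigl(T_{\mathcal{X}}|_{X_0}\xrightarrow{\langle -,dt\rangle}\mathcal{O}_{X_0}\bigr)$; the image is the ideal sheaf of $D$ (since $dt=y\,dx+x\,dy$ in local coordinates), so $T^1_{X_0}\cong\mathcal{O}_D$. The $\mathcal{O}_D$ is the cokernel computed \emph{after} dualizing, not a fourth term of the conormal sequence. You should also make explicit that this direction requires the smoothing to be a Kulikov model (smooth total space, reduced central fiber) — ``$X_0$ deforms to a smooth K3'' over a disk with singular total space need not imply $d$-semistability — though in the context of the surrounding text of the paper this is the intended meaning.
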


The components $V_i$ in Type III are rational surfaces with a nodal
anticanonical cycle as the double locus. The two ends of a Type II degeneration
are rational surfaces with a smooth elliptic anticanonical double curve, and the
intermediate components are elliptic ruled surfaces with
double locus an anticanonical disjoint union of two elliptic sections.

\begin{definition}
An {\it anticanonical pair} or simply
{\it pair} $(V,D)$ is a smooth surface $V$ with a reduced, at worst
nodal, anticanonical divisor $D\in |-K_V|$.
A {\it toric pair} $(\oV,\oD)$ is a smooth toric
surface $\oV$ with $\oD\in |-K_{\overline{V}}|$ the toric boundary.
\end{definition}

The topologically trivial deformations of $X_0$ consist of
deforming the moduli of the pairs $(V_i,D_i)$, and regluing the
double curves by an element of $\C^*$ (in Type III) or
by a translation of the elliptic double curve $E$ (in Type II).
Only some of these regluings are smoothable by Theorem \ref{dss:def}.

\begin{definition}\label{def:charge}
The {\it charge} of an anticanonical pair $(V,D)$ is $\chi_{\rm top}(V\setminus D)$. If
$D=\sum D_j$, $$Q(V,D):=\threepartdef{12+ \sum (-3-D_j^2)}{D\textrm{ is nodal with at least two components,}}{11-D^2}{D\textrm{ is nodal and irreducible,}}{12\chi(\cO_V)-D^2}{D\textrm{ is smooth.}}$$
\end{definition}

\begin{proposition}[Conservation of Charge, {\cite[Prop. 3.7]{friedman1983smoothing-cusp}}]\label{conserve-charge}
Let $X=\bigcup\, (V_i,D_i)$ be a Kulikov surface. Then $\sum Q(V_i,D_i)=24.$
\end{proposition}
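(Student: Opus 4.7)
The plan is to recognize the charge $Q(V,D)$ as a topological invariant of the pair, and to identify the global sum with $\chi_{top}(X_t) = 24$ for the smooth K3 fiber of a one-parameter smoothing of $X_0$.

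First I would verify, case by case in the definition, the uniform formula
\[ Q(V,D) = \chi_{top}(V) - \chi_{top}(D). \]
For a cycle $D = \sum_{j=1}^n D_j$ of $n\geq 2$ rational curves on a rational surface $V$, Noether's formula combined with $\chi(\mathcal{O}_V) = 1$ and $K_V = -D$ gives $\chi_{top}(V) = 12 - D^2$; expanding $D^2 = \sum_j D_j^2 + 2n$ (since a cycle has $n$ nodes) yields $Q = 12 + \sum_j(-3 - D_j^2) = 12 - n - D^2$, and a cycle of $n$ smooth $\mathbb{P}^1$'s has $\chi_{top}(D) = n$. The irreducible nodal case ($\chi_{top}(D) = 1$, $Q = 11 - D^2$) and smooth elliptic or two-elliptic case ($\chi_{top}(D) = 0$, $Q = 12\chi(\mathcal{O}_V) - D^2 = \chi_{top}(V)$) are analogous. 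Conceptually, $Q(V,D) = c_2(\Omega^1_V(\log D)) = \chi_{top}(V\setminus D)$.

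Next I would sum and apply inclusion-exclusion. The open strata $V_i\setminus D_i$ are disjoint with union the smooth locus $X_0^{\rm sm}$, so
\[ \sum_i Q(V_i,D_i) = \sum_i \chi_{top}(V_i\setminus D_i) = \chi_{top}(X_0^{\rm sm}). \]
On each $V_i$, the nodes of $D_i = \bigcup_j D_{ij}$ are precisely the triple points of $X_0$ lying on $V_i$, each a transverse intersection of two adjacent $D_{ij}, D_{ik}$; so $\chi_{top}(D_i) = \sum_j \chi_{top}(D_{ij}) - \#\{\text{triple points on } V_i\}$. Summing and using that each double curve lies on two components and each triple point on three,
\[ \sum_i Q(V_i,D_i) = \sum_i \chi(V_i) - 2\sum_{\{i,j\}}\chi(D_{ij}) + 3\sum_{\{i,j,k\}}\chi(T_{ijk}). \]

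Finally, I would identify this expression with $\chi_{top}(X_t)$ via the Steenbrink formula for a semistable degeneration with smooth total space:
\[ \chi_{top}(X_t) = \sum_{p\geq 0}(-1)^p(p+1)\,\chi_{top}(X_0^{[p]}), \]
where $X_0^{[p]}$ is the normalization of the codimension-$p$ stratum. Since $X_0^{[0]} = \bigsqcup V_i$, $X_0^{[1]} = \bigsqcup D_{ij}$, $X_0^{[2]} = \bigsqcup T_{ijk}$, the right-hand side matches the previous display exactly, giving $\sum Q = \chi_{top}(X_t) = \chi_{top}(K3) = 24$. The main technical step is the Steenbrink identity, which follows from the vanishing cycle spectral sequence; a hands-on alternative is the Clemens collapse, under which $X_t$ is built from $X_0^{\rm sm}$ by inserting a $2$-sphere at each triple point and an $S^1$-bundle along each double curve, so that $\chi_{top}(X_t) = \chi_{top}(X_0^{\rm sm})$ after the boundary cancellations. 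A minor bookkeeping point is that non-normal components in Type III produce self-loops in the dual complex; one passes to the normalizations $\widetilde V_i\to V_i$ and the combinatorics above goes through unchanged.
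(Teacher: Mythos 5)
Your proof is correct. Note, though, that the paper does not give its own argument for this proposition: it simply cites Friedman--Miranda \cite[Prop.~3.7]{friedman1983smoothing-cusp}, whose argument is the same Euler-characteristic one you use, so there is no real divergence in method.

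A few remarks on your presentation, all minor. The uniform identification $Q(V,D)=\chi_{\rm top}(V)-\chi_{\rm top}(D)=\chi_{\rm top}(V\setminus D)$ is exactly the right way to package the three-case definition; the Noether-formula computation you give in the cycle case carries over verbatim to the other two cases (and to the Type II elliptic-ruled components, where $\chi(\mathcal{O}_V)=0$, and to the Type I case $D=\emptyset$, where one gets $Q=24$ outright). Your inclusion--exclusion bookkeeping, $\sum_i\chi(D_i)=2\sum\chi(D_{ij})-3\,\#\{\text{triple points}\}$, is right, since each double curve lies on two normalized components and each triple point appears as a node of $D_i$ for exactly three values of $i$. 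For the last step, the Clemens-collapse observation you mention as an ``alternative'' is actually the cleanest route: since $c_t\colon X_t\to X_0$ has fibers $(S^1)^p$ over the open codimension-$p$ stratum and $\chi((S^1)^p)=0$ for $p\ge 1$, one gets directly $\chi(X_t)=\chi(X_0^{\rm sm})=\sum_i\chi(V_i\setminus D_i)=\sum_i Q(V_i,D_i)$, bypassing Steenbrink's formula entirely. Finally, your closing remark that the argument presupposes a smoothing of $X_0$ is answered by the paper's definitions: a ``Kulikov surface'' is by definition the central fiber of a Kulikov model, so $X_t$ always exists and is a smooth K3 with $\chi=24$.
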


\begin{definition}\label{corner-and-internal}
A {\it corner blow-up} of $(V,D)$ is the blow-up at
a node of $D$. An {\it internal blow-up} is the blow-up at a
smooth point of $D$.
\end{definition}

 Both the corner and internal blow-ups
$\widetilde{V}\to V$ are naturally anticanonical pairs
$(\widetilde{V},\widetilde{D})$.
For a corner blow-up, $\widetilde{D}$
is the reduced inverse image of $D$. For an internal blow-up,
$\widetilde{D}$ is the strict transform of $D$. 
The formula for charge easily implies $Q(\widetilde{V},\widetilde{D})=Q(V,D)$
for a corner blowup, while $Q(\widetilde{V},\widetilde{D})=Q(V,D)+1$
for an internal blow-up.

Any toric pair satisfies $Q(\oV,\oD)=0$. 
When $V$ is rational and $D$ has nodes, as is the case
for any component in Type III, \cite[Prop.~1.3]{gross2015mirror-symmetry-for-log}
proves the existence of a diagram $$(V,D)\xleftarrow{f}
 (\widetilde{V},\widetilde{D})\xrightarrow{g} (\oV,\oD)$$ where $f$
 is a sequence of corner blow-ups, $g$ is a sequence of internal
 blow-downs, and $(\oV,\oD)$ is a toric pair. We call this
data a {\it toric model} of $(V,D)$.
By the existence of a toric model, 
$Q(V,D)\geq 0$ for all $(V,D)$ in Type III,
with $Q(V,D)=0$ if and only if $(V,D)$ is toric. So the
conservation-of-charge formula (\ref{conserve-charge})
says that $X_0$ is ``24 steps from being toric."

When $(V,D)$ is an elliptic ruled component of a Type II Kulikov surface,
we have $Q(V,D)=0$ if and only if
$V\cong \mathbb{P}_E(\mathcal{O}\oplus \mathcal{L})$
with $D$ the disjoint union of the zero and infinity sections.
Otherwise $Q(V,D)$ measures the number of steps from
being a smooth $\mathbb{P}^1$-bundle over an elliptic curve $E$.

Finally, we discuss base change, following
\cite{friedman1983base-change}.  Consider an order $k$ base change
$X'\to X$ of a Kulikov model along a branched cover $(C',0)\to (C,0)$.
Let $t$ be an analytic coordinate on $(C,0)$. The smoothing of
$X_0$ is locally $xy=t$ or $xyz=t$
near a double curve or triple point, respectively.
So the base change is locally $xy=t^k$ or $xyz=t^k$.
There is a locally toric, SNC resolution $X[k]\to X'$
near the singular locus of $X_0$ corresponding to the standard
order $k$ subdivision of the simplices of the dual
complex $\Gamma(X_0)$.
Each triangle decomposes into $k^2$ triangles, and each segment
into $k$ subsegments. All components of $X_0[k]$ not appearing
in $X_0$ satisfy $Q=0$.

\subsection{Nef, divisor, and stable models}
\label{sec:models}
We now describe some additional structures on a Kulikov model in the
presence of a quasipolarization.

\begin{definition}
  Let $L^*$ be a line bundle on $X^*$, relatively nef and big over $C^*$.
  A relatively nef extension $L$ to a Kulikov model $X$ over $C$
  is called a {\it nef model}.
\end{definition}

\begin{definition}
  Let $R^*\subset X^*$ be the vanishing locus of a
  section of $L^*$ as above, containing no vertical
  components. A {\it divisor model} is an extension $R\subset X$ to a relatively
  nef divisor $R\in |L|$ for which $R_0$ contains no strata of $X_0$.
\end{definition}

\begin{definition}
  The {\it stable model} of $(X^*,R^*)$ is
  $$(\overline{X},\overline{R}):=\textrm{Proj}_C\textstyle\bigoplus_{n\geq
    0} H^0(X, \mathcal{O}(nR))$$ for some divisor model. It is unique and
  independent of the choice of divisor model $(X,R)$ 
  by the theory of canonical models, since for $0<\epsilon\ll 1$
  the pair $(\oX,\oX_0+ \epsilon \oR)$ has log canonical singularities and the
  divisor $K_\oX + \epsilon \oR$ is relatively ample.

  By adjunction, the central fiber $(\oX_0,\epsilon \oR_0)$ has semi
  log canonical (slc) singularities and the divisor $K_{\oX_0} +
  \epsilon\oR_0$ is ample.
\end{definition}

The existence of a nef model is due to Shepherd-Barron
\cite{shepherd-barron1981extending-polarizations},
and the existence of a divisor model is proved in
\cite[Thm. 2.11]{laza2016ksba-compactification},
\cite[Thm. 3.12]{alexeev2019stable-pair}.

Now suppose one starts with a family $(\oX^*, \oR^*)\to C^*$ of K3
surfaces with ADE singularities. After a finite base change it admits
a simultaneous resolution of singularities $f\colon X^*\to\oX^*$. Let
$R^*=f^*(\oR^*)$. After a further finite base change, by the above we get a
divisor model, whose stable model $(\oX,\oR)$ is the {\it stable extension} 
of $(\oX^*,\oR^*)$ over $C$. It is unique and stable under base changes
by a standard argument, see e.g. \cite[Thm. 2.47]{kollar2023families-of-varieties}.

\subsection{Topology of Kulikov models}
The primary reference for this section is \cite{friedman1986type-III}.
Let $X\to (C,0)$ be a Kulikov model.
For convenience, denote integral
singular cohomology by $H^i(-)$.
Let $T:H^2(X_t)\to H^2(X_t)$ be the Picard-Lefschetz
transformation along an oriented simple loop in
$C^*$ enclosing $0$. Since $X_0$ is reduced normal crossings,
$T$ is unipotent. Let $N:=\log T$
be its logarithm. 

\begin{theorem}[{\cite{friedman1986type-III,friedman1984a-new-proof}}]\label{thm:pic-lef}
Let $X\to (C,0)$ be a Kulikov model. We have that
 \begin{enumerate} 
\item[] if $X$ is Type I, then $N=0$,
\item[] if $X$ is Type II, then $N^2=0$ but $N\neq 0$,
\item[] if $X$ is Type III, then $N^3=0$ but $N^2\neq 0$.
\end{enumerate} 
Furthermore, $N$ is integral, and of the form
$Nx = (x\cdot \lambda)\delta-(x\cdot \delta)\lambda$ for $\delta\in H^2(X_t)$
a primitive isotropic vector, and $\lambda\in \delta^\perp/\delta$ satisfying
$\lambda^2=\#\{\textrm{triple points of }X_0\}.$ When $\lambda^2=0$,
its imprimitivity is the number of double curves of $X_0$.
\end{theorem}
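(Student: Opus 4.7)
The plan is to extract the monodromy logarithm $N$ from the combinatorics of $X_0$ by computing the limit mixed Hodge structure on $H^2(X_t,\mathbb{Q})$ via the Steenbrink spectral sequence (equivalently, via the Clemens-Schmid exact sequence). The general monodromy theorem for a family over a disk with $\dim X_t = 2$ gives $N^3 = 0$ unconditionally, so only the positive statements $N\neq 0$ in Type II and $N^2\neq 0$ in Type III need work. Type I is immediate since $X\to (C,0)$ is smooth. In the other two cases, the $E_1$-page of Steenbrink's spectral sequence is built from the cohomology of the disjoint unions of components $V_i$, double curves $D_{ij}$, and triple points $T_{ijk}$, with shape governed by the dual complex $\Gamma(X_0)$ of Proposition~\ref{prop:kulikov}: a subdivided segment in Type II and a triangulation of $S^2$ in Type III. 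The contributions from $H^2$ and $H^0$ of $\Gamma(X_0)\simeq S^2$ produce one-dimensional graded pieces $\mathrm{gr}^W_0$ and $\mathrm{gr}^W_4$ of $H^2_{\mathrm{lim}}$ in Type III, while only $\mathrm{gr}^W_1$ and $\mathrm{gr}^W_3$ are nontrivial in Type II. Since $N^k\colon \mathrm{gr}^W_{2+k}\to \mathrm{gr}^W_{2-k}$ is an isomorphism of the limit MHS, this forces the claimed nilpotency orders.

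Next I would derive the explicit form $Nx = (x\cdot\lambda)\delta - (x\cdot\delta)\lambda$. The operator $N$ is integral because $T$ preserves the unimodular lattice $H^2(X_t,\mathbb{Z})$ and the nilpotency bound on $T-I$ controls the denominators in the log series. It is skew with respect to the cup product, a standard consequence of $T$ being an isometry of a polarized Hodge structure. The first step shows that $\mathrm{im}(N)$ is contained in an isotropic plane $\langle\delta,\lambda\rangle$ (Type III) or in a rank-at-most-two isotropic subspace with $N^2 = 0$ (Type II); in both cases an integral skew nilpotent operator of this kind must take the stated bilinear shape, with $\delta$ a primitive generator of the rank-one piece $\mathrm{im}(N^2)$ (Type III) or of $\mathrm{im}(N)\cap\ker N$ (Type II), and $\lambda$ a well-defined class in $\delta^\perp/\delta$.

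The hardest step is the numerical identifications $\lambda^2 = \#\{\text{triple points of }X_0\}$ and $\mathrm{imprim}(\lambda) = \#\{\text{double curves of }X_0\}$ when $\lambda^2=0$. I would combine two inputs. A scaling argument via Proposition~\ref{standard-resolution} shows that under the order-$k$ base change $X[k]\to X$ one has $N\mapsto kN$, the number of triple points multiplies by $k^2$, and the number of double curves by $k$; this matches both formulas up to a universal multiplicative constant. To pin that constant down, I would follow the Mayer-Vietoris computation of \cite{friedman1984a-new-proof}: construct a chain-level cycle $\gamma\in H_2(X_t,\mathbb{Z})$ whose image $N\gamma$ is Poincar\'e dual to $\lambda$, and then compute $\gamma\cdot N\gamma$ as a sum of local intersection numbers at the strata of $X_0$. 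At each triple point the local contribution is $1$, yielding $\lambda^2 = \#\{\text{triples}\}$; the analogous computation over the one-dimensional dual complex in Type II contributes $1$ per double curve to $\mathrm{imprim}(\lambda)$. The main obstacle is keeping signs and integrality consistent across these chain computations, and this bookkeeping is the technical core of the Friedman-Scattone argument \cite{friedman1986type-III}.
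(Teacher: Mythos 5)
The paper does not reprove this theorem; it is cited to Friedman \cite{friedman1984a-new-proof} and Friedman--Scattone \cite{friedman1986type-III}, with the topological identification of $\delta$ and $\lambda$ recalled in Construction~\ref{lambda-construction}. Your outline follows those sources: the limit mixed Hodge structure via Steenbrink/Clemens--Schmid, with the nonvanishing of $\mathrm{gr}^W_0,\mathrm{gr}^W_4$ governed by $H^\bullet(\Gamma(X_0))$ and $\Gamma(X_0)\simeq S^2$ in Type~III, is the right mechanism for the nilpotency orders, and the Mayer--Vietoris bookkeeping is indeed the technical core. That much is sound.

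Two points need repair. First, the integrality claim as you phrase it does not hold: nilpotency alone does not ``control the denominators.'' With $N^3=0$ one has $N=(T-I)-\tfrac12(T-I)^2$, and the $\tfrac12$ is a genuine denominator. What saves integrality in Type~III is that $N^2x=-\lambda^2(x\cdot\delta)\delta$, so the bad term is $\tfrac{\lambda^2}{2}\,\delta\otimes\delta^\vee$, and $\lambda^2=\#\{\text{triples}\}$ is always \emph{even} (a triangulation of $S^2$ satisfies $3F=2E$, hence $F$ even). Alternatively, and this is Friedman--Scattone's actual route, integrality of $N$ is read off from the integral Clemens--Schmid sequence \cite[4.13]{friedman1986type-III}. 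Either way, integrality is not an input you get for free but a nontrivial conclusion intertwined with the formula you are trying to prove; your sketch treats it as independently available, which makes the subsequent ``integral skew nilpotent operator must take the bilinear shape'' step circular. Second, the Type~II statement about imprimitivity is not a local intersection count of the same kind: imprimitivity is the divisibility $\lambda=k\lambda^{\mathrm{prim}}$ in $\delta^\perp/\delta$, and Construction~\ref{lambda-construction} obtains it by exhibiting $\lambda^{\mathrm{prim}}=[c_t^{-1}(\beta)]$ for a $1$-cycle on a single double curve and then summing the $k$ identical contributions, one per double curve --- it is a counting-of-components argument, not a $\gamma\cdot N\gamma$ computation. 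Your scaling argument under base change is a useful consistency check but cannot by itself produce a ``universal constant'' across deformation types, since different $X_0$'s are not related by base change; it only shows the ratios are invariant along a single base-change tower.
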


Thus, the Types I, II, III of Kulikov model are distinguished by the
behavior of the {\it monodromy invariant} $\lambda$: either
$\lambda=0$, $\lambda^2=0$ but $\lambda\neq 0$, or $\lambda^2\neq 0$ respectively.

\begin{remark}
If $X^*\to C^*$ admits a quasipolarization
$M\hookrightarrow {\rm Pic}(X^*)$ then $T\in O(H^2(X_t))$
lies in the subgroup $\Gamma$ fixing $M$.
In particular, $\delta\in M^\perp$ and we can consider the lattice
of monodromy invariants $\lambda \in \delta^\perp/\delta$ as valued
in a subquotient of $M^\perp$.
\end{remark}

\begin{definition}
Let $I\subset H^2(X_t)$ denote the primitive
isotropic lattice $\Z\delta$ in Type III or the saturation of
$\Z\delta\oplus \Z\lambda$ in Type II. 
\end{definition}

As a simple normal crossings degeneration,
there is a deformation-retraction $c:X\times [0,1]\to  X_0$ called the
{\it Clemens collapse} \cite{clemens1969picard}.
So we have $H^*(X_0)=H^*(X)$. In particular, the map
$c_t^*\colon H^*(X_0)\to H^*(X_t)$ coincides with restriction from $X$ to $X_t$.

The integral cohomology of a Type III Kulikov surface $X_0$ is computed 
in \cite[Sec.~1]{friedman1986type-III} by the Mayer-Vietoris spectral sequence,
associated to the exact sequence of sheaves
$$\textstyle 0\to \underline{\Z}_{X_0}\to \bigoplus_i \underline{\Z}_{V_i}\to
\bigoplus_{i\leq j}\underline{\Z}_{D_{ij}}\to \bigoplus_{i\leq j\leq k}\underline{\Z}_{T_{ijk}}\to 0.$$ It follows that there is an exact sequence
$$0\to \Z\to H^2(X_0)\to \widetilde{\Lambda}\to 0,\textrm{ where}$$

\begin{definition}\label{def:wLambda}
The {\it numerically Cartier classes} on a
Kulikov model $X_0$
$$\textstyle\widetilde{\Lambda}=\widetilde{\Lambda}(X_0):=
\ker\left(\bigoplus_i H^2(V_i)\to \bigoplus_{i\leq j} H^2(D_{ij})\right)$$
are collections of classes
$(\alpha_i)$ for which $n_{ij}:=\alpha_i\cdot D_{ij} = \alpha_j\cdot D_{ji}$ for all double curves.
\end{definition}

The lefthand term $\Z$ arises in the spectral sequence
from the second simplicial cohomology $H^2(\Gamma(X_0))$ of the
dual complex. Choosing an orientation on $\Gamma(X_0)$ gives a
generator $1\in \Z$ which satisfies $c_t^*(1)=\delta$. 

Mayer-Vietoris for a Type II Kulikov surface $X_0$
implies that there is an analogous exact sequence
$0\to \Z^2\to H^2(X_0)\to \widetilde{\Lambda}(X_0)\to 0$ with the
$\Z^2$ arising from $H^1(D_{i,i+1})$ for some double curve $D_{i,i+1}$.
Here the image $c_t^*(\Z^2)$ is identified with the rank two lattice $I$.

\begin{definition}\label{def:i0} Let $I_0$ denote the sublattice
$\Z\cong H^2(\Gamma(X_0))\subset H^2(X_0)$
in Type III or $\Z^2\cong H^1(D_{i,i+1})\subset H^2(X_0)$ in Type II arising
from Mayer-Vietoris.
 \end{definition}
 
So for both Type II and III,
$c_t^*(I_0)=I$ and $H^2(X_0)/I_0=\widetilde{\Lambda}$.

\begin{definition}
Define the intersection form $\cdot$ on
$\widetilde{\Lambda}$ by $(\alpha_i)\cdot (\beta_i) = \sum_i \alpha_i\cdot \beta_i.$
\end{definition}

\begin{definition}\label{xi-def}
Define $\hat{\xi}_i:=c_1(\mathcal{O}_X(V_i))\big{|}_{X_0}\in H^2(X_0)$
and let $\xi_i\in\widetilde{\Lambda}$ be the image of $\hat{\xi}_i$.
Then $\xi_i=\sum_j (D_{ji}-D_{ij})$ and $\sum_i \xi_i=0$.
Define $\textstyle \Xi := \Z\textrm{-span}\{\xi_i\}\subset 
\widetilde{\Lambda}$
and declare $\Lambda:=\widetilde{\Lambda}/\Xi.$
\end{definition}

 It is easy to check directly from property (2) of Proposition
 \ref{prop:kulikov} that $\Xi\subset \widetilde{\Lambda}$ is contained
 in the null sublattice of the intersection form.

\begin{proposition} \label{prop:lambda-lattice}
The map $c_t^*\colon H^2(X_0)\to H^2(X_t)$ induces a surjection
$\widetilde{\Lambda}\twoheadrightarrow \{\delta,\lambda\}^\perp/I$ sending
$\Xi$ to zero, which thus descends to $\Lambda$.
 Furthermore, $\Xi=\widetilde{\Lambda}\!\,^{\rm null}$
is the null sublattice. Hence $\Xi$ is saturated, $\Lambda$ is torsion-free
and the induced map $\Lambda\to \{\delta,\lambda\}^\perp/I$
is an isometry of lattices.
 \end{proposition} 

\begin{proof}
 \cite[4.13]{friedman1986type-III} gives an exact sequence
$$0\to \hat{\Xi}\to H^2(X_0)\xrightarrow{c_t^*} \ker(N)=\{\delta,\lambda\}^\perp\to 0$$
where $\hat{\Xi}:=\Z\textrm{-span}\{\hat{\xi}_i\}$.
Noting that $c_t^*(I_0)=I$,
we can quotient the second and third factors in the above exact sequence
to get an exact sequence
$$0\to \Xi\to \widetilde{\Lambda}\to \{\delta,\lambda\}^\perp/I\to 0.$$
Since the third term is torsion-free,
the kernel $\Xi$ must be the saturated. It is exactly the null lattice
because the target $\{\delta,\lambda\}^\perp/I$ is nondegenerate
and $c_t^*$ preserves the intersection form.
\end{proof}

\section{The period map}
\label{sec:period-map}

\subsection{The period of a Kulikov surface}\label{sec:periods-kulikov}
Let $X_0$ be
a Kulikov surface, not necessarily $d$-semistable.
The period map is a homomorphism $\widetilde{\psi}$
from $\widetilde{\Lambda}(X_0)$ (see Def. \ref{def:wLambda})
to $\C^*$ in Type III
or the elliptic double curve $E$ in Type II, which measures the
obstruction to a class being represented by a Cartier divisor. 
First, we consider the Type III case.

A resolution of the sheaf of non-vanishing holomorphic functions
is given by 
$$\textstyle 1\to \mathcal{O}^*_{X_0}\to
\bigoplus_i \mathcal{O}^*_{V_i}\to
\bigoplus_{i\leq j}\mathcal{O}^*_{D_{ij}}\to
\bigoplus_{i\leq j\leq k}\mathcal{O}^*_{T_{ijk}}\to 1.$$
Computing ${\rm Pic}(X_0) = H^1(X_0,\mathcal{O}_{X_0}^*)$
via the Mayer-Vietoris spectral sequence
\cite[Sec.~3]{friedman1986type-III} shows that ${\rm Pic}(X_0) $
is the kernel of a homomorphism
$$\ker\left(\textstyle \bigoplus_i {\rm Pic}(V_i)\to
\bigoplus_{i\leq j}{\rm Pic}(D_{ij})\right)\to
H^2(\Gamma(X_0),\C^*)\cong\C^*$$ where the latter
space is identified with $\C^*$ by choosing an orientation on the dual complex.
Note that since $V_i$ and $D_{ij}$ are rational, we have
${\rm Pic}(V_i)=H^2(V_i)$ and ${\rm Pic}(D_{ij})=H^2(D_{ij})$
so the first term is nothing more than the lattice
$\widetilde{\Lambda}$ of \eqref{def:wLambda}.

\begin{definition} The {\it period point} of a Type III Kulikov surface $X_0$
is $\widetilde{\psi}_{X_0}\in {\rm Hom}(\widetilde{\Lambda},\C^*)$.
\end{definition}

\begin{construction}\label{iii-unwind} Unwinding the maps in the
spectral sequence, one can explicitly construct
the homomorphism $\widetilde{\psi}_{X_0}$.
Let $\alpha=(\alpha_i)\in\widetilde{\Lambda}$ be a numerically Cartier divisor.
Then $\alpha_i$ determines a unique line bundle $L_i\in{\rm Pic}(V_i)$ for all $i$.
We have
$$L_i\big{|}_{D_{ij}} \cong L_j\big{|}_{D_{ji}}\cong \mathcal{O}_{\mathbb{P}^1}(n_{ij})$$
so we can extend a line bundle $L_i\to V_i$ by $L_j\to V_j$ to a
line bundle on $V_i\cup V_j$.
We may continue successively until only one component $V_1$ remains.
The result is a line bundle $L\to \overline{X_0\setminus V_1}$ and
we may consider the line bundle
$$L\big{|}_{D_1}\otimes L_1^{-1}\big{|}_{D_1}=:L_\alpha \in {\rm Pic}^0(D_1).$$
We have ${\rm Pic}^0(D_1)=\C^*$
because the cycle $D_1$ is oriented by the choice of
orientation on the dual complex $\Gamma(X_0)$. So $\alpha$ determines
a period $\widetilde{\psi}_{X_0}(\alpha)=L_\alpha\in \C^*$.
It is independent of the choice of component $V_1$ and clearly obstructs
$\alpha$ being represented by a Cartier divisor.\end{construction}

\begin{construction}\label{ii-unwind} In analogy to Construction \ref{iii-unwind},
we now construct a period map
$\widetilde{\psi}_{X_0}\colon \widetilde{\Lambda}\to E$ in Type II.
Orient the segment $\Gamma(X_0)$ so that
$X_0=V_0\cup\cdots \cup V_k$ with indices increasing with respect to
the orientation. Let $\alpha = (\alpha_i)\in \widetilde{\Lambda}$.
Then $\alpha_0\in H^2(V_0)$ and $\alpha_k\in H^2(V_k)$ define
line bundles $L_0$ and $L_k$ because the end surfaces are
rational elliptic. On the other hand, the lifts of an element
$\alpha_i\in H^2(V_i)$, $i\neq 0,k$ to an element $L_i\in {\rm Pic}(V_i)$
form a torsor over $E={\rm Pic}^0D_{i,i+1}$. So there is a unique lift
$L_1$ of $\alpha_1$ for which $L_0\big{|}_{D_{01}}\cong L_1\big{|}_{D_{10}}$.
Take this lift to extend $L_0$ to $V_0\cup V_1$ by $L_1$.
Continuing inductively gives a unique line bundle
$L\to \overline{X_0\setminus V_k}$. Then define
$$\widetilde{\psi}_{X_0}(\alpha):=L\big{|}_{D_{k-1,k}}\otimes L_k^{-1}\big{|}_{D_{k,k-1}}\in {\rm Pic}^0(D_{k-1,k})=E.$$
\end{construction}

The period map can also be defined from the exponential long exact sequence
\begin{align*} \cdots & \to H^1(X_0)\to H^1(X_0, \mathcal{O})\to
{\rm Pic}(X_0) \to H^2(X_0)\xrightarrow{\Psi} H^2(X_0,\mathcal{O})
\to H^2(X_0,\mathcal{O}^*)\to\cdots.\end{align*}

Note that $H^2(X_0,\mathcal{O}) = H^0(X_0,\omega_{X_0})^*\cong \C$
is one-dimensional. Quotienting by the image of $I_0\subset H^2(X_0)$,
we reproduce the period homomorphism
$$\widetilde{\psi}_{X_0} \colon \widetilde{\Lambda}\to \C/\Psi (I_0).$$ In Type III, we have
$\C/\Psi(I_0)\cong \C^*$ while in Type II we have $\C/\Psi(I_0)\cong E$
for an elliptic curve $E$. In both cases,
${\rm Pic}(X_0)$ is the kernel because $H^1(X_0,\mathcal{O})=0$.

\begin{proposition} The surface $X_0$ is smoothable if and
only if the period point
$\widetilde{\psi}_{X_0}\in {\rm Hom}(\widetilde{\Lambda},\C^*)$ or 
${\rm Hom}(\widetilde{\Lambda},E)$
descends to a period point
$\psi_{X_0}\in {\rm Hom}(\Lambda,\C^*)\textrm{ or }
{\rm Hom}(\Lambda,E).$
\end{proposition}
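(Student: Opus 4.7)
The plan is to identify each value $\widetilde\psi_{X_0}(\xi_i)$ with the obstruction to the class $\hat\xi_i \in H^2(X_0)$ lifting to a line bundle on $X_0$, and then to combine this identification with Friedman's $d$-semistability criterion (Theorem~\ref{dss:def}) to deduce the equivalence with smoothability.

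The exponential sheaf sequence produces the long exact piece
\[
\mathrm{Pic}(X_0)\;\longrightarrow\;H^2(X_0)\;\xrightarrow{\;\Psi\;}\;H^2(X_0,\mathcal{O}_{X_0}),
\]
and by the construction of $\widetilde\psi_{X_0}$ (Constructions~\ref{iii-unwind} and~\ref{ii-unwind}), the period map factors through $\Psi$ after quotienting source and target by the image of $I$. Hence $\widetilde\psi_{X_0}(\xi_i)=0$ if and only if $\hat\xi_i$ is the first Chern class of a line bundle $\mathcal{N}_i\in\mathrm{Pic}(X_0)$. Because $\Xi$ is generated by the $\xi_i$'s, subject only to the single relation $\sum_i\xi_i=0$, the period descends to $\Lambda$ exactly when every $\hat\xi_i$ admits such a lift.

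First I will treat the forward direction. Given a smoothing $X\to(C,0)$, each component $V_i\subset X$ is Cartier, so $\mathcal{N}_i:=\mathcal{O}_X(V_i)|_{X_0}$ is a line bundle on $X_0$ with $c_1(\mathcal{N}_i)=\hat\xi_i$. Thus $\widetilde\psi_{X_0}(\xi_i)=0$ for every $i$, and the period descends.

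For the converse, I will assume $\widetilde\psi_{X_0}$ descends, extract line bundles $\mathcal{N}_i$ with $c_1(\mathcal{N}_i)=\hat\xi_i$, and deduce $d$-semistability; Theorem~\ref{dss:def} then yields a smoothing. The restrictions $\mathcal{N}_i|_{V_j}\cong\mathcal{O}_{V_j}(D_{ij})$ for $j\neq i$ and $\mathcal{N}_i|_{V_i}\cong\mathcal{O}_{V_i}(-D_i)$ are forced by their Chern classes (in Type II, the choice of extension across each elliptic-ruled component is pinned down by Construction~\ref{ii-unwind}). Existence of a global gluing of these prescribed restrictions along each double curve $D_{jk}$ pins down the tensor $N_{D_{jk}/V_j}\otimes N_{D_{jk}/V_k}$ up to a canonical twist. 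The hard part will be identifying this gluing condition, simultaneously along every double curve, with the sheaf-theoretic triviality $\mathcal{E}xt^1(\Omega_{X_0},\mathcal{O}_{X_0})\cong\mathcal{O}_{(X_0)_{\mathrm{sing}}}$, rather than with a weaker degree-matching condition on each $D_{jk}$ separately. This identification is Friedman's explicit local analysis of the smoothing obstruction in~\cite{friedman1983global-smoothings}, which I would invoke to close the argument.
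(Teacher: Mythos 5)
Your proof is correct and takes essentially the same route as the paper's: reduce smoothability to $d$-semistability via Theorem~\ref{dss:def}, and identify $d$-semistability with the condition $\widetilde{\psi}_{X_0}(\xi_i)=1$ for all $i$ (equivalently, each $\xi_i$ representing an actual line bundle on $X_0$). The paper asserts this last equivalence in one line, while you unpack it as a gluing condition on the line bundles $\mathcal{N}_i$ and correctly flag that identifying this gluing condition with triviality of $\mathcal{E}xt^1(\Omega_{X_0},\mathcal{O}_{X_0})$ is the one nontrivial input from Friedman's local computation — on which the paper's terse assertion relies implicitly as well.
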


\begin{proof} By Theorem \ref{dss:def}, $X_0$ is smoothable if and
only if it is $d$-semistable. But $X_0$ is $d$-semistable if and only
if $\widetilde{\psi}_{X_0}(\xi_i)=1$ for all $i$, i.e. $\widetilde{\psi}_{X_0}$
descends to $\Lambda =\wt\Lambda/\Xi$. \end{proof}

\subsection{Markings of Kulikov surfaces}\label{sec:kulikov-marking}
 In this section, we define the analogues of
markings for Kulikov surfaces $X_0$ to properly
formulate results on the period map. 
Let $\Lambda_0=\Lambda_0(t,k)$
denote a model for $\{\delta, \lambda\}^\perp/I$
in $L_{K3}$. It depends only on the even integer
$\lambda^2=t$ giving the number of triple points of $X_0$ 
and the imprimitivity $k$ of $\lambda\in \delta^\perp/\delta$.
We suppress $(t,k)$ in the notation. 

\begin{definition} Let $X_0$ a $d$-semistable Kulikov surface.
A {\it marking} $(\sigma,\underline{b})$ consists of:
\begin{enumerate}
\item An isometry $\sigma\colon\Lambda(X_0)\to \Lambda_0$
(see Def.~\ref{def:wLambda}) and  
\item An ordered basis $\underline{b}$ of $I_0\subset H^2(X_0)$ (see Def.~\ref{def:i0}).
\end{enumerate}\end{definition}

The notion of a marking naturally extends to equisingular
families $\mathcal{X}\to S$ of Kulikov surfaces using local
systems. We can now define the period map:

\begin{definition}\label{def:kul-period}
Let $(\mathcal{X}\to S,\sigma)$
be a family of marked $d$-semistable Kulikov surfaces.
The {\it period map} is defined by
\begin{displaymath}
S \to {\rm Hom}(\Lambda_0, \C^*)\textrm{ or }
{\rm Hom}(\Lambda_0,\widetilde{\mathcal{E}}), \qquad
s \mapsto B\circ \Psi_s\circ \sigma^{-1}.
\end{displaymath}
Here $\Psi_s$ comes from the exponential exact sequence
as in Section \ref{sec:periods-kulikov},
$\widetilde{\mathcal{E}} \to \C\setminus \R$ is the
universal marked elliptic curve
$\C/\Z\oplus \Z\tau\to \{\tau\in \C\setminus \R\}$,
and $B$ is the quotient map
\begin{displaymath}
  B\colon H^2(X_0,\mathcal{O}_{X_0})\to \C/\Z= \C^*
  \quad\textrm{or}\quad
  B\colon H^2(X_0,\mathcal{O}_{X_0})\to \C/\Z\oplus \Z\tau
\end{displaymath}
induced by the ordered basis $\underline{b}$ of $I_0$
(the first element $b_1$ of the basis is sent to $1\in \C$).
\end{definition}

\begin{remark}\label{rem:tau-up}
In the Type II case, we
could also require that $\underline{b}$
is an oriented basis, in the sense that $\tau\in \mathbb{H}$.
Then the period map can be defined with target
$\widetilde{\mathcal{E}}^+:=\widetilde{\mathcal{E}} \big{|}_\mathbb{H}$
instead.
\end{remark}

\subsection{Partial markings of K3 surfaces}
Let $X\to (C,0)$ be a Kulikov model.
We determine what information a marking of
$X_0$ induces on the general fiber $X_t=Y$.
In this subsection, we denote an analytic K3 surface by
$Y$ to distinguish it from the Kulikov model $X$.

\begin{definition}
A
{\it partial marking} of $Y$ is a distinguished
primitive isotropic class $\delta\in H^2(Y)$, a distinguished vector
$\lambda\in \delta^\perp/\delta$ of non-negative norm, and an isometry
$\sigma\colon \{\delta,\lambda\}^\perp/I\to \Lambda_0$.
We say the partial marking is {\it Type II} or {\it III}
depending on whether $\lambda^2=0$ or $\lambda^2>0$, respectively.
\end{definition}

\begin{proposition}\label{partial-iii}
Let $X\to (C,0)$ be a Kulikov model. A partial marking
of $X_t$ whose distinguished classes $\delta$, $\lambda$
are the monodromy invariants
determines uniquely a marking of $X_0$.
\end{proposition}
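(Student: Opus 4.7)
The plan is to use the Clemens collapse $c_t\colon X_t\to X_0$ and its induced pullback $c_t^*\colon H^2(X_0)\to H^2(X_t)$ as the bridge between the cohomology of the general and central fibers. The key input is Proposition~\ref{prop:lambda-lattice}, which asserts that $c_t^*$ induces an isometry $\Lambda(X_0)\xrightarrow{\sim}\{\delta,\lambda\}^\perp/I$. Once this identification is in hand, the partial marking of $X_t$ can be restricted to produce both the isometry $\Lambda(X_0)\to\Lambda_0$ and the basis $\underline{b}$ required by a marking of $X_0$.

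First I would construct the isometry $\sigma\colon \Lambda(X_0)\to\Lambda_0$. In Type III, observe that $\{\delta,\lambda\}^\perp/I$ is the orthogonal complement (taken in $\delta^\perp/\delta$) of the class of $\lambda$, and similarly $\Lambda_0=\{\delta_0,\lambda_0\}^\perp/I_0$ is the orthogonal complement of $\lambda_0$ inside $\delta_0^\perp/\delta_0$. Since the partial marking gives an isometry $\delta^\perp/\delta\to\delta_0^\perp/\delta_0$ carrying $\lambda\mapsto\lambda_0$, it restricts to an isometry of these orthogonal complements, and composing with the identification from Proposition~\ref{prop:lambda-lattice} yields the required $\sigma$. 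In Type II, the partial marking directly supplies an isometry $I^\perp/I\to I_0^\perp/I_0=\Lambda_0$, and composing with $c_t^*\colon\Lambda(X_0)\xrightarrow{\sim}\{\delta,\lambda\}^\perp/I=I^\perp/I$ again gives $\sigma$.

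Next I would produce the basis $\underline{b}$. From the Mayer-Vietoris analysis preceding Definition~\ref{def:wLambda}, together with Construction~\ref{lambda-construction} and its Type II analogue, $c_t^*$ restricts to an isomorphism from the subgroup $\Z\subset H^2(X_0)$ (resp.\ $\Z^2$ in Type II) onto $I\subset H^2(X_t)$. In Type III, the distinguished class $\delta$ then pins down a unique generator $b_1\in\Z$ characterized by $c_t^*(b_1)=\delta$. In Type II, the chosen basis of $I$ lifts uniquely through $c_t^*$ to the required basis $\underline{b}$ of $\Z^2\subset H^2(X_0)$.

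The main item to verify -- more a matter of bookkeeping than a genuine obstacle -- is that this construction introduces no auxiliary choice, such as an orientation of $\Gamma(X_0)$. This is in fact automatic: the orientation of the dual complex is forced by the specified $\delta$, because the generator of $\Z\subset H^2(X_0)$ corresponding to a given orientation is precisely the one that pulls back to $\delta$. The compatibility of the sign of $\lambda$ with that of $\delta$ is already built into Theorem~\ref{thm:pic-lef}, so no further choices enter, and both pieces of the marking of $X_0$ are canonically determined by the partial marking of $X_t$.
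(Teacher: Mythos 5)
Your proposal is correct and follows essentially the same route as the paper's own proof: invoke Proposition~\ref{prop:lambda-lattice} for the isometry $c_t^*\colon\Lambda(X_0)\xrightarrow{\sim}\{\delta,\lambda\}^\perp/I$, compose with the partial marking to obtain the isometry $\Lambda(X_0)\to\Lambda_0$, and use $c_t^*$ again to pull the distinguished $\delta$ (or basis of $I$) back to a basis of $\Z$ (or $\Z^2$) in $H^2(X_0)$. The extra remark about the orientation of $\Gamma(X_0)$ being forced by $\delta$ is a welcome clarification but not a departure.
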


\begin{proof}
Proposition \ref{prop:lambda-lattice} gives
an isometry $c_t^*: \Lambda(X_0)\to \{\delta,\lambda\}^\perp/I$.
So a partial marking of $X_t$ induces an isometry
$\Lambda(X_0)\to \Lambda_0$ by composing with $c_t^*$.
The class $\delta$ (and $\lambda$ in Type II) determines
a basis of $I_0\subset H^2(X_0)$ via $c_t^*$.
\end{proof}

\begin{definition}\label{exact-sequence}
The parabolic stabilizer of an isotropic lattice $I\subset L_{K3}$
fits into an exact sequence
$0\to U_{I}\to\textrm{Stab}_{O(L_{K3})}(I)\to \Gamma_{I}\to 0$
where $U_{I}$ is the {\it unipotent radical}: the normal subgroup acting
trivially on the graded pieces $I$ and $I^\perp/I$. In Type III, $U_I$ is isomorphic to
the additive group ${\rm Hom}(I^\perp/I,\,I)$. In Type II, $U_I$
is a central $\Z$-extension of ${\rm Hom}(I^\perp/I,\,I)$.
The quotient has the structure $\Gamma_{I}\cong O(I^\perp/I)\times GL(I)$.
These exact sequences play an important role in the toroidal
compactifications (Sec. \ref{sec:tor-comp}).
\end{definition}

\begin{definition}
A partially marked K3 surface $(Y,\sigma)$ is
{\it admissible} if $[\Omega]  \colon I\otimes \R\to \C$
sending $i\mapsto [\Omega]\cdot i$ is injective for any 
non-zero two-form $\Omega$ on $Y$. Similarly, define 
$$\mathbb{D}^{I}:=\{x\in \mathbb{D}\,\big{|}\, I\otimes \R\xrightarrow{\cdot x} \C\textrm{ is injective} \}.$$
\end{definition}

Note that $\mathbb{D}^I\subset \mathbb{D}$ is an open subset.
The period maps described in Sec.~\ref{sec:periods-kulikov}
can be understood as Carlson's extension class \cite{carlson1985one}
for the limit mixed Hodge structure of $X_0$ and $\mathbb{D}^I$ 
is the domain of Hodge structures on $L_{K3}$
for which $I$ happens to also define a mixed Hodge structure.

\begin{proposition}\label{partial-fine}
There is a fine
moduli space of admissible, partially marked, analytic K3
surfaces, admitting a  period map to $\mathbb{D}^{I}/U_{I}$.
\end{proposition}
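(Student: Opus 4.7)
The plan is to mimic the construction of Proposition \ref{glue-def-smooth}, replacing full markings with partial markings. For each isomorphism class $(X,\sigma)$ of admissible partially marked analytic K3 surface, Kuranishi theory produces a universal deformation $\mathcal{X}_{(X,\sigma)} \to U_{(X,\sigma)}$ over a complex ball. The partial marking $\sigma$ extends canonically to the family because the relevant subquotient $I^\perp/I$ of the Gauss--Manin local system $R^2\pi_*\underline{\mathbb{Z}}$, together with the distinguished basis of $I$, is a local system trivial on the simply connected base $U_{(X,\sigma)}$. I would then glue these Kuranishi families along the open loci parametrizing isomorphic admissible partially marked K3s:
$$\mathcal{M}^{\mathrm{par}}:=\bigcup_{(X,\sigma)} U_{(X,\sigma)}.$$

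The crux is a rigidity statement: an admissible partially marked K3 has no nontrivial automorphisms. An automorphism $f\colon (X,\sigma)\to (X,\sigma)$ must satisfy $f^*|_I=\mathrm{id}$ (preserving the distinguished basis of $I$) and descend to the identity on $I^\perp/I$ via $\sigma$, so by Definition \ref{exact-sequence} we have $f^*\in U_I$. Since $f$ is holomorphic, $f^*$ preserves the Hodge filtration, and being unipotent must fix the period vector $[\Omega]$. Writing elements of $U_I\cong\mathrm{Hom}(I^\perp/I,I)$ explicitly as Eichler-type transformations (abelian in Type III, a Heisenberg extension in Type II), the equation $u([\Omega])=[\Omega]$ becomes a linear condition on $u$ whose coefficients are the pairings $[\Omega]\cdot i$ for $i\in I$. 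Admissibility --- the injectivity of $i\mapsto [\Omega]\cdot i$ on $I\otimes\mathbb{R}$ --- forces $u=\mathrm{id}$, and then $f=\mathrm{id}$ by Theorem \ref{torelli-i}. With rigidity in hand, the gluing is canonical and the resulting family is universal at every fiber, so $\mathcal{M}^{\mathrm{par}}$ is a fine moduli space exactly as in the unpolarized case.

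For the period map, any partial marking $\sigma$ lifts to a full marking $\tilde\sigma\colon H^2(X,\mathbb{Z})\to L_{K3}$ sending $I$ to the fixed $I_0\subset L_{K3}$ with its chosen basis and restricting to $\sigma$ on $I^\perp/I$; two such lifts differ by an element of $U_I$. Hence $\tilde\sigma([\Omega])\in\mathbb{D}$ is well defined modulo $U_I$, and admissibility places the period in $\mathbb{D}^I$, yielding the map $\mathcal{M}^{\mathrm{par}}\to \mathbb{D}^I/U_I$. The main obstacle I expect is the rigidity argument: while the Type III case reduces to a short abelian Eichler computation, the Heisenberg structure of $U_I$ in Type II requires a separate verification that admissibility rules out all nonidentity elements fixing $[\Omega]$. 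Once rigidity is verified in both types, all remaining steps --- extension of partial markings over simply connected bases, canonicity of the gluing, well-definedness of the period map modulo $U_I$ --- are formal analogues of their counterparts in Section \ref{analyticK3}.
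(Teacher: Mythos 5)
Your proof is correct but takes a genuinely different route from the paper's. The paper's proof is short: it starts from the already-constructed fine moduli space $\mathcal{M}$ of fully marked K3s (Proposition~\ref{glue-def-smooth}), observes that partial markings are exactly $U_I$-orbits of full markings, and quotients the open subset $P^{-1}(\mathbb{D}^I)\subset\mathcal{M}$ by $U_I$, asserting that $U_I$ acts freely on $\mathbb{D}^I$ and invoking Torelli to descend the universal family and period map. You instead re-run the Kuranishi gluing from scratch, mimicking Proposition~\ref{glue-def-smooth} directly; the essential non-formal step in both arguments is the same (rigidity of admissible partially marked K3s, equivalently, freeness of the unipotent action on $\mathbb{D}^I$), but you derive it explicitly via the Eichler computation, whereas the paper asserts it. What the paper's route buys is brevity and the automatic inheritance of the manifold structure from $\mathcal{M}$; what yours buys is a self-contained derivation of the freeness fact and a construction that does not presuppose $\mathcal{M}$.

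Your caution about the Type II Heisenberg structure is in fact well placed and points at a mild imprecision in the paper. In Type II the group of isometries acting trivially on $I$ and $I^\perp/I$ is a central extension
\begin{displaymath}
0\to\mathbb{Z}\cdot E_{\delta,\lambda}\to U_I^{\mathrm{full}}\to\operatorname{Hom}(I^\perp/I,I)\to 0,
\end{displaymath}
not $\operatorname{Hom}(I^\perp/I,I)$ itself as stated in Definition~\ref{exact-sequence}. An automorphism of an admissible partially marked K3 has $f^*$ in $U_I^{\mathrm{full}}$, a priori possibly with nontrivial central component. However the same admissibility computation handles the center: $E_{\delta,\lambda}^n[\Omega]=[\Omega]+n\,([\Omega]\cdot\delta)\lambda-n\,([\Omega]\cdot\lambda)\delta$, and injectivity of $i\mapsto[\Omega]\cdot i$ on $I\otimes\mathbb{R}$ forces $n=0$. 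So your rigidity claim and the proposition survive under either convention; you might simply record the central contribution alongside the $w,w'$ terms rather than treating Type II as an anxiety.
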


\begin{proof} The partial markings of a K3 surface
$Y$ are identified with $U_I$-orbits of the set of
markings of $Y$. The fine moduli space
$\mathcal{M}$ of marked analytic K3 surfaces
admits a period map $P\colon \mathcal{M}\to \mathbb{D}$,
and if the partial marking associated to a
marked K3 surface $(Y,\sigma)$ is admissible, 
then its image under the period map lies in $\mathbb{D}^I$.
 The action of $U_I$ by post-composition on $\sigma$ is 
  free on $P^{-1}(\mathbb{D}^I)$ (as it is free on $\mathbb{D}^I$). 
  The quotient is a non-Hausdorff complex manifold. By the Torelli theorem,
  the universal family descends to a universal
  family of partially marked K3 surfaces. 
   \end{proof}

\begin{proposition}\label{partial-embedding} In Type III, there
is an open embedding
$\mathbb{D}^I/U_I\hookrightarrow I^\perp/I\otimes \C^*$
into a $20$-dimensional algebraic torus. In Type II,
there is an open embedding $\mathbb{D}^{I}/U_{I}\hookrightarrow A_{I}$
where $A_{I}\to I^\perp/I\otimes \widetilde{\mathcal{E}}$
is a punctured holomorphic disk bundle.  \end{proposition}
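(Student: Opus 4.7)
The plan is to realize $\mathbb{D}^I$ as a tube domain using an isotropic flag dual to $I$, and then identify the $U_I$-action as translation by a lattice so that the quotient embeds into a torus (Type III) or an affine bundle over a family of tori (Type II). This is the standard tube-domain realization of the Type IV domain relative to a cusp of corank $\operatorname{rank}(I)$.

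\emph{Type III.} Since $L_{K3}$ contains hyperbolic summands, pick an isotropic $f\in L_{K3}$ with $f\cdot\delta=1$, and set $N:=\{\delta,f\}^\perp$, a rank-$20$ lattice of signature $(2,18)$ canonically identified with $I^\perp/I$. Admissibility lets us normalize every $[\Omega]\in\mathbb{D}^I$ to $\Omega=\tau\delta+f+\omega$ with $\omega\in N\otimes\C$; the condition $\Omega^2=0$ then forces $\tau=-\omega^2/2$, while $\Omega\cdot\bar\Omega>0$ becomes $\operatorname{Im}(\omega)^2>0$. Hence $\mathbb{D}^I$ is identified with the tube domain $T:=\{\omega\in N\otimes\C:\operatorname{Im}(\omega)^2>0\}$. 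A direct computation shows that the Eichler transformation $E_v$ attached to $v\in N\cong\operatorname{Hom}(I^\perp/I,\Z)\cong U_I$ sends the normalized $\omega$ to $\omega+v$, so $U_I$ acts on $T$ by translation by the full lattice $N$, and the exponential $\omega\mapsto\exp(2\pi i\omega)$ descends to a holomorphic open embedding
$$\mathbb{D}^I/U_I\hookrightarrow N\otimes\C^*=(I^\perp/I)\otimes\C^*.$$

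\emph{Type II.} Choose a rank-$4$ hyperbolic sublattice $\langle\delta,\lambda,f,g\rangle$ of $L_{K3}$ with $\delta\cdot f=\lambda\cdot g=1$ and all other pairings vanishing, and set $N:=\{\delta,\lambda,f,g\}^\perp$, a rank-$18$ lattice of signature $(1,17)$ identified with $I^\perp/I$. Normalizing $\Omega\cdot\delta=1$, we write $\Omega=c_1\delta+c_2\lambda+f+\tau g+\omega$; admissibility forces $\tau\in\C\setminus\R$, and $\Omega^2=0$ determines $c_1=-c_2\tau-\omega^2/2$. So $\mathbb{D}^I$ is parameterized by $(\tau,c_2,\omega)\in(\C\setminus\R)\times\C\times(N\otimes\C)$, subject to the residual positivity condition. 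The group $U_I=\operatorname{Hom}(I^\perp/I,I)\cong N\oplus N$ is generated by two copies of the Type III Eichler action, one for each of $\delta,\lambda$. A direct calculation shows that in normalized coordinates it translates $\omega$ by the lattice $N+N\tau\subset N\otimes\C$ and simultaneously shifts $c_2$ affinely. Consequently the $\omega$-coordinate descends, over each $\tau$, to $N\otimes E_\tau=(I^\perp/I)\otimes E_\tau$, and the residual $c_2$-coordinate becomes the fiber of an affine $\C$-bundle $A_I\to(I^\perp/I)\otimes\widetilde{\mathcal{E}}$, yielding the desired holomorphic embedding $\mathbb{D}^I/U_I\hookrightarrow A_I$.

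\emph{Main obstacle.} The heart of the proof is verifying the $U_I$-action on the normalized coordinates. In Type III this is a one-line translation, but in Type II the group $U_I$ intertwines $\omega$ with the ``affine'' coordinate $c_2$, which is exactly what forces $A_I$ to be an affine rather than a linear bundle; carrying out this bookkeeping cleanly is the technical core. Injectivity of the quotient map then follows from the tube-domain description together with the fact that $U_I$ is precisely the kernel of the stabilizer sequence of Definition \ref{exact-sequence}, so any two normalized representatives with the same image in $(I^\perp/I)\otimes\C^*$ (resp.\ in $A_I$) must differ by an element of $U_I$.
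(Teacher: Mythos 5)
Your proof is correct and follows essentially the same route as the paper: normalize a representative of $[\Omega]$ so that $\Omega\cdot\delta=1$, choose a complementary isotropic flag (here $f$, or $f,g$ in Type II, playing the role of the paper's $\delta'$, $\lambda'$), and observe via the isotropic constraint that the residual coordinate $\omega$ parameterizes $\mathbb{D}^I$, with $U_I$ acting on $\omega$ by translations through the lattice $N$ (resp.\ $N+N\tau$), so that exponentiating (resp.\ reducing modulo $\Z\oplus\Z\tau$) yields the embedding. The only cosmetic difference is that you make the Eichler transvection action on fixed coordinates explicit, whereas the paper phrases the identical computation as $U_I$ acting simply transitively on the non-canonical choices of the complementary isotropic vector(s); both land on the same invariant $\overline{\omega}$.
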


\begin{proof}[Sketch] Though the period domain $\mathbb{D}$ of analytic
K3s is not Hermitian symmetric, these embeddings are defined in
exactly the same way as the ``torus embeddings"
of the unipotent quotients of Hermitian symmetric domains 
\cite{ash1975smooth-compactifications}. In Type III,
one realizes $\bD^I$ as a tube domain inside $\C^{20}$.
The translation group $U_I={\rm Hom}(I^\perp/I,I)$ acts by translations by $\Z^{20}$
on $\C^{20}$ and so the quotient $\bD^I/U_I$ embeds into $(\C^*)^{20}$.

In Type II, $\bD^I$ is contained in an upper half-plane bundle,
fibered over the total space of a $\C^{18}$-bundle over $\C\setminus \R$.
The central $\Z$ acts on the upper-half plane bundle
by fiberwise translation. Quotienting gives a punctured holomorphic
disk bundle over the $\C^{18}$-bundle.
Then ${\rm Hom}(I^\perp/I,I)$ further acts on the $\C^{18}$-fiber
over $\tau\in \C\setminus \R$ by translation by
$(\Z\oplus \Z\tau)^{18}$. So the quotient $\bD^I/U_I$
embeds into a punctured holomorphic disk bundle
$A_I\to I^\perp/I\otimes \widetilde{\cE}$. \end{proof}

The unipotent quotient of $\mathbb{D}_M$ embeds
into $\mathbb{D}^I/U_I$ for all $M$. Let $\mathbb{D}(I):=\mathbb{D}^I/U_I$.

\begin{definition}
Define an enlargement $\mathbb{D}(I)\hookrightarrow \mathbb{D}(I)^{\lambda}$
as follows: In Type III, it is the closure of $\mathbb{D}(I)$ in the toric
variety $T_\lambda$ extending the torus $I^\perp/I\otimes \C^*$
whose fan consists of the unique ray $\R_{\geq 0}\lambda $.
In Type II, it is the holomorphic
disk bundle $\overline{A}_I$ extending the punctured disk bundle $A_I$.
\end{definition}

In Type III, the boundary divisor in $D(I)^\lambda$
is isomorphic to $\delta^\perp/\{\delta,\lambda\}^{\rm sat}\otimes \C^*$.
Since $\delta^\perp/\delta$ is unimodular, 
this torus can be identified with ${\rm Hom}(\Lambda,\C^*)$.
Similarly, the added boundary divisor in Type II is naturally
isomorphic to the base $I^\perp/I\otimes \widetilde{\mathcal{E}}$
of the disk bundle, which is identified with
${\rm Hom}(\Lambda, \,\widetilde{\mathcal{E}})$ again
because $I^\perp/I$ is unimodular.

\begin{definition}\label{mixed-type}
Let $\mathcal{X}\to S$ be a family of $d$-semistable Kulikov surfaces
of Types I + II or I + III over a smooth base $S$. Suppose furthermore
that the discriminant locus $\Delta\subset S$ is a smooth divisor.
A {\it mixed marking} $\sigma$ is a partial marking of the family
$\mathcal{X}^*\to S\setminus \Delta$ of smooth fibers together
with a compatible (Prop.~\ref{partial-iii}) marking of
the equisingular family $\mathcal{X}_0\to \Delta$ of Kulikov surfaces.
\end{definition}

\begin{theorem}\label{period-extension}
Let $(\mathcal{X}\to S,\sigma)$
be a mixed marked family of admissible surfaces as in Definition \ref{mixed-type}.
The period map $\psi \colon S\setminus \Delta \to \mathbb{D}(I)$ extends
to a morphism $$\overline{\psi}\colon S\to \mathbb{D}(I)^\lambda$$
sending the discriminant $\Delta$ to the boundary divisor
${\rm Hom}(\Lambda,\C^*)\textrm{ or }{\rm Hom}(\Lambda,\widetilde{\mathcal{E}})$.
Furthermore, $\overline{\psi}\big{|}_\Delta$ is the period map for
the family of marked Kulikov surfaces $\mathcal{X}_0\to \Delta$,
as in Definition \ref{def:kul-period}.
\end{theorem}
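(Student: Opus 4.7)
The plan is to reduce to a one-parameter local situation, apply Schmid's nilpotent orbit theorem in the explicit coordinates of Proposition~\ref{partial-embedding}, and finally identify the resulting boundary value with the Kulikov period of Section~\ref{sec:periods-kulikov} via the integral Clemens-Schmid sequence.

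First I would reduce to the one-parameter case. Since the target $\mathbb{D}(I)^\lambda$ is a complex manifold and $\Delta \subset S$ is a smooth divisor, after choosing local coordinates $(t_1,\ldots,t_n)$ with $\Delta = \{t_1 = 0\}$, the extension of $\psi$ across $\Delta$ may be verified along each transverse disk $\{(t_1,c_2,\ldots,c_n)\}$ by a Hartogs-type principle applied to any local embedding of $\mathbb{D}(I)^\lambda$ into affine space. So I may assume $S$ is a disk and $\Delta = \{0\}$.

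Next I would write out the period in the coordinates of Proposition~\ref{partial-embedding}. On the universal cover $\mathbb{H} \to \Delta^*$, a multi-valued lift $\tilde{x}\colon \mathbb{H} \to \mathbb{D}^I$ satisfies $\tilde{x}(z+1) = T \cdot \tilde{x}(z)$ with $T = \exp N$ the Picard--Lefschetz monodromy of Theorem~\ref{thm:pic-lef}. Because $Nx = (x\cdot\lambda)\delta - (x\cdot\delta)\lambda$ sends $I^\perp$ into $I$ and kills $I$, the monodromy lies in the unipotent group $U_I$ of Definition~\ref{exact-sequence}, and descending to $\mathbb{D}^I/U_I = \mathbb{D}(I)$ produces a single-valued holomorphic $\psi$ on $\Delta^*$. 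Schmid's nilpotent orbit theorem then yields an asymptotic expansion $\tilde{x}(z) = \exp(zN)\cdot h(e^{2\pi i z})$ with $h$ a holomorphic germ into the compact dual of $\mathbb{D}$. The partial compactification $\mathbb{D}(I) \hookrightarrow \mathbb{D}(I)^\lambda$ is designed precisely to absorb the factor $\exp(zN)$: in Type III, the toric variety $T_\lambda$ attached to the ray $\mathbb{R}_{\geq 0}\lambda$ adds a boundary divisor naturally isomorphic to $\mathrm{Hom}(\Lambda,\mathbb{C}^*)$; in Type II, the projective bundle $\overline{A}_I$ adds a section at infinity isomorphic to $\mathrm{Hom}(\Lambda,\widetilde{\mathcal{E}})$. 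Hence $\psi$ extends to a morphism $\overline{\psi}\colon S \to \mathbb{D}(I)^\lambda$ sending $\Delta$ into the claimed boundary divisor.

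The main obstacle, and the substantive content of the last assertion, is identifying $\overline{\psi}|_\Delta$ with the intrinsically defined Kulikov period $\psi_{X_0}$ from the exponential exact sequence. The comparison runs through the Clemens collapse $c_t\colon X_t \to X_0$ and the isometry $\Lambda(X_0) \cong \{\delta,\lambda\}^\perp/I$ of Proposition~\ref{prop:lambda-lattice}: for any numerically Cartier class $\alpha \in \Lambda(X_0)$, both $\overline{\psi}(0)(\alpha)$ and $\psi_{X_0}(\alpha)$ should compute the same Carlson extension class associated to the weight filtration $0 \subset I \subset I^\perp \subset H^2(X_t,\mathbb{Z})$, evaluated on $\alpha$. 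On the nilpotent orbit side this is Schmid's identification of the limit Hodge filtration with Carlson's extension class, while on the Kulikov side this is Constructions~\ref{iii-unwind} and~\ref{ii-unwind} combined with the integral Clemens--Schmid sequence \cite[4.13]{friedman1986type-III} already used in Proposition~\ref{prop:lambda-lattice}. The hard step is verifying that these two computations of the same extension class literally coincide; this ultimately amounts to checking the compatibility of Schmid's and Steenbrink's constructions of the limit mixed Hodge structure for the semistable degeneration $\mathcal{X} \to S$, applied class by class through the basis of $\Lambda(X_0)$ supplied by the marking.
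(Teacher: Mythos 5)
Your overall route — descend to $\mathbb{D}(I)$ using $T = \exp N \in U_I$, apply Schmid's nilpotent orbit theorem in the coordinates of Proposition~\ref{partial-embedding} so that the toroidal extension $\mathbb{D}(I)^\lambda$ absorbs the $\exp(zN)$ factor, reduce to one-parameter arcs and use normality of $S$ for the global extension, and then match the boundary value with the intrinsically defined Kulikov period — is the same architecture the paper uses, and the first three steps track the paper's argument closely.

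Where you diverge is in the final, and as you yourself note, substantive step: identifying $\overline{\psi}|_\Delta$ with $\psi_{X_0}$. You propose to pass through Carlson's extension class and then invoke the compatibility of Schmid's Hodge-theoretic limit with Steenbrink's geometric construction of the limit mixed Hodge structure. That is a legitimate strategy, but it is heavier machinery than the paper uses and, more importantly, you leave it as a strategy rather than carry it out: you say "this ultimately amounts to checking the compatibility of Schmid's and Steenbrink's constructions ... applied class by class," but do not verify this or cite a specific comparison theorem in the form you need (integrally, on the lattice $\Lambda(X_0)$, compatibly with $c_t^*$). The paper instead sidesteps the abstract comparison entirely: it writes both sides as explicit exponentiated integrals. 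For the smooth fibers it has the closed formula
\[
\psi_t(\gamma) = \exp\!\Bigl(2\pi i\, \tfrac{\int_\gamma \Omega_t}{\int_\delta \Omega_t}\Bigr),
\]
the nilpotent orbit expansion of $[\Omega_t]$ then yields $\psi_t(\gamma)\sim \psi_0(\gamma)\,t^{\lambda\cdot\gamma}$ with $\psi_0(\gamma)=\exp(2\pi i\, y\cdot\gamma)$, and on the Kulikov side $\psi_{X_0}(\gamma)=\exp(2\pi i\,[\Omega_0]\cdot\widetilde\gamma)$ after the normalization $[\Omega_0]\cdot 1 = 1$. Under $c_t^*$ (which identifies $1\mapsto\delta$ and $\Lambda\to\{\delta,\lambda\}^\perp/\delta$), these are literally the same formula — normalize by $\delta$, pair, exponentiate — so no appeal to a general Schmid--Steenbrink comparison is needed. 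If you want to keep your route, you need to supply the precise reference (e.g.\ Steenbrink's comparison theorem, integrally) and check it interacts correctly with the identification $c_t^*\colon\Lambda(X_0)\xrightarrow{\sim}\{\delta,\lambda\}^\perp/I$; as written, the final step is a gap.
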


\begin{proof} This theorem is essentially the same as
\cite[Thm.~5.3]{friedman1986type-III}. The primary tool is the
nilpotent orbit theorem \cite[Thm.~4.9]{schmid1973variation}.
 \end{proof}


 For an $M$-quasipolarized $d$-semistable Kulikov surface $X_0$, we
 fix an embedding $M\hookrightarrow \ker(\psi_{X_0})\subset \Lambda(X_0)$.
In the $M$-quasipolarized case, the period point
${\rm Hom}(\Lambda, \,\C^*\textrm{ or }E)$ descends to
a period point in ${\rm Hom}(\Lambda/M,\,\C^*\textrm{ or }E)$
which we will also denote $\psi_{X_0}$ by abuse of notation.
More precisely, a primitive sublattice
$M\subset\Lambda\oplus I$ is the same as a not necessarily
primitive sublattice $M\subset\Lambda$ plus a homomorphism 
$\Psi\colon\Tors(\Lambda/M)\to \bC^*$ or $E$.
The period point 
belongs to the coset of $\Hom(\Lambda/M^{\rm sat},\bC^*\textrm{ or }E)$
of points with $\psi_{X_0}|_{\Tors(\Lambda/M)} = \Psi$.
The discussion of the period map in the above sections holds,
replacing everywhere $H^2(X_t)$ with $j(M)^\perp$,
$L_{K3}$ with $M^\perp$,
$\mathbb{D}^I$ with $\mathbb{D}^I_M:=\mathbb{D}^I\cap \mathbb{D}_M$,
$U_I$ with $U_I\cap \Gamma$, $I^\perp$ with
$I^{\perp}_{M^\perp}$, $\Lambda$ with $\Lambda/j_0(M)$,
$\Lambda_0$ with $\Lambda_0/M$. Recall that $\Gamma$ is the subgroup of
$O(L_{K3})$ acting by the identity on~$M$.

\begin{proposition}
$\mathbb{D}_M^I=\mathbb{D}_M$ for any $M$.
\end{proposition}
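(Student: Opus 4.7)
The inclusion $\mathbb{D}_M^I \subset \mathbb{D}_M$ is definitional, so the claim reduces to showing $\mathbb{D}_M \subset \mathbb{D}^I$, i.e.\ for every $x \in \mathbb{D}_M$ the linear map $I \otimes \R \to \C$, $\mu \mapsto x \cdot \mu$, has no real kernel. First I would write $x = u + iv$ with $u,v \in M^\perp \otimes \R$; the defining conditions $x^2 = 0$ and $x \cdot \bar x > 0$ translate to $u^2 = v^2 > 0$ and $u \cdot v = 0$, i.e.\ $u$ and $v$ span a \emph{positive-definite} real $2$-plane $P \subset M^\perp \otimes \R$.

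Next I would pin down $I$ relative to $M$. Because $M$ is by hypothesis a primitive sublattice of $\Lambda_0 = \{\delta_0, \lambda_0\}^\perp / I_0$, any lift of $M$ to $L_{K3}$ is orthogonal to both $\delta_0$ and $\lambda_0$; thus $I \subset M^\perp$. Moreover $I$ is totally isotropic in both types: in Type III one has $I = \Z\delta$ with $\delta^2 = 0$, and in Type II the generators $\delta, \lambda$ satisfy $\delta^2 = \lambda^2 = 0$ and $\delta \cdot \lambda = 0$ (since $\lambda \in \delta^\perp/\delta$ and $\lambda^2 = 0$ because we are in Type II), so the rank-$2$ saturation $I$ is totally isotropic.

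The key step is then a signature computation. Suppose, toward a contradiction, that $x \cdot \mu = 0$ for some nonzero $\mu \in I \otimes \R$. Then $P \subset \mu^\perp \cap (M^\perp \otimes \R)$. The lattice $M^\perp$ has signature $(2, 20 - r)$, and $\mu \in M^\perp$ is isotropic, so $\mu \in \mu^\perp \cap M^\perp$ and the quotient $(\mu^\perp \cap M^\perp)/\R\mu$ has signature $(1, 19 - r)$. A positive-definite $2$-plane in $\mu^\perp \cap M^\perp$ would either project isomorphically to the quotient (impossible, as the quotient has only $1$-dimensional positive part) or contain $\mu$ (impossible, since $\mu$ is isotropic and no positive-definite subspace contains an isotropic vector). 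This contradiction forces $x \cdot \mu \neq 0$ for every nonzero $\mu \in I \otimes \R$, so $x \in \mathbb{D}^I$.

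There is no real obstacle here; the argument is essentially a signature count once one observes that $I \subset M^\perp$ is totally isotropic. The only point requiring momentary care is the unified treatment of Types II and III — specifically, verifying total isotropy of $I$ in Type II using $\lambda^2 = 0$ — but both cases are then handled by the single signature bound above.
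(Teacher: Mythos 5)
Your argument is correct and is essentially the paper's own proof: both observe that a zero of $x\cdot(-)$ on $I\otimes\R$ would force the positive-definite $2$-plane $\mathrm{span}\{\mathrm{Re}\,x,\mathrm{Im}\,x\}$ into the perpendicular of an isotropic vector $\mu\in M^\perp\otimes\R$, which is impossible given that $M^\perp$ has signature $(2,20-r)$. You spell out two details the paper leaves implicit — that $I$ is totally isotropic in Type II (so every $\mu\in I\otimes\R$ is isotropic) and the explicit passage to the quotient $(\mu^\perp\cap M^\perp)/\R\mu$ — but the idea is the same.
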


\begin{proof}
An $x\in \mathbb{D}_M\setminus \mathbb{D}_M^I$
would satisfy $x\cdot i=0$ for some nonzero isotropic vector $i\in I\otimes \R$.
But then ${\rm Re}(x)$, ${\rm Im}(x)$ span a positive
definite $2$-plane in $i^\perp_{M^\perp}\otimes \R$
which is impossible since $M^\perp$ has signature $(2,20-r)$.
\end{proof}

The moduli space of partially marked, $M$-quasipolarized K3
surfaces admits a period map to the torsion translate
of a subtorus $\mathbb{D}_M(I):=\mathbb{D}_M/U_I\cap
\Gamma\subset \mathbb{D}(I)$
which is generically one-to-one. A mixed marked $M$-quasipolarized family 
admits a period map to
the toroidal extension $$\mathbb{D}_M(I)^\lambda:=\overline{\mathbb{D}_M(I)}\subset \mathbb{D}(I)^\lambda.$$

\begin{notation}
We henceforth write $I^{\perp}_{M^\perp}$ simply
as $I^\perp$ whenever it is clear from context that we are
working with $M$-quasipolarized surfaces.
\end{notation}

\section{Compactifications of arithmetic quotients}
\label{sec:arithmetic-quotients}

\subsection{Baily-Borel compactification}\label{sec:bb}
By Theorem \ref{smooth-coarse},
the coarse space of $M$-polarized
(ADE) K3 surfaces $F_M$ is the quotient of the period domain
$$\mathbb{D}_M:=\mathbb{P}\{x\in M^\perp\otimes \C\,\big{|}\,
x\cdot x=0, x\cdot \overline{x}>0\}$$ by the arithmetic group $\Gamma$. 
In this capacity, the space $F_M=\mathbb{D}_M/\Gamma$ has a
Baily-Borel compactification \cite{baily1966compactification-of-arithmetic},
which we now describe.

\begin{remark} Note that $\mathbb{D}_M=\mathbb{D}_M^1 \sqcup \mathbb{D}_M^2$
has two connected components and so $F_M$ may have either $1$ or $2$ connected
components, depending on whether or not $\Gamma$ contains an element interchanging
the two components. To simplify (but abuse)
notation, we refer to $\mathbb{D}_M^1$ and its stabilizer
$\Gamma^1\subset \Gamma$ as $\mathbb{D}_M$ and $\Gamma$, respectively. \end{remark}

\begin{definition} The {\it compact dual} is $\mathbb{D}_M^c:=
\mathbb{P}\{x\in M^\perp\otimes \C\,\big{|}\,x\cdot x=0\}$. It is the 
compact hermitian symmetric domain containing $\mathbb{D}_M$
 as an open subdomain. \end{definition} 

\begin{definition}\label{boundary-comp} A {\it boundary component} 
of $\mathbb{D}_M$ is a maximal connected complex
submanifold of the boundary
 $\partial \mathbb{D}_M \subset \mathbb{D}_M^c$. The {\it rational}
  boundary components $B_I$ are in bijection
  with primitive isotropic lattices $I\subset M^\perp$ via
  $$B_I=\{x\in \partial \mathbb{D}_M\,\big{|}
  \,\textrm{span}\{\textrm{Re}\,x,\textrm{Im}\,x\}=I\otimes \R\}.$$ 
  We have $B_I\cong\mathbb{H}$ when ${\rm rk}\, I=2$.
  We have $B_I\cong\{\textrm{pt}\}$ when ${\rm rk}\, I=1$. We call these 
  {\it Type II and III} boundary components, respectively. The 
  {\it rational closure} of $\mathbb{D}_M$ is $\mathbb{D}_M^+:=
  \mathbb{D}_M\cup_I B_I\subset \mathbb{D}_M^c$
   topologized at the boundary 
   points using horoballs as a neighborhood base. \end{definition}

\begin{theorem}[\cite{baily1966compactification-of-arithmetic}] The
quotient $\oF_M^\bb:= \mathbb{D}_M^+/\Gamma$ is compact and
has the structure of a projective variety, and the projective coordinate
ring is the ring of modular forms for $\Gamma$.
\end{theorem}

The image of a boundary component $B_I$ in $\oF_M^\bb$
is isomorphic to $B_I/\textrm{Stab}_\Gamma(I)$ and so is either
a point when ${\rm rk}\,I=1$ or a modular curve when ${\rm rk}\,I=2$.

\begin{definition} The {\it $0$- and $1$-cusps} of $\oF_M^\bb$ are
the zero- and one-dimensional boundary components, respectively.
They are, respectively, in bijection with $\Gamma$-orbits
of rank $1$ and $2$ primitive
isotropic lattices $I\subset M^\perp$. \end{definition}

\begin{proposition} Let $X\to (C,0)$ be an $M$-quasipolarized
 Kulikov model. The extension of the period map $C^*\to F_M$ to 
 the Baily-Borel compactification sends $0$ into the cusp associated
  to the monodromy lattice $I$. In Type II, the $j$-invariant $j(D_{i,i+1})$ of a double curve 
  agrees with the $j$-invariant $j:B_I/{\rm Stab}_\Gamma(I)\to
   \mathbb{H}/SL_2(\Z)=\mathbb{A}^1_j$ of the corresponding 
   image point. \end{proposition}

\begin{proof} This well-known fact follows directly from the
asymptotics of the period map and the nilpotent orbit theorem, as in Theorem
  \ref{period-extension}. \end{proof}

\subsection{Toroidal compactification}\label{sec:tor-comp} The 
original source on this subject is \cite{ash1975smooth-compactifications}. 
The reference \cite{namikawa1980toroidal-compactification} in
 the case of Siegel space $\mathbb{D}=\mathcal{H}_g$ is particularly
  clear. The following well-known theorem is key to constructing 
  toroidal compactifications:

\begin{theorem}\label{stab-cusp} Let $B_I$ be a rational boundary
 component of $\mathbb{D}_M$. There exists a horoball neighborhood
  $\overline{N}_I\supset B_I$ preserved by ${\rm Stab}_\Gamma(I)$ 
  and an embedding $$N_I/{\rm Stab}_\Gamma(I)\hookrightarrow 
  \mathbb{D}_M/\Gamma\textrm{ where }N_I=\overline{N}_I\setminus B_I.$$
   \end{theorem}

So a punctured neighborhood 
of a Baily-Borel cusp can be constructed locally as a quotient by 
the parabolic stabilizer ${\rm Stab}_\Gamma(I)$. Let
$0\to U_I\to {\rm Stab}_\Gamma(I)\to \Gamma_I\to 0$ be the exact
sequence associated to the unipotent radical $U_I$ (\ref{exact-sequence}).
Then $N_I/U_I\hookrightarrow \mathbb{D}_M(I)=\mathbb{D}_M/U_I$
has an open embedding into the unipotent quotient.
The Levi group $\Gamma_I$ has a residual action on both.

\begin{definition}\label{def:c-plus} Let $I=\Z\delta$ be a rank $1$ isotropic lattice. 
Let $C_\delta\subset \delta^\perp/\delta\otimes \R$ denote a connected component of 
the positive norm vectors and let $C_\delta^+$ be its {\it rational closure}: 
the union of $C_\delta$ with all rational rays on its boundary. \end{definition}

\begin{definition}\label{def:fan} A {\it $\Gamma$-admissible collection of fans} 
$\mathfrak{F}$ (or for short, {\it fan}) is, for each $I=\Z\delta$, a fan 
$\mathfrak{F}_\delta$ with support $C_\delta^+$, such that the 
collection $\{\mathfrak{F}_\delta\}$ is $\Gamma$-invariant, with
 finitely many orbits of cones. \end{definition}

By ``fan" $\mathfrak{F}_\delta$ we mean a  decomposition into 
rational polyhedral cones, closed under taking faces 
and intersections, and locally finite in the positive cone~$C_\delta$. 
Infinitely many cones meet at rational rays on the boundary of
 $C_\delta^+$. Recall, when ${\rm rk}\,I= 1$, there is a ``torus embedding"
 $\bD_M(I)\hookrightarrow \delta^\perp/\delta\otimes \C^*$ (\ref{partial-embedding}).

\begin{construction}\label{tor-construction}
The {\it toroidal compactification} $\oF_M^\mathfrak{F}$
 associated to a fan $\mathfrak{F}$ is built as follows: Take
 the closure $N_I/U_I\hookrightarrow \overline{N_I/U_I}\subset
 X(\mathfrak{F}_\delta)$ in the toric variety containing
 $\delta^\perp/\delta\otimes \C^*$ associated to the fan $\mathfrak{F}_\delta$.
   Then quotient by $\Gamma_I$ to get an analytic space 
   $V_I:=(\overline{N_I/U_I})/\Gamma_I.$ This is possible by 
   $\Gamma$-invariance of $\fF$. Note that $V_I$ contains an
    open subset $$(N_I/U_I)/\Gamma_I= N_I/{\rm Stab}_\Gamma(I)
    \hookrightarrow \mathbb{D}_M/\Gamma.$$ Define the {\it Type III extension} 
    to be the gluing of $F_M=\mathbb{D}_M/\Gamma$ to $V_I$ along this
     open set, ranging over all $\Gamma$-orbits of rank $1$ isotropic $I$. 

If $I=\Z\delta\oplus \Z\lambda$ is isotropic of rank $2$,
take the closure $\overline{N_I/U_I}\subset \mathbb{D}_M(I)^\lambda$
  in the projective line bundle over $I^\perp/I\otimes \widetilde{\mathcal{E}}^+$
(\ref{partial-embedding})
and define $V_I$ as above.
The {\it Type II extension} is the gluing
    of $F_M$ with $V_I$ along their common open subset 
    $(N_I/U_I)/\Gamma_I = N_I/{\rm Stab}_\Gamma(I)$. 

The toroidal compactification $\oF_M^\mathfrak{F}:=F_M\cup_I V_I $
 is the gluing of the Type II and III extensions.

  \end{construction}
  
Let $\delta\in I=\Z\delta\oplus \Z\lambda$.
The analytic structure where the corresponding Type III 
 and II loci meet is described by the Mumford construction
\cite{mumford1972an-analytic-construction}
applied to a periodic, rational polyhedral tiling
 $\mathfrak{F}_{\delta,I}$ of $I^\perp/I\otimes \R $. The polyhedral 
 tiles are defined as follows: Quotient the cones of $\mathfrak{F}_\delta$ 
 passing through $\R^+\lambda\subset C_\delta^+$, viewing
 $I^\perp/I$ as the subquotient $\lambda^\perp/\lambda$ of $\delta^\perp/\delta$.
  Geometrically, $I^\perp/I\otimes \R$ is identified with a small horosphere 
  through $\lambda$ (minus $\lambda$) in the hyperbolic space 
  $\mathbb{P} C_\delta$. The projectivized cones of $\mathfrak{F}_\delta$
   decompose this horosphere in a 
   ${\rm Stab}_{\Gamma_\delta}(\lambda)$-invariant manner.

%

\subsection{Semitoroidal compactification}\label{sec:semitor-comp}

The papers
\cite{looijenga1985semi-toric, looijenga2003compactifications-defined2}
are the only references for this section.
Semitoroidal compactifications are determined combinatorially,
unify the toroidal and Baily-Borel compactifications, and
form the smallest class of compactifications
closed under taking normal images of toroidal compactifications
(proven in Theorem \ref{thm:semi-is-normal} below).
 
The combinatorial input is similar
to toroidal compactifications, with two differences:

\begin{definition}[{\cite[Def.~6.1]{looijenga2003compactifications-defined2}}]\label{def:semifan}
A {\it semifan} $\mathfrak{F}$
requires the same data as a fan (Def.~\ref{def:fan}), but we allow the
cones in $\mathfrak{F}_\delta$ to be only {\it locally polyhedral} in
$C_\delta$.

We additionally require ``compatibility"
 at each $1$-cusp: Let $\delta\in I$ be a primitive integral
vector in a rank $2$ isotropic lattice and let $\mathfrak{F}_{\delta,I}$
denote the corresponding polyhedral tiling of $I^\perp/I\otimes \R$.
The tiles of $\mathfrak{F}_{\delta,I}$
are of the form $B\times (H_{I,\delta}\otimes \R)$ for bounded polytopes $B$ and
$H_{I,\delta} \subset I^\perp/I$ a primitive sublattice. We require that
$H_{I,\delta}=H_I$ is independent of choice of $\delta$. \end{definition}

\begin{example} Any fan is a semifan. The tiles of
$\mathfrak{F}_{\delta,I}$ are bounded polytopes, so $H_I=\{0\}$ for all $I$.
At the other extreme, the semifan $\mathfrak{F}$ for which
$\mathfrak{F}_\delta=\{C_\delta^+\}$ is locally finite
and the compatibility condition holds: $H_{I,\delta} = I^\perp/I$ for all $\delta\in I$.
The resulting compactification is $\oF_M^\mathfrak{F} = \oF_M^\bb$.
\end{example}

We now compile the key results we need about semitoroidal compactifications:

\begin{theorem}
There is a normal compactification
$\oF_M^\mathfrak{F}$ whose boundary strata are in bijection with
$\Gamma$-orbits of cones of $\mathfrak{F}$ \cite[Thm.~6.7]{looijenga2003compactifications-defined2}. 
The stratum ${\rm Str}_\sigma$ corresponding to a cone
$\sigma\subset \mathfrak{F}_\delta$ is
finite quotient of $\delta^\perp/\{\delta,\sigma\}\otimes \C^*$
in Type III, and a finite quotient of
$I^\perp/\{I,H_I\}\otimes \mathcal{E}$ in Type II \cite[p.~552]{looijenga2003compactifications-defined2}.
For any semifan
$\mathfrak{G}$ which refines $\mathfrak{F}$, there is a morphism
$$\oF_M^\mathfrak{G}\to \oF_M^\mathfrak{F}$$ mapping strata to
strata \cite[Lem.~6.6]{looijenga2003compactifications-defined2}.
Given an inclusion of cones $\sigma_\mathfrak{G} \subset \sigma_\mathfrak{F}$
 the map of corresponding strata is induced by the natural
  quotient map on tori.
\end{theorem}

Unlike for fans, a Type III cone $\sigma$ of a
semifan may have an infinite stabilizer ${\rm Stab}_{\Gamma_\delta}{\sigma}$.
Still, the corresponding stratum is a finite quotient of a torus.

A simple way to visualize a semifan or fan $\mathfrak{F}$ is as follows:
For each $\Gamma$-orbit of isotropic vector $\delta$, associate a cusped,
real-hyperbolic orbifold $M_\delta:=\mathbb{H}^{19-{\rm rk}\,M}/\Gamma_\delta$ where
$\mathbb{H}^{19-{\rm rk}\,M}=\mathbb{P}C_\delta$ is
real-hyperbolic space. The cusps of $M_\delta$ correspond to
$\Gamma_\delta$-orbits of
isotropic rays in $C_\delta^+$.
A semifan $\mathfrak{F}_\delta$ gives rise to a
finite decomposition of $M_\delta$ (for all $\delta$)
into metrically convex, rational, locally polyhedral cells, compatible
with the hyperbolic cusps. For a fan, these cells are polyhedra, while for a
semifan, they may have nontrivial topology.

We now prove a key theorem characterizing semitoroidal
compactifications:

\begin{theorem}\label{thm:semi-is-normal} Let $F$ be a
Type IV arithmetic quotient and let $\oF$ be a normal 
compactification of $F$. The following are equivalent:
 \begin{enumerate}
 \item $\oF$ sits between some toroidal and the Baily-Borel
 compactification: $\oF^\mathfrak{G}\xrightarrow{m} \oF\to \oF^\bb.$
 \item There exists a semifan $\mathfrak{F}$ for which $\oF = \oF^\mathfrak{F}$.
 \end{enumerate}
\end{theorem}

\begin{proof}
  The implication (2) $\implies$ (1) follows from
refining $\mathfrak{F}$ to some fan $\mathfrak{G}$.

Now we prove (1) $\implies$ (2). Define an equivalence relation
$\sigma_1\sim \sigma_2$ on maximal cones of $\mathfrak{G}$
generated by: $\sigma_1\sim \sigma_2$ if $\sigma_1$ and $\sigma_2$
share a codimension one face $\tau$ such that the corresponding
$1$-dimensional boundary stratum $\textrm{Str}_\tau$ is contracted
by $m$. Our strategy is to show that the curves contracted by any birational morphism
$m\colon \oF^\mathfrak{G}\to \oF$ over $\oF^\bb$ are algebraically
equivalent to a union of $1$-dimensional torus orbits. So
the ``toroidally definable" equivalence relation $\sim$
captures everything one needs to know about the
contracting morphism $m$.

Define a decomposition of $C_\delta^+$ into a collection
of maximal dimensional sets
$$[\sigma_0]:=
\textstyle\bigcup_{\sigma\sim \sigma_0} \sigma.$$ We claim that
the $[\sigma_0]$ form the maximal cones of some semifan $\mathfrak{F}$.
The $\Gamma$-invariance is automatic, so it
 suffices to show that $[\sigma_0]$ satisfy the semifan axioms, 
 including the compatibility condition (Def. \ref{def:semifan})
over the $1$-cusps.   

Begin with a Type III cone $\tau\in \mathfrak{G}_\delta$. The stratum $\overline{{\rm Str}}_\tau\subset \oF^{\mathfrak{G}}$
 is the ${\rm Stab}_{\Gamma_\delta}(\tau)$-quotient of the toric variety
 $X(\mathfrak{G}_\delta/\tau)$ associated to the quotient fan.
Consider the Stein factorization 
$X(\mathfrak{G}_\delta/\tau)\to Z_\tau\to m(\overline{\Str}_\tau)$.
It is proved in \cite[Lem.~2.3.4]{brown2018geometric_characterization} that the
 target of a morphism from a proper toric variety to a
 normal variety ($Z_\tau$ here)
 is automatically toric and with the morphism also toric. 
 
 Since the maps $Z_\tau \to m(\overline{{\rm Str}}_\tau)$ and
 $X(\mathfrak{G}_\delta/\tau)\to \overline{{\rm Str}}_\tau$ are finite,
the curves contracted by $\overline{{\rm Str}}_\tau\to m(\overline{{\rm Str}}_\tau)$
are exactly those that lift to curves contracted by $X(\mathfrak{G}_\delta/\tau)\to Z_\tau$. 
 So the equivalence relation $\sim$ on the maximal cones
 containing $\tau$ is induced by the morphism of fans corresponding to the toric morphism
 $X(\mathfrak{G}_\delta/\tau)\to Z_\tau$. Thus the
 cones $[\sigma_0]$ locally form a fan in a tubular neighborhood of
 $\tau\subset C_\delta^+$. In particular, they
 are locally polyhedral and convex at their boundary.
 So the $[\sigma_0]$ define a semifan within $C_\delta$.

Next, we examine the Type II locus. Since $\oF$ has a morphism to
$\oF^\bb$, we conclude that $m$ induces a fiberwise morphism over
the modular curve $1$-cusp. Let $j$ be a point in the
$1$-cusp of $\oF^\bb$. The fiber over $j$ in any toroidal
compactification is a finite quotient of
$I^\perp/I\otimes E_j$ so there is a morphism 
$I^\perp/I\otimes E_j\to \oF_j$
induced by $m$. In analogy with the
Type III case, take the Stein factorization of this morphism
$I^\perp/I\otimes E_j\to Z_j\to \oF_j$. Since the normal image
of an abelian variety is an abelian variety,
this map is the quotient by a sub-abelian variety $H_I\otimes E_j$.
  
  The contracted curves are generated, up to algebraic equivalence,
  by $h\otimes E_j$ for $h\in H_I$. Taking the limit $$j\to \textrm{the }
0\textrm{-cusp of }\oF^\bb\textrm{ associated to }\delta,$$ the elliptic curve $h\otimes E_j$
breaks in the Type III locus 
to a cycle of rational curves, according to the Mumford degeneration
discussed after Construction \ref{tor-construction}.

Applying the torus action to the cycle of rational curves
$\lim_jh\otimes E_j$ we can break it further into a cycle of contracted
$1$-dimensional
boundary strata connecting $0$-dimensional boundary strata of Type III.
So the equivalence relation $\sim$ on maximal cones
induces a polyhedral decomposition of $I^\perp/I\otimes \R$ whose
tiles are fixed under translation by $h$, and in turn by $H_I$. Conversely,
consider an $h\in I^\perp/I$ fixing all tiles in the polyhedral
decomposition $H_{I,\delta}$ of $I^\perp/I$ induced by the cones $[\sigma_0]$. 
This $h$ corresponds to a contracted cycle of rational curves, which deforms
to a contracted elliptic curve $h\otimes E_j$. Hence $h\in H_I$.
Thus $H_I = H_{I,\delta}$ is independent of $\delta$.
So there exists a semifan $\mathfrak{F}$ whose maximal
cones are $[\sigma_0]$.

Since $\mathfrak{F}$ is a coarsening
of $\mathfrak{G}$, there is a morphism
$\oF^\mathfrak{G}\xrightarrow{n} \oF^\mathfrak{F}$ and the above
arguments prove that the curves contracted by $n$ are exactly those
contracted by $m$. We conclude by Zariski's main theorem
 that $\oF=\oF^\mathfrak{F}$, because $\oF$ is normal.
\end{proof}

\section{Moduli of stable slc pairs}
\label{sec:slc-pairs}

\subsection{Canonical choices of polarizing divisor}

Let $\cF_M^{\rm q}$ be the moduli stack
of $M$-quasipolarized K3 surfaces.
 Fix a class $L\in M$, not necessarily primitive, which defines
 a relatively big and nef line bundle $\mathcal{L}\to \cX\to \cF_M^{\rm q}$
 on the universal family, canonical up to twisting
  by line bundles pulled back from $\cF_M^{\rm q}$. 
 Since $\mathcal{L}$ is big and nef on every fiber,
 $h^i(\mathcal{X}_s, \mathcal{L}_s)=0$ for $i>0$
 for all $s\in\mathcal{F}_M^{\rm q}$. By Cohomology and Base Change,
  the pushforward of $\mathcal{L}$ from the universal family defines a
 vector bundle of rank $2+\frac{1}{2}L^2$
 on $\mathcal{F}_M^{\rm q}$, canonical up to twisting by line
 bundles, cf. \cite[Cor.~2.69]{kollar2023families-of-varieties}.
  Let $\mathbb{P}_\cL$ denote its projectivization, a $\mathbb{P}^g$-bundle
  over the stack, where $g=d+1$.

\begin{definition}\label{def:can-choice}
A {\it canonical choice of polarizing divisor} is
a rational section $R$ of the projective bundle $\mathbb{P}_\cL$. 
Alternatively, it is an ample divisor $R$ on the generic K3 surface.
\end{definition}

Let $U$ be the regular locus of this rational section.
The key definition of the paper is:

\begin{definition}\label{recog-def}
A canonical choice of
polarizing divisor $R$ is {\it recognizable for $F_M$} if every
$M$-quasipolarized Kulikov surface $X_0$ of Type I, II, or III
contains a divisor $R_0\subset X_0$ which, for any $M$-quasipolarized
smoothing $X\to (C,0)$ with $C^*\subset U$, has the property that $R_0$ is
the flat limit of $R_t\subset X_t$, $t\neq 0$,
up to the action of ${\rm Aut}^0(X_0)$.
\end{definition}

Here ${\rm Aut}^0(X_0)$ is the connected component of the identity
of the automorphism group, which is always trivial in Type III,
and is isomorphic to $(\C^*)^{k-1}$ where $k-1$ is the number
of intermediate elliptic ruled components, in Type II.

We use the term ``smoothing" to mean specifically a Kulikov model $X\to (C,0)$.
Roughly, Definition \ref{recog-def} amounts to saying that the
canonical choice $R$ can also be made on any Kulikov surface,
including smooth K3s, at least up to ${\rm Aut}^0(X_0)$.

\begin{proposition}\label{recog-section-type-i}
Let $(\mathcal{X}^*, \mathcal{R}^*)\to U$ be the
universal family of pairs. If $R$ is recognizable, it extends
to a flat family of pairs $(\mathcal{X}, \mathcal{R})\to \mathcal{F}_M^{\rm q}$.
That is, $R$ defines a regular section of
$\mathbb{P}_\mathcal{L}\to \cF^{\rm q}_M$. \end{proposition}

\begin{proof} Let $0\in \mathcal{F}_M^{\rm q}$ be in the complement of
$U$. Choose any curve $C\subset \mathcal{F}_M^{\rm q}$ containing
 $0$ for which $C^*=C\setminus 0\subset U$. Then $X\to C$ is a Type I Kulikov
  model, for which $X_0$ is a smooth $M$-quasipolarized K3 surface. 
  By assumption, there is a divisor $R_0\in |L|$ on $X_0$ which is the
  flat limit of the curves $R_t$ for $t\neq 0$. We may extend the
  section $\mathcal{R}^*\,:\,U\to \mathbb{P}_\mathcal{L}\big{|}_U$
  set-theoretically by declaring $\mathcal{R}(0)=R_0$. This 
  extension is algebraic when restricted to any curve in 
  $U\cup \{0\}$. Since $\cF_M^{\rm q}$ is normal, we conclude 
  that the rational section $\mathcal{R}^*$ extends over $0$.
\end{proof} 

This proposition only concerns Type I Kulikov models. 
The properties of recognizability in Types II and III is discussed in
Section \ref{sec:recognizable-divisors}. 

\subsection{Compact moduli of stable pairs}

We refer the reader to \cite{kollar2023families-of-varieties} for a
definitive account. 
An {\it slc} (or KSBA) {\it stable pair} $(X,B=\sum b_i B_i)$ consists of a projective variety
and a $\bQ$-divisor which has semi log canonical (slc) singularities
such that the divisor $K_X + B$ is ample. A particular case is a
log Calabi-Yau pair $(X, \Delta + \epsilon R)$ such that $\Delta$ is
reduced and log canonical, $0<\epsilon\ll1$, $K_X+\Delta\sim_\bQ 0$
and $R$ is ample, not containing any log centers of $\Delta$. In
our notations, $R$ is a polarizing divisor. By
\cite{kollar2019moduli-of-polarized}, in any dimension the irreducible
components of the moduli of log Calabi-Yau pairs with a polarizing divisor
are projective. Sections 6.4 and 8.3 of
\cite{kollar2023families-of-varieties} are closely related to our
setup. 

The situation for K3 surfaces (note $\Delta=0$) is easier because
if $(X_0,\epsilon R_0)$ is the stable limit of a one-parameter family
of K3 pairs $(X_t,\epsilon R_t)$ then the divisor $R_0$ is, perhaps
surprisingly, Cartier and not merely $\bQ$-Cartier. Indeed, the pair
$(X_0,\epsilon R_0)$ is the central fiber of the stable model of a
divisor model we defined and discussed in Section~\ref{sec:models}.
We state the main theorem for the moduli functor we need in this
paper. The details are in given in
\cite[Sec. 3]{alexeev2019stable-pair}.
 
\begin{definition}\label{moduli-conditions}
  For a fixed degree $e\in\bN$ and fixed rational number
  $0<\epsilon\le 1$, a \emph{stable $K$-trivial pair} of type $(e,\epsilon)$ is
  a pair $(X,\epsilon R)$ such that
  \begin{enumerate}
  \item $X$ is a Gorenstein surface with $\omega_X\simeq\cO_X$,
  \item The divisor $R$ is an effective, ample Cartier divisor of degree $R^2=e$.
  \item The pair $(X,\epsilon R)$ has semi log
    canonical singularities.
  \end{enumerate}
\end{definition}

\begin{definition}\label{def:family-stable-k3}
  A family of stable $K$-trivial pairs of type $(e,\epsilon)$ is a flat
  morphism $f\colon (\cX,\epsilon\cR)\to S$ such that
  $\omega_{\cX/S}\simeq \cO_\cX$ locally on $S$, the divisor $\cR$ is a
  relative Cartier divisor, such that every fiber is a stable $K$-trivial pair
  of type $(e,\epsilon)$.
\end{definition}

By \cite[Lem.~3.6]{alexeev2019stable-pair}, 
  for a fixed degree $e$ there exists an $\epsilon_0(e)>0$ such that
  for any $0<\epsilon\le \epsilon_0$ the moduli stacks
  $\cM^\slc(e,\epsilon_0)$ and $\cM^\slc(e,\epsilon)$ coincide.

\begin{definition}
  A family of stable $K$-trivial pairs of degree $e$ is a family of type
  $(e,\epsilon_0)$, with $\epsilon_0(e)$ chosen as above.
  We will denote the corresponding
  moduli functor by $\cM^\slc_e$.  For a scheme $S$,
  \begin{math}
    \cM^\slc_e(S) = 
    \{ \text{families of type $(e,\epsilon_0(e))$ over } S\},
  \end{math} with the equivalence relation being 
  $S$-isomorphisms of the family $\cX \to S$ preserving $\cR$.
\end{definition}

\begin{proposition}[{\cite[Prop.~3.8]{alexeev2019stable-pair}}]
 $\cM^\slc_e$ is a Deligne-Mumford stack of stable $K$-trivial pairs.
\end{proposition}

We denote the coarse moduli space by $M^\slc_e$.

\begin{definition}
  Let $N\in\bN$. The moduli stack $\cP_{N,2d}$ parameterizes proper
  flat families of pairs $(X, R)$ such that $(X,L)$ is a polarized K3
  surface with ADE singularities and a primitive ample line bundle
  $L$, $L^2=2d$, and $R\in |NL|$ is an arbitrary divisor. One has
  $R^2=2dN^2$. In particular, one defines $\cP_{2d}:= \cP_{1,2d}$.
\end{definition}

If we take $\epsilon_0(e)$ as above
then the pair $(X,\epsilon_0 R)$ is stable. Obviously, the stack
$\cP_{N,e}$ is fibered over the stack $\cF_{2d}$ with fibers
isomorphic to $\bP^{dN^2+1}$. The automorphism groups of stable pairs
are finite, and it is easy to see that $\cP_{N,2d}$  is
coarsely represented by a scheme $P_{N,2d}$.

\begin{definition}
  One defines $\oP_{N,2d}$ (resp. $\ocP_{N,2d}$) to be the closure of the coarse moduli
  space $P_{N,2d}$ (resp. stack $\cP_{N,2d}$) in $M^\slc_e$ (resp. $\cM_e^\slc$) for $e=2dN^2$. 
\end{definition}

For K3 surfaces polarized by a lattice $M\subset \Pic X$, choose a
primitive vector $L\in M$ with $L^2>0$. Then the substack $\cF_M
\subset \cF_{2d}$ parameterizing $M$-polarized K3 surfaces
inside of $\Z L$-polarized K3 surfaces has dimension
$20-\rank M$. A canonical choice $R$ of polarizing divisor over a Zariski
open subset $U\subset \cF_M$ (or equivalently $\cF_M^{\rm q}$ as in
Def.~\ref{def:can-choice})
defines an embedding of $U\subset \cP_{N,2d}$ if $R\in |NL|$. 

\begin{remark} We should choose $U$ to avoid the non-separated
locus of $\cF_M^{\rm q}$ to ensure that $U\subset \cP_{N,2d}$. This embedding exists
on the stack level. For instance, on the stack $\cF_2^{\rm q}$ of degree $2$ K3 surfaces,
there is a nontrivial generic inertia group $\Z_2$. Then $R$ must be preserved by the involution,
and defines an embedding of stacks $U\subset \cP_{N,2d}$. \end{remark}

\begin{definition}\label{stable-pair-comp} 
Let $\ocF_M^R$ denote the closure of $U$ in $\ocP_{N,2d}$
and let $\oF_M^R$ be its coarse space. \end{definition}

\begin{proposition}\label{contains-int} If $R$ is recognizable,
$\ocF_M^R$ contains $\cF_M$ as an open substack.\end{proposition}

\begin{proof} By Proposition \ref{recog-section-type-i}, the choice of divisor $R$
extends to all of $\cF_M^{\rm q}$ when $R$ is recognizable. Taking the relative
stable model of the universal family of pairs $(\cX,\cR)\to \cF_M^{\rm q}$
gives a classifying morphism $\cF_M^{\rm q}\to \ocF_M^R$ which necessarily
factors through the separated quotient $\cF_M$.
\end{proof}

\begin{theorem}[{\cite[Thm.~3.11]{alexeev2019stable-pair}}]
  \label{thm:stable-moduli}
  $\oP_{N,2d}$ and thus also $\oF_M^R$ are 
  projective. 
\end{theorem}

\section{$\lambda$-families}
\label{sec:lambda-families}

The goal of this section is to construct ``$\lambda$-families" of Kulikov models,
both unpolarized and $M$-quasipolarized, of a fixed combinatorial type,
and to describe the birational modifications which relate them.
These are families of Kulikov models,
which complete any one-parameter degeneration with monodromy
invariant $\lambda$ and play a critical role in the main theorem
of \cite{friedman1986type-III}: two Kulikov models with the same
$\lambda$ are related by Atiyah flops and
topologically trivial deformations.

Some improvements on {\it loc.cit.}~are made: We construct
families for which the boundary period mapping is an isomorphism onto
the period torus ${\rm Hom}(\Lambda,\C^*\textrm{ or }\cE)$, as opposed
to simply an isogeny. Also, we globalize the main theorem of  \cite{friedman1986type-III}:
two $\lambda$-families are related by certain global birational modifications
(Thm.~\ref{global-fs}, Thm.~\ref{qpol-fs}). These global modifications are key to
proving that different formulations of recognizability are equivalent (Sec.~\ref{sec:recognizable-divisors}).

Unlike Kulikov models, which depend on continuous parameters,
the $\lambda$-families depend only on combinatorial parameters, and thus
are countable in number.
Similar families of Kulikov surfaces previously appear in
work of Olsson \cite{olsson2004semistable}. See
Remark~\ref{rem:olsson} for a comparison with our version. 

\subsection{Deformation spaces of Kulikov models} We
recall the description of the universal deformation of a $d$-semistable
Kulikov surface $X_0$ given in \cite[Thm.~5.10]{friedman1983global-smoothings},
when $(X_0)_{\rm sing}$ is connected.
The deformation space $S\cup T$ has two smooth components. The
component $S$ is smooth and $20$-dimensional,
with a smooth, divisorial discriminant locus $\Delta$.
The general fiber over $s\in S$ is a
smooth K3 surface. The other component $T$
has large dimension $\rk \widetilde{\Lambda}(X_0)$,
and consists of the topologically trivial
deformations of $X_0$. These result from deforming the
gluings of double curves or the moduli of anticanonical pairs
$(V_i,D_i)$ and are generally not $d$-semistable.
$\Delta=S\cap T$ consists of the $d$-semistable, topologically
trivial deformations of $X_0$.
The universal family $\mathcal{X}\to S$ is topologically
a product $\mathcal{X}\approx_{\rm diff} \Delta\times X$
with a fixed Kulikov model $X\to (C,0)$, and has smooth total space.
In particular, $\mathcal{X}\to S$ admits a mixed marking (\ref{mixed-type})
over a contractible $S$.

As for deformations of smooth K3 surfaces, 
the local period map on $S$
is understood:

\begin{theorem}\label{thm:kulikov-torelli} Let $X_0$ be a $d$-semistable
Kulikov surface. Suppose $t>0$, or $t=0$, $k=1$
(Sec.~\ref{sec:kulikov-marking}). The period map
$S\to \mathbb{D}(I)^\lambda$ is an order
$k$ cyclic cover, branched along the boundary divisor.
\end{theorem}

\begin{proof} In Type III,
this is \cite[Thm.~5.3]{friedman1986type-III}.
The Type II case is similar
\cite{friedman1984a-new-proof}. 
 \end{proof}
 
By Theorem \ref{thm:kulikov-torelli} we can ensure
that $\mathcal{X}\to S$ is universal at all $s\in S$: 
A topologically trivial family $\mathcal{X}_0\to \Delta$ of
$d$-semistable Kulikov surfaces is a fiberwise
universal deformation if and only if the period map
to the boundary divisor
${\rm Hom}(\Lambda,\,\C^*\textrm{ or }\widetilde{\mathcal{E}})
\subset \mathbb{D}(I)^\lambda$ is a local isomorphism. 

\begin{remark}\label{not-universal} In the remaining case $t=0$,
$k>1$ the singular locus of $X_0$ is disconnected, making it possible to
independently smooth each double curve. 
The $d$-semistable deformations of $X_0$ have
dimension $19+k$ and fiber over $\C^k$, with each
coordinate hyperplane parameterizing deformations which do not
smooth a given double curve of $X_0$.

In this case, we define $S$ as the inverse image of the line
$\C(1,\dots,1)\subset \C^k$. It gives a slice transverse to the
natural action of ${\rm Aut}^0(X_0)\cong (\C^*)^{k-1}$. The discriminant
locus $\Delta\subset S$ is the inverse image of $0\in \C^k$
and is still the universal $d$-semistable topologically trivial deformation,
while the general fiber is a K3 surface that simultaneously
smooths all $k$ double curves. \end{remark}

\begin{proposition}\label{smooth-sing}
Let $\mathcal{X}_0\to \Delta_\lambda$
be a topologically trivial family of marked Kulikov surfaces for which
the period map $\Delta_\lambda\to {\rm Hom}
(\Lambda,\,\C^*\textrm{ or }\widetilde{\mathcal{E}})\subset \mathbb{D}(I)^\lambda$
is an isomorphism. There is a smoothing \vspace{5pt}

\begin{centering}

\begin{tikzcd}
\mathcal{X}_0 \arrow{r} \arrow{d} & \mathcal{X} \arrow{d} \\
\Delta_\lambda \arrow{r}  & S_\lambda
\end{tikzcd}

\end{centering}
\vspace{5pt}
\noindent for which the mixed period map 
$S_\lambda \to \mathbb{D}(I)^\lambda$ defines an 
order $k$ cyclic branched cover to an open neighborhood
of the boundary divisor. The analytic germ of the family along
 $\Delta_\lambda \subset S_\lambda$ is unique.
\end{proposition}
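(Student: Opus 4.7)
The plan is to construct $\mathcal{X} \to S_\lambda$ by gluing the local $d$-semistable deformation spaces provided by Theorem \ref{thm:kulikov-torelli}, in a manner analogous to the construction of the moduli space $\mathcal{M}$ in Proposition \ref{glue-def-smooth}.

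First, for each $s \in \Delta_\lambda$, I would apply Theorem \ref{thm:kulikov-torelli} (or its extension in Remark \ref{not-universal} when $t = 0$, $k > 1$) to the fiber $\mathcal{X}_{0,s}$. This yields a smooth $20$-dimensional family $\mathcal{X}_s \to S_s$ with a mixed marking over a contractible neighborhood, whose discriminant locus $\Delta_s \subset S_s$ contains an open neighborhood of $s$ in $\Delta_\lambda$, and whose period map $P_s \colon S_s \to \mathbb{D}(I)^\lambda$ is an order $k$ cyclic cover branched along the boundary divisor onto an open neighborhood of $\overline{\psi}(s)$. Using part (3) of Theorem \ref{thm:kulikov-torelli} together with the compatibility of markings, I may arrange so that $\mathcal{X}_s\big|_{\Delta_s \cap \Delta_\lambda}$ is canonically identified with the given family $\mathcal{X}_0 \to \Delta_\lambda$ over the overlap.

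Second, I would glue these local pieces. By the universality in Theorem \ref{thm:kulikov-torelli}(1), any two families $\mathcal{X}_s$ and $\mathcal{X}_{s'}$ are canonically isomorphic over the open subset of $S_s \cap S_{s'}$ where they parameterize isomorphic marked fibers. The hypothesis that the period map $\Delta_\lambda \to \Hom(\Lambda, \C^* \text{ or } \widetilde{\mathcal{E}})$ is an isomorphism ensures that the pieces match compatibly along the boundary, so the $S_s$'s assemble into a global $\mathcal{X} \to S_\lambda$ whose mixed period map is an order $k$ cyclic branched cover of an open neighborhood of the boundary divisor in $\mathbb{D}(I)^\lambda$. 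Uniqueness of the germ along $\Delta_\lambda$ then follows from the local uniqueness in Theorem \ref{thm:kulikov-torelli}: any two such smoothings must agree near each $s$ by universality, and since marked Kulikov fibers have no nontrivial automorphisms compatible with the marking, these identifications glue to a unique isomorphism of germs.

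The hard part will be the exceptional case $t = 0$, $k > 1$, where strict universality fails: the $d$-semistable deformation space has dimension $19 + k$ and fibers carry infinitesimal automorphism groups $(\C^*)^{k-1}$ that independently smooth the double curves. Here I would invoke the transversal slice construction of Remark \ref{not-universal}. The requirement that the period map be an order $k$ branched cover of a neighborhood of the boundary divisor pins the slice down uniquely, and the gluing is then controlled by the $(\C^*)^{k-1}$-action rather than by a categorical universal property. This case is also where uniqueness of the germ demands the most care, since it must be read modulo the action of these infinitesimal automorphisms.
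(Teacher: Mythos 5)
Your overall strategy — gluing universal $d$-semistable deformation spaces fiber by fiber, in analogy with Proposition~\ref{glue-def-smooth}, and handling the $t=0,k>1$ case via transverse slices as in Remark~\ref{not-universal} — is indeed the approach taken in the paper. However, your second step contains a genuine gap: you assert that "the hypothesis that the period map $\Delta_\lambda \to \mathrm{Hom}(\Lambda,\,\C^*\text{ or }\widetilde{\mathcal{E}})$ is an isomorphism ensures that the pieces match compatibly along the boundary, so the $S_s$'s assemble into a global $\mathcal{X}\to S_\lambda$ whose mixed period map is an order $k$ cyclic branched cover." This does not follow. Gluing universal deformation spaces along the loci of isomorphic marked fibers is exactly the construction that produces a \emph{non-Hausdorff} manifold (as for $\mathcal{M}$ in Proposition~\ref{glue-def-smooth}), and the period map on such a glued object is typically many-to-one. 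The hypothesis gives you injectivity of $\overline{\psi}$ on the glued discriminant $\Delta_\lambda$ only; it says nothing about two unglued points in $S_{X_1}$ and $S_{X_2}$ away from $\Delta_\lambda$ having the same period.

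The paper closes this gap with a shrinking argument that you have omitted: shrinking each $S_{X_i}$ to a relatively compact piece, then inductively removing from $S_{X_2}$ the closed set $V_2$ of points whose period is hit by the compact closure of $S_{X_1}$ but which are not identified with any point of $S_{X_1}$. The key point making this removal harmless is precisely the injectivity of $\overline{\psi}$ on $\Delta_\lambda$, which ensures $V_2$ is disjoint from the discriminant locus. Without this (or an equivalent device), you cannot conclude that the glued $S_\lambda$ maps via a cyclic branched cover onto an open neighborhood of the boundary divisor — you would only get a local branched cover, perhaps with uncontrolled global multiplicity or non-Hausdorff behaviour. You should incorporate this step explicitly.
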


\begin{proof}[Sketch.] The construction parallels that 
of \cite[Exp. XIII]{asterisque1985geometrie-des-surfaces}.
When $t>0$, or $t=0$ and $k=1$, we glue together the $20$-dimensional
bases of everywhere-universal deformations of the
fibers $X_0\subset \cX_0$. With the mixed markings, these bases either glue
uniquely (when $k=1$) or uniquely up to the order $k$ cyclic action
permuting the sheets of the period mapping (Thm.~\ref{thm:kulikov-torelli}).
Taking care to ensure that the glued base is Hausdorff, 
the resulting family $\mathcal{X}\to S_\lambda$ smooths
$\mathcal{X}_0\to \Delta_\lambda$ and the germ is unique
by local universality. 
The $t=0$, $k>1$ case is proven in the same way, by instead gluing
the slices $S$ of the ${\rm Aut}^0(X_0)$ action, see
Remark \ref{not-universal}. \end{proof}

\subsection{The gluing and period complexes} We now
explicitly construct families of Kulikov surfaces
$\mathcal{X}_0\to \Delta_\lambda$
satisfying the hypotheses of Proposition \ref{smooth-sing},
developing ideas in \cite[Sec.~4]{friedman1986type-III}.
We assume here that $X_0=\bigcup\, (V_i,D_i)$ is Type III.
For notational convenience, we drop the index $i$ when
analyzing an individual component.

Each component $(V,D)$ admits a toric model
$(V,D)\xleftarrow{f} (\widetilde{V},\widetilde{D})
\xrightarrow{g} (\overline{V},\overline{D})$
where $f$ is a sequence of corner blow-ups and
$g$ is a sequence of internal blow-ups (\ref{corner-and-internal}).
Note that $f$ has no moduli whereas $g$ can be varied
by moving the non-nodal points blown up on the $\overline{D}_j$. 
Note that unless $(V,D)$ is itself toric, the
toric model is non-unique.

\begin{definition} An {\it ordered toric model} of $(V,D)$ is an orientation
of the cycle $D$ and a toric model $f , g$ as above, together with a
factorization $g=\tau_Q\circ\cdots\circ \tau_1$ into internal blow-ups.
Here $Q=Q(V,D)$ is the charge (\ref{def:charge}).
An {\it ordered toric model} of $X_0$ is an orientation of 
$\Gamma(X_0)$, a toric model of each component $(V_i,D_i)$,
and a total ordering of the $24$ internal blow-ups. 
\end{definition}

An ordered toric model of $X_0$ orients
each cycle $D_i\subset V_i$ and thus gives a way to
label the nodes on the component $D_{ij}\subset D_i$
as $0$ and $\infty$. But on the double curve
$D_{ji}\subset D_j$ the corresponding nodes have
the opposite label, so the non-nodal points of
$D_{ij}$ and $D_{ji}$ are inverse torsors for $\C^*$.

\begin{construction}\label{gigantic} Fix an ordered toric model of $X_0$ and fix
 copies of the toric surfaces $(\overline{V}_i,\overline{D}_i)$.
For a given toric surface $(\overline{V},\overline{D})$, construct a family
$$(\widetilde{\mathcal{V}}, \widetilde{\mathcal{D}})\xrightarrow{\tau_Q} \cdots \xrightarrow{\tau_1}
(\overline{V},\overline{D})\times (\C^*)^Q$$ of anticanonical pairs over $(\C^*)^Q$ by freely
varying the points blown up by $\tau_k$. There exists a simultaneous contraction
$(\mathcal{V}, \mathcal{D})\to (\C^*)^Q$ which 
contracts the corner blowdowns of $f$ fiberwise. So we have families 
$(\mathcal{V}_i,\mathcal{D}_i)\to (\C^*)^{Q_i}$ for all $i$.

Now, for each $i$, choose some fiber $(V_i,D_i)$ of this family and glue
$D_{ij}\subset D_i$ to $D_{ji}\subset D_j$ by a map identifying the
appropriate nodes of $D_i$ and $D_j$. The set of such gluings is a
torsor over $\C^*$. Varying all such gluings, we get a
 family of Kulikov surfaces $$\mathcal{X}_0^{\rm gig}\to
\textstyle \prod_i (\C^*)^{Q_i}\times (\C^*)^E=(\C^*)^{24+E}$$
whose fibers are not necessarily $d$-semistable. Here $E$ is the number
of double curves of $X_0$. We call this the
{\it gigantic gluing family} of Kulikov surfaces associated to the
ordered toric model of $X_0$. It is globally topologically trivial by construction.
\end{construction} 

Choose an origin of the open torus orbit in the
fixed toric surface $(\overline{V}_i,\overline{D}_i)$. This choice defines
a distinguished origin point of any toric boundary component and 
thus defines an isomorphism of the $\C^*$-torsor associated to
any internal blow-up or any edge-gluing with $\C^*$.
So such a choice identifies the base of the gigantic gluing family
with ${\rm Hom}(\mathcal{G}_0,\,\C^*)$ where
$$\mathcal{G}_0:=\textstyle \bigoplus_{k=1}^{24} \Z E_{ijk}
\oplus \bigoplus_{i< j} \Z D_{ij}$$ is a free $\Z$-module
encoding the blow-up points of the $\tau_k$ and the gluing maps.
Here the index ${ijk}$ indicates that $E_{ijk}$ meets the component
$D_{ij}$ and is the $k$th internal blow-up in the ordered toric model.
Note that $D_{ij}$ range only over the curves corresponding
to actual double curves appearing in $X_0$ and not to boundary
components blown down in $(\widetilde{\mathcal{V}}_i, \widetilde{\mathcal{D}}_i)$.

Consider now automorphisms. Define $\textstyle \mathcal{G}_1:=\bigoplus_i M_i$
with each $M_i\cong \Z^2$ the character lattice of the
toric surface $(\overline{V}_i,\overline{D}_i)$. The set of choices
of origin points in the open torus orbit of $(\overline{V}_i,\overline{D}_i)$
is naturally a torsor over ${\rm Hom}(\mathcal{G}_1,\C^*)$.
Fixing the family $\mathcal{X}_0^{\rm gig}$ but varying the chosen
origin point defines an equivariant action of
${\rm Hom}(\mathcal{G}_1,\C^*)$ on 
$\mathcal{X}_0^{\rm gig}\to {\rm Hom}(\mathcal{G}_0,\C^*)$
by isomorphisms. On the base, this action
is determined by a map of $\Z$-modules $\mathcal{G}_0\to \mathcal{G}_1$.

\begin{definition} The {\it gluing complex} $\mathcal{G}$ is the two-step
complex $\mathcal{G}_0\xrightarrow{\partial_\mathcal{G}} \mathcal{G}_1$. \end{definition}

We describe the map $\partial_\mathcal{G}$ explicitly. 
An orientation of the cycle $D_i$ (and thus of $\overline{D}_i$)
gives a canonical identification $w_i:M_i\to N_i$ sending $v\mapsto \det(v,-)$.

\begin{proposition}\label{glue-boundary}
  We have
  \begin{displaymath}
    \partial_\mathcal{G}(E_{ijk}) = w_i^{-1}(v_{ij}),
    \qquad
    \partial_\mathcal{G}(D_{ij}) = w_i^{-1}(v_{ij})+w_j^{-1}(v_{ji}).
  \end{displaymath} 
Here $v_{ij}\in N_i$ in the cocharacter lattice of $(\overline{V}_i,\overline{D}_i)$ 
is the primitive integral vector in the fan of $(\overline{V}_i,\overline{D}_i)$ 
corresponding to the component $\overline{D}_{ij}$.
\end{proposition}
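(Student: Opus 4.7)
The plan is to compute the action of $\mathrm{Hom}(\mathcal{G}_1, \C^*) = \prod_i T_i$ on the parameter torus $\mathrm{Hom}(\mathcal{G}_0, \C^*)$ of the gigantic gluing family explicitly, then read off $\partial_\mathcal{G}$ by dualizing. Here $T_i = N_i \otimes \C^*$ is the open torus orbit of $(\overline{V}_i, \overline{D}_i)$, and an element $(t_i) \in \prod_i T_i$ corresponds to replacing each origin $o_i$ by $o_i \cdot t_i$, acting on all structures inside $\overline{V}_i$ by the translation $t_i$. The boundary map $\partial_\mathcal{G}$ is by definition the map of character lattices dual to this base action, so for each generator of $\mathcal{G}_0$ it suffices to compute which character of $T_i$ scales the corresponding coordinate in $\mathrm{Hom}(\mathcal{G}_0, \C^*)$.

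For the coordinate $E_{ijk}$: this records a point $p \in \overline{D}_{ij}^{\circ}$ identified with $\C^*$ using the origin $o_i$ and the orientation. Translation by $t_i$ sends $p$ to $t_i \cdot p$, which in the $\C^*$-coordinate is multiplication by the image of $t_i$ in the quotient $T_i/S_{ij}$, where $S_{ij} \subset T_i$ is the stabilizer of $\overline{D}_{ij}^{\circ}$. The stabilizer has cocharacter lattice $\Z v_{ij}$, so $T_i/S_{ij}$ has character lattice $v_{ij}^{\perp} \cap M_i$, a rank-one sublattice. The orientation of the cycle $D_i$ (equivalently, the counterclockwise orientation of the fan of $\overline{V}_i$) distinguishes a canonical primitive generator of this lattice; unwinding the definition of $w_i$ as the $90^{\circ}$-rotation-like isomorphism from $M_i$ to $N_i$, this generator is precisely $w_i^{-1}(v_{ij})$. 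Since $v_{ijk}$ is the fan ray corresponding to the component on which the $k$-th blow-up occurs (i.e.\ $v_{ijk} = v_{ij}$ under our indexing), we obtain $\partial_\mathcal{G}(E_{ijk}) = w_i^{-1}(v_{ijk})$, viewed as an element of the $M_i$-summand of $\mathcal{G}_1$.

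For the coordinate $D_{ij}$: this parameterizes the $\C^*$-torsor of gluings $\overline{D}_{ij} \to \overline{D}_{ji}$ matching the specified $0$- and $\infty$-nodes. By the same computation as above, translation by $t_i$ rescales the $\C^*$-parameterization of $\overline{D}_{ij}^{\circ}$ by $\chi_{w_i^{-1}(v_{ij})}(t_i)$, and translation by $t_j$ rescales that of $\overline{D}_{ji}^{\circ}$ by $\chi_{w_j^{-1}(v_{ji})}(t_j)$. The crucial observation, explicit in Construction~\ref{gigantic}, is that the cycle orientations of $D_i$ and $D_j$ induce \emph{opposite} orientations on the shared curve, so the gluing takes the form $z_j = \alpha \cdot z_i^{-1}$ in coordinates. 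A short substitution then gives the transformed gluing parameter $\alpha' = \alpha \cdot \chi_{w_i^{-1}(v_{ij})}(t_i) \cdot \chi_{w_j^{-1}(v_{ji})}(t_j)$: both contributions combine additively in the character lattice, yielding $\partial_\mathcal{G}(D_{ij}) = w_i^{-1}(v_{ij}) + w_j^{-1}(v_{ji})$.

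The main obstacle is bookkeeping of sign and orientation conventions, specifically verifying (i) that the chosen orientation of $D_i$ picks out $w_i^{-1}(v_{ij})$ rather than its negative as the correct generator of $v_{ij}^{\perp} \cap M_i$, and (ii) that the orientation-reversal of the shared double curve forces the two contributions for $D_{ij}$ to add rather than cancel. Once these conventions are matched to the construction of the gigantic gluing family and to the definition of $w_i$, the proposition follows directly from the two character computations above.
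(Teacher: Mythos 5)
Your proof is correct and follows essentially the same approach as the paper's: you identify the character through which the change-of-origin torus $T_i=\mathrm{Hom}(M_i,\C^*)$ scales the $\C^*$-coordinate on $\overline{D}_{ij}^{\circ}$ as the primitive element $w_i^{-1}(v_{ij})$ of $v_{ij}^{\perp}\cap M_i$, and then dualize. The paper's proof is essentially your argument compressed to two sentences; your additional bookkeeping about stabilizers, quotient tori, and the form $z_j=\alpha\,z_i^{-1}$ of the gluing (coming from the opposite induced orientations on the shared double curve) makes explicit why the two contributions for $D_{ij}$ add rather than cancel, which the paper leaves to the reader.
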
  

\begin{proof}
The action of a change-of-origin $c_i\in {\rm Hom}(M_i,\C^*)$
on the induced origin point of $\overline{D}_{ij}$ is given by
  $c_i(w_i^{-1}(v_{ij}))$. This factor scales either the
  gluing parameter between $D_{ij}$  and $D_{ji}$ or the position of
  any blow-up $E_{ijk}$ on the edge $\overline{D}_{ij}$.
     \end{proof}

Given Proposition \ref{glue-boundary}, it is convenient to identify
$\mathcal{G}_1 \cong \bigoplus_i N_i$ using the isomorphisms 
$w_i^{-1}$ on each summand, so that $\partial_\mathcal{G}(E_{ijk})=v_{ij}$ 
and $\partial_\mathcal{G}(D_{ij})= v_{ij}+v_{ji}$.

\begin{definition}
The {\it period complex} $\mathcal{P}$ of a Kulikov
model is the two-step complex $\mathcal{P}_0\xrightarrow{\partial_\mathcal{P}} \mathcal{P}_1$
where $\mathcal{P}_0=\bigoplus_i H^2(V_i)$,
$\mathcal{P}_1=\bigoplus_{i\leq j} H^2(D_{ij})$, and $\partial_P$ is the
signed restriction map with respect to an orientation
of the edges of $\Gamma(X_0)$. \end{definition}

\begin{theorem}\label{glue-period}
Let $X_0$ be a Type III
Kulikov surface with an ordered toric model. The gluing and
period complexes are quasi-isomorphic as complexes of $\Z$-modules.
In particular $H^0(\mathcal{G})=H^0(\mathcal{P})=\widetilde{\Lambda}$
and $K:=H^1(\mathcal{G})=H^1(\mathcal{P})$.
 \end{theorem}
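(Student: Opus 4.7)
The plan is to construct an explicit chain map $\phi\colon \mathcal{G}^\bullet \to \mathcal{P}^\bullet$ arising from the toric description of each component $V_i$, and verify it is a quasi-isomorphism by identifying $H^0$ through the local balance conditions in $N_i$ and matching $H^1$ via Euler characteristic plus a diagram chase. The essential algebraic input is the presentation of $H^2(V_i)$ obtained from the toric model $V_i \xleftarrow{f_i} \widetilde V_i \xrightarrow{g_i} \overline V_i$: starting from the standard toric sequence $0 \to M_i \to \bigoplus_\sigma \Z \overline D_{i\sigma} \to {\rm Pic}(\overline V_i) \to 0$, pulling back via $g_i^* \overline D_{i\sigma} = \widetilde D_{i\sigma} + \sum_{k\text{ on }\sigma} E_{ijk}$, and then quotienting by the corner exceptional divisors $F_{i\alpha}=\widetilde D_{i\alpha}$ contracted under $f_i$, one derives
$$H^2(V_i) \cong \Bigl(\bigoplus_j \Z D_{ij} \oplus \bigoplus_k \Z E_{ijk}\Bigr)\Big/\bigl\langle \textstyle\sum_j (m\cdot v_{ij})\, D_{ij} + \sum_k (m \cdot v_{ijk})\, E_{ijk} : m \in M_i\bigr\rangle.$$

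Using this presentation, I define $\phi_0\colon \mathcal{G}_0 \to \mathcal{P}_0$ by $E_{ijk} \mapsto [E_{ijk}] \in H^2(V_i)$ and $D_{ij} \mapsto [D_{ij}]_{V_i} - [D_{ji}]_{V_j}$ (signs chosen by the orientation of $\Gamma(X_0)$), and $\phi_1\colon \bigoplus_i N_i \to \bigoplus_{i<j} \Z$ by linearly extending the assignment $\phi_1(v_{ij}) = e_{(i,j)}$. The chain-map square is then verified on generators: for $E_{ijk}$, both sides yield $e_{(i,j)}$ using $E_{ijk}\cdot D_{ij}=1$; for $D_{ij}$, the $(i,j)$-entry equals $D_{ij}^2+D_{ji}^2 = -2$ by Proposition~\ref{prop:kulikov}(2), matching $\phi_1(v_{ij}+v_{ji})=-2$ by construction, while entries on other edges vanish because adjacent double curves in $V_i$ meet only at triple points.

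For the $H^0$ identification, an element $\xi = \sum a_{ijk}E_{ijk} + \sum b_{ij} D_{ij}\in \ker\partial_\mathcal{G}$ satisfies the balance $\sum_{jk} a_{ijk}v_{ij} + \sum_j b_{ij} v_{ij} = 0$ in each $N_i$, which by the toric presentation is precisely the condition for $\phi_0(\xi)|_i$ to descend to a well-defined element of $H^2(V_i)$; the matched coefficients $b_{ij}$ on both sides further enforce agreement of restrictions to $D_{ij}$, placing $\phi_0(\xi)$ in $\wt\Lambda=\ker\partial_\mathcal{P}$, and every numerically Cartier class arises uniquely this way. For $H^1$, direct computation gives $\chi(\mathcal{G}) = (24+E)-2V = \sum_i(e_i+Q_i-2) - E = \chi(\mathcal{P})$ using $\rho(V_i) = e_i+Q_i-2$ (from the presentation, with $e_i$ the valence of $V_i$ and $Q_i$ its charge), $\sum_i e_i = 2E$, and $\sum_i Q_i = 24$ from Proposition~\ref{conserve-charge}; combined with the $H^0$ isomorphism this pins down the rank of $H^1$. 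Torsion discrepancies are ruled out by fitting $\phi$ into a morphism of short exact sequences of complexes whose kernel and cokernel complexes encode the toric relation lattices $M_i$ and are acyclic.

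The main obstacle is pinning down $\phi_1$ canonically enough that the chain-map square commutes integrally (not merely rationally), with all signs aligned to the orientation of $\Gamma(X_0)$. Relating the $M_i$-relation presentation of $H^2(V_i)$ with the $N_i$-balance condition via the dual toric sequence $0 \to {\rm Pic}(\overline V_i)^\vee \to \bigoplus_\sigma \Z \to N_i \to 0$, and ensuring no torsion obstruction appears in $H^1$, requires careful bookkeeping of signs and divisibility data across all vertices and edges of $\Gamma(X_0)$.
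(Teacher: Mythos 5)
Your plan—to build a direct chain map $\phi\colon\mathcal{G}^\bullet\to\mathcal{P}^\bullet$—fails at the definition of $\phi_1$. The primitive ray generators $\{v_{ij}\}_j$ of the complete fan of $\overline V_i$ span the rank-two lattice $N_i$ but are far from linearly independent; every $\Z$-linear relation $\sum_j c_j v_{ij}=0$ (e.g.\ $v_1+v_2+v_3=0$ for $\mathbb{P}^2$) must be sent to zero by $\phi_1$, yet $\sum_j c_j e_{(i,j)}\ne 0$ in general. Equivalently, you would need the composite $\Pic(\overline V_i)^\vee\hookrightarrow\bigoplus_j\Z\to\bigoplus_j\Z e_{(i,j)}$ to vanish, and it is injective, not zero. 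So ``linearly extending $\phi_1(v_{ij})=e_{(i,j)}$'' does not define a homomorphism out of $N_i$.

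Even setting that aside, the claim that the chain-map square commutes because ``entries on other edges vanish since adjacent double curves in $V_i$ meet only at triple points'' is reversed: adjacent components of the cycle $D_i$ do meet at a triple point, so $D_{ij}\cdot D_{ij'}=1$ for $j'$ a cyclic neighbor of $j$. Hence $\partial_\mathcal{P}\bigl(\phi_0(D_{ij})\bigr)$ has nonzero entries on the (up to) four edges adjacent to $\{i,j\}$, while $\phi_1\bigl(\partial_\mathcal{G}(D_{ij})\bigr)=\phi_1(v_{ij}+v_{ji})$ is supported only on $e_{(i,j)}$. There is also a sign mismatch: your $\phi_1$ gives $+2e_{(i,j)}$ while $D_{ij}^2+D_{ji}^2=-2$ for a smooth double curve. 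These are not repairable by re-choosing signs; there simply is no natural chain map $\mathcal{G}\to\mathcal{P}$ of the kind you describe. The paper avoids this by not constructing a direct map at all: it produces an enlarged complex $\widetilde{\mathcal{G}}$ (with one copy of $\Z\oD_{ij}$ per flag $(i,j)$ rather than per edge, and target $\bigoplus_{i\le j}\Z D_{ij}\oplus N_i$), exhibits $\mathcal{P}$ as a subcomplex with acyclic quotient $\bigoplus N_i\to\bigoplus N_i$, and separately gives an explicit quasi-isomorphism $\mathcal{G}\to\widetilde{\mathcal{G}}$. The quasi-isomorphism $\mathcal{G}\simeq\mathcal{P}$ is a zig-zag through $\widetilde{\mathcal{G}}$, not a single arrow. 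Your Euler-characteristic computation and the sketch of the torsion argument via acyclic kernel/cokernel complexes are on the right track in spirit, but they rest on a chain map that does not exist; you would need to insert the intermediary $\widetilde{\mathcal{G}}$ to make any version of this work.
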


\begin{proof} We first record some exact sequences of Picard groups arising from the basic results
on smooth projective toric surfaces: 

\begin{lemma}\label{quasiiso} Write $(V_i,D_i=\sum_j D_{ij})$ as $(V,D=\sum D_j)$.
For each component, one has the following exact sequences:
  \begin{eqnarray*}
    &0 \to \Pic \oV \to \oplus \bZ\oD_j \to N \to 0,
      &\textstyle \oL \mapsto \sum (\oL\cdot \oD_j) \oD_j \\
    &0 \to \Pic \wV \to \oplus \bZ\oD_j\oplus\bZ E_{jk} \to N \to 0,
      & \textstyle \wL \mapsto \sum (\oL\cdot \oD_j) \oD_j +
    \sum (\wL\cdot E_{jk}) E_{jk}\\
    &0 \to \Pic V\to \oplus \bZ\oD_j\oplus\bZ E_{jk} \to N \to 0,
      & \textstyle L \mapsto \sum (\oL\cdot \oD_j) \oD_j +
    \sum (\wL\cdot E_{jk}) E_{jk}
  \end{eqnarray*}
  where
  \begin{enumerate}
  \item $L,\wL,\oL$ are the line bundles on $V,\wV,\oV$,
    and $\oL = g_*\wL$.
  \item In the last line we take $\wL = f^*L$.
  \item In the last line the sum goes only over $\oD_j$
    such that $D_j = f_* \wD_j \ne 0$.
  \item $\oD_j\mapsto v_j$ and $E_{jk} \mapsto v_j$.  \end{enumerate}
\end{lemma}

Notationally reincorporating the dependence on $i$, and 
summing these exact sequences, we get a short exact sequence of two-term complexes:

  \begin{displaymath}
  \begin{tikzcd}
    & 0 \arrow[d] 
    & 0 \arrow[d] \\
    0 \arrow[r]
    & \oplus \Pic V_i \arrow[d] \arrow[r]
    & \oplus_{i\leq j} \bZ D_{ij} \arrow[d] \arrow[r]
    & 0 \\
    0 \arrow[r]
    & \oplus_{i,j} \bZ\oD_{ij} \oplus_{k=1}^{24}\bZ E_{ijk} \arrow[d] \arrow[r]
    & \oplus_{i\leq j} \bZ D_{ij} \oplus N_i \arrow[d] \arrow[r]
    & 0 \\
    0 \arrow[r]
    & \oplus N_i \arrow[d] \arrow[r]
    & \oplus N_i \arrow[d] \arrow[r]
    & 0 \\
    & 0 
    & 0 
  \end{tikzcd}
  \end{displaymath}
  
  Note that: 
  \begin{enumerate}
  \item The first column is a direct sum of sequences from the
    previous lemma.
  \item In the first line $\partial_\mathcal{P}\colon L_i\mapsto \pm \sum (L_i\cdot D_{ij}) D_{ij}$
  is the signed restriction map. 
  \item In the second column, the first map sends $D_{ij}\mapsto D_{ij} + v_{ij}$.
  \item In the second line,
    $\oD_{ij} \mapsto D_{ij} + v_{ij}$ and
    $\oD_{ji} \mapsto -D_{ij} + v_{ji}$ if the corresponding edge is oriented from
    $i$ to $j$. Also, for all $i$ and $j$,
    $E_{ijk} \mapsto v_{ij}$.
  \end{enumerate}

  The commutativity of the diagram follows from 
  \begin{math}
    \wD_i = f_i^* (\oD_i) - \sum_{j,k} E_{ijk}.
  \end{math}
  Since the last complex is acyclic, the complex $\mathcal{P}$ is
  quasi-isomorphic to the second complex $\widetilde{\mathcal{G}}$. There also is a
  quasi-isomorphism $\mathcal{G}\to \widetilde{\mathcal{G}}$. On
$\mathcal{G}_0$ it maps $\overline{D}_{ij}\mapsto \overline{D}_{ij}+\overline{D}_{ji}$ and
$E_{ijk}\mapsto E_{ijk}$ and on $\mathcal{G}_1$ it is $(0,\,{\rm id})$.
\end{proof}

\begin{proposition}\label{glue-period2}
The gigantic gluing family
$\mathcal{X}_0^{\rm gig}\to {\rm Hom}(\mathcal{G}_0,\C^*)$
descends along the canonical surjection
${\rm Hom}(\mathcal{G}_0,\C^*)\twoheadrightarrow{\rm Hom}(H^0(\mathcal{G}),\C^*)$.
Furthermore, the isomorphism
${\rm Hom}(H^0(\mathcal{G}),\C^*) = {\rm Hom}(\widetilde{\Lambda},\,\C^*)$
induced by Theorem \ref{glue-period} is the period map
of the descended family.
\end{proposition}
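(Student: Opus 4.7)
The plan is to prove the proposition in two parts: (A) descent along the canonical surjection, and (B) identification of the induced map with the period map $\widetilde\psi$.

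\textbf{(A) Descent.} By the construction of $\mathcal{X}_0^{\rm gig}$, the group ${\rm Hom}(\mathcal{G}_1,\C^*)$ acts by fiberwise isomorphisms, covering the action on the base ${\rm Hom}(\mathcal{G}_0,\C^*)$ given by $\partial_\mathcal{G}^* = {\rm Hom}(\partial_\mathcal{G},\C^*)$. Since $\C^*$ is divisible, the functor ${\rm Hom}(-,\C^*)$ is exact on finitely generated $\Z$-modules, so dualizing the short exact sequence $0 \to H^0(\mathcal{G}) \to \mathcal{G}_0 \to \im\partial_\mathcal{G} \to 0$ yields
\[
0 \to {\rm Hom}(\im\partial_\mathcal{G},\C^*) \to {\rm Hom}(\mathcal{G}_0,\C^*) \twoheadrightarrow {\rm Hom}(H^0(\mathcal{G}),\C^*) \to 0,
\]
and the image of $\partial_\mathcal{G}^*$ coincides with the kernel of the right-hand surjection. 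Standard descent for a group action by isomorphisms then produces the descended family over ${\rm Hom}(H^0(\mathcal{G}),\C^*)$.

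\textbf{(B) Period identification.} Fix $\chi \in {\rm Hom}(\mathcal{G}_0,\C^*)$ and let $X_0^\chi$ be the corresponding Kulikov fiber, with blow-up positions $\chi(E_{ijk}) \in \C^*$ and gluing parameters $\chi(D_{ij}) \in \C^*$. For $\alpha \in H^0(\mathcal{G}) = \widetilde\Lambda$, the claim is $\widetilde\psi_{X_0^\chi}(\alpha) = \chi(\alpha)$. Using the quasi-isomorphism $\mathcal{G} \to \widetilde{\mathcal{G}} \leftarrow \mathcal{P}$ of Theorem \ref{glue-period}, represent $\alpha$ by a compatible collection $(L_i) \in \bigoplus_i \Pic V_i$ and further by the pullbacks $\widetilde L_i = f_i^* L_i$ written in the basis $\{\overline{D}_{ij}, E_{ijk}\}$ of $\Pic \widetilde V_i$ as $\sum_j a_{ij}\overline{D}_{ij} + \sum_k c_{ijk} E_{ijk}$. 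Now execute Construction \ref{iii-unwind} on $X_0^\chi$: starting from $L_1$, extend across each successive double curve $D_{ij}$ using the canonical isomorphism determined by the toric coordinates on $(\overline V_i, \overline D_i)$. Each gluing then contributes a factor $\chi(D_{ij})^{a_{ij}}$, and each internal blow-up contributes a factor $\chi(E_{ijk})^{c_{ijk}}$. By the explicit form of the quasi-isomorphism $\mathcal{G}_0 \to \widetilde{\mathcal{G}}_0$ (sending $D_{ij} \mapsto \overline{D}_{ij} + \overline{D}_{ji}$ and $E_{ijk} \mapsto E_{ijk}$), the product of these contributions is precisely $\chi(\alpha)$.

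\textbf{Main obstacle.} The delicate step is (B): one must confirm that the canonical trivializations of $L_i|_{D_{ij}}$ implicit in Construction \ref{iii-unwind} coincide with those induced by the toric coordinates used in Construction \ref{gigantic}, so that the gluing and blow-up parameters accumulate multiplicatively to give exactly $\chi(\alpha)$. Sign conventions arising from the orientations of $\Gamma(X_0)$ and of the cycles $D_i$, together with the identifications $w_i\colon M_i \to N_i$, must all be tracked carefully; Proposition \ref{glue-boundary} supplies the needed compatibility between these orientations and the boundary map $\partial_\mathcal{G}$. I would organize the verification by first checking the identity on distinguished generators of $H^0(\mathcal{G})$ (for instance, classes supported on a single edge of $\Gamma(X_0)$, and classes realized by an internal blow-up at a single component), and then extending by $\Z$-linearity.
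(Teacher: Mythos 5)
Your part (A) is essentially the paper's argument: the paper constructs the quotient by restricting $\mathcal{X}_0^{\rm gig}$ to a subtorus transverse to the orbits of ${\rm Hom}(\mathcal{G}_1,\C^*)$, which is the explicit slice realizing the ``standard descent'' you invoke; the exactness of ${\rm Hom}(-,\C^*)$ that you spell out is implicit there. No issue.

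In part (B) there is a concrete error. You write $\widetilde L_i = f_i^*L_i$ ``in the basis $\{\overline D_{ij},E_{ijk}\}$ of $\Pic\widetilde V_i$,'' but this set is not a basis: $\Pic\widetilde V_i = \Pic\overline V_i \oplus \bigoplus_k \Z E_{ijk}$, and the boundary divisors $\overline D_{ij}$ satisfy two relations in $\Pic\overline V_i$ (the sequence $0\to M_i\to\bigoplus_j\Z\overline D_{ij}\to\Pic\overline V_i\to 0$). So the expansion coefficients $a_{ij}$ are only well defined up to the image of $M_i$, and the product $\prod\chi(D_{ij})^{a_{ij}}$ is not a function of $\alpha$ and $\chi$ alone. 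What the computation actually requires are the intersection numbers $a_{ij} = \overline L_i\cdot\overline D_{ij}$ and $c_{ijk}=\widetilde L_i\cdot E_{ijk}$, which are precisely the components of the chain map $\mathcal{P}_0\to\widetilde{\mathcal{G}}_0$ from the lemma in Theorem~\ref{glue-period}. With that correction, the pointwise identity $\widetilde\psi_{X_0^\chi}(\alpha)=\chi(\alpha)$ is the intended content, but your ``each gluing contributes a factor'' picture still needs the trivialization-compatibility check you flag as the main obstacle.

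The paper circumvents all of this by running a differential rather than pointwise calculation: it shows, via Construction~\ref{iii-unwind} with $V_i$ glued in last, that rescaling the gluing parameter at $D_{ij}$ (resp.\ the blow-up position $E_{ijk}$) by $c$ multiplies $\psi_{X_0}(\gamma)$ by $c^{\gamma\cdot D_{ij}}$ (resp.\ $c^{f_i^*\gamma\cdot E_{ijk}}$). These exponents are intersection numbers computed directly on $V_i$, not through the toric model expansion, so the trivialization and sign bookkeeping you worry about cancel automatically. This equivariance is then matched against the explicit chain map $\mathcal{P}_0\to\widetilde{\mathcal{G}}_0$, giving the identification. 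Your approach can be repaired along these lines, but as written the non-basis expansion is a gap, and the differential computation is cleaner.
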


\begin{proof}
As noted, ${\rm Hom}(\mathcal{G}_1,\C^*)$ acts 
by automorphisms on $\mathcal{X}_0^{\rm gig}$. The action is free 
and the quotient can be constructed, for instance, by restricting 
$\mathcal{X}_0^{\rm gig}$ to a subtorus of ${\rm Hom}(\mathcal{G}_0,\C^*)$
 which intersects each orbit of ${\rm Hom}(\mathcal{G}_1,\C^*)$ exactly once.

For the second statement, it suffices to show that the action 
of regluing on periods is described by the isomorphism 
$H^0(\mathcal{G})\to \widetilde{\Lambda}$. By Construction 
\ref{iii-unwind}, regluing $D_{ij}$ or moving the blow-up point of $E_{ijk}$
by $c\in \C^*$ can be computed by gluing in $V_i$ as the last component.
The action is
\begin{align*}
  \psi_{X_0}(\gamma)
  \mapsto
    c^{\gamma\cdot D_{ij}}\psi_{X_0}(\gamma)
  \quad\textrm{and}\quad
    \psi_{X_0}(\gamma)
  \mapsto c^{f_i^*\gamma\cdot E_{ijk}}\psi_{X_0}(\gamma)
    \textrm{ for }\gamma\in\widetilde{\Lambda}.
\end{align*}
This exactly corresponds to the first chain map
$\mathcal{P}_0\to \widetilde{\mathcal{G}}_0$ in Theorem
\ref{glue-period}. Thus the map $\mathcal{P}_0\to\widetilde{\mathcal{G}}_0$ in
Lemma \ref{quasiiso} is the natural one,
$H^0(\mathcal{P})=H^0(\mathcal{G})$ canonically, 
and the proposition follows.
 \end{proof}

\begin{definition}
Let $X_0$ be a Type III Kulikov surface with ordered
 toric model. Define the {\it big gluing family} to be the descended
 family $\mathcal{X}_0^{\rm big}\to {\rm Hom}(\widetilde{\Lambda},\C^*)$
 from Proposition \ref{glue-period2}, for which the period
 map is an isomorphism.
 \end{definition} 

Now observe that ${\rm Hom}(\Lambda,\C^*)$ is the subtorus
of ${\rm Hom}(\widetilde{\Lambda},\C^*)$ corresponding to the
$d$-semistable Kulikov surfaces. So we define:

\begin{definition}
The {\it gluing family} associated to the ordered
toric model of $X_0$ is the restriction of $\mathcal{X}_0^{\rm big}$
to the subtorus $\mathcal{X}_0\to {\rm Hom}(\Lambda,\C^*)=\Delta_\lambda.$
The period map is an isomorphism.
\end{definition}

\begin{definition}\label{lambda-fam}
The {\it $\lambda$-family} of a Type III Kulikov
surface $X_0$ with ordered toric model is the unique germ $\mathcal{X}\to S_\lambda$
of the universal smoothing (Prop. \ref{smooth-sing}) of the gluing family
$\mathcal{X}_0\to \Delta_\lambda$. \end{definition} 

We delay the construction of $\lambda$-families in Type II, as some complications
arise from the non-existence of toric models.

\subsection{The global Friedman-Scattone theorem}\label{sec:global-fs}
We now discuss birational modifications. Let $X\to (C,0)$ be a Kulikov model of Type I, II, or III.

\begin{definition}
An {\it (M0), (M1), or (M2) modification} of $X$ is the
flop along a curve $E\cong \mathbb{P}^1$ in
the central fiber $X_0$.
The cases are distinguished by
when $E\cap (X_0)_{\rm sing}=\emptyset$,
when $E\cap (X_0)_{\rm sing}=\{\rm pt\}$, or 
when $E\subset (X_0)_{\rm sing}$, respectively.
\end{definition}

We describe the effect of each modification
on the central fiber $X_0$:

\begin{enumerate}

\item[(M0)] flops a smooth
$(-2)$-curve in $X_0$ which does not
deform to the general fiber.
It leaves the isomorphism
type of $X_0$ invariant. 

\item[(M1)] flops an internal exceptional
$(-1)$-curve $E$ on a component $V_i\subset X_0$.
The effect on the central
fiber is to contract $E\subset V_i$ and blow
up the intersection point $E\cap D_{ij}$
on $V_j$.

\item[(M2)] flops a double curve
$D_{ij}$ which is exceptional
on both components on which it lies. The effect
on $X_0$ is to contract $D_{ij}$ on both
$V_i$, $V_j$ and to make corner blow-ups
on the two remaining components
$V_\ell$, $V_r$ which $E$ intersects.
\end{enumerate}

\begin{notation}
In the book \cite{friedman1983the-birational-geometry},
M0, M1, M2 modifications
are called Type 0, 1, 2 modifications, but we find this to 
conflict with the already existing usage of the word
``Type." 
\end{notation}

\cite[Thm.~0.6]{friedman1986type-III} states that
any two Kulikov models with the same $(k,t)$ are related by M0, M1, and M2 modifications and topologically trivial deformations. We require an analogue of this
statement on the level of the entire $\lambda$-family.

\begin{definition}\label{gm-mod} Let $\mathcal{X}\to S_\lambda$ be the $\lambda$-family
associated to some ordered toric model of $X_0$. Let $B\subset S_\lambda$
be a smooth divisor and let $\mathcal{E}\to B$ be a smooth $\mathbb{P}^1$-fibration
for which the normal bundle to $\mathcal{E}$ restricts to
$\mathcal{O}(-1)\oplus \mathcal{O}(-1)$ on every fiber. We call the
relative flop along $\mathcal{E}$:
\begin{enumerate}
\item[(GM0)] if $B$ is the closure in $S_\lambda$ of a Noether-Lefschetz divisor of
K3 surfaces with a $(-2)$-curve, and $\mathcal{E}$ is the family of $(-2)$-curves.
\item[(GM1)] if $B=\Delta_\lambda$ is the discriminant, and $\mathcal{E}$ is a
family of internal exceptional curves meeting a relative double curve $\mathcal{D}_{ij}$.
\item[(GM2)]  $B=\Delta_\lambda$ is the discriminant, and $\mathcal{E}$
is a family of relative double curves $\mathcal{D}_{ij}$ which is, on each fiber,
exceptional on both components.
\end{enumerate}
 \end{definition}

In all three cases, the divisor $B\subset S_\lambda$ is smooth. Indeed
in the GM1, GM2 cases, $B$ is the (smooth) discriminant
divisor $\Delta$, and in the GM0 case it is the closure of
Noether-Lefschetz divisor, a hypersurface subtorus, in the
divisorial toroidal extension $S_\lambda$.

The relative flop $\cX\dashrightarrow\cX'$ along $\cE$ exists. Indeed, let
$\wt\cX\to\cX$ be the blowup along $\cE$. The exceptional divisor
$\wt\cE$ is a $(\bP^1\times\bP^1)$-fibration over $B$ and $\wt\cE
\cdot \ell=-1$ for the lines of either ruling. By
\cite{nakano1971on-the-inverse} there exists the contraction
$\wt\cX\to\cX'$ along the second ruling so that $\wt\cX$ is the
blowup along $\cE'\subset\cX'$, a $\bP^1$-fibration over $B$.

\begin{example}
Fix a Kulikov surface $X_0=\bigcup V_i$
in the $\lambda$-family $\mathcal{X}\to S_\lambda$
and consider a boundary divisor $\overline{D}_{ij}\subset \overline{V}_i$
of some ordered
toric model which receives two internal blow-ups $E_1$ and $E_2$.
On the sublocus of $\Delta_\lambda$ where the two
blow-up points coincide, the first $(-1)$-curve $E_1$
breaks into the union of the $(-1)$-curve $E_2$
and a $(-2)$-curve with class
$E_1-E_2$.
But the second exceptional curve $E_2$ never breaks, and thus
satisfies the conditions of Definition \ref{gm-mod}(GM1).
\end{example}

\begin{theorem}\label{global-fs} Any two $\lambda$-families
$\mathcal{X}\to S_\lambda$ and $\mathcal{X}'\to S_\lambda$ 
with the same $(k,t)$ are related by a series of GM0, GM1,
and GM2 modifications. \end{theorem}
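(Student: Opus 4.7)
The plan is to reduce Theorem \ref{global-fs} to the (fiberwise) Friedman--Scattone theorem \cite[Thm.~0.6]{friedman1986type-III}, which asserts that any two Kulikov models with the same monodromy invariants $(k,t)$ are connected by a finite sequence of M0, M1, and M2 modifications combined with topologically trivial deformations. The task is to promote these fiberwise modifications to GM modifications of the total family.

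First I identify the two families over the smooth locus of $S_\lambda$. By Proposition \ref{smooth-sing}, the period maps $\overline{\psi},\overline{\psi}' \colon S_\lambda \to \mathbb{D}(I)^\lambda$ are order-$k$ branched covers of the same tubular neighborhood of the boundary divisor, ramified along $\Delta_\lambda$. After shrinking $S_\lambda$ and labeling sheets compatibly, I may assume both period maps coincide, and Proposition \ref{partial-fine} then provides a canonical isomorphism between the restrictions of $\mathcal{X}$ and $\mathcal{X}'$ to $S_\lambda \setminus \Delta_\lambda$ as partially marked families of K3 surfaces. Pick a general point $p \in \Delta_\lambda$ and denote by $X_0$, $X_0'$ the central fibers at $p$. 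These are Kulikov models of the same K3 degeneration with the same $(k,t)$, so the local Friedman--Scattone theorem produces a chain $X_0 = Y^{(0)} \dashrightarrow \cdots \dashrightarrow Y^{(N)} = X_0'$ of M0, M1, and M2 modifications; any intermediate topologically trivial deformations can be absorbed by translating $p$ within $\Delta_\lambda$.

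The main work is to globalize each fiberwise modification $Y^{(n-1)} \dashrightarrow Y^{(n)}$ to a GM modification of the current $\lambda$-family. Let $E$ be the flopped $\mathbb{P}^1$ on $Y^{(n-1)}$. For M1 and M2, the curve $E$ is an exceptional curve on the Kulikov surface; the gluing family construction (Construction \ref{gigantic}) shows that $E$ deforms to a smooth $\mathbb{P}^1$-bundle $\mathcal{E} \to \Delta_\lambda$ over a Zariski-open subset, with normal bundle still $\mathcal{O}(-1)^{\oplus 2}$ by upper semicontinuity. The relative flop then exists analytically by \cite{grauer1962uber_modifikationen} and defines a GM1 or GM2 modification. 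For M0, the curve $E$ lies on a single component $V_i$; its image under the Clemens collapse gives a class in $H^2(X_t)$ which is Hodge precisely along a Noether--Lefschetz divisor $B \subset S_\lambda$ through $p$. The infinitesimal rigidity of $(-1,-1)$-curves together with the topological triviality of $\mathcal{X}|_{\Delta_\lambda}$ produces a family $\mathcal{E} \to B$ of effective $(-2)$-curves, whose relative flop is a GM0 modification.

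After performing the global GM modifications in order, the resulting $\lambda$-family $\widetilde{\mathcal{X}}$ has central fiber at $p$ isomorphic to $X_0'$, so its gluing family along $\Delta_\lambda$ agrees with that of $\mathcal{X}'$ up to topologically trivial deformation; the uniqueness of the universal smoothing in Proposition \ref{smooth-sing} then yields $\widetilde{\mathcal{X}} \cong \mathcal{X}'$ as germs along $\Delta_\lambda$. The principal obstacle is the global existence of the flopping families $\mathcal{E}$ across specialization loci in $\Delta_\lambda$ (or $B$) where, as noted in the remark preceding the theorem, an internal $(-1)$-curve may decompose into a $(-1,-1)$-curve plus a chain of $(-2)$-curves. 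Two strategies circumvent this: either avoid these loci, which have positive codimension and so do not affect the germ of $S_\lambda$ along $\Delta_\lambda$, or first apply Theorem \ref{minus-one} (Miranda--Morrison) to both $\mathcal{X}$ and $\mathcal{X}'$ to bring the central fibers into balanced form, in which the flopping curves to be used are supported uniformly across the discriminant; this is in the spirit of the reduction used in the proof of Proposition \ref{lambda-tf}.
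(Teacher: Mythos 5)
Your proof shares the broad skeleton of the paper's argument (fiberwise Friedman--Scattone on a transverse arc, followed by globalization), and you correctly identify the central obstruction: an internal exceptional $(-1)$-curve $E$ on a component may break into a chain as one moves within $\Delta_\lambda$, so the relative flopping $\mathbb{P}^1$-bundle $\mathcal{E}$ need not exist over all of the discriminant. But both of the resolutions you offer for that obstruction fail, and the paper's actual device is absent from your argument.

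Strategy (a), discarding the breaking locus because it has positive codimension, does not work. The ``germ of $S_\lambda$ along $\Delta_\lambda$'' means a neighborhood of \emph{all} of $\Delta_\lambda$ in the transverse direction; one does not shrink $\Delta_\lambda$ itself. The breaking locus is a nonempty proper analytic subset of $\Delta_\lambda$ and is present in any such germ, so the relative flop still has to be performed across it. Strategy (b), passing to balanced form via Theorem~\ref{minus-one}, has two problems. First, Miranda--Morrison is itself a fiberwise statement consisting of M0/M1/M2 modifications, so invoking it globally begs the question: those modifications would themselves need to be globalized. Second, and more decisively, balanced form only constrains the self-intersections of the \emph{double curves} $D_{ij}$ and says nothing about the internal structure of the components $V_i$; the breaking phenomenon is caused by multiple internal blow-ups on the same boundary component $\overline{D}_{ij}$ colliding, and this persists in balanced form. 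Your globalization of M0 modifications has a similar unaddressed issue: there is no reason the family of $(-2)$-curves over the Noether--Lefschetz divisor $B$ is irreducible (it can also break), and ``infinitesimal rigidity plus topological triviality'' does not by itself produce the smooth $\mathbb{P}^1$-fibration required by the definition of a GM0 modification.

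The paper's actual mechanism, which is missing from your proposal, is Lemma~\ref{toric-mod}: using a theorem of Blanc on connecting ordered toric models by elementary mutations and order switches, one shows that any two ordered toric models of a component give families related by isomorphisms and GM0 modifications. The point is that one can then \emph{reorder} the toric model so that the curve $E$ one wishes to flop is the \emph{last} internal blow-up $E_Q$; being last, it can never break (breaking is caused by later blow-ups at the same point), so $\mathcal{E}_Q$ is a global $\mathbb{P}^1$-bundle over all of $\Delta_\lambda$ and the GM1 flop exists. One also needs to check that the auxiliary GM0 modifications extend from $\mathcal{X}_0$ to $\mathcal{X}$, which the paper does by noting that the relevant $(-2)$-class $\mathcal{E}_{i+1}-\mathcal{E}_i$ smooths over a Noether--Lefschetz divisor. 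Finally, the paper sidesteps fiberwise M0 modifications entirely by choosing the arc $(C,0)$ so that the generic Picard group is $\bZ L$ and invoking Shepherd-Barron's result directly, whereas your argument both invokes them and then has to globalize them. Without the reordering lemma, the globalization step in your argument has a genuine gap.
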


\begin{proof} Choose an arc $(C,0)$
intersecting $\Delta_\lambda\subset S_\lambda$ transversely,
mapping $C^*$ generically
into a locus of K3 surfaces with Picard group $\Z L$,
$L^2=2d$.
Consider the two Kulikov models
$X\to (C,0)$ and $X'\to (C,0)$. Then the punctured families
$X^*,(X')^*\to (C,0)$ are isomorphic as families of (partially
marked) K3 surfaces, and admit polarizations.
So by \cite[Cor.~3.1]{shepherd-barron1981extending-polarizations}
there exists a sequence of M1 and
M2 modifications $X\dashrightarrow X'$ connecting them.
Requiring $\ker\psi_{X_0}=\Z L$, no modification of $X$
supports a $(-2)$-curve, eliminating the need for M0 modifications.

We now seek to globalize these modifications to
a sequence of GM0, GM1, GM2 modifications. There
is no obstruction to globalizing an M2 modification to GM2,
since the relative double curve $\mathcal{D}_{ij}$ never
breaks. For GM0 and GM1 modifications, it suffices to
work component-wise.

\begin{lemma}\label{toric-mod} Let $(\mathcal{V},\mathcal{D})\to (\C^*)^Q$
 and $(\mathcal{V}',\mathcal{D}')\to (\C^*)^Q$ be
 families of anticanonical pairs (see \ref{gigantic}) associated to two ordered toric models
 of a given pair $(V,D)$. Then, there is an isomorphism
 in $S_Q^{\pm }$ of the bases 
 and a sequence of GM0 modifications connecting
 $\mathcal{V}$ and $\mathcal{V}'$.  \end{lemma}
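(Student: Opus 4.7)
The plan is to reduce the comparison of two ordered toric models to two types of elementary moves, each realizable as a sequence of GM0 modifications in the family. First I would treat two ordered toric models sharing the same underlying unordered data (same $\widetilde{V}$, same toric target $(\overline{V},\overline{D})$, and same set of internal blow-up centers), but differing only in the ordering $\tau_Q\circ\cdots\circ\tau_1$. The families $\mathcal{V}$ and $\mathcal{V}'$ are then canonically isomorphic over the dense open subset of $(\C^*)^Q$ where all $Q$ blow-up positions are pairwise distinct on the toric boundary $\overline{D}$. Along the codimension-one locus where two positions $c_a=c_b$ collide on the same boundary component, the fiber acquires a chain of exceptional curves; in one ordering the first-blown-up exceptional $(-1)$-curve becomes a $(-2)$-curve when the next blow-up lands on it, while in the opposite ordering the roles are interchanged. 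The two fibers differ by the Atiyah flop of this $(-2)$-curve, and globally along the wall this flop is precisely a GM0 modification.

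Second I would handle genuinely different unordered toric models. By taking a common refinement of $\widetilde{V}_1$ and $\widetilde{V}_2$ via further corner blow-ups, we may assume both models factor through a common $\widetilde{V}$ with two sequences of internal blow-downs $\widetilde{V}\to(\overline{V}_i,\overline{D}_i)$. By the theory of toric models developed in \cite{gross2015mirror-symmetry-for-log}, any two such internal blow-down structures on $\widetilde{V}$ are connected by a sequence of ``toric model mutations'', each of which exchanges a single internal $(-1)$-curve for a different one in the same numerical class. To realize a mutation in the family, consider the codimension-one locus in $(\C^*)^Q$ where one internal blow-up point approaches a corner of the toric boundary of $\overline{V}_1$. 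The limiting central fiber acquires a $(-2)$-curve supported on the component that is about to be contracted by the mutation; performing the Atiyah flop of this curve (a GM0 modification) re-presents the generic fiber as an internal blow-up of the mutated toric pair $(\overline{V}_2,\overline{D}_2)$ at a point on the neighboring boundary component. The induced reparametrization of $(\C^*)^Q$ supplies the isomorphism of bases asserted in the lemma.

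Combining these two reductions, we connect the given families $\mathcal{V}$ and $\mathcal{V}'$ by a finite sequence of GM0 modifications together with a reparametrization of the base $(\C^*)^Q$ by permutations and mutation coordinate changes. The conservation of charge $Q(V,D)$, which is invariant under both corner and internal blow-ups and under mutations, guarantees that both bases are $(\C^*)^Q$ with the same $Q$. The main obstacle is the realization of toric-model mutations as GM0 modifications in step two: one must verify that the $(-2)$-curve emerging as a blow-up center tends to a toric corner has normal bundle $\mathcal{O}(-1)\oplus\mathcal{O}(-1)$ in the three-dimensional total space $\mathcal{V}$ (so that the Atiyah flop is defined), and that after flopping one obtains precisely the family associated to the mutated ordered toric model. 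Both verifications are local calculations on $\widetilde{V}$ in standard toric coordinates, and they match the behavior of Noether--Lefschetz loci seen from the smoothed K3 side, which is what makes the flops genuine GM0 modifications of the ambient $\lambda$-family.
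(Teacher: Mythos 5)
Your decomposition into two move types --- reorderings of the blow-ups and mutations of the toric model --- parallels the paper's approach, which invokes a theorem of Blanc (\cite{blanc2013symplectic}, see also \cite[Prop.~3.27]{hacking2020homological}) to connect any two ordered tuples of disjoint internal exceptional curves by \emph{elementary mutations} (replacing the first blow-up $E_1\mapsto E_1'$) and \emph{order switches} (transposing two adjacent blow-ups). Your step one, realizing an order switch of two blow-ups landing on the same boundary component as a GM0 modification along the collision wall inside $(\C^*)^Q$, matches the paper's treatment of that move.

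Your step two has a genuine gap. In the paper an elementary mutation yields an \emph{isomorphism} of the families $(\mathcal{V},\mathcal{D})\cong(\mathcal{V}',\mathcal{D}')$ together with a non-canonical reparametrization of $(\C^*)^Q$, not a GM0 modification: replacing the first blow-up $\tau_1$ by $\tau_1'$ leaves the intermediate surface $\mathcal{V}_1$ unchanged, because the ruling fiber through the first center pulls back to $E_1+E_1'$, so the two possible centers are bound to move together. Your proposal instead tries to realize a mutation as an Atiyah flop along ``the codimension-one locus in $(\C^*)^Q$ where one internal blow-up point approaches a corner,'' but no such locus exists \emph{inside} $(\C^*)^Q$: the corner of the toric boundary lies outside the open torus orbit parametrized by the relevant $\C^*$ factor, so the locus you describe sits on the boundary of the base, where the family has not been defined. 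Even if you extend the family, blowing up at a corner produces a $(-1)$-curve meeting two boundary components, not a $(-2)$-curve with normal bundle $\mathcal{O}(-1)\oplus\mathcal{O}(-1)$ in the threefold; the $(-2)$-curves arise from collisions of blow-up centers on the same component, which is exactly your step one, not from corner approaches. Finally, \cite{gross2015mirror-symmetry-for-log} gives existence of toric models but not the connectedness you need; the correct reference is Blanc. Once the mutation step is corrected to an isomorphism rather than a flop, your outline essentially coincides with the paper's.
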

 
 Here $S_Q^\pm$ is the signed symmetric group, acting on $(\C^*)^Q$ by permuting
 and inverting coordinates.

\begin{proof} First, note that M0 and GM0 modifications
also make sense for anticanonical pairs, by flopping $(-2)$-curves
in the complement $V\setminus D$. Since we will only be
making birational modifications in the complement of the
anticanonical cycle, and $V\setminus D = \widetilde{V}\setminus \widetilde{D}$,
we may as well assume that $(V,D)=(\widetilde{V},\widetilde{D})$
i.e. there are no corner blow-ups in the toric model.

Fix $(V,D)$ very general, in the sense that it has no $(-2)$-curves
 disjoint from $D$. An ordered toric model is given by an 
 ordered collection $(E_1,\dots,E_Q)$ of $Q$ disjoint internal 
 exceptional curves. It follows from a theorem of Blanc
 \cite[Thm.~1]{blanc2013symplectic} describing the birational automorphism
 group of $((\C^*)^2, \tfrac{dx}{x}\wedge \tfrac{dy}{y})$---see
 \cite[Prop.~3.27]{hacking2020homological}
 for the interpretation we employ---that any two such tuples
 are related by a series of two moves:
 
\begin{definition} An {\it elementary mutation} replaces the first
exceptional curve $(E_1,E_2,\dots, E_Q) \mapsto (E_1',E_2,\dots,E_Q)$
where $E_1+E_1'$ is the pullback of a fiber of a toric ruling on $(\oV,\oD)$. \end{definition}

\begin{definition} An {\it order switch} sends 
  $(E_1,\dots,E_i,E_{i+1},\dots,E_Q)\mapsto
  (E_1,\dots, E_{i+1},E_i,\dots,E_Q)$.
\end{definition}

An elementary mutation of the ordered toric model gives
rise to an isomorphism $(\mathcal{V},\mathcal{D})\to (\mathcal{V}',\mathcal{D}')$
of the corresponding families: The construction of the family by
successive blow-ups $$(\mathcal{V},\mathcal{D})\xrightarrow{\tau_Q}\cdots
\xrightarrow{\tau_2}(\mathcal{V}_1,\mathcal{D}_1)\xrightarrow{\tau_1}
(\mathcal{V}_0,\mathcal{D}_0)=(\overline{V},\overline{D})\times (\C^*)^Q$$
is unaltered when the blow-up $\tau_1$ is replaced with the blow-up $\tau_1'$.
The bases $B\cong (\C^*)^Q\cong B'$ of the two families of varying blow-ups
$g= \tau_Q\circ\cdots\circ\tau_2 \circ \tau_1$ and
$g'=  \tau_Q\circ\cdots\circ\tau_2 \circ \tau_1'$ can thus be canonically identified.
But with respect to this canonical identification, the point blown up by $\tau_1'$
lives in the inverse $\C^*$-torsor to the point blown up by $\tau_1$. We require the 
coordinates on $B'\cong (\C^*)^Q$ to be compatible with the orientation,
so we must invert the first coordinate.

An order switch gives an isomorphism of the families
whenever $E_i$ and $E_{i+1}$ meet distinct components---$\tau_i$ and
$\tau_{i+1}$
commute. But when $E_i$ and $E_{i+1}$ meet
the same component, the families are only canonically isomorphic
over the locus where $E_i$ and $E_{i+1}$ meet distinct points.
Then $(\mathcal{V}_{i+1},\mathcal{D}_{i+1})$ and the family
$(\mathcal{V}_{i+1}',\mathcal{D}_{i+1}')$ constructed with the
reverse ordering are related by a flop along the relative
$(-2)$-curve $\mathcal{E}_{i+1}-\mathcal{E}_i\subset \cV_{i+1}\big{|}_L$
fibering over the locus $L$ where the blow-up points coincide.
The remaining blow-ups $\tau_j$ for $j>i+1$
do not interfere with the flop because $\mathcal{E}_{i+1}-\mathcal{E}_i$
is disjoint from the boundary.
The order switch permutes two $\C^*$ coordinates of $B$.

Thus, we can connect any two families $(\mathcal{V},\mathcal{D})$ and
$(\mathcal{V}',\mathcal{D}')$ by a series of isomorphisms
and GM0 modifications. The sequence of elementary mutations and
order switches connecting $(E_1,\dots,E_Q)$ to $(E_1',\dots,E_Q')$
induces an isomorphism $B\to B'$ valued in $S_Q^\pm$.
 \end{proof} 

By Lemma \ref{toric-mod},
 two families $(\mathcal{V}_i,\mathcal{D}_i)$ of anticanonical pairs associated
  to ordered toric models of a given component $(V_i,D_i)\subset X_0$ 
are connected by GM0 modifications. To
 globalize an M1 modification along $E\subset V_i$ we apply
   Lemma \ref{toric-mod} to find a sequence of isomorphisms
    and GM0 modifications until $E=E_Q$ is the {\it last} exceptional
     curve in the ordered toric model. Then $\mathcal{E}_Q$ never breaks 
     in the gluing family $\mathcal{X}_0$ as it is the last blow-up 
     performed on any given fiber. So $\mathcal{E}_Q$ can be flopped 
     in $\mathcal{X}$. The GM0 modifications on the discriminant
family $\mathcal{X}_0$ extend to the smoothing $\mathcal{X}$
because the relative $(-2)$-curve $\mathcal{E}_{i+1}-\mathcal{E}_i$
deforms over the Noether-Lefschetz divisor $B\subset S_\lambda$. 

This proves that there exists a series of GM0, GM1, GM2
modifications of $\mathcal{X}\to S_\lambda$ to a new
$\lambda$-family $\mathcal{X}''\to S_\lambda$ for which the sequence
of modifications restricts to the given sequence of birational modifications
$X\dashrightarrow X'$. Again applying Lemma \ref{toric-mod}, 
 perform a sequence of GM0 modifications until the ordered
  toric models defining $\mathcal{X}''$ and $\mathcal{X}'$ are the same.
  The theorem follows.
    \end{proof}

\subsection{Type II $\lambda$-families} We construct topologically
trivial families of Type~II Kulikov surfaces for which the period map
is an isomorphism. It is simplest to construct a family for a single
combinatorial type with $(k,t)=(k,0)$, then just apply GM0 and GM1
modifications to it. 

\begin{proposition}\label{ii-gluing}
For each $k$, there exists a
family of Type II Kulikov models $\mathcal{X}_0\to {\rm Hom}(\Lambda,\widetilde{\mathcal{E}})$
for which the period map is the identity.
\end{proposition}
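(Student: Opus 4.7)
The plan is to adapt the Type III construction of gluing families (Constructions~\ref{gigantic}, \ref{glue-period2} and Theorem~\ref{glue-period}) to the Type II setting, where toric models are unavailable but the formal structure of the argument survives. By Proposition~\ref{bir-ident} and the Type II analog of Theorem~\ref{global-fs} (essentially contained in \cite{friedman1986type-III}), GM0 and GM1 modifications preserve the period map and relate all Type II combinatorial types with the same $k$, so it suffices to construct the desired family for a single chosen combinatorial type.

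Pick the simplest reference: $X_0^{\rm ref} = V_0 \cup V_1 \cup \cdots \cup V_k$, with $V_0, V_k$ rational elliptic surfaces having a smooth elliptic fiber as anticanonical divisor (each of charge $12$) and each intermediate $V_i = \mathbb{P}_E(\mathcal{O} \oplus \mathcal{O})$ with boundary the zero and infinity sections (charge $0$), so that conservation of charge is satisfied. Work fiberwise over $\tau \in \mathbb{H}$, where the universal elliptic curve $\widetilde{\mathcal{E}}^+ \to \mathbb{H}$ supplies the double curves $E_\tau$. Mirroring Construction~\ref{gigantic}, form a ``gigantic gluing family'' whose parameters for each $\tau$ are: nine points on $E_\tau$ for each of $V_0, V_k$ (encoding the moduli of rational elliptic surfaces with fixed anticanonical $E_\tau$, in direct analogy with internal blow-ups in Type III), and one translation parameter in $E_\tau$ for each of the $k$ double-curve regluings. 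The intermediate components are rigid once $E_\tau$ is fixed. The total base is an abelian variety bundle $\mathrm{Hom}(\mathcal{G}_0, \widetilde{\mathcal{E}}^+) \to \mathbb{H}$ for a free $\mathbb{Z}$-module $\mathcal{G}_0$ playing the role of the Type III gluing lattice.

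The fiberwise automorphism groups of the components act by translations on the gluing parameters, giving a boundary map $\partial_\mathcal{G}\colon \mathcal{G}_0 \to \mathcal{G}_1$. Quotienting by these automorphisms produces a family whose base is $\mathrm{Hom}(H^0(\mathcal{G}), \widetilde{\mathcal{E}}^+)$, and the Type II analog of Theorem~\ref{glue-period}---proved by comparing Mayer-Vietoris for the exponential sequence with the explicit extension-of-line-bundles description of Construction~\ref{ii-unwind}---identifies $H^0(\mathcal{G})$ with $\widetilde{\Lambda}$ and shows the resulting tautological map is exactly the period map. Restriction to the $d$-semistable sublocus $\mathrm{Hom}(\Lambda, \widetilde{\mathcal{E}}^+) \subset \mathrm{Hom}(\widetilde{\Lambda}, \widetilde{\mathcal{E}}^+)$ yields the desired family over the upper half-plane; the lower half-plane component of $\mathrm{Hom}(\Lambda, \widetilde{\mathcal{E}})$ is produced by the analogous construction (equivalently, by complex conjugation).

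The main obstacle is proving the Type II analog of Theorem~\ref{glue-period}. In Type III, the quasi-isomorphism of gluing and period complexes rested on toric exact sequences for $\mathrm{Pic}(V_i)$. Here one must instead describe $\mathrm{Pic}(V_i)$ directly---for rational elliptic surfaces via the nine blown-up points on $E_\tau$ (modulo the linear-equivalence constraint among the nine points of intersection of two plane cubics), and for $\mathbb{P}_E(\mathcal{O} \oplus \mathcal{O})$ via fiber and section classes---and verify by hand the analog of Proposition~\ref{glue-boundary}, now with the translation action of $\mathrm{Pic}^0(E_\tau) = E_\tau$ replacing the $\C^*$-action in the toric case. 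A secondary technical point is ensuring the fiberwise constructions glue holomorphically over $\mathbb{H}$, which reduces to the holomorphic dependence of the moduli parameters on $\tau$.
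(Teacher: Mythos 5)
Your overall plan---build a topologically trivial gluing family of Type~II Kulikov surfaces fiberwise over $\tau$ and quotient by fiberwise automorphisms---has the same shape as the paper's proof. But there is a genuine gap in your construction, and even aside from that, the paper's route to the period statement is substantially different.

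\textbf{The gap.} You fix the intermediate components to be $V_i = \mathbb{P}_{E_\tau}(\mathcal{O}\oplus\mathcal{O}) = E_\tau\times\mathbb{P}^1$ and assert that ``the intermediate components are rigid once $E_\tau$ is fixed.'' This is false: the relevant deformations of the pair $(V_i, D_{i-1,i}\sqcup D_{i,i+1})$ are the ruled surfaces $\mathbb{P}_{E_\tau}(\mathcal{O}\oplus L_i)$ with $L_i\in\operatorname{Pic}^0(E_\tau)$, a one-parameter family. With your fixed choice $L_i = \mathcal{O}$, both boundary sections of every intermediate component have trivial normal bundle, and then $d$-semistability at $D_{01}$ and at $D_{k-1,k}$ forces $N_{D_0/V_0}$ and $N_{D_k/V_k}$ to be trivial---two independent codimension-one conditions on the $E_\tau^{9}\times E_\tau^{9}$ factor of your base. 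After imposing them and quotienting by the $E_\tau^{k-1}$ of translation automorphisms of the intermediates, your base has dimension $18$, one short of $\dim\operatorname{Hom}(\Lambda,\widetilde{\mathcal{E}}) = 19$: your period map cannot be surjective, let alone the identity. The paper avoids this by defining the intermediate ruled surfaces in terms of $V_0$ itself, namely $V_i = \mathbb{P}_{\mathcal{D}_0}(\mathcal{O}\oplus N_{\mathcal{D}_0}^{-1})$, so that $d$-semistability at every intermediate double curve holds automatically and only the \emph{single} condition $\mathcal{O}_{D_0}(D_0) = \mathcal{O}_{D_k}(D_k)^{-1}$ remains. If you keep your framework, you would need to add a modulus $L\in\operatorname{Pic}^0(E_\tau)$ for the intermediate bundles and couple it to the normal bundle of $\mathcal{D}_0$; note that $d$-semistability forces all the $L_i$ to be equal, so this is really one additional parameter matched to the end surfaces, exactly what the paper's $N_{\mathcal{D}_0}^{-1}$ does.

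\textbf{The route to the period statement.} You propose to prove a Type~II analog of Theorem~\ref{glue-period} by replacing the toric $\operatorname{Pic}$ exact sequences with explicit descriptions of $\operatorname{Pic}$ for rational elliptic surfaces and ruled surfaces over elliptic curves, and re-deriving Proposition~\ref{glue-boundary} with translation actions. You do not supply these details, and they are not short. The paper sidesteps the gluing/period complex entirely: it invokes the Torelli theorem for anticanonical pairs to descend the universal blown-up family $(\widetilde{\mathcal{V}},\widetilde{\mathcal{D}})\to\overline{\mathcal{D}}^{\times 9}$ along its own period map, producing a family of end surfaces $(\mathcal{V},\mathcal{D})\to\operatorname{Hom}(D^\perp,\widetilde{\mathcal{E}})$ with identity period map; it then verifies the period identity for the glued chain by a short direct computation of $\widetilde{\Lambda}$, $\Xi$, $\Lambda$ on a reference fiber of the form $V_0 \cup (E\times\mathbb{P}^1) \cup\cdots\cup V_k$. (The paper does note that the cited Torelli theorem is stated in the literature only for nodal $D$ and assumes the extension to smooth $D$.) This Torelli route is significantly more economical than formalizing a Type~II gluing/period complex, and I would recommend it over your plan even after the gap above is repaired.
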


\begin{proof} It suffices restrict to the $k=1$
as we may otherwise insert $k-1$ intermediate components which are
$\mathbb{P}^1$-bundles over elliptic curves.

Let $\oD\subset \mathbb{P}^2$ be an arbitrary smooth cubic. Take $18$
points $p_1,\dots,p_9, q_1,\dots,q_9\in \oD$ satisfying the single
condition $\cO_{\oD}(6)\cong \cO_{\oD}(\sum p_i +\sum q_i)$. Let $D_1$ denote
the strict transform of $\oD$ in $V_1:=Bl_{p_1,\dots,p_9}\mathbb{P}^2$
and let $D_2$ denote the strict transform of $\oD$ in
$V_2:=Bl_{q_1,\dots,q_9}\mathbb{P}^2$. 
Then $X_0:=(V_1,D_1)\cup (V_2,D_2)$
is a $d$-semistable Type II Kulikov surface, even
when $D_1$ and $D_2$ are glued via an arbitrary translation.
This construction produces a $1+(18-1)+1=19$-dimensional space of
Kulikov surfaces. Respectively, the parameters
are the $j$-invariant of $\oD$, the $18$ points $p_i,q_i$ subject
to the single condition, and the translation to glue by.

There is a projective linear automorphism acting by translation on
$\oD$ and sending one $9$-tuple to another
$(p_1,\dots,p_9)\mapsto (p_1',\dots,p_9')$ if and only if
$p_i'-p_i$ are all equal to a fixed element of ${\rm Pic}^0(\oD)[3]\cong \Z_3^2$.
Thus the family of Kulikov surfaces $\widehat{\cX}_0\to \widehat{S}$
gotten by varying the data of $\oD$, $p_i$, $q_i$, and the gluing
descends to a quotient $\cX_0\to S=\widehat{S}/(\Z_3^2\times \Z_3^2)$.
A straightforward computation of the period map on $\widehat{S}$
using Construction \ref{ii-unwind} shows that in fact, the fibers of the period
mapping $\widehat{S}\to {\rm Hom}(\Lambda,\widetilde{\cE})$ are exactly
the orbits of the $\Z_3^2\times \Z_3^2$-action. So $\cX_0\to S$ is the
desired family.
\end{proof}

\begin{corollary}\label{ii-lambda} In Type II, there is a
 family $\mathcal{X}\to S_\lambda$ of mixed marked surfaces 
 for which the period map is an order $k$ branched cover of a 
 tubular neighborhood of the boundary divisor of $\mathbb{D}(I)^\lambda$. 
 \end{corollary}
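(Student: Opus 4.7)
The plan is to deduce Corollary~\ref{ii-lambda} as an immediate consequence of Proposition~\ref{ii-gluing} composed with Proposition~\ref{smooth-sing}. Proposition~\ref{ii-gluing} supplies, for each $k$, a topologically trivial family $\mathcal{X}_0\to \Delta_\lambda={\rm Hom}(\Lambda,\widetilde{\mathcal{E}})$ of $d$-semistable Type II Kulikov surfaces whose period map is the identity. The explicit construction in that proof also equips $\mathcal{X}_0$ with a canonical marking: the trivializations of the various $H^2(V_i)$ arising from the fiber products define the isometry $\Lambda(\mathcal{X}_0)\to \Lambda_0$, and the chosen basis $(\alpha,\beta)$ of $H_1(\overline{D})$ in Proposition~\ref{ii-gluing} supplies the basis of $I\subset H^2(X_0)$ required in Section~\ref{sec:kulikov-marking}.

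Next, I would verify the hypotheses of Proposition~\ref{smooth-sing}: the family $\mathcal{X}_0\to \Delta_\lambda$ is topologically trivial, $d$-semistable, and its period map is an isomorphism onto the boundary divisor ${\rm Hom}(\Lambda,\widetilde{\mathcal{E}})\subset \mathbb{D}(I)^\lambda$. Applying Proposition~\ref{smooth-sing} then produces the germ of a smoothing $\mathcal{X}\to S_\lambda$ along $\Delta_\lambda\subset S_\lambda$, carrying a mixed marking extending $\sigma$, and for which the mixed period map $S_\lambda\to \mathbb{D}(I)^\lambda$ is an order $k$ cyclic cover of a tubular neighborhood of the boundary divisor, branched along that divisor. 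This is exactly the assertion of the corollary.

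The one place where care is needed is the case $t=0$, $k>1$ flagged in Remark~\ref{not-universal}, which is the only regime Type II ever falls into (since $t=\lambda^2=0$ in Type II). Here $X_0$ has infinitesimal automorphisms ${\rm Aut}^0(X_0)\cong (\C^*)^{k-1}$ acting by translations on the intermediate elliptic ruled components, and the $d$-semistable deformation space is not universal. However, the final paragraph of the proof of Proposition~\ref{smooth-sing} explicitly addresses this case by gluing slices of the ${\rm Aut}^0$-action rather than full deformation spaces, and the quotient step in the proof of Proposition~\ref{ii-gluing} (taking the slice where the gluings of $\mathcal{D}_{0,1},\dots,\mathcal{D}_{k-2,k-1}$ are fixed) is precisely such a transverse slice. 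Thus the hypotheses match and no additional argument is needed. The main potential obstacle, namely reconciling the non-universality of Type II Kulikov deformations with the isomorphism statement for the period map, is therefore already handled upstream, and the corollary follows directly.
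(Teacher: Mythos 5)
Your proof is correct and follows the same route as the paper, whose own proof is simply the one-line statement that the corollary follows from Proposition~\ref{smooth-sing} and Proposition~\ref{ii-gluing}. One small inaccuracy: you assert that $t=0$, $k>1$ is ``the only regime Type II ever falls into,'' but Type II only forces $t=\lambda^2=0$; the imprimitivity $k$ (which equals the number of double curves) can equal $1$, in which case $X_0$ has no intermediate elliptic ruled components, ${\rm Aut}^0(X_0)$ is trivial, and the simpler branch of Proposition~\ref{smooth-sing} applies with ``order $1$ branched cover'' meaning a local isomorphism. This does not affect the validity of the argument since both the $k=1$ and $k>1$ cases are handled by Proposition~\ref{smooth-sing}.
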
 

\begin{proof}
 This follows from Proposition \ref{smooth-sing} 
and Proposition \ref{ii-gluing}. 
\end{proof}

\begin{definition} \label{type-ii-lambda}
A Type II {\it $\lambda$-family} is a family 
of surfaces which arises from a series of GM0, GM1 modifications
 of the family $\mathcal{X}\to S_\lambda$ in Corollary \ref{ii-lambda}. 
 \end{definition}

Using techniques of Theorem \ref{global-fs}, replacing
toric models with minimal models, we can construct a Type II
$\lambda$-family for any fixed combinatorial type of surface
$X_0$ via a series of GM0, GM1 modifications of the one in
Corollary \ref{ii-lambda}. 

\subsection{Quasipolarized $\lambda$-families} 

\begin{definition}
\label{lamfam-pol} An {\it $M$-quasipolarized
 $\lambda$-family} is the restriction of a $\lambda$-family 
 $\mathcal{X}\to S_\lambda$ to the Noether-Lefschetz locus 
 $\mathbb{D}_M(I)^\lambda\cap S_\lambda\subset \mathbb{D}(I)^\lambda$, 
 such that the embedding $M\to {\rm Pic}(\mathcal{X}_t)$ induced by the marking
 defines an $M$-quasipolarization on
 a generic fiber $\mathcal{X}_t$.
 \end{definition}

\begin{notation} 
When the context is clear, we reuse symbols 
$S_\lambda$, $\Delta_\lambda$ and $\mathcal{X}$, $\mathcal{X}_0$
 for the intersections $\mathbb{D}_M(I)^\lambda\cap S_\lambda$, 
 $\mathbb{D}_M(I)^\lambda\cap \Delta_\lambda$ and the restrictions
  of the unpolarized $\lambda$-families $\mathcal{X}$, $\mathcal{X}_0$
   to them.
    \end{notation}

The elements $L\in M$ extend to line bundles $\mathcal{L}\to \mathcal{X}$
 which are unique up to twisting by the relative components 
 $\mathcal{O}_{\mathcal{X}}(\mathcal{V}_i)$ and line bundles
 pulled back from the base $S_\lambda$.

\begin{theorem}\label{qpol-fs} 
Any two $M$-quasipolarized $\lambda$-families
 are related by a series of GM0, GM1, GM2 modifications.
 \end{theorem}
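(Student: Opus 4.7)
The plan is to mimic the proof of Theorem \ref{global-fs}, working throughout within the $M$-quasipolarized setting. First, I would choose a holomorphic arc $(C,0) \hookrightarrow S_\lambda$ meeting $\Delta_\lambda$ transversely, whose punctured germ $C^*$ maps into a sublocus of $\mathbb{D}_M(I)^\lambda$ where the Picard group of the very general fiber is precisely $M \oplus \Z L_0$ for a chosen big and nef class $L_0$. Such arcs exist because the union of $M$-quasipolarized Noether--Lefschetz divisors is a meager subset of $\mathbb{D}_M(I)^\lambda$. With $L_0$ providing a polarization, \cite[Cor.~3.1]{shepherd-barron1981extending-polarizations} yields a sequence of M1 and M2 modifications connecting the two Kulikov models $X, X' \to (C,0)$; by the genericity of $C^*$, no M0 modifications appear along $C$.

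Second, I would globalize these modifications to GM1 and GM2 modifications of the $M$-quasipolarized $\lambda$-family, interspersed with GM0 modifications that reorder the underlying ordered toric models, exactly as in the proof of Theorem \ref{global-fs}. Lemma \ref{toric-mod} applies componentwise without change, since elementary mutations and order switches act only on a single pair $(V_i, D_i)$ and are insensitive to the embedding $M \hookrightarrow \Pic(X_0)$. The observation ensuring that each auxiliary GM0 modification preserves the $M$-quasipolarization is that any $(-2)$-curve $E \subset V_i$ produced by such a mutation or switch is perpendicular to $M$: for every $L \in M$, the restriction $L|_{V_i}$ pulls back from the toric surface $(\overline{V}_i, \overline{D}_i)$ and hence has zero intersection with any internal exceptional curve in the toric model.

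The main obstacle I expect is verifying that the Noether--Lefschetz divisor $B \subset S_\lambda$ on which such a class $E$ becomes effective is a genuine codimension-one locus inside the $M$-quasipolarized $\lambda$-family, so that the relative flop along $\mathcal{E} \to B$ realizes a bona fide GM0 modification rather than the identity. Since $E \in M^\perp$, the locus of periods $[\Omega] \in \mathbb{D}_M(I)^\lambda$ at which $E$ becomes Hodge is cut out by the single linear condition $E \cdot [\Omega] = 0$, which is of codimension one. Taking its closure and pulling back to $S_\lambda$ produces the required divisor, and the relative flop along it is the desired GM0 modification. Concatenating all constructed modifications yields a chain of GM0, GM1, GM2 modifications of $M$-quasipolarized $\lambda$-families from $\mathcal{X}$ to $\mathcal{X}'$.
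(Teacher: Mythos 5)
Your strategy is to redo the proof of Theorem~\ref{global-fs} from scratch inside the $M$-quasipolarized world. The paper's proof is much shorter because it exploits Definition~\ref{lamfam-pol}: an $M$-quasipolarized $\lambda$-family is \emph{by definition} the restriction of an unpolarized $\lambda$-family to $\mathbb{D}_M(I)^\lambda\cap S_\lambda$. So one simply applies Theorem~\ref{global-fs} to the two ambient unpolarized families and checks that each of the GM0, GM1, GM2 modifications restricts.

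There are two genuine problems with your argument. First, your key claim that any $(-2)$-class arising from Lemma~\ref{toric-mod} is perpendicular to $M$ because ``$L|_{V_i}$ pulls back from the toric surface $(\overline{V}_i,\overline{D}_i)$'' is false. In the construction of a nef $\lambda$-family (Proposition~\ref{nef-lambda-exist}) one has $f_i^*L_i=\sum a_{ij}\widetilde{D}_{ij}+\sum b_{ijk}E_{ijk}$ with the $b_{ijk}$ generally positive, and indeed the point of the chosen ordering is that $L_i\cdot(E_{ijk_2}-E_{ijk_1})=b_{ijk_1}-b_{ijk_2}>0$. So $L_i$ is \emph{not} pulled back from $\overline{V}_i$, and the $(-2)$-classes $E_{ijk_2}-E_{ijk_1}$ need not lie in $M^\perp$. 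Fortunately this stronger conclusion is also unnecessary: a GM0 modification restricts to the $M$-quasipolarized sublocus whenever its $(-2)$-class $\beta$ is not contained in $M$, since (as you do say at the end) $\beta\cdot[\Omega]=0$ is still codimension one in $\mathbb{D}_M(I)^\lambda$ in that case.

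Second, and more importantly, you never address the exceptional case $\beta\in M$, which is precisely the case the paper singles out. If $\beta$ is a root of $M$ then $\beta$ is Hodge on every fiber of the $M$-quasipolarized family, so the ``Noether--Lefschetz divisor'' you invoke in your last paragraph is all of $S_\lambda^M$ and the relative flop along $\mathcal{E}\to B$ is not a GM0 modification of the restricted family at all. The paper handles this by observing that the two restricted families are then canonically isomorphic, so one simply replaces that GM0 by an isomorphism. Without this observation the chain of modifications you build does not in fact live in the category of $M$-quasipolarized $\lambda$-families. (As a minor notational slip, the Picard lattice of the very general fiber on your chosen arc should be $M$ itself, not $M\oplus\mathbb{Z}L_0$, since $L_0\in M$.)
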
 

\begin{proof} By Theorem \ref{global-fs}, the two unpolarized 
$\lambda$-families from which they are restricted (see Def.~\ref{lamfam-pol})
are related by GM0, GM1, GM2 modifications.
  These modifications specialize to birational modifications
   of the restricted family in all cases, except for a GM0 modification 
   associated to a $(-2)$-curve $\beta\in M$. But in this case, the two 
   restricted families are isomorphic before and after the modification 
   so we simply replace the GM0 modification with this isomorphism. 
   \end{proof}

We now define analogues of nef and divisor models.

\begin{definition} A {\it nef $\lambda$-family} is an $M$-quasipolarized
 $\lambda$-family $\mathcal{X}\to S_\lambda$ together with an
  extension of $L\in M$ to a relatively
  big and nef line bundle 
  $\mathcal{L}\to \mathcal{X}$.  \end{definition}

\begin{definition}\label{div-lambda} A {\it divisor $\lambda$-family} 
$(\mathcal{X},\mathcal{R})\to S_\lambda$ is an $M$-quasipolarized
 nef $\lambda$-family and a relatively big and nef divisor 
 $\mathcal{R}\in |\mathcal{L}|$ which contains no stratum of any fiber. 
 \end{definition}

\begin{proposition}
\label{nef-lambda-exist} Given a nef model 
$L\to X$ of a Type III $M$-quasipolarized Kulikov model $X\to (C,0)$,
 there is an ordered toric model of $X_0$ for which $L$ defines
  a nef $\lambda$-family $\mathcal{L}\to \mathcal{X}\to S_\lambda$. 
  \end{proposition}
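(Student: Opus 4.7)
The plan is to produce the nef $\lambda$-family in three stages: first choose an ordered toric model of $X_0$ adapted to $L$; then extend $L$ componentwise to a line bundle on the gluing family $\mathcal{X}_0 \to \Delta_\lambda$; finally extend this over the smoothing $\mathcal{X} \to S_\lambda$ via Proposition \ref{smooth-sing} and verify relative nefness.

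On each component $(V_i, D_i) \subset X_0$, the restriction $L_i := L|_{V_i}$ is nef, and the degrees $L \cdot D_{ij}$ agree across the two components meeting at each double curve because $L$ is Cartier on $X$. Use \cite[Prop.~1.3]{gross2015mirror-symmetry-for-log} to produce a toric model $(V_i, D_i) \xleftarrow{f_i} (\widetilde{V}_i, \widetilde{D}_i) \xrightarrow{g_i} (\overline{V}_i, \overline{D}_i)$ with ordered factorization $g_i = \tau_{i,Q_i} \circ \cdots \circ \tau_{i,1}$, chosen compatibly with $L_i$ in the following sense: writing $\widetilde{L}_i := f_i^* L_i = g_i^* \overline{L}_i + \sum a_{ijk} [E_{ijk}]$ in $\Pic \widetilde{V}_i$, pick the factorization so that the extension over $\widetilde{\mathcal{V}}_i \to (\C^*)^{Q_i}$ stays fiberwise nef. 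The classes $g_i^* \overline{L}_i + \sum a_{ijk} \mathcal{E}_{ijk}$ assemble to a line bundle on $\widetilde{\mathcal{V}}_i$; since $\widetilde{L}_i$ has degree zero on the corner-exceptional fibers of $f_i$ and intersection numbers are constant in the flat family, this descends to $\mathcal{L}_i$ on $\mathcal{V}_i$. Glue the $\mathcal{L}_i$ across relative double curves (the matched degrees permit this) to obtain a section of $\widetilde{\Lambda}$ on the gigantic gluing family. Since $[L] \in M$, its image in $\Lambda$ is well-defined, and by the definition of an $M$-quasipolarized $\lambda$-family the period $\widetilde{\psi}_{X_0}([L])$ is $1$ throughout $\Delta_\lambda$, so the gluing is an honest line bundle $\mathcal{L}_0$ on $\mathcal{X}_0$.

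Extend $\mathcal{L}_0$ to $\mathcal{L}$ on the smoothing $\mathcal{X} \to S_\lambda$ using that the obstruction to deforming a line bundle across the smoothing is controlled by the period, which is identically $1$ on the $M$-quasipolarized locus. For relative nefness: on smooth K3 fibers the quasipolarization and $L \in \ocK^{\rm small}_M$ give nefness. On Kulikov fibers, nefness reduces to the componentwise statement that $\mathcal{L}_i$ is nef on each fiber of $\mathcal{V}_i$, which was arranged by the compatibility above. The main obstacle is exactly this componentwise verification: as internal blow-up points degenerate inside the toric moduli $(\C^*)^{Q_i}$, new negative curves — typically chains of $(-2)$-curves arising when blow-up points become collinear along a toric ruling — can emerge, and the ordered toric model must be chosen so that each such class pairs non-negatively with $\widetilde{L}_i$. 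The nefness of $L_i$ on $V_i$, together with the freedom in selecting the ordered factorization and the restriction to the $M$-quasipolarized Noether–Lefschetz locus which prevents most new curve classes from becoming effective on smooth fibers, suffices after a suitable choice.
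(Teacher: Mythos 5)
Your three-stage architecture matches the paper's, and you correctly identify that the heart of the matter is keeping $L_i$ fiberwise nef on each family $(\mathcal{V}_i,\mathcal{D}_i)\to(\C^*)^{Q_i}$ as internal blow-up points collide. But the proposal stops exactly at the point where an argument is required: you write ``pick the factorization so that the extension stays fiberwise nef'' and later ``suffices after a suitable choice,'' without establishing that such a choice exists or giving the criterion.

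The paper supplies this via two ingredients you omit. First, it does not rely on \cite[Prop.~1.3]{gross2015mirror-symmetry-for-log} (which only produces \emph{some} toric model) but on \cite[Prop.~1.5]{engel2019smoothings}, which produces a toric model adapted to $L_i$ in a specific quantitative sense:
\[
f_i^*L_i=\textstyle\sum a_{ij}\widetilde D_{ij}+\sum b_{ijk}E_{ijk},\qquad a_{ij},\,b_{ijk}\ge 0.
\]
Second, with the coefficients $b_{ijk}$ in hand, it specifies the ordering concretely: blow up in decreasing order of $b_{ijk}$ along each boundary component. Then the only curve classes that can become effective as blow-up points collide are $\beta=(f_i)_*(E_{ijk_2}-E_{ijk_1})$ with $E_{ijk_2}$ blown up after $E_{ijk_1}$, and one computes $L_i\cdot\beta=b_{ijk_1}-b_{ijk_2}>0$. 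Without the non-negativity of the coefficients, your ordering freedom alone does not obviously suffice, and you have not shown it does.

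A secondary issue: your appeal to the $M$-quasipolarized Noether--Lefschetz restriction as ``preventing most new curve classes from becoming effective on smooth fibers'' misplaces where the problem lives. The nefness verification is entirely over the discriminant $\Delta_\lambda$, on Kulikov fibers; restricting to the $M$-polarized sublocus does not tame the degeneration of the components $(\mathcal{V}_i,\mathcal{D}_i)$ there. The paper handles the singular fibers by the componentwise numerical argument above, and then handles the smooth fibers simply by openness of bigness and nefness in the deformation space.
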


\begin{proof} Write $L\big{|}_{X_0} = (L_i)$ with each $L_i\in {\rm Pic}(V_i)$. 
Note that $L_i$ is nef for all $i$ and at least one $L_i$ is big. It follows from
 \cite[Prop.~1.5]{engel2021smoothings} that there exists a toric model of 
 $V_i$ for which $$f_i^*L_i=\textstyle \sum a_{ij}\widetilde{D}_{ij} + 
 \sum b_{ijk} E_{ijk}.$$ with $a_{ij},b_{ijk}\geq 0$. We order this toric model so
 that $b_{ijk_1}> b_{ijk_2}$ implies that $E_{ijk_2}$ is blown up after 
 $E_{ijk_1}$. Then $L_i$ defines a relatively nef line bundle on the
  family $(\mathcal{V}_i,\mathcal{D}_i)$ because the only irreducible 
  curves which $L_i$ could possibly intersect negatively are $(-2)$-curves
   of the form $\beta=(f_i)_*(E_{ijk_2}-E_{ijk_1})$ but $$L_i\cdot \beta =
   f_i^*L_i\cdot (E_{ijk_2}-E_{ijk_1})= b_{ijk_1}-b_{ijk_2}> 0.$$

\begin{definition} An element $(\alpha_i)\in \widetilde{\Lambda}$ is 
{\it numerically nef} if $\alpha_i$ is the class of a nef line bundle 
on each component $V_i$. \end{definition}

We have that $L_i$ defines a numerically nef class on every 
fiber of the unpolarized gluing family over $ {\rm Hom}
(\Lambda,\,\C^*\textrm{ or }\widetilde{\mathcal{E}})$. 
On the sublocus of the discriminant $\Delta_\lambda$ where $(L_i)$ actually 
defines a Cartier divisor, in particular over the locus where 
$\psi_{X_0}(M)=1$, we get a relatively big and nef line bundle 
$\mathcal{L}_0\to \mathcal{X}_0$. On the smoothing 
$\mathcal{X}\to S_\lambda$, the line bundle $\mathcal{L}_0$ 
extends to a relatively big and nef line bundle $\mathcal{L}$, 
because big and nefness is an open condition. 
\end{proof}

\begin{proposition}
\label{nef-lambda-exist-ii} Given a nef model 
$L\to X$ of a Type II $M$-quasipolarized Kulikov model $X\to (C,0)$,
there is a nef $\lambda$-family $\mathcal{L}\to \mathcal{X}\to S_\lambda$
extending it.
  \end{proposition}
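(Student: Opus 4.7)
The plan is to mirror the strategy of Proposition~\ref{nef-lambda-exist}, using the Type~II $\lambda$-families constructed in Corollary~\ref{ii-lambda} in place of the toric-model construction. First, by Theorem~\ref{qpol-fs} (in its Type~II form), any two $M$-quasipolarized $\lambda$-families of the given $(k,t)$ are connected by GM0 and GM1 modifications; so after applying such a sequence to the standard family of Corollary~\ref{ii-lambda}, I may assume that the combinatorial type of the special fiber matches that of $X_0$, and that the Kulikov model $X$ is represented as one closed fiber of the resulting family $\mathcal{X}\to S_\lambda$.

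Next, write $L|_{X_0}=(L_i)$ with $L_i\in\Pic(V_i)$ nef, and $L_i$ big for at least one (in fact both) of the rational elliptic end components. I analyze the components separately. For each minimal elliptic ruled middle component $V_i=\mathbb{P}_{D_{i-1,i}}(\mathcal{O}\oplus N_{i-1,i}^{-1})$ appearing in the base family of Proposition~\ref{ii-gluing}, the Picard lattice is spanned by the section class and the fiber class, and nefness is given by non-negativity of two intersection numbers. This is preserved as $D_{i-1,i}$ and the gluings vary across the base. If GM1 modifications have produced non-minimal middle components, the internal exceptional curves are simply the blown-up loci introduced by those modifications, and nefness of $L_i$ across the whole family can be ensured by the same reordering argument as in Type~III: require that if $E_{ijk_1}, E_{ijk_2}$ are adjacent blow-ups and $b_{ijk_1}>b_{ijk_2}$, then $E_{ijk_2}$ is performed after $E_{ijk_1}$, so that the relevant $(-2)$-curve $(f_i)_*(E_{ijk_2}-E_{ijk_1})$ intersects $L_i$ non-negatively.

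For each rational elliptic end component $(V_0,D_0)$ (and symmetrically $(V_k,D_k)$), constructed in Proposition~\ref{ii-gluing} as a $9$-point blow-up of $\mathbb{P}^2$ along a smooth anticanonical cubic: the nef cone is classical, and I need a Type~II analogue of \cite[Prop.~1.5]{engel2019smoothings} stating that $L_0$ can be expanded as a non-negative integral combination of the proper transform of $D_0$ and the chain of exceptional curves introduced on $D_0$, with an ordering that keeps $L_0$ relatively nef across the family of $9$-point blow-ups parametrized by $\overline{D}_0^{\times 9}$. Establishing this Type~II analogue---with smooth rather than cyclic anticanonical divisor---is the main technical obstacle; the argument should proceed by observing that the dual picture under elliptic fibration structure replaces the role of toric rays, and that reorderings can be implemented as GM0 modifications along the Noether--Lefschetz loci where blown-up points collide. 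Once this is in place, the resulting data defines on every fiber of the gluing family a numerically nef class $(L_i)\in\widetilde{\Lambda}$.

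Finally, on the sub-locus of the discriminant family where $(L_i)$ is Cartier---which contains the $M$-quasipolarized locus cut out by $\psi_{X_0}(M)=1$, and hence contains $\Delta_\lambda\cap\mathbb{D}_M(I)^\lambda$---the class $(L_i)$ defines a relatively big and nef line bundle $\mathcal{L}_0\to\mathcal{X}_0$. Because relative bigness and nefness are open conditions on a flat family with a given line bundle, $\mathcal{L}_0$ extends to a relatively big and nef line bundle $\mathcal{L}$ on the smoothing $\mathcal{X}\to S_\lambda$, as required. The hardest step is the component-wise nefness analysis on the rational elliptic ends; everything else is a direct adaptation of the Type~III proof.
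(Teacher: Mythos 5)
Your overall scaffolding matches the paper's: start from the standard Type~II $\lambda$-family of Corollary~\ref{ii-lambda}, apply GM0 and GM1 modifications (globalizing Theorem~\ref{global-fs}) to reach the combinatorial type of $X_0$, then arrange for fiberwise numerical nefness and extend. But you introduce an unnecessary complication that turns into a real gap: your plan for the rational elliptic end components hinges on ``a Type~II analogue of \cite[Prop.~1.5]{engel2019smoothings}'' giving an explicit decomposition of $L_0$ into exceptional curves blown up on $D_0$ together with an ordering that preserves nefness. You flag this as ``the main technical obstacle'' and do not prove it---and indeed, the literal statement you want (a non-negative integral decomposition adapted to a chain of internal blow-ups on a \emph{smooth} anticanonical cubic, with a reorder-to-nef principle) is not available off the shelf and is not obviously the right analogue, since rational elliptic surfaces are not produced by a toric model.

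The paper avoids this entirely. After reaching the right combinatorial type, it does not attempt an explicit ordered-toric-model-style decomposition at all. Instead it simply performs a further sequence of GM0 modifications ``until $L_0 = (L_i)$ satisfies the additional property that $L_i$ lies in the nef cone of each component $V_i\subset X_0$ for any singular fiber of $\mathcal{X}$.'' The point is that a GM0 modification flops a relative $(-2)$-curve over the Noether--Lefschetz locus where it becomes effective; flopping a $\beta$ with $L_i\cdot\beta<0$ reverses the sign of that intersection, so one can push $L_0$ into the fiberwise nef cone directly, with no need to know the combinatorial structure of the nef cone of a nine-point blow-up of $\bP^2$ as its points collide. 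Everything after that (Cartierness over the $M$-quasipolarized sublocus, openness of relative big-and-nef, extension to $\mathcal{X}$) you have correct and matches the paper. So the fix is: replace your reliance on an unproven Type~II decomposition lemma with the simpler observation that finitely many additional GM0 flops achieve fiberwise nefness on every component, end and middle alike.
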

  
  \begin{proof}[Sketch.] The proof is roughly the same
  as Proposition \ref{nef-lambda-exist}, the key point being to
  order the exceptional curves one must successively
  blow down to get a minimal model of each component $V_i\subset X_0$.
  This ordering comes from the intersection numbers of $L_i$ with each
  exceptional curve.
   \end{proof}

\begin{definition}\label{stable-lambda} A {\it stable $\lambda$-family} 
  $(\overline{\mathcal{X}},\epsilon\overline{\mathcal{R}})\to S_\lambda$
  is defined as $\Proj_{S_\lambda} \oplus_{n\ge0} \pi_* \cO(n\cR)$ for
  a divisor $\lambda$-family. 
\end{definition}
Cohomology and Base Change theorem
 \cite[III.12.11]{hartshorne1977algebraic-geometry} implies that the fibers of a
stable $\lambda$-family are stable pairs $(\oX,\epsilon \oR)$.

\begin{remark}\label{rem:olsson}
  Olsson defined a moduli space closely related to $\lambda$-families in
  \cite{olsson2004semistable}. The functor is defined by families of
  Kulikov surfaces together with a line bundle $\cL$ extending
  a polarization,
  such that $\cL^n$ for some $n>0$ gives
  a morphism fiberwise contracting only finitely many curves.  (Olsson
  uses the language of stacks and log schemes, so this description is
  approximate, see \cite{olsson2004semistable} for complete details.)
  Our $\lambda$-families are different in a number of ways: 
  our primary focus is a divisor $\cR$, and the corresponding nef line bundle $\cL^n = \cO(n\cR)$ 
  usually contracts irreducible components of the fibers.
\end{remark}

\section{Recognizable divisors}
\label{sec:recognizable-divisors}

When a canonical choice of polarizing divisor (\ref{def:can-choice})
is recognizable (\ref{recog-def}),
Proposition \ref{recog-section-type-i} allows us to extended
 $\mathcal{R}^*$ to the whole quasipolarized moduli space
 $\mathcal{F}_M^{\rm q}$. We now generalize this to
 $\lambda$-families $\mathcal{X}\to S_\lambda$.
Recall that ${\rm Aut}^0(X_0)$ is non-trivial only when $t=0$, $k>1$, i.e.~
$X_0$ is of Type II with intermediate elliptic ruled components.
This case for $\lambda$ has a number of subtleties not present in the general case,
and we delay its treatment to Proposition \ref{type-ii-rec}. 

\begin{proposition}\label{stronger-rec} Let  $\mathcal{X}\to S_\lambda$
be an $M$-quasipolarized $\lambda$-family. If $R$ is recognizable,
then the Zariski closure of $\mathcal{R}^*$ is a flat family of curves
in $\mathcal{X}$. Conversely, if the canonical choice of divisor $R$ extends
to a flat family of divisors $\mathcal{R}^*$ on $\mathcal{F}_M^{\rm q}$,
then the existence of a further flat extension of $\mathcal{R}^*$
over any $\lambda$-family $\mathcal{X}$ implies
that $R$ is recognizable. \end{proposition}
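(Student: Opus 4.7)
The plan is to prove the two implications separately, with the universality of $\lambda$-families (Theorem \ref{thm:kulikov-torelli}, Proposition \ref{smooth-sing}) providing the engine in both directions.

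For the forward direction, I start from the observation that recognizability plus Proposition \ref{recog-section-type-i} has already extended $\mathcal{R}^*$ to a relative divisor on the universal family over all of $\mathcal{F}_M^{\rm q}$. Pulling back along the natural map $S_\lambda\setminus\Delta_\lambda\to\mathcal{F}_M^{\rm q}$ gives a relative divisor on $\mathcal{X}|_{S_\lambda\setminus\Delta_\lambda}$, and I define $\mathcal{R}$ to be its scheme-theoretic closure in $\mathcal{X}$. To verify flatness at a boundary point $0\in\Delta_\lambda$, I choose an analytic arc $(C,0)\hookrightarrow S_\lambda$ meeting $\Delta_\lambda$ transversely; this carves out an $M$-quasipolarized Kulikov model $X\to(C,0)$, and recognizability identifies the flat limit of $R_t$ along this arc with the canonical curve $R_0\subset X_0$. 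Thus $\mathcal{R}|_X$ is flat over $(C,0)$ with central fiber $R_0$, so the Hilbert polynomial of $\mathcal{R}|_{X_0}$ matches that of the generic fiber, and flatness of $\mathcal{R}\to S_\lambda$ follows from the constancy-of-Hilbert-polynomial criterion applied at every point of $\Delta_\lambda$.

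For the reverse direction, I fix an $M$-quasipolarized Kulikov surface $X_0$ and choose a $\lambda$-family $\mathcal{X}\to S_\lambda$ realizing $X_0$ as a fiber; such a family exists by Proposition \ref{nef-lambda-exist} in Type III and Proposition \ref{nef-lambda-exist-ii} in Type II. The hypothesis furnishes a flat family $\mathcal{R}\subset\mathcal{X}$ over $S_\lambda$, and I set $R_0:=\mathcal{R}|_{X_0}$. To verify recognizability, I let $X\to(C,0)$ be any $M$-quasipolarized smoothing of $X_0$. The universality of $\lambda$-families produces a unique classifying map $\phi\colon (C,0)\to S_\lambda$, well-defined on a neighborhood of $0$, such that $X$ is the pullback of $\mathcal{X}$. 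Since flatness is preserved under base change, $\phi^*\mathcal{R}$ is a flat family of curves on $X$ whose restriction over $C^*$ is $R_t$; uniqueness of flat limits identifies its central fiber with $R_0$, which is thus the flat limit of $R_t$ on every smoothing of $X_0$.

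The main technical obstacle is the exceptional case $t=0$, $k>1$ flagged in Remark \ref{not-universal}, where $\mathcal{X}\to S_\lambda$ is only a transverse slice of the $(19+k)$-dimensional deformation space of $X_0$ under the natural action of $\mathrm{Aut}^0(X_0)\cong(\mathbb{C}^*)^{k-1}$. Here a general smoothing need not literally pull back from the slice, only from some translate of it. I would close this gap by verifying that $\mathcal{R}$ is $\mathrm{Aut}^0(X_0)$-equivariant, which follows because it restricts to the uniquely defined canonical divisor on the general fiber where this automorphism group becomes trivial; consequently $R_0\subset X_0$ is canonically associated to $X_0$ regardless of which translate realizes a given smoothing.
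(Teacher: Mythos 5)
Your reverse direction is fine, and your discussion of the $t=0$, $k>1$ case correctly identifies and patches the universality caveat from Remark \ref{not-universal}. The issue is in the forward direction.

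The gap is in the sentence ``Thus $\mathcal{R}|_X$ is flat over $(C,0)$ with central fiber $R_0$.'' This conflates two different objects: the restriction $\mathcal{R}\cap\mathcal{X}|_C$ of the global Zariski closure, and the Zariski closure $\overline{\mathcal{R}^*|_{C^*}}$ computed within the one-parameter family $\mathcal{X}|_C$. Recognizability controls the second of these (it equals $R_0$ on the central fiber), but what you need to control is the first. Over a higher-dimensional base, the closure $\mathcal{R}=\overline{\mathcal{R}^*}$ can intersect $X_0$ in a set strictly larger than any single-arc limit, because points of $\mathcal{R}\cap X_0$ can be accumulation points of $\mathcal{R}^*$ along arcs with \emph{arbitrary} contact order to $\Delta_\lambda$, not just transverse ones. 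In particular, $\mathcal{R}\cap X_0$ could \emph{a priori} contain a whole component $V_i$ of $X_0$, and transverse arcs alone cannot rule this out. The Hilbert-polynomial criterion you invoke would require knowing the Hilbert polynomial of the actual fiber $\mathcal{R}\cap X_0$, which is exactly what is not yet known. (Once one does know that $\mathcal{R}$ contains no fiber component, flatness is essentially free: $\mathcal{R}$ is then a relative Cartier divisor over the smooth base $S_\lambda$.)

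The paper closes this gap with a base-change argument: an arc $(C,0)\to S_\lambda$ meeting $\Delta_\lambda$ with multiplicity $k$ corresponds to a Kulikov model with monodromy invariant $k\lambda$; after a standard order-$k$ resolution (Proposition \ref{standard-resolution}) one obtains a Kulikov model $X[k]\to(C^\nu,0)$ transverse to $\Delta_{k\lambda}$, so recognizability applies and pins the limit to the image $\oR_0[k]$ of the canonical curve on $X_0[k]$. Ranging over all $k$, the full set of arc-limits lies in the \emph{countable} union $\bigcup_k\oR_0[k]$ of curves, so no component of $X_0$ can be covered. Your argument needs this step (or some substitute that handles non-transverse arcs) to be complete.
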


\begin{proof} Note that $\mathcal{R}^*$ extends to a flat family
of curves in $\mathcal{X}$ if and only if the Zariski closure
$\mathcal{R}:=\overline{\mathcal{R}^*}\subset \mathcal{X}$
defines a relative curve, even over the discriminant $\Delta_\lambda$.
Equivalently, $\mathcal{R}$ contains no component
of any singular fiber $X_0$.
By recognizability, there is a ``candidate curve" $R_0\subset X_0$ which
enjoys the following property: if we take any curve $(C,0)$
transverse to $\Delta_\lambda$ at $0$, then the Zariski
closure of $\mathcal{R}^*\big{|}_{C^*}\subset \mathcal{X}\big{|}_C$
 intersects $X_0$ at $R_0$. We say that $R_0$ is the {\it flat limit
 of $\cR^*$ along the arc $C$}.
This follows from recognizability 
 because $\mathcal{X}\big{|}_C$ is Kulikov.

More generally, suppose that $(C,0)$ is an arc passing 
through $0$ which has intersection multiplicity $k$ with 
$\Delta_\lambda$. This arc defines a degenerating family
 $X\to (C^\nu,0)$ with monodromy invariant $k\lambda$. 
 Letting $t$ be a local parameter at $0\in C^\nu$, the local 
 analytic equation of the smoothing is of the form $xy=t^k$ 
 and $xyz=t^k$ near the double curves and triple points of $X_0$.

Such a family admits a standard resolution (Sec. \ref{sec:kulikov-models}) to a new Kulikov
 model $X[k]\to (C^\nu,0)$ whose dual complex $\Gamma(X_0[k])$ 
 is gotten by subdividing the triangles and segments of $\Gamma(X_0)$ into
  $k^2$ triangles and $k$ segments.
  Then $X[k]$ defines a map $(C^\nu,0)\to S_{k\lambda}$
   which is transverse to $\Delta_{k\lambda}.$ Here the Kulikov surfaces
    over the discriminant have the same combinatorial type as $X_0[k]$.
    The boundary divisors $\Delta_{k\lambda}=\Delta_\lambda$ are
     naturally isomorphic and the arcs $(C^\nu,0)$ in both $S_\lambda$ and $S_{k\lambda}$
     limit to the same point under this isomorphism.

Then $X_0[k]$ contains a distinguished curve $R_0[k]$ which is 
the flat limit of the canonically chosen divisors over any arc transverse 
to $\Delta_{k\lambda}$. So the image $\oR_0[k]$ under the morphism
 $X_0[k]\to X_0$ is equal to the flat limit of the restriction of $\mathcal{R}^*$
 to any arc with tangency $k$ to $\Delta_\lambda$. So the
   flat limit of $\mathcal{R}^*$ over any arc $(C,0)$ not fully contained
    in $\Delta_\lambda$ lies in the countable union of curves
     $$\textstyle \bigcup_{k\geq 1} \oR_0[k]\subset X_0.$$

Supposing for the sake of contradiction $\mathcal{R}\cap X_0$ 
contained a component $V_i$, there would be some point
 $p\in \mathcal{R}\cap V_i$ avoiding the above countable union. 
 Choose some irreducible curve contained in $\mathcal{R}$ passing
  through $p$ whose projection is not contained in $\Delta_\lambda$.
   Taking the image in $S_\lambda$ gives an arc $C$ passing
    through $0$, possibly singular, which intersects $\Delta_\lambda$
     with some finite multiplicity $k$ for which the restriction $\mathcal{R}^*\big{|}_{C^*}$
     contains $p$ in its Zariski closure. Contradiction.

To prove the converse is easy: Every $M$-quasipolarized
 smoothing of $X_0$ corresponds to a transverse arc $(C,0)$ 
 in the base of the $\lambda$-family $\mathcal{X}\to S_\lambda$ 
 and so the flat extension $\mathcal{R}\cap X_0$ defines a 
 curve $R_0$ satisfying the recognizability property. \end{proof}

Intuitively, recognizability implies that the limits of 
canonically chosen curves over arcs $(C,0)$ approaching 
the discriminant with tangency $k$ are rigid, for all $k$. 
On the other hand, if the closure of $\mathcal{R}^*$ 
contained a surface in $X_0$, there would have to be
 some finite tangency order $k$ for which these limit curves moved. 

\begin{remark} Proposition \ref{stronger-rec} 
implies that any of the images $\overline{R}_0[k]$ must in 
fact equal $R_0$. In particular, the divisor $R_0\subset X_0$ is 
compatible with base change plus standard resolution.
\end{remark} 

\begin{definition} We say that $R$ is  (resp. {\it weakly}) 
{\it $\lambda$-recognizable} if $\mathcal{R}^*$ extends 
to a flat family of curves in $\mathcal{X}\to S_\lambda$ for 
any (resp. some) ordered toric model of any (resp. some) Kulikov model 
with monodromy invariant $\lambda$.  \end{definition}

\begin{remark} The existence of an extension of $\mathcal{R}^*$ 
to $\mathcal{F}^{\rm q}_M$ can be considered as 
$\lambda$-recognizability in the $\lambda=0$ case. Then 
Proposition \ref{stronger-rec} states that $R$ is recognizable
 if and only if it is $\lambda$-recognizable for all possible 
 $\lambda$, including $\lambda=0$. \end{remark}

We now show equivalence with weak recognizability:

\begin{proposition}\label{strong-is-weak} $R$ is
  $\lambda$-recognizable if and only if it is weakly $\lambda$-recognizable.
\end{proposition}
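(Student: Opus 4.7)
The plan is as follows. The forward implication is immediate, so the real task is to show that weak $\lambda$-recognizability implies $\lambda$-recognizability. Suppose $\mathcal{R}^*$ extends flatly on one specific $M$-quasipolarized $\lambda$-family $\mathcal{X}\to S_\lambda$, and let $\mathcal{X}'\to S_\lambda$ be any other such family. I would invoke Theorem \ref{qpol-fs} to connect $\mathcal{X}$ and $\mathcal{X}'$ by a finite sequence of GM0, GM1, and GM2 modifications, and then reduce the proposition to the claim that each individual modification in isolation preserves the property of admitting a flat extension of $\mathcal{R}^*$.

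Next I would fix one such modification $\phi\colon \mathcal{X}\dashrightarrow \mathcal{X}'$, given as a relative Atiyah flop along a $\mathbb{P}^1$-fibration $\mathcal{E}\to B$ over a divisor $B\subset S_\lambda$, with $(-1,-1)$ normal bundle on each $\mathbb{P}^1$ fiber. Since $B$ has codimension one in $S_\lambda$ and each $\mathbb{P}^1$ fiber of $\mathcal{E}$ is a curve inside a fiber of $\mathcal{X}\to S_\lambda$, the locus $\mathcal{E}$ has codimension two in $\mathcal{X}$, and $\phi$ restricts to an isomorphism $\mathcal{X}\setminus \mathcal{E}\xrightarrow{\sim} \mathcal{X}'\setminus \mathcal{E}'$. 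Let $\mathcal{R}\subset \mathcal{X}$ and $\mathcal{R}'\subset \mathcal{X}'$ denote the Zariski closures of $\mathcal{R}^*$; these are Weil divisors agreeing under $\phi$ away from $\mathcal{E}$ and $\mathcal{E}'$.

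The main step will be to show that $\mathcal{R}$ is flat over $S_\lambda$ if and only if $\mathcal{R}'$ is. Since $S_\lambda$ is smooth, flatness of a Weil divisor in $\mathcal{X}$ is equivalent to the condition that it contains no irreducible two-dimensional subvariety of any fiber $\mathcal{X}_s$. Supposing $\mathcal{R}'$ contains such a component $W'\subset \mathcal{X}'_s$, the subset $\mathcal{E}'\cap \mathcal{X}'_s$ has dimension at most one, so $W'\setminus \mathcal{E}'$ is dense in $W'$; its $\phi^{-1}$-image is then a two-dimensional locally closed subset of $\mathcal{R}\cap \mathcal{X}_s$, whose closure in $\mathcal{X}$ is a fiber-supported component of $\mathcal{R}$, contradicting flatness. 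The converse follows by symmetry. The main obstacle is really just ensuring that the fiberwise codimension count holds uniformly across all three modification types; this works because in each of GM0, GM1, GM2 the locus $\mathcal{E}$ is by definition a $\mathbb{P}^1$-fibration over a divisor in $S_\lambda$, so its intersection with any affected fiber of $\mathcal{X}\to S_\lambda$ is at most one-dimensional inside that fiber and therefore cannot create or absorb a two-dimensional fiber component of the strict transform. Combined with Theorem \ref{qpol-fs}, this yields the proposition.
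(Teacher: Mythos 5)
Your proof is correct and takes essentially the same approach as the paper: you apply Theorem~\ref{qpol-fs} to connect any two $M$-quasipolarized $\lambda$-families by GM0/GM1/GM2 modifications, then reduce to showing that a single such flop preserves the property ``$\overline{\mathcal{R}^*}$ contains no fiber component,'' which you justify by observing that the flopping center $\mathcal{E}$ is a $\mathbb{P}^1$-fibration over a divisor in $S_\lambda$ and hence meets each fiber in dimension at most one. The paper's proof is a compressed version of exactly this argument (``because the center of any such modification contains no fiber component''); your write-up simply makes the dimension count explicit.
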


\begin{proof} $\lambda$-recognizability clearly implies weak
  $\lambda$-recognizability. To show the converse, apply Theorem
  \ref{qpol-fs}: There exists a sequence of GM0, GM1, GM2
  modifications connecting any two $\lambda$-families. The condition
  that the closure of $\mathcal{R}^*$ in a $\lambda$-family
  $\mathcal{X}\to S_\lambda$ contain no fiber component is a property
  invariant under all three types of modifications, because the center
  of any such modification contains no fiber component. Hence weak
  $\lambda$-recognizability implies $\lambda$-recognizability.
\end{proof}

Proposition \ref{strong-is-weak} shows that recognizability
can be certified by finding {\it some} $\lambda$-family 
$\mathcal{X}$ for which $\mathcal{R}^*$ extends, for all $\lambda$.
The following is a key statement:

\begin{proposition}\label{no-strata-rec} Suppose $R$ is
 recognizable, and let $X\to (C,0)$ be a Kulikov model for 
 which $R_0$ contains no strata of $X_0$. Then all fibers 
 of the flat extension $(\mathcal{X},\mathcal{R})\to S_\lambda$ (Prop.~\ref{stronger-rec})
  enjoy the same property: $\mathcal{R}\cap X_p$ contains no strata of $X_p$.
\end{proposition}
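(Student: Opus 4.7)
My approach is to show that the locus $Z\subset S_\lambda$ of points $p$ at which $\mathcal{R}\cap X_p$ contains some stratum of $X_p$ is both closed and empty. For closedness, by Proposition~\ref{stronger-rec} the flat extension $\mathcal{R}\subset \mathcal{X}$ is a flat family of curves over $S_\lambda$, so no fiber contains a $2$-dimensional stratum (an entire component $V_i\subset X_p$); hence $Z\subset \Delta_\lambda$. For each relative stratum $\mathcal{W}\subset \mathcal{X}_0$---a relative double curve $\mathcal{D}_{ij}$ or triple point $\mathcal{T}_{ijk}$---upper semicontinuity of fiber dimension for the scheme-theoretic intersection $\mathcal{R}\cap \mathcal{W}\to \Delta_\lambda$ implies that the sub-locus where $\mathcal{R}_p\supset \mathcal{W}_p$ is closed. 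Taking the finite union over strata yields $Z$ closed in $\Delta_\lambda$.

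To show $Z = \emptyset$, the hypothesis supplies a point $p_0\in \Delta_\lambda\setminus Z$, so the open complement $\Delta_\lambda\setminus Z$ is nonempty. I upgrade this to global emptiness using the canonical nature of recognizable divisors: because $R_{0,p}$ depends only on the unmarked isomorphism class of $(X_p,M)$, the set $Z$ is invariant under the $O^*(\Lambda)$ change-of-marking action on the $\lambda$-family constructed at the end of Section~\ref{sec:lambda-families}. The discriminant $\Delta_\lambda\cong {\rm Hom}(\Lambda/M,\,\C^*)$ (or its Type~II analogue) is an algebraic group on which $O^*(\Lambda)$ acts by integer matrices through an essentially irreducible arithmetic representation, so the only $O^*(\Lambda)$-invariant closed algebraic subvarieties are unions of proper invariant subtori translated by torsion cosets, together with $\Delta_\lambda$ itself and $\emptyset$. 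Since openness of $\Delta_\lambda\setminus Z$ lets me replace $p_0$ by an arbitrarily generic nearby good point---one avoiding every proper $O^*(\Lambda)$-invariant subvariety---the only possibility left is $Z=\emptyset$, completing the proof.

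The main obstacle is making the invariance argument rigorous: one must verify that $O^*(\Lambda)$ acts on the $\lambda$-family by birational self-maps permuting Kulikov fibers by isomorphism, and that this permutation intertwines with the Zariski-closure definition of $\mathcal{R}$ from Proposition~\ref{stronger-rec}. The former follows from the identification $\pi_1(\mathbb{G}_{k,t})=O^*(\Lambda)$ and the fact that mixed markings determine isomorphism classes on both singular and smooth fibers; the latter follows from canonicality of $R$ on the generic fiber, which propagates through flat limits since both sides of the automorphism pull back to the same closed subscheme of $\mathcal{X}$. A secondary obstacle is the case of a very special hypothesized $X_0$ whose $O^*(\Lambda)$-orbit is small: this is handled by first deforming within the open good locus in $\Delta_\lambda$ to a generic point before applying the density argument.
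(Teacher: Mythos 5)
Your closedness argument for $Z$ is fine (indeed $\mathcal{R}$ being a flat family of curves rules out $2$-dimensional strata, and the criteria for containing a double curve or triple point are closed by semicontinuity and properness). However, the emptiness step has two genuine gaps, and I do not see how either can be repaired along the lines you propose.

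First, the $O^*(\Lambda)$-invariance of $Z$ is not established and is far from clear. The identification $\pi_1(\mathbb{G}_{k,t})=O^*(\Lambda)$ realizes this group as birational self-maps of $\mathcal{X}\to S_\lambda$ built from GM0, GM1, GM2 modifications. But these are \emph{flops}, not biregular automorphisms, and GM1/GM2 modifications are centered precisely along the discriminant, so they alter the Kulikov fibers $X_p$ near their singular strata. The birational map $X_p\dashrightarrow X_{\phi(p)}$ induced by $\phi\in O^*(\Lambda)$ is not an isomorphism of pairs $(X_p,R_p)\to(X_{\phi(p)},R_{\phi(p)})$ — it contracts or blows up curves meeting the triple-point and double-curve loci — so the property ``$\mathcal{R}_p$ contains a stratum of $X_p$'' has no reason to transport to $\phi(p)$. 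The fact that $R$ is canonical controls the smooth fibers and hence $\mathcal{R}^*$, but it does not give you biregular control of the flop centers meeting the Kulikov strata, which is exactly where $Z$ lives.

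Second, even granting invariance, your classification of $O^*(\Lambda)$-invariant closed subvarieties does not force $Z=\emptyset$. As you yourself note, proper invariant subvarieties exist — for instance the identity element $\{1\}\subset\Delta_\lambda$ (a fixed point of any linear torus action), or finite unions of torsion cosets of invariant subtori. The hypothesis supplies only one point $p_0\notin Z$, which is perfectly compatible with $Z$ being such a proper invariant subvariety. Saying one can ``replace $p_0$ by a generic nearby good point'' does not change this: it shows $\Delta_\lambda\setminus Z$ is large, not that $Z$ is empty. You would need an additional argument that $Z$ cannot be any of these proper invariant loci, and none is offered.

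The paper avoids these issues with a purely local argument at the offending point $p$: after a base change and (possibly non-standard) simultaneous toric resolution, one obtains a $k\lambda$-family in which the closure of $\mathcal{R}'$ over the original arc avoids all strata, yet $\mathcal{R}'$ meets an exceptional component $V_p'$ lying over the triple point of $X_p$ while avoiding the corresponding component $V_0'$ over the nearby fiber. Topological triviality of the discriminant family forces $\mathcal{O}_{\mathcal{X}'}(\mathcal{R}')|_{V_p'}$ to be trivial (since it is trivial on $V_0'$), whence $R_p'$ would have to contain $V_p'$ if it met it — a contradiction. This is local in $\Delta_\lambda$, makes direct use of the flat extension and of the topological rigidity built into gluing families, and needs no arithmetic density.
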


\begin{proof}
  We show the Type III case; Type II works the same but easier.  Assume
  the opposite: for some $p\in \Delta_\lambda$ the divisor
  $\cR\cap X_p$ contains a triple point.  Following the argument in
  \cite[Claim 3.13]{alexeev2019stable-pair}, there is an order $k$
  base change and (possibly non-standard) simultaneous toric
  resolution producing a $k\lambda$-family $\cX'\to S_{k\lambda}$ for
  which the closure of $\cR'\big{|}_{C^*}$ in $\cX'\big{|}_{(C,p)}$ contains
  no strata. Here $(C,p)$ is an arc intersecting 
  $\Delta_{k\lambda}$ transversely at $p$ and $\mathcal{R}'\subset \mathcal{X}'$
  extends (Prop.~\ref{stronger-rec}) the canonical choice of polarizing divisor.
   
   The discriminant family
  $\cX'_0\to \Delta_{k\lambda}$ is
  topologically trivial. The divisor $\cR'$ intersects some irreducible component
  $V_p'\subset X_p'$ lying over the triple points of $X_p$ but it is
  disjoint from the corresponding component $V_0'\subset X_0'$.
  The divisor $\cR'$ is a section of a line bundle $\cL'=\cO_{\cX'}(\cR')$.
  It restricts to line bundles 
  $L_p'$ resp. $L_0'$ on $V_p'$ resp. $V_0'$, with $R_p'\in |L_p'|$ and $R_0'\in |L_0'|$.

But  since $L_0'$
  restricts to the trivial bundle on $V_0'$, the 
	 topological triviality implies that 
  $L_p'$ is the trivial bundle on $V_p'$. So $R_p'$ contains
  $V_p'$ if it intersects it. Contradiction.  An alternative
  contradiction avoiding reference to the line bundles is that
  $\mathcal{R}'_0\subset \mathcal{X}_0'$ is a flat family of curves
   intersecting $V_p'$ but not intersecting the corresponding component
   for a generic nearby fiber. This would only possible if $R_p'$
   contained a triple point, which is does not. 
  \end{proof}

Next, we study when we have the freedom to multiply $\lambda$ by an integer:

\begin{proposition}\label{no-strata-rec2} Suppose $R$ is $m\lambda$-recognizable
and the fibers of the flat extension
$\mathcal{R}\subset \mathcal{X}\to S_{m\lambda}$ contain no strata 
of any fiber. Then $R$ is $n\lambda$-recognizable for all $n$. Conversely, 
if $R$ is $n\lambda$-recognizable for all $n\in \N$, then there is an 
$m\in \N$ for which the flat extension $\mathcal{R}\subset \mathcal{X}$
 contains no strata of fibers. \end{proposition}

\begin{proof} First we prove the forward direction, i.e. we 
have a flat family of curves $\mathcal{R}\subset \mathcal{X}\to S_{m\lambda}$
not containing strata of any fiber. Let $n=mk$, and consider the standard 
resolution $\mathcal{X}[k]$ of the global base change. On any fiber,
the map $u:X_0[k]\to X_0$ satisfies the property that the inverse image
$R_0[k]:=u^{-1}(R_0)$
is still a divisor. This divisor certifies recognizability for $X_0[k]$. This would
be false if $R_0$ contained a singular stratum of $X_0$,
as then $u^{-1}(R_0)$ would contain a component.

Hence $R$ is weakly $n\lambda$-recognizable for all $m\mid n$.
By Proposition \ref{strong-is-weak}, we conclude that $R$ is
$n\lambda$-recognizable whenever $m\mid n$.
So consider the case $m\nmid n$.
Supposing $R$ were not $n\lambda$-recognizable,
the limiting divisor $R_0$ would vary depending on the
chosen arc $(C,0)\to S_{n\lambda}$. But taking a standard
resolution and base change of order $r$, we would conclude that $R$ is not
$rn\lambda$-recognizable for an $r\in \N$ as the base-changed arcs 
would also produce different limiting divisors.
Taking $r=m$ gives a contradiction.

The reverse direction follows from the existence of divisor models: 
There exists some Kulikov model $X\to (C,0)$ with monodromy
$m\lambda$ for which the limit $R_0$ contains no strata of $X_0$.
Taking a $\lambda$-family, Proposition \ref{no-strata-rec}
shows we get a flat extension $\mathcal{R}\subset \mathcal{X}$
containing no strata of fibers.
\end{proof}

\begin{proposition}\label{divisor-lambda-rec} $R$ is $n\lambda$-recognizable for all
$n\in \mathbb{N}$ if and only if there exists a divisor $m\lambda$-family
$(\mathcal{X},\mathcal{R})\to S_{m\lambda}$ for some $m\in \mathbb{N}$.
\end{proposition} 

\begin{proof} The existence of a divisor $m\lambda$-family $(\mathcal{X},\mathcal{R})$
implies that $R$ is (weakly) $m\lambda$-recognizable with $\mathcal{R}$
containing no strata of fibers, so Proposition \ref{no-strata-rec2} implies
that $R$ is $n\lambda$-recognizable for all $n\in \mathbb{N}$.
Conversely, choose a divisor model $(X,R)\to C$ with monodromy
invariant $m\lambda$. Then Proposition \ref{nef-lambda-exist} (or Proposition
\ref{nef-lambda-exist-ii} for Type II)
implies that we may choose an
ordered toric model of $X_0$ for which the line
bundle $\mathcal{O}_{X_0}(R_0)$
extends to a relatively big and nef line bundle $\mathcal{L}\to \mathcal{X}$
on the corresponding $\lambda$-family. By recognizability and
Proposition \ref{no-strata-rec}, the 
closure $\mathcal{R}=\overline{\mathcal{R}^*}$
is a section of $\mathcal{L}$
which doesn't contain strata. We conclude
that $(\mathcal{X},\mathcal{R})$ it is a divisor $m\lambda$-family. \end{proof}

We also show equivalence with a weaker condition:

\begin{proposition}\label{recog-vary} Let $\mathcal{X}\to (C,0)\times B$
 be a family of Kulikov models over a curve $B$ for which the
  discriminant family $\mathcal{X}_0=X_0\times B$ is constant, and 
  the restriction of $\mathcal{X}$ to $(C,0)\times \{b_0\}$
    gives a divisor model. Then $R$ is recognizable if and only if 
    $R_{0,b}:=\lim_{t\to 0} R_{t,b}$ is independent of $b$, i.e. 
    $R_{0,b}=R_{0,{b_0}}\subset X_0$ for any such $\mathcal{X}\to (C,0)\times B$.
  \end{proposition}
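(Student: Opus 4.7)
The forward direction is immediate: if $R$ is recognizable, then Definition~\ref{recog-def} gives a well-defined divisor $R_0\subset X_0$ determined solely by the Kulikov surface $X_0$, so the flat limit over each slice $(C,0)\times\{b\}$ equals this same $R_0$, yielding $R_{0,b}=R_0$ independent of $b$.

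For the reverse direction, the plan is to use the hypothesis to construct a divisor $\lambda$-family and then invoke Corollary~\ref{divisor-lambda-rec}. First, set $R_0:=R_{0,b_0}$; since the slice at $b_0$ is a divisor model, $R_0$ contains no stratum of $X_0$, and the $b$-independence hypothesis implies that the flat closure $\overline{\mathcal{R}^*}\subset\mathcal{X}$ is a flat family of curves over $(C,0)\times B$ with central fiber $R_0\times B$. Next, by Proposition~\ref{nef-lambda-exist} (respectively~\ref{nef-lambda-exist-ii} in Type~II), there is an ordered toric model of $X_0$ relative to which the nef line bundle $L=\mathcal{O}_{X_0}(R_0)$ extends to a relatively big and nef line bundle $\widetilde{\mathcal{L}}$ on some $\lambda$-family $\widetilde{\mathcal{X}}\to S_\lambda$. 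By the uniqueness of the germ in Proposition~\ref{smooth-sing}, our family $\mathcal{X}\to(C,0)\times B$ embeds as a $2$-dimensional analytic slice of $\widetilde{\mathcal{X}}$, after possibly shrinking $B$.

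The key step, and the main obstacle, will be showing that the Zariski closure $\widetilde{\mathcal{R}}:=\overline{\mathcal{R}^*}\subset\widetilde{\mathcal{X}}$ contains no fiber component over any point of $\Delta_\lambda$. The difficulty is that our hypothesis directly controls $\widetilde{\mathcal{R}}$ only over the arc in $\Delta_\lambda$ that is the image of $B$, whereas $\Delta_\lambda$ has much higher dimension. I would attack this by adapting the argument of Proposition~\ref{no-strata-rec}: if $\widetilde{\mathcal{R}}\cap\widetilde{\mathcal{X}}_p$ contained a fiber component $V_p'\subset\widetilde{\mathcal{X}}_p$ for some $p\in\Delta_\lambda$, then the topological triviality of the discriminant family $\widetilde{\mathcal{X}}_0\to\Delta_\lambda$ together with Cohomology and Base Change would force the restriction $\widetilde{\mathcal{L}}|_{V_0'}$ on the corresponding component $V_0'\subset X_0$ to be trivial, contradicting the fact that $R_0|_{V_0'}$ is a proper effective section of $\widetilde{\mathcal{L}}|_{V_0'}$ at the basepoint $b_0$.

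Once $(\widetilde{\mathcal{X}},\widetilde{\mathcal{R}})$ is established as a divisor $\lambda$-family, Corollary~\ref{divisor-lambda-rec} yields $n\lambda$-recognizability for every $n\in\mathbb{N}$. Applying the same construction for each Kulikov monodromy type $\lambda'$ (using a suitable choice of $X_0$ and $B$ in each case, afforded by the existence of divisor models) produces flat extensions of $\mathcal{R}^*$ over every $\lambda'$-family, which by Proposition~\ref{stronger-rec} is equivalent to full recognizability of $R$.
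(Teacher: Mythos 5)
Your forward direction matches the paper's and is fine.

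The reverse direction is where the trouble lies. You take a direct approach: from constancy of $R_{0,b}$ you try to show that the closure $\widetilde{\mathcal{R}}=\overline{\mathcal{R}^*}$ in a $\lambda$-family $\widetilde{\mathcal{X}}\to S_\lambda$ contains no fiber component, and then invoke Corollary~\ref{divisor-lambda-rec}. You correctly flag the key obstacle — the hypothesis only controls $\widetilde{\mathcal{R}}$ over the single point $p_0=\psi(X_0)\in\Delta_\lambda$ (note that since $\mathcal{X}_0=X_0\times B$ is constant, $B$ maps to a point of $\Delta_\lambda$, not an arc), whereas you need to rule out fiber components over all of the $19$-dimensional $\Delta_\lambda$. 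But the argument you sketch for this step does not work. First, ``adapting the argument of Proposition~\ref{no-strata-rec}'' is circular: that proposition takes recognizability as a hypothesis and relies on the already-established flat extension of Proposition~\ref{stronger-rec}. Second, the line bundle argument is reversed: in Proposition~\ref{no-strata-rec} one starts from a component $V_0'$ on which $\cL'$ is \emph{known} to be trivial (because the flat divisor $\cR'$ avoids it), and concludes via topological triviality that $\cL'|_{V_p'}$ is trivial, contradicting $\cR'_p$ meeting $V_p'$. In your setting, $\widetilde{\mathcal{R}}$ containing $V_p'$ does not imply anything about triviality of the fixed polarizing line bundle $\widetilde{\mathcal{L}}$ restricted to the corresponding component at $p_0$, since $\widetilde{\mathcal{R}}$ is a priori just a closed analytic subset, not a flat family of Cartier divisors. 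The paper avoids this difficulty entirely by arguing the contrapositive: if $R$ is not recognizable, the proof of Proposition~\ref{stronger-rec} already produces a one-parameter family $\mathcal{X}\to(C,0)\times B$ with varying $R_{0,b}$; one then applies GM0/GM1/GM2 modifications (after possible base change and standard resolution) to arrange that the $b_0$-slice is a divisor model, noting that these modifications preserve both the constancy of $\mathcal{X}_0=X_0\times B$ and the variation of the limit curves. This sidesteps the need to propagate flatness from $p_0$ to all of $\Delta_\lambda$ — which is precisely the step your proposal cannot complete. Relatedly, your argument also does not address arcs approaching $\Delta_\lambda$ with tangency $k>1$; these are handled in the proof of Proposition~\ref{stronger-rec} via base change and standard resolution, and the contrapositive argument inherits that handling for free.
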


\begin{proof} Certainly if $R$ is recognizable, then $R_{0,b}$ will
equal the divisor $R_0\subset X_0$ certifying recognizability for any $b$.
Conversely, suppose $R$ is not recognizable. Following
the proof of Proposition \ref{stronger-rec}, there must be a one-parameter
family of Kulikov models $\mathcal{X}\to (C,0)\times B$ for which $R_{0,b}$ varies.
It remains to show that we may assume
these Kulikov models are divisor models. To do so, we perform a series of
GM0, GM1, GM2 modifications (possibly after a global base change and
standard resolution) until the restriction of the modified family $\mathcal{X}'$ to a fixed arc
$(C',0)\times \{b_0\}$ is a divisor model. These modifications do not affect
the triviality of the discriminant family $\mathcal{X}_0'=X_0'\times B$ and
the limit curves $R_{0,b}'$ still vary
on $X_0'$ because they cover some component. \end{proof}

\begin{proposition}\label{type-ii-rec} Suppose that $t=0$ and $k>1$. That is, $X_0$ is a Type II
Kulikov surface with intermediate elliptic ruled components.
Then, there exist $\lambda$-families $\cX\to S_\lambda$
for which Propositions \ref{stronger-rec}, \ref{strong-is-weak},
\ref{no-strata-rec}, \ref{no-strata-rec2}, \ref{divisor-lambda-rec}, \ref{recog-vary} hold.
\end{proposition}

\begin{proof} Recall that the smoothing component of such a Kulikov surface
$X_0$ has dimension $19+k$ and is fibered over $\C^k$, with the $k$th coordinate axis
corresponding to the deformations which smooth the $k$th double curve. Imposing an
$M$-quasipolarization reduces the dimension to $19+k-{\rm rk}\,M$. Given any smooth arc
$(C,0)\hookrightarrow (\C^{19+k-{\rm rk}\,M},0)=:(S,0)$ whose tangent direction $T_0C$ is transverse to
all the coordinate axes under the projection to $(\C^k,0)$, the restriction of the universal family
to $(C,0)$ is a Kulikov model, simultaneously smoothing all of the double curves. 

The closure $\mathcal{R}=\overline{\mathcal{R}^*}$ over the full $(19+k-{\rm rk}\,M)$-dimensional
smoothing component of such an $X_0$ could contain an entire intermediate
elliptic ruled component. In fact, this does occur: Applying 
$g\in {\rm Aut}^0(X_0)\cong (\C^*)^{k-1}$ to the arc $(C,0)$ in the deformation space
will translate the flat limit $R_0\subset X_0$ by $g$. But a recognizable divisor $R_0$
need not be ${\rm Aut}^0(X_0)$-invariant, see the $\widetilde{A}_{17}$ case in
\cite[Construction~9.27]{alexeev2019stable-pair}.

Fixing one arc $(C,0)\hookrightarrow  (S,0)$ gives a flat limit $R_0\subset X_0$
and assuming $R$ is recognizable, the flat limit $R_0'$ along any other arc
$(C',0)\hookrightarrow  (S,0)$ differs from $R_0$ by an element $g\in {\rm Aut}^0(X_0)$, i.e. $g(R_0')=R_0$.
But then, the flat limit along $g^*(C',0)$ equals $R_0$. So for any arc transverse to the coordinate
axes of $(\C^k,0)$, there is a representative of its ${\rm Aut}^0(X_0)$-orbit
for which the flat limit is {\it equal} to $R_0$.
Thus, there exists a slice of the ${\rm Aut}^0(X_0)$-action on $(S,0)$ for which the flat limit
along the slice is always $R_0$.

This procedure can be performed analytically-locally
along the fibers over the equisingular locus $\Delta\subset S$. We call such a slice {\it well-chosen}. 
Summarizing, a well-chosen slice gives a local $\lambda$-family over an open set $U\subset S_\lambda$
around $0\in \Delta_\lambda$ for which $\cR^*$ extends
to a flat family of divisors $\cR$.

Now consider a collection $\{U_i\}$ of well-chosen slices for which $U_i\cap \Delta$ 
cover the equisingular deformation space $\Delta_\lambda$. On the double overlaps $U_i\cap U_j$ these well-chosen
slices are isomorphic, by a unique isomorphism preserving the mixed marking,
because the isomorphisms on the smooth smooth fibers are unique (Prop.~\ref{partial-fine}).
Thus, when $R$ is recognizable, we can glue to form a $\lambda$-family $(\cX,\cR)\to S_\lambda$
on which $\cR$ extends to a flat family of divisors.
The arguments of the above propositions apply verbatim to such a well-chosen slice. \end{proof}

%

We summarize the results proven above:

\begin{theorem}\label{thm:equiv-rec} Let $R$ be a canonical
choice of polarizing divisor, defining a divisor $\cR^*$ on the universal
K3 surface over a Zariski open subset $U\subset \cF_M^{\rm q}$.
Then the following are equivalent: 
\begin{enumerate}
\item Any one-parameter deformation of a divisor model $(X,R)\to (C,0)$
keeping $X_0$ constant in moduli gives rise
to a constant limiting curve $R_0$, up to ${\rm Aut}^0(X_0)$.
\item $R$ is recognizable. 
\item For all primitive isotropic $\delta$ and all $\lambda\in C_\delta^+\cap \delta^\perp/\delta$,
there is some $\lambda$-family for which $\mathcal{R}^*$ extends a flat divisor $\mathcal{R}\subset \mathcal{X}$.
\item $\mathcal{R}^*$ extends to a flat divisor $\mathcal{R}\subset \mathcal{X}$
in every $\lambda$-family.
\item For every projective class $[\lambda]$, there exists
some $k\in \N$ for which $\mathcal{R}^*$ extends to a divisor
$\lambda$-family $(\mathcal{X},\mathcal{R})\to S_{k\lambda}$.\end{enumerate}

If $t=0$, $k>1$, the above equivalences hold when 
the $\lambda$-family is a well-chosen slice.
\end{theorem} 

\begin{proof} Note that we are allowing the case $\lambda=0$,
which in conditions (3), (4), (5) amounts to saying that $\mathcal{R}^*$
extends to a section of the projective bundle
$\mathbb{P}_\mathcal{L}\to \mathcal{F}^{\rm q}_M$. Then
$(2)\iff (4)$ by Proposition \ref{stronger-rec}, $(3)\iff (4)$ by
Proposition \ref{strong-is-weak}, and $(4)\iff (5)$ by
Proposition \ref{divisor-lambda-rec}. Finally, $(1)\iff (2)$ by
Proposition \ref{recog-vary}.
\end{proof}

The conditions in Theorem \ref{thm:equiv-rec} are roughly
in increasing order of strength. As such, we use condition
(5) in the proof of Theorem \ref{thm:recognizable-semitoroidal}, but
use condition (1) in the proof of Theorem \ref{thm:main-rc-divisor}.

\begin{definition}\label{def:slc-type}
  Let $(\oX,\epsilon\oR)= \bigcup_i (\oV_i,\oD_i,\epsilon\oR_i)$
  be a stable degeneration
  of K3 pairs. The {\it slc combinatorial type} is the data of: 
\begin{enumerate} 
\item The deformation types of
the quasipolarized minimal resolutions $(V_i,D_i, L_i)$ of each component, where
$L_i=\mathcal{O}_{V_i}(R_i)$, and
\item the combinatorics $\Gamma(\oX)$ of the
singular strata.
\end{enumerate}
\end{definition}

\begin{corollary}\label{cor:slc-type-defined-by-lambda}
Suppose $R$ is recognizable and let $(\oX^*,\epsilon \oR^*)\to C^*$, $\epsilon\ll 1$ be a 
family of stable K3 pairs over a punctured curve $C^*=C\setminus 0$.
The slc combinatorial type of the unique stable limit $(\oX_0,\epsilon\oR_0)$
  depends only on the projective class $[\lambda]$ of the monodromy invariant.
\end{corollary}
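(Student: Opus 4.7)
The plan is to identify $(\oX_0,\epsilon\oR_0)$ as a fiber of a stable $m\lambda$-family produced by recognizability, and then observe that the slc combinatorial type is locally constant on the discriminant of such a family and insensitive to all choices.

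First I would reduce to a Kulikov divisor model. After a finite base change of order $n$ and a simultaneous resolution as in Section \ref{sec:models}, we obtain a Kulikov divisor model $(X,R)\to(C,0)$ with monodromy invariant $n\lambda$ whose stable model is still $(\oX_0,\epsilon\oR_0)$; its slc combinatorial type is unaffected by these operations. By Theorem \ref{thm:equiv-rec}(5) combined with Lemma \ref{no-strata-rec2}, there exists $k\in\N$ such that for every sufficiently divisible $m$ a divisor $m\lambda$-family $(\cX^{(m)},\cR^{(m)})\to S_{m\lambda}$ exists. Choosing $m$ divisible by both $n$ and $k$, performing the standard base-change resolution of Proposition \ref{standard-resolution} on $(X,R)$, and using Theorem \ref{qpol-fs} to adjust $(\cX^{(m)},\cR^{(m)})$ by GM0, GM1, GM2 modifications until the central Kulikov combinatorial type on its discriminant matches that of $(X,R)_0$, the universality of $\lambda$-families (Theorem \ref{thm:kulikov-torelli} and Proposition \ref{smooth-sing}) exhibits $(X,R)$ as the pullback of $(\cX^{(m)},\cR^{(m)})$ along a map $(C,0)\to S_{m\lambda}$ transverse to $\Delta_{m\lambda}$ at $0$. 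Taking $\mathrm{Proj}$ as in Definition \ref{stable-lambda} then identifies $(\oX_0,\epsilon\oR_0)$ with the fiber of the stable $m\lambda$-family $(\overline{\cX}^{(m)},\epsilon\overline{\cR}^{(m)})\to S_{m\lambda}$ over $0\in\Delta_{m\lambda}$.

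Next I would show the slc combinatorial type is locally constant on $\Delta_{m\lambda}$. The discriminant family $\cX^{(m)}_0\to\Delta_{m\lambda}$ is topologically trivial (Theorem \ref{thm:kulikov-torelli}(3)) with a global ordered toric model, so each relative anticanonical pair $(\cV_i,\cD_i)$ has constant deformation type, and the relatively big-and-nef bundle $\cL^{(m)}|_{\cV_i}$ has fiberwise constant numerical class. Hence the subset of components contracted by the $\mathrm{Proj}$ construction and the combinatorics $\Gamma(\oX)$ of the resulting stable pair are locally constant on $\Delta_{m\lambda}$, delivering constancy of both pieces of data in Definition \ref{def:slc-type}. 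Independence of the chosen $m\lambda$-family then follows from Theorem \ref{qpol-fs}: each of GM0, GM1, GM2 is a relative flop that leaves the stable model fiberwise unchanged, so the generic stable fiber over $\Delta_{m\lambda}$ is a birational invariant of the $\lambda$-family class. The principal technical hurdle is the constancy step, which amounts to showing that the set of components $\cV_i$ on which $\cL^{(m)}$ restricts to a non-big line bundle is locally constant on $\Delta_{m\lambda}$; this reduces to locally constant behavior of the intersection numbers of $\cL^{(m)}$ with the double curves and internal exceptional curves on each $\cV_i$, which in turn follows from topological triviality combined with the trivialization of the relative Picard groups afforded by the ordered toric model.
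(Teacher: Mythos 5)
Your proof is correct and follows essentially the same approach as the paper's: both pass to a divisor $\lambda$-family (or $k\lambda$-family) from Theorem~\ref{thm:equiv-rec}(5), form the stable $\lambda$-family by $\Proj$, and then use topological triviality of the discriminant family together with the Gauss--Manin identification of $L_0=\cO(R_0)$ to conclude that the contraction $X_0\to\oX_0$, and hence the slc combinatorial type, is fixed along $\Delta_\lambda$. Your write-up spells out the base-change and standard-resolution bookkeeping and the GM0/GM1/GM2-invariance of the stable model more explicitly than the paper, which treats those points as understood.
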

\begin{proof}
  Consider the divisor $\lambda$-family as in
  Theorem~\ref{thm:equiv-rec}(5). The family of canonical models
  $(\overline{\cX},\epsilon \overline{\cR})$, where
  \begin{math}
    \overline{\cX} = \Proj \oplus_{n\ge0} \pi_* \cO_\cX(n\cR),
  \end{math}
  and $\overline{\cR} = \im \cR$, 
  is the corresponding family of stable slc pairs.  Every
  one-parameter degeneration with monodromy invariant $\lambda$
  has a unique limit in this family. The combinatorial type of the discriminant 
  family $(\cX_0,\cR_0)$ is fixed, with the line bundles 
  $L_0=\cO_{X_0}(R_0)$ on every fiber identified
   by the Gauss-Manin connection because $\mathcal{X}_0$ is
   topologically trivial.
   Since the contraction $X_0\to \oX_0$ is defined only by the line bundle $L_0$,
   the combinatorial type of the stable models is also fixed.
\end{proof}

\section{Main theorem for recognizable divisors}
\label{sec:main-thm-recognizable}

\subsection{Proof of Theorem~\ref{thm:recognizable-semitoroidal}}
We have proven in Corollary~\ref{cor:slc-type-defined-by-lambda} that 
whenever $R$ is recognizable, the slc combinatorial type of an $M$-polarized degeneration depends only on the projective class $[\lambda]$ of the monodromy invariant. This is the key input which recognizability gives us: from
here we have an essentially birational-geometric argument to show that the KSBA compactifications associated to recognizable divisors are
(up to normalization) semitoroidal.

\begin{theorem}\label{main-thm} If $R$ is recognizable, there exists a unique semifan $\mathfrak{F}_R$
for which $\oF_M^{\mathfrak{F}_R}\to \oF_M^R$ is the normalization. \end{theorem}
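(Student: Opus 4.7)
The plan is to apply Theorem~\ref{thm:semi-is-normal} to the normalization $\oF_M^{R,\nu}$. This requires constructing (i) a morphism $\oF_M^{R,\nu}\to \oF_M^\bb$ and (ii) a morphism $\oF_M^\mathfrak{G}\to \oF_M^{R,\nu}$ from some toroidal compactification, each extending the identity on the interior $F_M$. Uniqueness of $\mathfrak{F}_R$ will then follow from the proof of Theorem~\ref{thm:semi-is-normal}, where the semifan is recovered from the combinatorics of strata contracted by $\oF_M^\mathfrak{G}\to \oF_M^{R,\nu}$.

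For (i), the Baily-Borel compactification is characterized as $\Proj$ of the ring of modular forms, so any normal projective variety containing $F_M$ as an open dense subset admits a canonical morphism to $\oF_M^\bb$. Since $\oF_M^R$ is projective by Theorem~\ref{thm:stable-moduli}, so is its normalization, and the morphism exists.

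For (ii), I invoke condition (5) of Theorem~\ref{thm:equiv-rec}: for every projective class $[\lambda]$ there exists a divisor $\lambda$-family $(\cX,\cR)\to S_{m\lambda}$, whose stable model $(\overline{\cX},\epsilon\overline{\cR})$ (Definition~\ref{stable-lambda}) is a family of stable $K$-trivial pairs. This induces a morphism from $S_{m\lambda}$ to $\oF_M^R$, and by the theory of divisor models in Section~\ref{sec:models}, the morphism factors through the analytic-local chart of some toroidal compactification near the cusp $I$ associated to $\lambda$. Varying the choice of nef extension $\cL$ on Kulikov models produces a decomposition of $C_\delta^+$ (respectively $I^\perp/I\otimes \bR$ for Type~II) whose cones correspond to classes of nef extensions giving the same stable model: this is the would-be fan $\mathfrak{G}_\delta$. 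By Theorem~\ref{qpol-fs}, any two divisor $\lambda$-families are related by GM0, GM1, GM2 modifications, and by recognizability (Proposition~\ref{no-strata-rec}) the divisor $\mathcal{R}$ remains flat under these modifications, so the associated stable models canonically agree on overlaps. Gluing the local charts yields a well-defined morphism $\oF_M^\mathfrak{G}\to \oF_M^R$ which factors through the normalization.

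The main obstacle is showing that, at each cusp, the stable model is actually locally constant on cones of $\mathfrak{G}$ and jumps only across walls. This follows because the stable pair $(\overline{\cX}_s,\epsilon\overline{\cR}_s)$ is defined intrinsically as $\Proj$ of the section algebra of a nef line bundle, which by upper semicontinuity is constant on the interior of each maximal cone; across walls the nef class crosses a Noether--Lefschetz locus and the stable model acquires additional contracted curves, giving an identification of the stratum of $\oF_M^\mathfrak{G}$ with a stratum of $\oF_M^{R,\nu}$. The fan $\mathfrak{G}$ can then be coarsened to the semifan $\mathfrak{F}_R$ whose cones are the preimages under the induced morphism of points of $\oF_M^{R,\nu}$, and the resulting $\oF_M^{\mathfrak{F}_R}$ is the normalization by Zariski's main theorem, as in the proof of Theorem~\ref{thm:semi-is-normal}.
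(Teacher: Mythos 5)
Your overall strategy---reduce to Theorem~\ref{thm:semi-is-normal} by producing morphisms $\oF_M^\mathfrak{G}\to(\oF_M^R)^\nu\to\oF_M^\bb$---is the same as the paper's, but both of your constituent steps have real gaps.

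\textbf{The morphism to $\oF_M^\bb$ is not free.} You assert that because $\oF_M^\bb=\Proj\bigoplus_k M_k(\Gamma)$, ``any normal projective variety containing $F_M$ as an open dense subset admits a canonical morphism to $\oF_M^\bb$.'' This is false in general: the rational map from an arbitrary normal compactification of $F_M$ to the Baily--Borel compactification has no reason to be regular (e.g.\ one can produce normal compactifications by blowing down divisors of $\oF^{\mathfrak{G}}$ in the Type~III boundary transversely to the fibers of $\oF^{\mathfrak G}\to\oF^\bb$; such contractions need not be compatible with the map to $\oF^\bb$). The paper isolates this as Lemma~\ref{lem:over-bb}, which is proved by a specific geometric argument---the $j$-invariant of the double curve, resp.\ of the elliptic singularity, on a Type~II stable slc limit $(\oX_0,\epsilon\oR_0)$ is intrinsically recoverable from $(\oX_0,\epsilon\oR_0)$, hence the composite map to the $1$-cusps is well defined on $\oF_M^R$ itself. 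You need this argument, or something like it.

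\textbf{The construction of $\mathfrak{G}$ and the morphism $\oF_M^\mathfrak{G}\to\oF_M^R$ is circular.} You propose to define $\mathfrak{G}_\delta$ as the decomposition of $C_\delta^+$ by ``classes of nef extensions giving the same stable model'' and to prove regularity by gluing local charts via Theorem~\ref{qpol-fs}. But nothing in your argument shows this decomposition is a fan (locally finite, rational polyhedral, closed under faces), nor that the classifying morphisms $S_{m\lambda}\to\oF_M^R$ factor through toroidal charts: these are exactly the nontrivial points. The paper avoids assuming any of this by running the argument in the opposite direction: start with an \emph{arbitrary} simplicial fan $\mathfrak{G}$ and show that the rational map $u\colon\oF_M^{\mathfrak G}\dashrightarrow\oF_M^R$ becomes a morphism after a suitable toric refinement. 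The two structural inputs are Corollary~\ref{cor:slc-type-defined-by-lambda} (the slc combinatorial type of a one-parameter limit depends only on $[\lambda]$, i.e.\ only on the tangency orders to coordinate hyperplanes in a toric chart) and Corollary~\ref{strata-affine} (Type~III slc strata contain no complete curve, which follows from the Torelli theory and affineness results for anticanonical pairs in Section~9B). Feeding these into the combinatorial Lemma~\ref{toric-resolution} is what actually produces the toric resolution of indeterminacy; your appeal to ``upper semicontinuity of the section algebra'' does not supply either of these facts. In Type~II you also need the observation that the classifying map $S_\lambda\to\oF_M^R$ factors through $v\colon S_\lambda\to\oF_M^{\mathfrak G}$ because fibers of $v$ away from the boundary parameterize isomorphic ADE K3 surfaces; this is where Theorem~\ref{thm:equiv-rec}(5) enters, which you correctly identify, but only after Type~III has been handled does the normality of $\oF_M^{\mathfrak G}$ let one glue the two extensions.

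In short: you identified the right black box (Theorem~\ref{thm:semi-is-normal}) but skipped the two nontrivial geometric inputs (Lemma~\ref{lem:over-bb} and the no-complete-curves statement feeding Lemma~\ref{toric-resolution}) that make the reduction work.
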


\begin{proof} Recall that $\oF_M^R$ is, by Definition \ref{stable-pair-comp}, the coarse space
of the closure (in $\cP_{N,2d}$) of the stack of pairs parameterized by $U\subset \cF_M$.

We define the {\it interior} of $\oF_M^R$ to be the locus in this closure parameterizing
$M$-polarized ADE K3 surface pairs $(\oX,\epsilon\oR)$. Proposition \ref{contains-int}
implies that this locus is isomorphic to $F_M$.

Let $\mathfrak{G}$ be some regular
fan (cones are standard affine)
and let $u\colon \oF_M^\mathfrak{G} \dashrightarrow \oF_M^R$ be the birational
map which is isomorphism on the interiors. Let
$\sigma=\textrm{span}\{\lambda_1, \dots ,\lambda_d\}$ be a Type III
standard affine cone of $\mathfrak{G}$ of maximal dimension.
Associated to this cone is an analytic, finite morphism from a tubular neighborhood $N(\sigma)$
of  the toric boundary of
$$X(\sigma)=\C^d =\C\lambda_1\oplus \cdots \oplus \C\lambda_d$$
to a neighborhood of the boundary strata of $\oF_M^\mathfrak{G}$
containing the $0$-dimensional stratum associated $\sigma$. The finiteness
arises from quotienting by the ${\rm Stab}_{\Gamma_\delta}(\sigma)$
action on this toric chart. 

Let $u(\sigma) \colon N(\sigma)\dashrightarrow \oF_M^R$ denote the
corresponding meromorphic map. Consider an arc germ $(C,0)\subset (\C^d,0)$ 
with $C^*\subset (\C^*)^d$ contained in the open torus orbit.
Since $\oF_M^R$ is proper, $u(\sigma)$ extends uniquely over $C^*$ to the origin $0$.
By Corollary \ref{cor:slc-type-defined-by-lambda}, the combinatorial type of the stable model 
depends only on the orders $r_i$ of tangency of $(C,0)$ with the coordinate
hyperplanes of $\C^d$, since this determines
the monodromy invariant of $(C,0)$ to be
$\lambda =r_1\lambda_1+\cdots +r_d\lambda_d$.

The meromorphic map $u(\sigma) \colon N(\sigma) \dashrightarrow \oF_M^R$
thus satisfies the following conditions:
\begin{enumerate}
\item There is a stratification (by slc 
combinatorial type) of $\oF_M^R$ for which
the extension of $u(\sigma)$ over any arc $(C,0)$ with fixed
tangency orders $r_i$ to the coordinate hyperplanes of $\C^d$
lies in a fixed slc stratum. 
\item The indeterminacy
locus lies in the coordinate hyperplanes, which map by $u(\sigma)$
into the union of Type III slc strata. \end{enumerate}

No Type III slc stratum contains a
complete curve by Corollary \ref{strata-affine}.
We conclude by
Lemma \ref{toric-resolution} that
there exists a toric blow-up of $X(\sigma)$ eliminating the
indeterminacy of $u(\sigma)$. Further refining,
we may assume this toric blow-up is given by a
${\rm Stab}_{\Gamma_\delta}(\sigma)$-invariant fan.
 Thus, we may refine $\mathfrak{G}$ so that
 $u$ defines a morphism over the refinement of $\sigma$.
Applying this argument to all $\Gamma$-orbits of maximal cones
 $\sigma\in \mathfrak{G}$,
 we may as well have assumed that
 $u: \oF_M^\mathfrak{G}\dashrightarrow \oF_M^R$
 has no indeterminacy over the Type III extension of $F_M$.

In fact, there is no indeterminacy in the Type II ($\lambda^2=0$) locus either:
By Theorem \ref{thm:equiv-rec},
there is a divisor $\lambda$-family
$(\mathcal{X},\mathcal{R})\to S_\lambda$. Consider the
resulting stable $\lambda$-family
$(\overline{\mathcal{X}},\epsilon\overline{\mathcal{R}}) \to S_\lambda$.
The base $S_\lambda$ is an order $k$ branched cover
of a tubular neighborhood of the boundary divisor
in the unipotent quotient $\mathbb{D}_M(I)$. 
There is a natural quotient map $v\colon S_\lambda\to\oF_M^\mathfrak{G}$
by the action of $\Gamma_I$.

The classifying morphism $S_\lambda\to \oF_M^R$ for the stable $\lambda$-family
must factor through $v$ because the
fibers of $v$ not lying in the boundary give isomorphic ADE K3 surfaces with divisor.
Ranging over all $I=\Z\delta\oplus \Z\lambda$, the maps $v$ surject
onto the Type II locus, so $u$ extends to a morphism
over the Type II extension of $F_M$.

Since the Type II and III extensions of $F_M$ cover all of $\oF_M^{\mathfrak{G}}$,
we conclude that there is a morphism $\oF_M^{\mathfrak{G}}\to \oF_M^R$---on the intersection
of the closure of the Type II locus with the Type III locus,
it is a morphism as opposed to just a set-theoretic map because
$\oF_M^{\mathfrak{G}}$ is normal. 

By Lemma \ref{lem:over-bb},
we also have a morphism $(\oF_M^R)^\nu\to \oF_M^\bb$. 
So by Theorem \ref{thm:semi-is-normal},
the normalization of $\oF_M^R$ is semitoroidal
for a unique semifan $\mathfrak{F}_R$.
  \end{proof}

\begin{corollary}\label{tor-to-slc-strata} Suppose $R$ is recognizable.
The normalization map $\oF_M^{\mathfrak{F}_R}\to \oF_M^R$
sends semitoroidal strata to slc strata.
\end{corollary}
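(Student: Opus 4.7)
The plan is to combine the construction of $\mathfrak{F}_R$ from Theorem~\ref{main-thm} with the slc-constancy statement of Corollary~\ref{cor:slc-type-defined-by-lambda}. Fix a cone $\sigma$ of $\mathfrak{F}_R$ and pick any $p\in\mathrm{Str}_\sigma\subset\oF_M^{\mathfrak{F}_R}$. A transverse analytic arc $(C,0)\to\oF_M^{\mathfrak{F}_R}$ with $0\mapsto p$ and $C^*\subset F_M$ pulls back, through the semitoroidal structure, to a Kulikov family whose monodromy invariant $\lambda$ lies in the relative interior $\sigma^\circ$ (its precise value being determined by the tangency data of the arc). By Corollary~\ref{cor:slc-type-defined-by-lambda}, the slc combinatorial type of $u(p)$ is determined by the projective class $[\lambda]$.

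The decisive step is to show that this type is independent of the choice of $\lambda\in\sigma^\circ$. I would refine $\mathfrak{F}_R$ to a simplicial fan $\mathfrak{G}$ carrying the morphism $m\colon\oF_M^{\mathfrak{G}}\to\oF_M^R$ constructed in the proof of Theorem~\ref{main-thm}. By the description of the semifan in Theorem~\ref{thm:semi-is-normal}, $\sigma$ is an equivalence class of maximal $\mathfrak{G}$-cones $\{\tau_i\}$ under the relation generated by $\tau\sim\tau'$ whenever their shared codimension-one $\mathfrak{G}$-face has its $1$-dimensional stratum contracted by $m$. A contracted connecting stratum forces the two adjacent $0$-dimensional $\mathfrak{G}$-strata to share an image in $\oF_M^R$ and hence an slc type, and transitivity propagates this through the whole class to a single slc stratum $S$ receiving every $0$-dimensional $\mathfrak{G}$-stratum sitting above $\sigma$. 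For $\lambda\in\sigma^\circ$ lying on an interior face of $\sigma$ that is not maximal in $\mathfrak{G}$, the arc-limit lies on a positive-dimensional $\mathfrak{G}$-stratum; the factorization $m\colon\oF_M^{\mathfrak{G}}\to\oF_M^{\mathfrak{F}_R}\to\oF_M^R$ collapses this stratum into $\overline{\mathrm{Str}_\sigma}$, whose generic point maps to $S$, so the image lies in $\overline{S}$.

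Therefore every arc-limit $u(p)$ for $p\in\mathrm{Str}_\sigma$ lies in $\overline{S}$, and I would upgrade this to $u(\mathrm{Str}_\sigma)\subseteq S$ using irreducibility of $\mathrm{Str}_\sigma$ together with the locally-closed nature of the slc stratification: a morphism from an irreducible variety whose image meets a single locally-closed stratum on a Zariski open subset has its full image in that stratum. The main obstacle is the interior-face case of paragraph two, since the equivalence relation from Theorem~\ref{thm:semi-is-normal} is stated only in terms of codimension-one adjacencies. I expect this to be resolved by the observation that the chain of contractions linking the $\tau_i$ in $\sigma$ is connected, so any higher-codimension $\mathfrak{G}$-face strictly interior to $\sigma$ is bounded by contracted codimension-one faces on all sides, forcing its stratum to be contracted as well — an argument that mirrors the ``interior cells of a connected CW-complex of contractions are contracted'' principle implicit in the proof of Theorem~\ref{thm:semi-is-normal}.
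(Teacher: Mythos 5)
Your first paragraph correctly sets up the problem and correctly identifies the decisive step as eliminating the dependence on the choice of $\lambda\in\sigma^\circ$. But the route you take to close it has a genuine gap, and it misses the one-line observation the paper actually uses. The paper fixes a single primitive $\lambda\in\mathrm{int}(\sigma)$ once and for all and then notes that the quotient map $\delta^\perp/\{\delta,\lambda\}\to\delta^\perp/\{\delta,\sigma\}$ is surjective (since $\lambda\in\sigma$), so \emph{every} point of $\mathrm{Str}_\sigma$ is the limit of some arc with that fixed monodromy invariant. Corollary~\ref{cor:slc-type-defined-by-lambda} then gives constancy immediately, with no need to vary $\lambda$ or to compare across cones of the refinement $\mathfrak{G}$.

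Two concrete problems with your replacement argument. First, the equivalence relation from the proof of Theorem~\ref{thm:semi-is-normal} is defined on \emph{maximal} cones of $\mathfrak{G}$ and produces the maximal cones of $\mathfrak{F}_R$; for a non-maximal $\sigma\in\mathfrak{F}_R$ (the only interesting case, since for maximal $\sigma$ the stratum $\mathrm{Str}_\sigma$ is a point and the statement is vacuous), $\sigma$ is not an equivalence class of maximal $\mathfrak{G}$-cones, so the ``transitivity propagates to a single slc stratum $S$'' step does not get off the ground. Second, the upgrade you invoke at the end is false as stated: a morphism from an irreducible variety whose generic point lands in a locally closed stratum $S$ need not have its entire image in $S$ — it can drop into $\overline{S}\setminus S$ on special fibers (e.g.\ the inclusion $\mathbb{A}^1\hookrightarrow\mathbb{A}^2$ stratified by $\mathbb{A}^2\setminus\mathbb{A}^1\supset\mathbb{A}^1\setminus\{0\}\supset\{0\}$). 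Finiteness of the normalization does not repair this. Both problems disappear if you adopt the paper's strategy of reaching every point of $\mathrm{Str}_\sigma$ with a single fixed $\lambda$ rather than comparing the slc types assigned to different $\lambda$'s in $\sigma^\circ$; the latter comparison is only established in the paper afterwards, as Proposition~\ref{stratum-function}, and it in fact relies on this corollary.
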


\begin{proof} Let $\sigma\in \mathfrak{F}_R$ be any cone and
choose $\lambda$ in the relative 
interior ${\rm int}(\sigma)$. By Corollary
 \ref{cor:slc-type-defined-by-lambda}, the stable limit of any degeneration with 
 monodromy invariant $\lambda$ lies in a fixed slc stratum. 
 Since the natural map $\delta^\perp/\{\delta,\lambda\}\to 
 \delta^\perp/\{\delta,\sigma\}$ is surjective, every point in ${\rm Str}_\sigma$ is the limit of some
   arc with monodromy invariant $\lambda$. So the combinatorial 
   type of the slc stable model at any point in ${\rm Str}_\sigma$ is the same.
   \end{proof}

Corollary \ref{tor-to-slc-strata} implies that there is a well-defined function 
\begin{align*} \mathbb{S}\colon \{\textrm{cones of }\mathfrak{F}_R\textrm{ mod }\Gamma\}&\to \left\{\!\!
\begin{array}{ll} \textrm{combinatorial
types of slc} \\ \textrm{strata which
appear in }\oF^R_M \end{array}\!\!\right\}. \end{align*}

Note that $\mathbb{S}$ may not be injective.
For instance, $\mathbb{S}(\sigma)=\mathbb{S}(\tau)$ if the
corresponding strata are unglued by normalizing. By abuse,
let $\mathbb{S}(\lambda):=\mathbb{S}(\sigma)$ 
where $\lambda\in{\rm int}(\sigma)$. 

\begin{theorem}\label{stratum-function} Let $R$ be a recognizable divisor for $F_M$.
Let $D$ be the decomposition of monodromy invariants 
into loci $\left\{\lambda \in \textstyle 
\coprod_\delta C_\delta^+\cap \delta^\perp/\delta\,\,\big{|}\,\,\mathbb{S}(\lambda)\textrm{ is constant}\right\}$.
Then maximal cones of $\mathfrak{F}_R$ and $D$ are the same. 
 \end{theorem}
 
A {\it maximal cone} of $D$ is a top-dimensional, convex cone in
 $C_\delta^+$ whose {\it integral} interior points lie in a single element of $D$,
 and which is maximal for this property.
 
 \begin{proof} $\mathbb{S}$ is constant on cones of $\mathfrak{F}_R$ by Corollary \ref{tor-to-slc-strata},
 so it suffices to show that $\mathbb{S}$
 cannot take the same value on two maximal dimensional cones
 $\sigma_1,\sigma_2\in\mathfrak{F}_R$ and a codimension $1$ face
 $\tau\subset \sigma_1\cap \sigma_2$ they share.
 If this were the case, the closed
 boundary stratum $\overline{\rm Str}_\tau$ would map to a fixed slc stratum 
 $\mathbb{S}(\sigma_1)=\mathbb{S}(\sigma_2)=\mathbb{S}(\tau)$.
 But the Type III slc strata contain no complete curve by Corollary \ref{strata-affine}.
 So $\overline{\rm Str}_\tau$ would be contracted to a point, contradicting
 finiteness of the normalization $\oF^{\mathfrak{F}_R}_M\to\oF^R_M$. \end{proof}

Theorem \ref{stratum-function} gives a method to compute
the semifan $\mathfrak{F}_R$. Up to taking faces, its cones are sets of
monodromy invariants $\lambda$ which produce a fixed combinatorial slc type.
This is how $\mathfrak{F}_R$ was computed in Examples \ref{deg2-ex}, \ref{ell-ex}
below.

The semifan $\mathfrak{F}_R$ is also functorial under restriction to
Type IV subdomains of $F_M$, i.e. Noether-Lefschetz loci. 
Let $M\subset M'\subset L_{K3}$ be primitive hyperbolic sublattices.
Then there is a natural map of moduli stacks
$\mathcal{F}_{M'}^{\rm q}\to \mathcal{F}_M^{\rm q}$
sending $(X,j)\mapsto(X,j\big{|}_M)$. Let $L\in M$.

\begin{proposition}\label{inclusion} Suppose $R\in |L|$ is recognizable for $\mathcal{F}_M^{\rm q}$. Then its
restriction to $\mathcal{F}_{M'}^{\rm q}$ is also recognizable. Furthermore, $\mathfrak{F}_R({M'})$ is the restriction of the semifan $\mathfrak{F}_R(M)$ to the appropriate
linear subspaces of $C_\delta^+\subset \delta^\perp_{M^\perp}/\delta$. 
\end{proposition}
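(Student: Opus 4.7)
The plan is to first transfer recognizability from $M$ to $M'$, and then identify the semifans via the combinatorial invariant $\mathbb{S}$ furnished by Proposition~\ref{stratum-function}. The inclusion of lattices $M\subset M'$ induces a natural map of moduli stacks $\mathcal{F}_{M'}^{\rm q}\to\mathcal{F}_M^{\rm q}$ by $j\mapsto j\big|_M$, and every $M'$-quasipolarized Kulikov model $X\to(C,0)$ is in particular an $M$-quasipolarized Kulikov model. Under the compatibility of the small cones $\cK_M^{\rm small}\subset \overline{\cK_{M'}^{\rm small}}$, the polarizing class $L\in M$ lies in the closure of both, so the rational section of the projective bundle $\mathbb{P}_\cL$ describing $R$ pulls back from $\mathcal{F}_M^{\rm q}$ to $\mathcal{F}_{M'}^{\rm q}$. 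Given any $M'$-quasipolarized smoothing $X\to(C,0)$ of $X_0$, the divisor $R_0\subset X_0$ witnessing recognizability of $R$ for $\mathcal{F}_M^{\rm q}$ is the flat limit of $R_t$. Since this $R_0$ depends only on $X_0$ and not on the smoothing, the same $R_0$ witnesses recognizability for $\mathcal{F}_{M'}^{\rm q}$.

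Next I would invoke Proposition~\ref{stratum-function}, which characterizes $\mathfrak{F}_R$ as the closure under taking faces of the decomposition of $\coprod_\delta C_\delta^+$ into level sets of the function $\mathbb{S}$ assigning to each monodromy invariant $\lambda$ the slc combinatorial type of the stable limit with monodromy $\lambda$. Writing $\mathbb{S}_M$ and $\mathbb{S}_{M'}$ for these functions in the $M$- and $M'$-polarized cases, the key point will be to show
\[
\mathbb{S}_{M'}(\lambda) = \mathbb{S}_M(\lambda) \qquad \text{for all } \lambda \in \coprod_\delta C_\delta^+(M'),
\]
where $C_\delta^+(M')\subset C_\delta^+(M)$ is cut out by the linear subspace $\delta^{\perp\text{ in }M'^\perp}/\delta\subset \delta^{\perp\text{ in }M^\perp}/\delta$. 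This equality is exactly what recognizability gives us: the limit divisor $R_0$ on $X_0$ is the same regardless of which polarization lattice is used, and the stable model $\Proj\bigoplus H^0(X_0,\cO(nR_0))$ depends only on the pair $(X_0,R_0)$.

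Combining these two observations, the maximal cones on which $\mathbb{S}_{M'}$ is constant are precisely the intersections with $C_\delta^+(M')$ of the maximal cones on which $\mathbb{S}_M$ is constant. Closing under faces and applying Proposition~\ref{stratum-function} in both cases yields
\[
\mathfrak{F}_R(M') \;=\; \mathfrak{F}_R(M)\big|_{\coprod_\delta C_\delta^+(M')},
\]
as desired. The main technical obstacle I foresee is verifying that this restricted collection of cones is actually a semifan in the sense of Definition~\ref{def:semifan}: local rational polyhedrality on the subspace follows from Proposition~\ref{small-cone-prop} applied to $M'$ (and from intersecting a locally rational polyhedral decomposition with a rational linear subspace), but the compatibility condition at Type~II cusps---namely that the lattice $H_{I,\delta}$ computed for $M'$ be independent of $\delta\in I$---requires a separate check. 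This should follow by noting that $H_{I,\delta}$ for $M'$ is the intersection of the $M$-version of $H_{I,\delta}$ with the subquotient $I^{\perp\text{ in }M'^\perp}/I$, and this intersection is $\delta$-independent because the original $H_I$ is. Uniqueness of $\mathfrak{F}_R(M')$ then follows from Theorem~\ref{main-thm} applied in the $M'$-setting.
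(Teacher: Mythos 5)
Your proof is correct and expands in detail on what the paper states very tersely: the paper simply cites that any $M'$-quasipolarized Kulikov model is also $M$-quasipolarized together with functoriality of the stable pair and semitoroidal constructions under Noether-Lefschetz restriction, which is precisely what you make explicit via the stratification function $\mathbb{S}$ from Proposition~\ref{stratum-function}. Your closing worry about re-verifying the semifan axioms for the restriction is unnecessary, since (as you note) Theorem~\ref{main-thm} applied in the $M'$-setting already guarantees $\mathfrak{F}_R(M')$ is a semifan and the $\mathbb{S}$-identification then pins down its cones.
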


More precisely, if $\delta\in {M'}^\perp\subset M^\perp$ is an isotropic vector corresponding
to some $0$-cusp of $F_{M'}$, we restrict the decomposition 
$\mathfrak{F}_{R,\delta}(M)$ to the subspace $\delta^{\perp}_{{M'}^\perp}/\delta$.
  
\begin{proof} Proposition \ref{inclusion} follows from the
 the fact that any ${M'}$-quasipolarized Kulikov model is also $M$-quasipolarized, plus the
 functoriality of the stable pair and semitoroidal constructions
 under restriction to Noether-Lefschetz subdomains.\end{proof}

\subsection{Moduli of anticanonical pairs} We prove 
here that Type III slc strata contain no complete curve by
considering the periods of
anticanonical pairs. A useful general reference is
\cite{friedman2015on-the-geometry}.

\begin{definition} Let $(V,D)$ be an anticanonical pair with $D=D_1+\cdots+D_n$ an
oriented, labeled cycle of rational curves. Define
$\Lambda_{(V,D)}:=\{D_1,\dots,D_n\}^\perp\subset H^2(V)$ and
define the {\it period point} $\psi_{(V,D)}\in {\rm Hom}(\Lambda_{(V,D)},\,\C^*)$
to be the restriction map $\gamma\mapsto \gamma\big{|}_D\in {\rm Pic}^0(D)=\C^*$.
\end{definition}

\begin{definition}[{\cite[Def.~5.4]{friedman2015on-the-geometry}}]
The {\it generic ample cone} $A_{\rm gen}\subset H^2(V)$ 
 is the ample cone of a very general topologically trivial 
 deformation of $(V,D)$.\end{definition}

It suffices to take a deformation for which ${\rm ker}(\psi_{(V,D)})=0$.
This is possible because there is a local universal
deformation $(\mathcal{V},\mathcal{D})\to S$ of pairs
for which the assignment $s\mapsto \psi_{(V_s,D_s)}$ is an isomorphism
to an open subset of ${\rm Hom}(\Lambda_{(V,D)},\C^*)$.

\begin{definition}[{\cite[Def.~6.5]{friedman2015on-the-geometry}}]
A {\it Looijenga root} $\beta\in \Lambda_{(V,D)}$ is a class of
square $\beta^2=-2$ which represents a smooth $(-2)$-curve on
some topologically trivial deformation of $(V,D)$,
and for which $\psi_{(V,D)}(\beta)=1$.
\end{definition}

Reflections in Looijenga roots act on $A_{\rm gen}$. The
ample cone $A$ of $(V,D)$ is a fundamental chamber for the
action of the group
$W_{(V,D)}:=\langle r_\beta \colon \beta\textrm{ a Looijenga root}\rangle $
on $A_{\rm gen}$. We can now recall the Torelli theorem for anticanonical pairs:

\begin{theorem}[{\cite[Thm.~8.7]{friedman2015on-the-geometry}}]
Two pairs $(V,D)$ and $(V',D')$ (with oriented, labeled cycle)
are isomorphic if and only if there exists an isometry $\phi\colon H^2(V)\to H^2(V')$
for which $\phi(D_j)=D_j'$, $\phi(A_{\rm gen})=A'_{\rm gen}$, and
$\psi_{(V,D)}=\psi_{(V',D')}\circ \phi$. Furthermore, $\phi=f^*$ is
induced by an isomorphism $f:(V',D')\to (V,D)$ if and only if
$\phi(A)=\phi(A')$. This isomorphism is unique up to the
action of continuous automorphisms ${\rm Aut}^0(V,D)$.\end{theorem}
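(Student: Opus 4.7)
The forward direction is routine: given $f\colon(V',D')\to(V,D)$ compatible with the oriented labeled cycle, $\phi=f^*$ preserves the intersection form, sends $D_j'\mapsto D_j$, and restricts to an orientation-preserving isomorphism $D'\to D$ inducing the identification $\C^*={\rm Pic}^0(D')\xrightarrow{\sim}{\rm Pic}^0(D)=\C^*$ compatible with restriction of line bundles; this gives the period equality. The map $\phi$ carries $A$ to $A'$, and since $A_{\rm gen}$ is determined as the ample cone of a very generic topologically trivial deformation and therefore depends only on the data $(\Lambda_{(V,D)},\{D_j\},\psi_{(V,D)})$ that is preserved by $\phi$, we also have $\phi(A_{\rm gen})=A'_{\rm gen}$.

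For the converse, my plan is a deformation argument anchored at a very generic pair. Take local universal deformations $(\mathcal{V},\mathcal{D})\to S$ and $(\mathcal{V}',\mathcal{D}')\to S'$ of $(V,D)$ and $(V',D')$ whose period maps are open embeddings into ${\rm Hom}(\Lambda_{(V,D)},\C^*)$. The isometry $\phi$ identifies the two period targets, hence also the two germs via their period maps. Choose an analytic arc $(C,0)\to S$ from $(V,D)$ to a very generic pair $(V_1,D_1)$ lying in the complement of all Looijenga hyperplanes, and let $(C,0)\to S'$ be the parallel arc ending at $(V_1',D_1')$. At the very generic endpoint, the Torelli theorem of \cite{gross2015moduli-of-surfaces, friedman2015on-the-geometry} (granting its extension to smooth $D$) yields a unique isomorphism $f_1\colon (V_1',D_1')\to (V_1,D_1)$ inducing $\phi$. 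I would then spread $f_1$ across $C$ using the universal property of the two deformations, whose fibers at each $t$ have matching period points by construction and hence are uniquely isomorphic by a refinement of $f_1$. Specializing at $0$ gives the required $f$.

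For the last statement, the set of isometries $\phi$ satisfying the three hypotheses forms a torsor under $W_{(V,D)}$, since Looijenga reflections fix the marked boundary classes and periods while acting simply transitively on chambers of $A_{\rm gen}$. The condition $\phi(A)=A'$ (the evident reading of the formula printed in the statement) singles out $\phi=f^*$ from this torsor. Uniqueness of $f$ up to ${\rm Aut}^0(V,D)$ is then the observation that $\ker\bigl({\rm Aut}(V,D)\to O(H^2(V))\bigr)={\rm Aut}^0(V,D)$: any automorphism acting trivially on cohomology preserves every $(-1)$-curve class and hence descends to a translation of $D$ extending across the blown-up points.

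The hard part will be making rigorous the smooth-$D$ extension invoked in the key step. For $D$ smooth the period target is the elliptic curve $D$ itself rather than $\C^*$, ${\rm Aut}^0(V,D)$ contains the translations of $D$, and the mutation and scattering arguments of \cite{friedman2015on-the-geometry} do not apply verbatim. I would address this by degenerating $D$ within its anticanonical linear system to a nodal rational curve through a one-parameter family of pairs, running the nodal Torelli theorem on the limit, and then using properness of the relative isomorphism scheme ${\rm Isom}\bigl((\mathcal{V},\mathcal{D}),(\mathcal{V}',\mathcal{D}')\bigr)$ over the locus of matching period points to lift the isomorphism back to the smooth-$D$ fiber.
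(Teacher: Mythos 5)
The paper does not prove this theorem; it is recalled from the literature, specifically the Torelli theorem for anticanonical pairs in \cite{gross2015moduli-of-surfaces} and \cite{friedman2015on-the-geometry}. (The surrounding definitions and the moduli-space theorem all carry explicit citations to \cite{friedman2015on-the-geometry}, and the sentence just before the statement begins ``We can now recall the Torelli theorem for anticanonical pairs.'') So there is no in-paper proof to compare your sketch against; the question is whether your sketch plausibly reproduces the argument of those sources.

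Your forward direction is correct, and your outline of the converse---deform along an arc to a very generic pair where $\ker\psi=0$, invoke the ``generic Torelli'' statement there to produce $f_1$, and propagate $f_1$ back using universality of the local deformations---is indeed the shape of the arguments in \cite{gross2015moduli-of-surfaces, friedman2015on-the-geometry}. You also correctly read the misprint $\phi(A)=\phi(A')$ as $\phi(A)=A'$. Two caveats: (a) your torsor claim in the last paragraph holds only modulo the image of $\mathrm{Aut}(V,D)\to O(H^2(V))$; if $(V,D)$ has non-connected automorphism group the set of compatible $\phi$ is an extension of that image by $W_{(V,D)}$, not a $W_{(V,D)}$-torsor; and (b) the ``spread across $C$'' step is exactly where the $\phi(A)=A'$ hypothesis is used---without it the family of isomorphisms $f_t$ on $C^*$ need not extend over $0$, and this should be said explicitly rather than absorbed into ``the universal property.''

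Your final paragraph, however, addresses a non-issue for this statement. The theorem is asserted only for pairs $(V,D)$ where $D$ is an oriented, \emph{labeled cycle of rational curves}; such a $D$ always has at least one node, the period target is $\mathrm{Pic}^0 D\cong\C^*$, and the nodal Torelli theorems of \cite{gross2015moduli-of-surfaces, friedman2015on-the-geometry} apply directly. The smooth-$D$ extension you worry about is a genuine gap, but it pertains to a different place in this paper---the construction of Type~II $\lambda$-families in the proof of Proposition~\ref{ii-gluing}, where the paper explicitly flags it in a Remark and assumes it without proof. For the theorem you were asked to prove, that entire paragraph (degenerating $D$ to a nodal curve, properness of the Isom scheme) can be discarded.
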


So the analogue of the Torelli Theorem \ref{torelli-i} holds
nearly verbatim, replacing $\cC$ with $A_{\rm gen}$ (which is notably
not the positive cone), 
 $\cK$ with $A$ (which is  the K\"ahler cone), and $W_X$ with $W_{(V,D)}$.

\begin{definition} Fix a reference lattice $L_{(V,D)}$ isomorphic
to $H^2(V)$. Fix classes $(D_j)^0\in L_{(V,D)}$ and fix a cone
$A_{\rm gen}^0\subset L_{(V,D)}\otimes \R$. A {\it marking} of
$(V,D)$ is an isometry $\sigma:H^2(V)\to L_{(V,D)}$ sending
$\sigma(D_j)=(D_j)^0$ and $\sigma(A_{\rm gen}) = A_{\rm gen}^0$.
Let $\Gamma_{(V,D)}\subset O(L_{(V,D)})$ be the subgroup
fixing all this data. \end{definition}

\begin{theorem}[{\cite[Thm.~8.13]{friedman2015on-the-geometry}}]
Assume ${\rm Aut}^0(V,D)$ is trivial. There is
a fine moduli space $\mathcal{M}_{(V,D)}$ of marked 
anticanonical pairs deformation-equivalent to $(V,D)$.
 It has a period map $$\mathcal{M}_{(V,D)}\to {\rm Hom}(L_{(V,D)},\C^*)$$
  which is generically one-to-one, and whose fibers are torsors
   over a group isomorphic to $W_{(V,D)}$ with the action on
    a fiber given by $(X,\sigma)\mapsto (X,g\circ \sigma)$. \end{theorem}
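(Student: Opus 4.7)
The plan is to mimic the construction of Proposition~\ref{glue-def-smooth} very closely, substituting the Torelli theorem for K3 surfaces with the Torelli theorem for anticanonical pairs stated just above, and substituting the Kuranishi family of a K3 with the local universal deformation of $(V,D)$ already invoked in the definition of the generic ample cone.

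The first step is to produce, for each marked pair $(V,D,\sigma)$, a local universal deformation $(\mathcal{V}_{(V,D,\sigma)},\mathcal{D}_{(V,D,\sigma)})\to U_{(V,D,\sigma)}$ over a complex ball together with an extension of the marking $\sigma$. Such a local universal deformation exists by the cited fact (right after the definition of $A_{\rm gen}$) that there is a local universal family of pairs whose period map is an isomorphism onto an open subset of $\mathrm{Hom}(\Lambda_{(V,D)},\C^*)$; since $A_{\rm gen}$, the cycle $D$ and the lattice $H^2(V)$ are locally constant in any such family, and $U_{(V,D,\sigma)}$ may be taken simply connected, the marking $\sigma$ extends uniquely from the central fiber. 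Next, one glues these local deformation spaces: set
\[
  \mathcal{M}_{(V,D)}:=\bigcup_{(V',D',\sigma')}U_{(V',D',\sigma')},
\]
identifying $U_{(V',D',\sigma')}$ with $U_{(V'',D'',\sigma'')}$ along the open subset parametrizing isomorphic marked pairs. Because $\mathrm{Aut}^0(V,D)$ is trivial and the Torelli theorem above guarantees that an isomorphism of marked anticanonical pairs (once the marking $\sigma'\circ (\sigma'')^{-1}$ sends $A_{\rm gen}$ to $A_{\rm gen}$ and the $D_j$ to the $D_j$) is uniquely determined, the gluing data is unambiguous, so the families glue to a universal family $(\mathcal{V},\mathcal{D})\to\mathcal{M}_{(V,D)}$ and $\mathcal{M}_{(V,D)}$ is a fine (non-Hausdorff) complex manifold. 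The period maps glue to a morphism
\[
  \mathcal{M}_{(V,D)}\to\mathrm{Hom}(L_{(V,D)},\C^*),
\]
which is a local isomorphism at every point by construction, hence generically one-to-one.

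It remains to identify the fibers. Fix $x\in\mathrm{Hom}(L_{(V,D)},\C^*)$ and set
\[
  W_x:=\langle r_\beta \mid \beta\in L_{(V,D)},\ \beta^2=-2,\ \beta\cdot (D_j)^0=0,\ x(\beta)=1\rangle;
\]
for any $(V',D',\sigma')$ with period $x$, the pullback $\sigma'^{-1}(W_x)$ is exactly the Looijenga Weyl group $W_{(V',D')}$, so the group is abstractly isomorphic to $W_{(V,D)}$ on each fiber. The group $W_x$ acts freely on $P^{-1}(x)$ via $(V',D',\sigma')\mapsto (V',D',g\circ\sigma')$: this is well-defined because reflections in Looijenga roots preserve $(D_j)^0$ and, by definition, fix the period $x$, while they permute the Weyl chambers tiling $A_{\rm gen}^0$. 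To see the action is transitive, let $(V',D',\sigma')$ and $(V'',D'',\sigma'')$ be two preimages of $x$. Then $\sigma''\circ(\sigma')^{-1}$ is a Hodge isometry of $L_{(V,D)}$ preserving $(D_j)^0$ and fixing $x$, but possibly sending the chamber $A_{\rm gen}^0$ to a different chamber of the $W_x$-action; a unique element of $W_x$ corrects this, and then the Torelli theorem for pairs (together with the uniqueness clause up to $\mathrm{Aut}^0$, which is trivial) produces an isomorphism of the underlying pairs realizing the composed isometry. Thus $P^{-1}(x)$ is a $W_x$-torsor.

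The main obstacle I expect is purely bookkeeping: verifying that the Looijenga Weyl group really is the \emph{full} group of ambiguities between two markings of the same pair with the same period. Concretely, this reduces to showing that any Hodge isometry of $H^2(V')$ preserving the cycle $D'$, the generic ample cone $A_{\rm gen}$, and the period homomorphism $\psi_{(V',D')}$ is generated by reflections in Looijenga roots. This is exactly the content of the description of $A$ as a fundamental chamber for the action of $W_{(V',D')}$ on $A_{\rm gen}$ noted before the Torelli theorem, combined with the fact that a $-2$ class $\beta\perp D$ with $\psi(\beta)=1$ is precisely a Looijenga root by definition; it is a direct transcription of the K3 argument after Theorem~\ref{torelli-i}.
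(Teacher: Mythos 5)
Your overall strategy is exactly what the paper intends---the paper itself offers no proof beyond the remark ``It follows from the same techniques as Proposition~\ref{glue-def-smooth}'' and a citation to Friedman, and your proposal fleshes out precisely that gluing argument. The construction of the moduli space by gluing local universal deformations, the uniqueness of the gluing data from triviality of $\mathrm{Aut}^0$, and the transitivity argument (correct the image of the ample cone by a unique Weyl element, then apply Torelli) are all correct and are direct analogues of the K3 case.

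However, there is a genuine gap in the identification of the group acting on fibers, concentrated in your final ``main obstacle'' paragraph. You define
\[
  W_x:=\langle r_\beta \mid \beta\in L_{(V,D)},\ \beta^2=-2,\ \beta\cdot (D_j)^0=0,\ x(\beta)=1\rangle
\]
and then assert that ``a $-2$ class $\beta\perp D$ with $\psi(\beta)=1$ is precisely a Looijenga root by definition.'' This is false. The definition in the paper (Friedman's Def.~6.5) imposes a genuinely geometric third condition: $\beta$ must \emph{represent a smooth $(-2)$-curve on some topologically trivial deformation} of $(V,D)$. For K3 surfaces the analogous subtlety evaporates because every $(-2)$-class in $NS(X)$ is $\pm$effective by Riemann--Roch, so the numerically-defined $W_x$ is the geometric Weyl group; this is why the K3 argument you are ``transcribing'' requires no such distinction. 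For anticanonical pairs the ambient cone is not the full positive cone but the proper subcone $A_{\rm gen}$, and only reflections in Looijenga roots are guaranteed to preserve it. If $W_x$ as you have defined it is strictly larger than $\sigma'(W_{(V',D')})$, the purported action $(V',D',\sigma')\mapsto(V',D',g\circ\sigma')$ is not even well-defined, since $g\circ\sigma'$ need not carry $A_{\rm gen}$ to $A_{\rm gen}^0$. The fix is straightforward but must be made: define $W_x:=\sigma'(W_{(V',D')})$ for one choice $(V',D',\sigma')\in P^{-1}(x)$, verify independence of the choice using that the Looijenga roots are deformation-invariant and the transitivity argument you already gave, and drop the incorrect numerical characterization. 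With that change the proof closes.
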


When ${\rm Aut}^0(V,D)$ is non-trivial, there is still a space
$\mathcal{M}_{(V,D)}$ admitting a family which defines at
every point a universal deformation, and for which every
isomorphism type is represented, but it is not a fine moduli space.

\begin{definition} A {\it quasi-polarized triple} $(V,D,L)$ is an
anticanonical pair $(V,D)$ and a big and nef line bundle
$L\in {\rm Pic}(V)$. A {\it polarized ADE triple} is an image
 $(\oV,\oD,\oL)$ of such under the linear system $\phi_{|nL|}$, 
 $n\gg 0$ (we must add the condition that $\psi_{(V,D)}(L)=1$ when
 $L\in \Lambda$). A {\it divisor triple} $(V,D,R)$ is the extra data of 
 an element $R\in |L|$ such that $R$ contains no nodes of $D$. 
 A {\it stable triple} $(\oV, \oD, \epsilon \oR)$
 is an image of a divisor 
 triple $(V,D,\epsilon R)$ under $\phi_{|nR|}$, $n\gg 0$.\end{definition}

The map $(V,D)\to (\oV,\oD)$ contracts the components of
$D$ for which $L\cdot D_j=0$, together with some
negative-definite ADE configuration of $(-2)$-curves
whose classes lie in $\Lambda_{(V,D)}$.

\begin{theorem} The coarse moduli space of polarized ADE triples
$F_{(\oV,\oD,\oL)}$ of a fixed deformation type is the quotient of
${\rm Hom}(L_{(V,D)},\,\C^*)$ by the finite group 
$\Gamma_{(V,D,L)}:={\rm Stab}_{\Gamma_{(V,D)}}(L)$. \end{theorem}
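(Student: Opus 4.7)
The plan is to mirror the proof structure of Theorem~\ref{smooth-coarse} in the anticanonical-pair setting, replacing the Type IV domain $\bD_M$ throughout by the torus ${\rm Hom}(L_{(V,D)},\C^*)$. First I would introduce the moduli space $\mathcal{M}_{(V,D,L)}$ of marked quasi-polarized triples as the sublocus of $\mathcal{M}_{(V,D)}$ on which the image $\sigma^{-1}(L)\in \Pic(V)$ is big and nef. The period map from \cite[Thm.~8.13]{friedman2015on-the-geometry} restricts to give $$P\colon \mathcal{M}_{(V,D,L)}\to {\rm Hom}(L_{(V,D)},\C^*),$$ and by the same Weyl-chamber argument used in Theorem~\ref{qpol-thm}, the fiber $P^{-1}(\psi)$ is a torsor over the subgroup $W_\psi(L^\perp) \subset W_{(V,D)}$ generated by reflections in Looijenga roots that are perpendicular to $L$ and satisfy $\psi(\beta)=1$. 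This subgroup is finite because the roots in $L^\perp$ span a negative-definite lattice (here $L$ being big and nef plays the role of very irrationality in Section~\ref{sec:qpol-moduli}), so $P$ has finite fibers.

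Next I would pass from quasi-polarized triples to ADE triples. As in the proof sketch of Theorem~\ref{smooth-coarse}, the map $(V,D,L)\mapsto (\oV,\oD,\oL)$ contracts exactly the configuration of positive Looijenga roots in $L^\perp$; since distinct fiber points of $P$ are related precisely by reflections in such roots, this collapses each Weyl-group fiber to a single polarized ADE triple. The resulting map on moduli of marked ADE triples to ${\rm Hom}(L_{(V,D)},\C^*)$ is therefore a bijection, and it is surjective by the surjectivity built into the fine moduli space $\mathcal{M}_{(V,D)}$ together with the fact that every period in ${\rm Hom}(L_{(V,D)},\C^*)$ is realized by some marking in which $L$ becomes big and nef (one can always translate by an element of $W_{(V,D)}$ to place $L$ in the closure of the appropriate Weyl chamber, exactly as in Theorem~\ref{qpol-thm}).

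Finally I would quotient by the action of $\Gamma_{(V,D,L)}={\rm Stab}_L\Gamma_{(V,D)}$ which changes the marking while preserving the polarizing class. By the anticanonical-pair Torelli theorem, two marked ADE triples give the same unmarked triple if and only if their markings differ by an element of $\Gamma_{(V,D,L)}$; this is the analogue of the exact sequence relating ${\rm HodgeIsom}$, $W_X$ and $\pm 1$ in Section~\ref{analyticK3}, adapted to pairs. Hence the coarse space $F_{(\oV,\oD,\oL)}$ is the quotient ${\rm Hom}(L_{(V,D)},\C^*)/\Gamma_{(V,D,L)}$.

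The main obstacle will be verifying the Torelli-type identification in the last step, specifically ruling out any extra Hodge-theoretic automorphisms of an ADE triple that do not come from the ambient group $\Gamma_{(V,D,L)}$. Since this amounts to showing that any isometry of $L_{(V,D)}$ fixing the cycle classes $(D_j)^0$, the generic ample cone $A_{\rm gen}^0$, the periods, and the class $L$ is induced by an isomorphism of ADE triples, it follows from combining the Torelli theorem for pairs with the observation that the contractions $(V,D)\to (\oV,\oD)$ are canonical once $L$ is fixed; modulo ${\rm Aut}^0(V,D)$, no ambiguity remains. A secondary technical point will be handling the case ${\rm Aut}^0(V,D)\ne 1$, where $\mathcal{M}_{(V,D)}$ is not fine, but as in Remark~\ref{not-universal} this only affects the stack structure and not the coarse-space identification.
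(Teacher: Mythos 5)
Your argument mirrors the paper's proof almost exactly: both take $\mathcal{M}_{(V,D,L)}\subset\mathcal{M}_{(V,D)}$, identify the fibers of the period map with orbits of the finite reflection subgroup $W_{(V,D,L)}={\rm Stab}_L(W_{(V,D)})$, contract via the linear system $|L|$ so that these orbits collapse to a single ADE triple, and then quotient by the residual change-of-marking group $\Gamma_{(V,D,L)}$. The only slip is the parenthetical suggesting that bigness of $L$ "plays the role of very irrationality" — very irrationality was needed in the K3 case to show roots perpendicular to $h$ land in $M^\perp$, which has no analogue here; here bigness of $L$ is used purely to make $L^\perp$ negative definite and hence the stabilizer finite, but this does not affect the correctness of your argument.
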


\begin{proof} The result is analogous to Theorem \ref{smooth-coarse}.
If $L\notin \Lambda$, take the sublocus $\mathcal{M}_{(V,D,L)}\subset \mathcal{M}_{(V,D)}$ 
where $L$ defines a big and nef divisor---this surjects onto the 
period torus with fibers a torsor over the reflection subgroup 
$W_{(V,D,L)}:={\rm Stab}_{W_{(V,D)}}(L)$. When $L\in \Lambda$,
we restrict to the sublocus $\psi_{(V,D)}(L)=1$.
 Now take the relative 
linear system of $nL$, which
simultaneously contracts the ADE configuration in 
$\Lambda_{(V,D)}\cap L^\perp$ and some components of $D$.

The fibers of the period map $\mathcal{M}_{(V,D,L)}\to {\rm Hom}(L_{(V,D)},\,\C^*)$ 
(or ${\rm Hom}(L_{(V,D)}/\Z L,\,\C^*)$ when $L\in \Lambda$)
are identified with distinct resolutions of the contraction, and 
the moduli functor factors through the separated
quotient of $\mathcal{M}_{(V,D,L)}$. Since we have
included $\oL$ as part of the data, our
change-of-markings in $\Gamma_{(V,D)}$ must preserve $L$. The result follows.

We can even identify (when ${\rm Aut}^0(V,D)$ is trivial) the
moduli stack as the separated quotient of
$[\mathcal{M}_{(V,D,L)}:\Gamma_{(V,D,L)}]$. Like in the
K3 case (Rem.~\ref{stack-difference}), its only difference with the quotient stack
$[{\rm Hom}(L_{(V,D)},\,\C^*)\colon \Gamma_{(V,D,L)}]$
is that the inertia groups are locally quotiented by $W_{(V,D,L)}$. \end{proof}

Let $F_{(\oV,\oD,\oR)}$ denote the coarse moduli space
  of stable triples $(\oV,\oD,\epsilon\oR)$ with a fixed
  deformation type of minimal resolution. Here $\epsilon$ is a
  fixed small number.

\begin{lemma}\label{fibers-affine} $F_{(\oV,\oD,\oR)}$
  is a (possibly non-flat)
family of affine varieties
over the coarse moduli space $F_{(\oV,\oD,\oL)}$. \end{lemma}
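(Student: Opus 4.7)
The plan is to identify each fiber of the forgetful morphism $F_{(\oV,\oD,\oR)}\to F_{(\oV,\oD,\oL)}$ with a quotient of the complement of a union of hyperplanes in a projective space, hence an affine variety, and then to globalize this description using the relative linear system.

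First I would fix a polarized ADE triple $(\oV,\oD,\oL)$ with minimal resolution $(V,D,L)$ and analyze its fiber pointwise. A point of the fiber is represented by a divisor triple $(V,D,R)$ with $R\in |L|$ avoiding the nodes of $D$, and two such $R,R'$ give isomorphic stable triples precisely when they differ by an element of $G:=\mathrm{Aut}(\oV,\oD,\oL)$. The set of admissible $R$ is the Zariski open subset $|L|\setminus\bigcup_i H_i$, where $H_i=\{R\in |L|:p_i\in R\}\subset |L|\cong \mathbb{P}^N$ is the hyperplane cut out by the linear evaluation functional at the node $p_i$ of $D$. Since $\bigcup_i H_i$ is a hypersurface, being the zero locus of the product of the defining linear forms, its complement is affine. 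The group $G$ acts linearly on $H^0(V,L)$ preserving this arrangement; its identity component ${\rm Aut}^0(V,D)$ is a torus and the component group is finite, so $G$ is linearly reductive and the GIT quotient of the affine complement is again affine. Thus each fiber is affine.

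To globalize, I would work on an \'etale (or analytic) cover $U\to F_{(\oV,\oD,\oL)}$ supporting a universal polarized triple $(\mathcal{V},\mathcal{D},\mathcal{L})\to U$. Because $L$ is big and nef on a surface with $K_V\sim 0$, the vanishing $H^i(V_s,L_s)=0$ for $i>0$ together with Cohomology and Base Change make $\pi_*\mathcal{L}$ a vector bundle of rank $N+1$ on $U$, and $\mathbb{P}(\pi_*\mathcal{L})\to U$ is the relative linear system. After shrinking $U$, the nodes of $\mathcal{D}$ are cut out by disjoint sections $\sigma_i\colon U\to \mathcal{V}$, and the evaluation maps $\pi_*\mathcal{L}\twoheadrightarrow \sigma_i^*\mathcal{L}$ determine relative hyperplanes $\mathcal{H}_i\subset\mathbb{P}(\pi_*\mathcal{L})$. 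The complement $\mathbb{P}(\pi_*\mathcal{L})\setminus\bigcup_i\mathcal{H}_i$ is relatively affine over $U$, since \'etale-locally on $U$ it is the complement of a relative hypersurface. Taking the fiberwise quotient by the (variable) automorphism group and descending along the cover exhibits $F_{(\oV,\oD,\oR)}\to F_{(\oV,\oD,\oL)}$ as a relatively affine morphism.

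The main obstacle I anticipate is the descent step together with the variation of $G$. The order of $G$ (and the dimension of ${\rm Aut}^0(V,D)$) can jump along Noether--Lefschetz-type loci in $F_{(\oV,\oD,\oL)}$, causing the morphism to be non-flat, which is already anticipated in the statement. What will require care is verifying that the formation of $G$-invariants commutes with the base change along the cover, which in characteristic zero holds for linearly reductive $G$, so that the relatively affine family on $U$ genuinely descends to a family on $F_{(\oV,\oD,\oL)}$ with affine fibers rather than living only on the cover.
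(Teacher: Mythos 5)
Your proof uses the same starting point as the paper's (the admissible $\oR$ form the complement of a union of hyperplanes in $|L|$, hence an affine open, possibly empty), but it handles the quotient by $G=\mathrm{Aut}(\oV,\oD,\oL)$ differently. The paper runs a case analysis on $\dim \mathrm{Aut}^0$: affine mod finite is affine; when $\dim\mathrm{Aut}^0=1$ it rigidifies by forcing $\oR$ through a chosen point and then takes a finite image (Chevalley: a finite image of an affine variety is affine); the toric $\dim 2$ case is dispatched as ``easy.'' You instead observe that $G$ is linearly reductive (torus identity component, finite component group) and invoke affineness of GIT quotients of affine varieties. This is genuinely cleaner and unifies the three cases; your explicit globalization step (étale cover, Cohomology and Base Change making $\pi_*\mathcal{L}$ a bundle, relative hyperplanes) is also more careful than the paper's purely fiberwise argument, and the remark about non-flatness from jumping automorphism groups matches the ``(possibly non-flat)'' caveat in the statement.

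There is, however, a real gap in the GIT step as you have written it. The GIT quotient $U/\!\!/G = \mathrm{Spec}\,\mathcal{O}(U)^G$ is always affine, but it is the fiber of $F_{(\oV,\oD,\oR)}\to F_{(\oV,\oD,\oL)}$ (the orbit space) only when it is a \emph{geometric} quotient, i.e., when $G$ acts on $U$ with closed orbits; in general the GIT quotient collapses distinct orbits whose closures meet, and then it computes the wrong object. To close the gap you should note that the moduli problem forces $\mathrm{Aut}(\oV,\oD,\epsilon\oR)$ to be finite for every point of the fiber, so every $G$-stabilizer on $U$ is finite, hence every orbit has dimension $\dim G$; since the boundary of an orbit closure consists of strictly lower-dimensional orbits, none exist in $U$, so every orbit is closed, the GIT quotient is geometric, and your identification of the fiber with $U/\!\!/G$ is valid. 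With that one-line addition your argument is complete and arguably tidier than the paper's.
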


\begin{proof} On a given polarized ADE triple $(\oV,\oD,\oL)$, we may
choose $\oR\in |\oL|$ arbitrarily, subject to the condition that $\oR$
not contain any nodes of $\oD$. This condition is either not
satisfied by any element of $|L|$, or is the complement of a
non-zero number of hyperplanes, corresponding to
sections which go through some node. Thus, the set of choices
of $\oR$ on a fixed ADE triple forms an affine variety.

The automorphism group of $(\oV,\oD,\oL)$ acts on the set of
such choices $\oR$. So when this automorphism group is finite,
the choices form an affine variety. If the automorphism group contains
a continuous part of dimension $1$ or $2$, we may rigidify by requiring
$\oR$ to go through $1$ or $2$ generically chosen points of $\oV\setminus \oD$.
Then, the coarse moduli space is a finite image of the rigidified
moduli space, which is again affine by the reasoning of the
first paragraph. \end{proof}

\begin{corollary}\label{component-no-complete}
The coarse moduli space $F_{(\oV,\oD,\oR)}$ contains no complete curves. \end{corollary}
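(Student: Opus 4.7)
The plan is to reduce the absence of complete curves to the affineness of both the base and the fibers of the natural forgetful morphism $F_{(\oV,\oD,\oR)} \to F_{(\oV,\oD,\oL)}$ which sends a stable triple $(\oV,\oD,\oR)$ to its underlying polarized ADE triple $(\oV,\oD,\oL)$, where $\oL = \cO_{\oV}(\oR)$.

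First I would observe that the base is affine. By the theorem preceding Lemma \ref{fibers-affine}, the coarse moduli space $F_{(\oV,\oD,\oL)}$ is the quotient of the algebraic torus $\Hom(L_{(V,D)},\C^*)$ by the finite group $\Gamma_{(V,D,L)}$. Since an algebraic torus is affine and finite quotients of affine schemes are affine (the invariant subring is a finitely generated $\C$-algebra), $F_{(\oV,\oD,\oL)}$ is affine and in particular contains no complete curve.

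Next I would invoke Lemma \ref{fibers-affine}, which asserts that the fibers of the forgetful morphism are affine varieties. Now suppose for contradiction that $C \subset F_{(\oV,\oD,\oR)}$ is a complete irreducible curve. Its image in $F_{(\oV,\oD,\oL)}$ is either a point or a complete curve; the latter is ruled out by affineness of the base, so $C$ is contained in a single fiber. But that fiber is affine, and an affine variety cannot contain a complete curve. Contradiction.

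There is essentially no obstacle: all the substantive content has already been packaged into Lemma \ref{fibers-affine} and into the identification of $F_{(\oV,\oD,\oL)}$ as a finite quotient of an algebraic torus. The only point requiring minor care is ensuring that the forgetful map is a genuine morphism of coarse spaces (rather than merely a set-theoretic map), which follows from functoriality of coarse moduli: a family of stable triples induces, by forgetting $\oR$, a family of polarized ADE triples, yielding the desired morphism.
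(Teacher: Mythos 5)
Your proof is correct and follows exactly the paper's intended argument: the paper's one-line proof cites Lemma \ref{fibers-affine} (fibers affine) together with affineness of $F_{(\oV,\oD,\oL)}$, and you have simply unpacked the standard dichotomy (the image of a complete curve is a point or a complete curve) that makes the conclusion immediate.
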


\begin{proof} This follows immediately from Lemma \ref{fibers-affine}
  and $F_{(\oV,\oD,\oL)}$ being affine.
\end{proof}

 Let $F_{(\oX,\oR)}$ denote the coarse moduli space
 of stable slc pairs of a fixed combinatorial type,
 as in Definition~\ref{def:slc-type}.

 \begin{remark}
   Semi log canonical singularities are seminormal.
   The seminormality implies 
that the scheme-theoretic structure of a $0$-stratum of 
$\oX$ is unique, since $\oX$ is the direct limit of the diagram of
strata, partially ordered by inclusion.
So moduli is uniquely determined by the 
moduli of components and gluings of double curves.\end{remark} 

\begin{theorem}\label{stable-all-possible} Let $F_{(\oX,\oR)}$
be a coarse moduli space of glued seminormal stable pairs containing a
Type III stable pair degeneration of K3 surfaces.
 Then $F_{(\oX,\oR)}$ contains no complete curve.
 \end{theorem}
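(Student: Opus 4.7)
The plan is to bootstrap from the component-level result Corollary~\ref{component-no-complete} via a forgetful morphism whose target has no complete curves and whose fibers are affine. Concretely, I will produce the natural forgetful morphism
\[
\pi\colon F_{(\oX,\oR)} \to \textstyle\prod_i F_{(\oV_i,\oD_i,\oR_i)}
\]
sending a glued stable pair $(\oX,\oR)$ of the fixed combinatorial type to the tuple of its polarized ADE triples, where $(\oV_i,\oD_i)$ is the normalization of the $i$th component of $\oX$ equipped with its preimage double-curve cycle and $\oR_i$ is the pullback of $\oR$. Since the combinatorial type (Definition~\ref{def:slc-type}) fixes the deformation type of each $(\oV_i,\oD_i,L_i)$ and fixes the dual complex $\Gamma(\oX)$ up to a finite automorphism group, this morphism is well-defined after possibly passing to a finite étale cover that labels the vertices; the finite cover does not affect the non-existence of complete curves.

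Next I will observe that the target has no complete curves. Indeed, by Corollary~\ref{component-no-complete} each factor $F_{(\oV_i,\oD_i,\oR_i)}$ contains no complete curve (being an affine bundle over the affine base $F_{(\oV_i,\oD_i,\oL_i)}$), so neither does the finite product. Then I will analyze the fibers of $\pi$: over a fixed tuple of components, what remains is to choose, for each edge $(ij)$ of $\Gamma(\oX)$, an isomorphism $\phi_{ij}\colon \oD_{ij}\xrightarrow{\sim}\oD_{ji}$ which preserves the nodal structure and carries $\oR_i\cap \oD_{ij}$ to $\oR_j\cap \oD_{ji}$. In Type~III each $\oD_{ij}$ is a single rational curve or a cycle of rational curves, so the automorphism group fixing the nodes is an algebraic torus; hence the set of admissible $\phi_{ij}$ is either an algebraic torus or, whenever $\oR$ meets $\oD_{ij}$ in at least one point, a finite set cut out by the matching condition. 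In every case the fiber is quasi-affine, and after further quotient by the finite groups $\Aut(\oV_i,\oD_i,\oR_i)$ (which act on components and gluings together) it remains quasi-affine and hence contains no complete curve.

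To finish, I will invoke the elementary fact that if $\pi\colon X\to B$ is a morphism of varieties with $B$ containing no complete curve and every fiber containing no complete curve, then $X$ contains no complete curve: any hypothetical complete irreducible curve $C\subset X$ would project to a complete curve in $B$, forcing $\pi(C)$ to be a point and thus $C$ to lie in a single fiber, contradicting the assumption on fibers. Combining this with the two previous paragraphs concludes the proof.

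The main obstacle is really a bookkeeping issue rather than a substantive one: correctly setting up $\pi$ in the presence of automorphisms of the dual complex $\Gamma(\oX)$ and nontrivial $\Aut(\oV_i,\oD_i,\oR_i)$, and confirming that the divisor matching condition $\phi_{ij}(\oR_i\cap \oD_{ij}) = \oR_j\cap \oD_{ji}$ really does cut out a (possibly empty) quasi-affine subscheme of the gluing torus rather than a positive-dimensional projective piece. The seminormality hypothesis is essential here: it guarantees that the scheme structure on $\oX$ is determined by the components and the gluing isomorphisms (as noted in the remark preceding the theorem), so that no additional infinitesimal moduli appear in the fibers of $\pi$.
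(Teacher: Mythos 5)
Your proof is correct and follows essentially the same route as the paper: factor through the product $\prod_i F_{(\oV_i,\oD_i,\oR_i)}$, invoke Corollary~\ref{component-no-complete} on the target, and observe that the gluing data over each point contains no complete curve, then handle the dual-complex symmetries by a finite cover/quotient. The paper's phrasing uses an explicit labeled gluing space $G_{(\oX,\oR)}$ with a \emph{finite} map to a subvariety of the product, noting (implicitly via ampleness of $\oR$, which forces $\oR_i\cap\oD_{ij}\neq\emptyset$) that the set of admissible gluings $\mu_{ij}$ is always finite, so your ``algebraic torus or finite set'' hedge is unnecessary, though harmless.
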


\begin{proof} We can construct the coarse moduli space as follows: 
First, take the product of the coarse moduli spaces of each component 
$\prod_i F_{(\oV_i,\oD_i,\oR_i)}$. Let  $\{\mu_{ij}\}\subset \C^*$
be the (possibly empty, but always finite)
set of gluings of $\oD_{ij}$ to $\oD_{ji}$ which identify
the nodes of $\oD_i$ and $\oD_j$ and for which
$\oR_i\cap \oD_{ij}=\oR_j \cap \oD_{ji}$. The space $G_{(\oX,\oR)}$ 
of such glued pairs is $\prod_i F_{(\oV_i,\oD_i,\oR_i)}\times \prod_{i,j} \{\mu_{ij}\}$
which has a finite map to $\prod_i F_{(\oV_i,\oD_i,\oR_i)}$. So by
Corollary \ref{component-no-complete},
$G_{(\oX,\oR)}$ contains no complete curves. 

The space $G_{(\oX,\oR)}$ parameterizes seminormal
pairs $(\oX,\epsilon \oR)$ together with a combinatorial labeling of the dual
complex $\Gamma(\oX)$. Consider the finite group of combinatorial self-maps of
   $\Gamma(\oX)$ preserving the combinatorial types of all
    stable triples. The coarse moduli space $F_{(\oX,\oR)}$
    is the quotient of $G_{(\oX,\oR)}$ by this finite group.
    Since $G_{(\oX,\oR)}$ contains no complete curve, neither does
   $F_{(\oX,\oR)}$.
\end{proof}

\begin{corollary}\label{strata-affine} No Type III stratum of $\oF_M^R$ contains a complete curve.\end{corollary}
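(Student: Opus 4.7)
The plan is to reduce directly to Theorem~\ref{stable-all-possible}. The key point is that, by construction, a point of $\oF_M^R$ is literally a point of $M^{\mathrm{slc}}_e$: indeed $\oF_M^R$ is defined as the closure in $\oF_{2d}^R$, which is itself a closed subvariety of $M^{\mathrm{slc}}_e$. So each point of $\oF_M^R$ is recorded simply as an isomorphism class of a stable $K$-trivial pair $(\oX,\epsilon\oR)$, with no extra marking or $M$-polarization data surviving in the coarse space.

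Given a Type~III stratum $S\subset\oF_M^R$, I would next appeal to Corollary~\ref{cor:slc-type-defined-by-lambda} to conclude that the slc combinatorial type $\tau$ is constant on $S$. This is because the boundary stratification of $\oF_M^R$ arises (after normalization by Theorem~\ref{main-thm}) from monodromy classes $[\lambda]$, and Corollary~\ref{cor:slc-type-defined-by-lambda} asserts that $[\lambda]$ determines $\tau$ uniquely. Hence the natural embedding $\oF_M^R\hookrightarrow M^{\mathrm{slc}}_e$ restricts on $S$ to a locally closed embedding $S\hookrightarrow F_{(\oX,\oR)}$, where $F_{(\oX,\oR)}$ denotes the moduli of stable slc pairs of fixed combinatorial type $\tau$.

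Finally, Theorem~\ref{stable-all-possible} provides that $F_{(\oX,\oR)}$ contains no complete curve whatsoever. Since $S$ is a subvariety of $F_{(\oX,\oR)}$, it inherits this property, completing the proof. The whole argument is essentially a one-line deduction, and there is no real obstacle once the combinatorial-type constancy (Corollary~\ref{cor:slc-type-defined-by-lambda}) and the componentwise absence of complete curves (Theorem~\ref{stable-all-possible}) are in hand; the only thing to check is that the forgetful map $S\to F_{(\oX,\oR)}$ really is an embedding rather than, say, a finite cover hiding extra polarization moduli, but this is immediate from $\oF_M^R$ being by definition a closed subvariety of $M^{\mathrm{slc}}_e$.
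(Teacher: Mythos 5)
Your core idea is the same as the paper's: a Type~III stratum of $\oF_M^R$, having constant slc combinatorial type, sits inside the coarse moduli $F_{(\oX,\oR)}$ of stable slc pairs of that fixed type, and Theorem~\ref{stable-all-possible} shows that the latter contains no complete curve.

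However, your justification for the constancy of the combinatorial type is circular. You appeal to Theorem~\ref{main-thm} (the existence of the semitoroidal normalization) to describe the stratification of $\oF_M^R$ via monodromy classes $[\lambda]$, and then feed this into Corollary~\ref{cor:slc-type-defined-by-lambda}. But Corollary~\ref{strata-affine} is itself used as an input in the proof of Theorem~\ref{main-thm} (it is what allows the application of Lemma~\ref{toric-resolution} to resolve the indeterminacy of $u(\sigma)$). So you cannot invoke Theorem~\ref{main-thm} here. The fix is simpler than what you wrote: in this paper, the stratification of $\oF_M^R$ is \emph{defined} to be the stratification by slc combinatorial type (see the first itemized condition in the proof of Theorem~\ref{main-thm}), so that the combinatorial type is constant on each stratum tautologically, and your argument then goes through with neither Theorem~\ref{main-thm} nor Corollary~\ref{cor:slc-type-defined-by-lambda}. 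Your closing observation that $\oF_M^R$ is a genuine closed subvariety of $M^\slc_e$, so no polarization moduli can hide behind a finite cover, is correct and worth spelling out.
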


\begin{proof} A Type III stratum of $\oF_M^R$ is a sublocus of the coarse
moduli space of pairs $(\oX,\epsilon \oR)$ as in Theorem \ref{stable-all-possible}.
The corollary follows. \end{proof}

\subsection{Other lemmas} We prove the remaining lemmas used in Theorem \ref{main-thm}.
Let $(\C^n,B)$ denote the analytic germ of $B:=\{x_1\cdots x_n=0\}$, the union of the coordinate
hyperplanes, in $\C^n$.

\begin{lemma}\label{toric-resolution}
  Let $V$ be an analytic variety stratified by sub-varieties $V_i$.  Consider a
  meromorphic map $\phi\colon (\mathbb{C}^n,B)\dashrightarrow V$ with locus
  of indeterminacy contained in $B$. Assume
  that the image of the indeterminacy locus is contained in
  $\cup_{i\in I}V_i$ and that no $V_i$ for $i\in I$ contains
  a complete curve.

  Assume that for any arc germ $f\colon (C,0)\to (\bC^n,0)$ with
  $f(C\setminus 0)\subset (\C^*)^n$,
  there exists an extension 
  $g\colon (C,0)\to V$ of $\phi\circ f$.
  Moreover, assume that for any such $f$, the stratum $V_i\ni g(0)$
  depends \emph{only} on the orders of tangency of $C$ to the
  coordinate hyperplanes.

  Then the indeterminacy of $\phi$ can be resolved by toric blow-ups.
\end{lemma}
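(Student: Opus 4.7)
The plan is to build a fan $\Sigma$ refining the positive orthant $\R_{\ge 0}^n$ in the cocharacter lattice $N\cong \Z^n$ of the torus, and then to show that the associated toric blow-up $\pi_\Sigma\colon X(\Sigma)\to \C^n$ resolves the indeterminacy of $\phi$.

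First, I build the fan combinatorially. For each primitive lattice vector $\lambda=(a_1,\ldots,a_n)$ in the positive orthant, applying the hypothesis to the one-parameter subgroup arc $t\mapsto(t^{a_1},\ldots,t^{a_n})$ produces a well-defined stratum $V_{i(\lambda)}$ with $i(\lambda)\in I$. Using the finiteness of the stratification hit by the indeterminacy locus (which holds in the target application to $\oF_M^R$, where the strata are indexed by finitely many slc combinatorial types), the level sets of the function $\lambda\mapsto i(\lambda)$ are constructible and are unions of rational cones. Refining if necessary, I choose $\Sigma$ so that $i$ is constant with common value $i(\sigma)$ on the relative interior of each cone $\sigma\in\Sigma$.

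Second, I produce a resolution. By standard resolution of indeterminacies, there is a projective birational morphism $\pi\colon Y\to X(\Sigma)$, an isomorphism over the open torus $(\C^*)^n$, such that the composition $\tilde\phi:=\phi\circ\pi_\Sigma\circ\pi$ extends to a morphism $\tilde\phi\colon Y\to V$. It then suffices to show every fiber of $\pi$ is contracted to a point by $\tilde\phi$: for then by Stein factorization and normality of $X(\Sigma)$, the morphism $\tilde\phi$ descends along $\pi$ to a morphism $\phi_\Sigma\colon X(\Sigma)\to V$, which gives the desired extension of $\phi$ across a toric blow-up.

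Third, for the fiber contraction, fix $p\in O_\sigma$ in a closed orbit of a positive-dimensional cone $\sigma\in\Sigma$ and let $q\in\pi^{-1}(p)$. I construct a curve germ in $Y$ through $q$ whose image in $X(\Sigma)$ is transverse to $O_\sigma$ at $p$; pushing forward through $\pi_\Sigma$ produces an arc $f\colon(C,0)\to\C^n$ with tangency vector in the relative interior of $\sigma$. By the hypothesis and continuity of $\tilde\phi$, the value $\tilde\phi(q)$ equals the extension $g(0)\in V_{i(\sigma)}$, and hence $\tilde\phi(\pi^{-1}(p))\subseteq V_{i(\sigma)}$. Since the fiber is connected and projective while $V_{i(\sigma)}$ contains no complete curves---and therefore no positive-dimensional projective subvarieties---the image must collapse to a single point.

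The main obstacle will be executing the arc-lifting step at every $q\in\pi^{-1}(p)$, rather than only at generic points; for $q$ lying on the intersection of several exceptional components, a direct construction of a transverse curve germ with tangency strictly in the interior of $\sigma$ is delicate. I expect to finesse this by a density argument: the locus of $q$ where the construction works is Zariski open and dense in the fiber, so its image lies in $V_{i(\sigma)}$, and by continuity of $\tilde\phi$ the image of the whole fiber is contained in the closure of that image; the no-complete-curves condition then forces this closure to be zero-dimensional, hence a single point by connectedness of $\pi^{-1}(p)$.
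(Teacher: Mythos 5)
Your overall strategy — toric resolution, arc lifting, and the no-complete-curve hypothesis forcing fiber contraction — engages the right ideas, and in particular your observation that transverse arcs through a torus orbit $O_\sigma$ all push down to the same tangency vector $\lambda_\sigma=\sum_{\rho\text{ ray of }\sigma}v_\rho$ is exactly the correct mechanism. But the proposal has two gaps, and they are both substantive.

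\textbf{Gap 1 (construction of the fan).} You assert that the level sets of $\lambda\mapsto i(\lambda)$ ``are constructible and are unions of rational cones,'' but nothing in the hypotheses makes this available. The assumption only says that the stratum of $g(0)$ is a well-defined function of the tangency orders $\lambda$; it does not say that the preimages of this function are rational polyhedral. That statement is essentially part of what the lemma asserts (compare Proposition~\ref{stratum-function}, where the polyhedrality of these level sets is deduced as an output of Theorem~\ref{main-thm}), so assuming it to build $\Sigma$ makes the argument circular. Without a fan on which $i$ is constant on relative interiors, the value $i(\sigma)$ in your step 3 is undefined, because arcs through $O_\sigma$ with higher tangency orders realize arbitrary integral vectors in $\sigma^\circ$, not just $\lambda_\sigma$.

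\textbf{Gap 2 (the ``density finesse'').} Even granted the fan, the fallback argument for special points $q$ does not close. You claim that since the image of a dense open set of $\pi^{-1}(p)$ lies in $V_{i(\sigma)}$, the image of the whole fiber lies in the closure, and ``the no-complete-curves condition then forces this closure to be zero-dimensional.'' This is false as stated: the closure in $V$ of a quasi-projective subset of $V_{i(\sigma)}$ can certainly be a complete curve (e.g. $\overline{\mathbb{A}^1}=\mathbb{P}^1$ inside a stratification $\mathbb{P}^1=\mathbb{A}^1\sqcup\{\infty\}$). The hypothesis forbids complete curves \emph{inside} $V_{i(\sigma)}$, not complete curves that merely meet it densely. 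So you cannot pass from ``generic $q$ maps into $V_{i(\sigma)}$'' to ``the whole fiber is contracted'' by a closure argument alone.

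\textbf{The paper's argument} sidesteps both problems by working by contradiction. It starts with any Hironaka resolution of $\phi$ (so no fan has to be constructed) and supposes for contradiction that some blowup center $H$ is the first one that is not torus-invariant. Let $O$ be the largest torus orbit meeting $H$; then $\phi$ is already regular on a dense open $U\subset O$ with image in a single stratum. Since $H$ is in the indeterminacy locus and $H\subsetneq O$, there is a point $p\in H\cap O$ and a resolution $Z\to X$ such that $Z_p\to V$ is non-constant. Because $Z_p$ is complete and the relevant strata $V_i$ contain no complete curves, $Z_p$ cannot map into a single stratum, so two arcs through $p$ limit into \emph{different} strata. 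Translating both arcs by the torus action moves $p$ into $U$ without changing tangency orders; but over $U$ both translated arcs limit into the same stratum, contradicting the hypothesis that the limiting stratum depends only on tangency orders. Notice this contradiction structure only requires exhibiting \emph{two} arcs with conflicting limits — which is exactly what the no-complete-curve hypothesis delivers — rather than proving that \emph{all} arcs land in the same stratum, which is what your direct approach would need. I'd recommend reorganizing your argument along these lines: keep your arc-and-tangency calculus, but use it to derive a contradiction from a non-toric Hironaka center rather than to build the fan from scratch.
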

\begin{proof}
  Fix the standard torus action of $T=(\bC^*)^n$ on $\bC^n$.  By
  Hironaka (see W{\l}odarczyk \cite{wlodarczyk2009resolution} for a careful
  treatment of analytic spaces), there exists a sequence of
  blowups at smooth centers in the indeterminacy loci that resolves
  $\phi$. Let $H$ be the first center which is not $T$-invariant.

  Let $O$ be the largest $T$-orbit with $O\cap
  H\ne\emptyset$. By restricting to an open subset, we can assume 
  that $H\subsetneq O$. Consider the toric cross-sections normal to $O$.
  These cross-sections satisfy the conditions of the Lemma,
  and so applying an inductive hypothesis in $n$,
  we resolve the indeterminacy of $\phi$ generically
  along $O$, by a series of toric blowups.

  So we get a rational map $\phi'\colon (X',B') \dashrightarrow V$ from a toric
  variety, a torus orbit $O'$, and a nontoric center of indeterminacy
  $H'\subset O'$ such that $\phi'$ is regular on an open set
  $U\subset X'$ intersecting $O'$. Then $\phi'(U\cap O')$ is contained in a
  single stratum~$V_i$. The stratum containing the limit of an arc in $X'$ again
  depends only on the orders of tangency with the components of $B'$.

  Let $X'\gets Z\to V$ be a resolution of singularities. Then there
  exists $p\in O'\cap H'$ such that for the fiber $Z_p$ of $Z\to X'$ the
  morphism $Z_p\to V$ is non-constant. Since $Z_p$ is proper and
  the strata $V_i$ contain no complete curve, there exist two arcs
  with $f_1(0)= f_2(0) = p$
  and with $g_1(0)$, $g_2(0)$ lying in different strata $V_i$.
  But shifts of these arcs by the torus action have the same tangency
  conditions with the coordinate hyperplanes and satisfy $f(0)\in
  U$. So for them $g(0)$ lie in the same stratum of $V$. Contradiction.
\end{proof}

\begin{lemma}\label{lem:over-bb}  
There is a morphism $(\oF_M^R)^\nu\to \oF_M^\bb$ for
any canonical choice of polarizing divisor $R$ (recognizable or
not). \end{lemma}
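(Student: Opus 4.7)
The plan is to verify the valuative criterion for the rational map
$f\colon (\oF_M^R)^\nu \dashrightarrow \oF_M^\bb$,
which is the identity on the common open dense subset $F_M$: namely, for every arc
$\alpha\colon(C,0)\to (\oF_M^R)^\nu$ with $\alpha(C^*)\subset F_M$,
the extension $\overline\alpha\colon C\to\oF_M^\bb$ (unique by properness of $\oF_M^\bb$)
sends $0$ to a point depending only on $x=\alpha(0)$, not on $\alpha$.
Given this, the graph closure of $f$ inside $(\oF_M^R)^\nu\times\oF_M^\bb$ is a proper
birational cover of $(\oF_M^R)^\nu$ with single-point set-theoretic fibers, and
since $(\oF_M^R)^\nu$ is the normalization of the projective variety $\oF_M^R$
from Theorem~\ref{thm:stable-moduli}, Zariski's main theorem forces this cover
to be an isomorphism, yielding the desired morphism $(\oF_M^R)^\nu\to\oF_M^\bb$.

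To produce $\overline\alpha(0)$, I will translate $\alpha$ into a K3 degeneration:
$\alpha$ induces a one-parameter family of stable K3 pairs whose limit is the pair
$(\oX_0,\epsilon\oR_0)$ corresponding to $\nu(x)\in\oF_M^R$, and after a finite base
change $(C',0)\to (C,0)$ and simultaneous resolution, Kulikov's theorem yields a
Kulikov model $X\to(C',0)$ with monodromy invariant $\lambda\in H^2(X_t)$ and
isotropic lattice $I\subset L_{K3}$ (of rank $1$ in Type III, or the rank-$2$
saturation of $\Z\delta\oplus\Z\lambda$ in Type II). By the nilpotent orbit
asymptotics underlying Theorem~\ref{period-extension}, the extended period map
$C'\to\oF_M^\bb$ sends $0$ to a specific point of the cusp indexed by $[I]$
(in Type II, the point with $j$-invariant equal to that of the limiting elliptic
double curve). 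Composing with the finite surjection $C'\to C$ identifies
$\overline\alpha(0)$ with this point.

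The main obstacle, and the crux of the proof, will be showing that this point is
an invariant of the stable pair $(\oX_0,\epsilon\oR_0)$, hence of $x$ alone,
independent of the choice of arc and Kulikov resolution. The argument combines
three ingredients: (i) a finite base change of order $k$ replaces $\lambda$ by
$k\lambda$, leaving $I$ unchanged as a saturation; (ii) by the Shepherd-Barron
and Friedman-Scattone theory together with Theorem~\ref{minus-one}, any two
Kulikov models resolving isomorphic stable pairs are related by a sequence of
$M0$, $M1$, $M2$ birational modifications and topologically trivial deformations;
and (iii) each such modification preserves $I\subset L_{K3}$ by
Proposition~\ref{bir-ident} and leaves the $j$-invariant of the double curve
unchanged. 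Equivalently, the class $\delta$ is built topologically from the
combinatorial type of the resolution of $(\oX_0,\epsilon\oR_0)$ via
Construction~\ref{lambda-construction}, even though two arcs $\alpha_1,\alpha_2$
landing at the same $x$ may produce entirely different punctured families over
$C^*$. Since the argument uses only the projectivity of $\oF_M^R$, the existence
of divisor models, and the topological invariants of Kulikov degenerations, it
applies to any canonical choice $R$, recognizable or not.
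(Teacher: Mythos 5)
Your overall scaffolding---valuative criterion, graph closure, Zariski's main theorem, and the translation of an arc into a Kulikov model whose period map degenerates to the cusp indexed by $[I]$---is sound, and it correctly identifies the crux: show the Baily-Borel limit depends only on $x=\alpha(0)$, not on the arc $\alpha$. But ingredient (ii) of your crux argument is a genuine gap. The Miranda-Morrison $(-1)$-theorem (Theorem~\ref{minus-one}), Shepherd-Barron~\cite[Cor.~3.1]{shepherd-barron1981extending-polarizations}, and Friedman-Scattone~\cite[Thm.~0.6]{friedman1986type-III} all compare Kulikov models that extend the \emph{same} punctured family $X^*\to C^*$; the M0, M1, M2 modifications are flops along curves in the central fiber and leave the punctured family untouched. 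Two arcs $\alpha_1,\alpha_2$ landing at the same boundary point $x\in(\oF_M^R)^\nu$ will in general parameterize entirely non-isomorphic families of K3 surfaces over $C^*$, so their Kulikov models are not related by any birational map, and the cited machinery says nothing about them. You acknowledge exactly this in the last sentence, but the ``equivalently'' you offer is not equivalent: Construction~\ref{lambda-construction} builds $\delta$ as a class in $H^2(X_t)$ from the dual complex of $X_0$, yet the datum that names a Baily-Borel cusp is the $\Gamma$-orbit of $I$ inside $L_{K3}$, and extracting that from $\delta\in H^2(X_t)$ requires a marking of $X_t$, which is part of the smoothing data, not an invariant of the stable pair alone. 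So the independence of the cusp (in Type III) and of the point on the modular curve (in Type II) is asserted but not proved.

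The paper's argument bypasses the comparison of smoothings entirely and is much more direct: the Baily-Borel image can be read off intrinsically from the stable pair $(\oX_0,\epsilon\oR_0)$. In Type III the cusp is a $0$-dimensional point, so there is no continuous parameter to worry about. In Type II the $1$-cusp is a modular curve, and its coordinate, the $j$-invariant, equals the $j$-invariant of the elliptic curve $E$ that sits either in the double locus of the nonnormal surface $\oX_0$ or in its elliptic singularity; this manifestly depends only on $\oX_0$, hence only on $\nu(x)$, and not on the arc. That observation, together with the properness of $\oF_M^R$ from Theorem~\ref{thm:stable-moduli}, is all that is needed, and it is what~\cite[Thm.~3.15]{alexeev2019stable-pair} records. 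To fix your proof, replace (ii) and (iii) with this intrinsic recovery of the $j$-invariant from the limiting slc pair; the rest of your argument then goes through.
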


This is proved in \cite[Thm.~3.15]{alexeev2019stable-pair} and amounts
to the observation that in Type II, the $j$-invariant of the
corresponding point in the $1$-cusp of $\oF_M^\bb$ can be
recovered from the stable slc pair $(X,\epsilon R)$. Indeed, either
$X$ is nonnormal and every connected component of the double locus is
an elliptic curve $E$ with this $j$-invariant, or $X$ has an elliptic
singularity corresponding to $E$.

\subsection{Examples} Previously known examples
 of recognizable divisors come from
  \cite{alexeev2019stable-pair}, \cite{alexeev2022compactifications-moduli}.

\begin{example}[Degree $2$ K3s]\label{deg2-ex} 
Let $(X,L)$ be a quasipolarized K3 surface of degree $L^2=2$. Let $\oX$
denote the corresponding polarized ADE K3 surface.
Then $\phi_{|L|}$ defines a branched double cover $\oX\to \mathbb{P}^2$ or
$\oX\to \mathbb{F}_4^0$ (the contraction of the Hirzebruch surface $\mathbb{F}_4$
along its negative section). Define $R\in |3L|$ to be the pullback of the
   ramification locus $\oR\subset \oX$.

There is only one $\Gamma$-orbit of primitive isotropic 
vector $\delta\in L^\perp$, and so a semitoroidal compactification is 
determined by a single $\Gamma_\delta$-invariant semifan 
$\mathfrak{F}=\mathfrak{F}_\delta$ in the positive cone of 
$C_\delta^+$. Then \cite{alexeev2019stable-pair} verifies
Theorem \ref{thm:equiv-rec}(5) directly,
by constructing for each monodromy invariant $\lambda$,
a divisor $\lambda$-family with monodromy invariant
in the projective class $[\lambda]$.

These divisor models are constructed by ensuring the involution on the general
fiber $X_t$ of Kulikov model $X\to (C,0)$ extends to the central fiber $X_0$.
Then the fixed locus $R_0\subset X_0$ is the canonical choice of divisor
certifying recognizability. The resulting semifan $\mathfrak{F}_R$ is not a fan. 
The lattice $\delta^\perp/\delta$ is a hyperbolic root lattice with a finite 
covolume Coxeter chamber $\mathfrak{K}$. There is an infinite 
subgroup $W\subset \Gamma_\delta$ for which
 $\mathfrak{F}_R$ is the $\Gamma_\delta$-orbit of a 
 single chamber $\mathfrak{L}:=W\cdot \mathfrak{K}$.
 
 Thus, Theorem \ref{main-thm} {\it cannot} be strengthened 
 by replacing ``semifan" with ``fan." \end{example}

\begin{example}[Elliptic K3s]\label{ell-ex}
Let $(X,j)$ be an $H$-quasipolarized K3 surface, i.e. an elliptic K3 surface, with
fiber class $f$ and section $s$ (so $h$ lies in cone spanned by $f$, $s+2f$). 
Let $R=s+m\sum f_i$ be the section plus the sum of the singular fibers,
with multiplicity, and weighted by $m$. Here $L=s+24mf\in H$ is the relevant
 big and nef class. $\oF_H^R$
 is the same for all $m> \tfrac{1}{3}$.

As in the previous example, \cite{alexeev2022compactifications-moduli} find 
Kulikov models $X\to (C,0)$ for any monodromy invariant which preserve 
the existing structures on the general fiber: $X_0$ admits a fibration
 by genus $1$ curves $\pi_0\colon X_0\to B_0$
 over a chain of rational curves $B_0$, with a section $s_0$.
Finitely many fibers $f_{i,0}=\pi_0^{-1}(b_i)$ not contained in the double 
locus of $X_0$ have more nodes than all analytically nearby fibers.  Counting $f_{i,0}$
 with the correct multiplicity, the recognizable divisor on $X_0$ is $$R_0=s_0+m\sum f_{i,0}.$$

There is a unique $\Gamma$-orbit of primitive isotropic
$\delta\in H^\perp=I\!I_{2,18}$ and $\delta^\perp/\delta=I\!I_{1,17}$
 is a hyperbolic root lattice with a finite covolume Coxeter chamber
  $\mathfrak{K}$. Then $\mathfrak{F}_R$ is the $\Gamma_\delta$-orbit
   of a subdivision of $\mathfrak{K}$ into $9$ subchambers. 
   So $\mathfrak{F}_R$ is a fan. \end{example}

\begin{example} Any choice of divisor $R$ when $\dim\mathbb{D}_M=1$ 
is recognizable: There exists a divisor model in the neighborhood
 of any point $p\in \oF_M$ in the unique toroidal compactification, 
 which also equals the Baily-Borel compactification. \end{example}

\begin{remark} The necessity of normalizing $\oF_M^R$ to
 get a semitoroidal compactification is apparent in both 
 Examples \ref{deg2-ex}, \ref{ell-ex}.
 \cite{alexeev2019stable-pair, alexeev2022compactifications-moduli}
  compute the normalization map explicitly. 
   \end{remark}

\section{The rational curve divisor}
\label{sec:rc-divisor}

Our goal is to now make a canonical choice of divisor for
$F_{2d}$ for any $d>0$, then prove its recognizability. From this,
we can conclude Corollary \ref{cor:some-semitoroidal}: there are KSBA
compactifications of $F_{2d}$ whose normalizations are semitoroidal, for all degrees.
Our divisor is roughly the sum of all rational curves in $|L|$. Its
recognizability is proven below, by showing that the image of a predeformable,
stable, genus zero map to any Kulikov surface is rigid.

\subsection{Definition of $R^{\rm rc}$} Consider the moduli space $F_{2d}^{\rm q}$
of quasipolarized K3 surfaces of degree $2d$. Let $(X,L)\in F_{2d}^{\rm q}$.

\begin{definition} We say that $G\in |L|$ is a {\it rational curve} if the
normalization of every irreducible component of $G$ is $\mathbb{P}^1$.\end{definition}

\begin{theorem}[Yau-Zaslow formula \cite{yau1996bps,beauville1997counting, chen1998rational, chen2000simple}]
\label{yz-formula}
There is a Zariski open subset $U\subset F_{2d}^{\rm q}$ for which any
rational curve $G\in |L|$ for $(X,L)\in U$ is irreducible, nodal, and for
which the number of such rational curves is exactly
\begin{displaymath}
  n_d:=[q^d]\, \frac1{q} \prod_{k\geq 1} \frac{1}{(1-q^k)^{24}}
  = [q^d]\, \frac1{\Delta(q)}
\end{displaymath}
where $\Delta(q)$ is the modular discriminant, and $[q^d]$ denotes the 
$q^d$-coefficient.
\end{theorem}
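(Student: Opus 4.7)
The plan is to split the statement into two independent pieces, corresponding to the two separate circles of ideas in the cited references: a genericity result saying the locus where every rational curve in $|L|$ fails to be irreducible and nodal has positive codimension in $F_{2d}^{\rm q}$, and an enumerative result counting the rational curves on a sufficiently generic surface. One then intersects the relevant open loci to obtain $U$.

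For the genericity part, I would follow Chen's method. Working on the universal family $\mathcal{X}\to F_{2d}^{\rm q}$ with its relative linear system $\mathbb{P}_\mathcal{L}$, the locus of pairs $(X,C)$ with $C\in|L|$ of geometric genus zero is constructible, and I want to show that its components where $C$ is reducible, or irreducible but non-nodal, do not dominate $F_{2d}^{\rm q}$. The strategy is to specialize to the boundary of moduli, namely a semistable Type II degeneration with elliptic double curve $E$, and degenerate $L$ compatibly. On the special fiber, a rational curve limit decomposes into rational curves on the two components that match along $E$; a careful dimension count, using that attaching conditions at points of $E$ impose codimension at least one, shows that any non-nodal or reducible limit is forced into a proper sublocus. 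Upper-semicontinuity of reducibility and non-nodality then yields the Zariski open $U_1\subset F_{2d}^{\rm q}$.

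For the counting, I would follow Beauville's approach. On a generic $(X,L)\in U_1$, rational curves in $|L|$ are isolated, integral, and nodal, and each contributes $1$ to a virtual count. Consider the relative compactified Jacobian $\pi\colon\overline{J}^0\to|L|\cong\mathbb{P}^{d+1}$ of the universal curve. On one hand, by a theorem of Mukai--Beauville for moduli of sheaves of rank $0$ on K3 surfaces, $\overline{J}^0$ is birational (in fact isomorphic under genericity) to the Hilbert scheme $X^{[d+1]}$; on the other hand, the topological Euler characteristic of $\pi$ is computed fiber by fiber: a smooth genus $g$ fiber contributes $0$, an integral nodal rational curve contributes $1$, while reducible fibers and worse-than-nodal fibers live in the codimension controlled by the first step and contribute nothing to the leading count. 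Thus
\begin{displaymath}
n_d=\chi(\overline{J}^0)=\chi(X^{[d+1]}),
\end{displaymath}
and G\"ottsche's formula evaluates $\sum_d \chi(X^{[d+1]})q^d$ as the stated generating function $q^{-1}\prod_k(1-q^k)^{-24}$. Intersecting with $U_1$ gives the final open set $U$.

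The main obstacle, and the delicate part of Beauville's argument, is controlling the contribution of the \emph{non}-integral and non-nodal fibers of $\pi$ to $\chi(\overline{J}^0)$: on the whole of $|L|$ one cannot a priori ignore these, and genuine reducible rational limits in $|L|$ do occur even on generic $(X,L)$. The resolution is to combine Chen's nodality with the fact that the compactified Jacobian of a reducible or cuspidal curve has vanishing Euler characteristic in the right sense, so only the integral nodal rational fibers contribute, each with multiplicity one. The identification $\overline{J}^0\cong X^{[d+1]}$ must also be made precise; this is where the Mukai lattice and primitivity of $L$ enter, and is the step that cleanly generalizes to all degrees $2d$ after the non-primitive extensions of Chen and Chen--Lewis.
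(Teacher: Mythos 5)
The paper does not prove this statement; it cites it as a ``well-known combination of theorems'' from Yau--Zaslow, Beauville, and Chen, and builds on it as a black box. So your task was really to reconstruct the proof from the literature, and you have the right ingredients (Chen's genericity, Beauville's compactified Jacobian, G\"ottsche's formula) but two points need correction.

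First, your ``main obstacle'' paragraph contains a false claim: it is not true that ``genuine reducible rational limits in $|L|$ do occur even on generic $(X,L)$.'' On a K3 surface with $\operatorname{Pic}(X)=\mathbb{Z}L$ --- which holds for the very general point of $F_{2d}^{\rm q}$ --- every effective curve in $|L|$ is automatically irreducible: a decomposition $C=C_1+C_2$ into effective curves would force $[C_i]=a_iL$ with $a_i\ge 1$ and $a_1+a_2=1$, which is impossible. The Noether--Lefschetz locus where reducible members could exist is a countable union of proper divisors, and a constructibility argument then yields a genuine Zariski open on whose complement some rational member is reducible. So reducibility is the \emph{easy} part; the only nontrivial genericity input is Chen's nodality theorem, which rules out cuspidal and worse singularities. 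The Euler characteristic computation $\chi(\overline{J}(C))=1$ for an integral nodal rational curve and $\chi(\overline{J}(C))=0$ for an integral curve of positive geometric genus (via the $\operatorname{Jac}^0(C)$-action) is then exactly what closes Beauville's count without any separate treatment of reducible fibers.

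Second, the identification of $\chi(\overline{J}^0)$ with $\chi(X^{[d+1]})$ is not via ``birational (in fact isomorphic under genericity).'' The correct statement is that the relative compactified Jacobian (a moduli space of stable torsion sheaves on $X$ with Mukai vector determined by $L$) is a smooth projective holomorphic symplectic variety \emph{deformation equivalent} to $X^{[d+1]}$; this is what gives equality of Betti numbers and hence of Euler characteristics. (Birational equivalence of compact hyperk\"ahler manifolds also preserves Betti numbers by Huybrechts, so the weaker hypothesis would suffice, but ``isomorphic'' is simply incorrect in general.) With these two corrections the outline matches the standard argument in the cited references.
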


The integer $n_d$ is the number of $24$-colored partitions of $d+1$.

\begin{definition}\label{naive-rc-def}
The {\it rational curve divisor} is the canonical choice
of polarizing divisor $R^{\rm rc}:=\sum_{G\in |L|\textrm{ rational }} G\in |n_dL|$
defined over the open subset $U\subset F_{2d}^{\rm q}$. \end{definition}

We now 
outline an alternative definition using Gromov-Witten invariants.

\begin{definition} Let $X$ be a smooth complete variety and let 
$\beta\in H_2(X,\Z)$.
The {\it Kontsevich space} $M_g(X,\beta)$ is the
moduli space of stable maps $f:T\to X$ from a genus $g$ nodal
curve $T$, for which $f_*[T]=\beta$.
\end{definition}

The Kontsevich space is a proper Deligne-Mumford stack.
For a surface, $H_2(X,\Z)$ and $H^2(X,\Z)$ are canonically identified
by Poincar\'e duality, so we make no distinction. We will take $L=\beta$.

There is a {\it virtual fundamental cycle}
$[M_g(X,\beta)]^{\rm vir}\in A_{\rm exp.dim}(M_g(X,\beta))$
where ${\rm exp.dim}=(\dim X-3)(1-g)+c_1(T_X)\cdot \beta$
is the expected dimension of the moduli space \cite{behrend1997intrinsic}.
In particular, for
stable genus $0$ maps to a K3 surface,
${\rm exp.dim}=-1$ so $[M_0(X,L)]^{\rm vir}=0$.
Geometrically, this can be explained by the fact that GW invariants
are deformation-invariant, but that a generic
deformation of $X$ has no nontrivial line bundles, so $\beta$
cannot represent an algebraic curve. 

For polarized K3 surfaces $(X,L)$,
there is a {\it reduced virtual fundamental cycle}
$[M_0(X,L)]^{\rm vir,red}\in A_0(M_0(X,L))$, see
\cite{kool2011reduced-classes}.
Roughly, it is built to be invariant 
only under the deformations
of $X$ which stay in $\cF_{2d}^{\rm q}$.
This decreases dimension of the obstruction space
by one, increasing the expected dimension by one.

\begin{lemma}\label{rc-constant-smooth}
 Let $(T,f)$ be a stable map $f:T\to X$ with $T$ a nodal curve of 
 arithmetic genus $0$ and $X$ a smooth K3 surface.
Under any deformation of the stable map $(T,f)\in M_0(X,L)$,
the image divisor $f_*T$ is constant. \end{lemma}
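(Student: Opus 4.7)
The plan is to argue by contradiction. I will assume there is a non-trivial deformation of $(T,f)$ over a connected base $B$ for which the induced family of image divisors $D_s := (f_s)_*T_s \in |L|$ on $X$ is non-constant, and derive a contradiction from the fact that a smooth K3 surface is not uniruled (its Kodaira dimension is $0$).

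The set-up will be as follows. Let $\mathcal{T} \to B$ be the total space of the sources and $F\colon \mathcal{T} \to X \times B$ the universal morphism over $B$; let $\mathcal{F} \subset X \times B$ be its scheme-theoretic image, so that the fiber $\mathcal{F}_s$ equals the support of $D_s$. Because every $T_s$ is an arithmetic-genus-$0$ nodal curve, hence a tree of $\mathbb{P}^1$'s, each irreducible component of $\mathrm{Supp}(D_s)$ is the image of such a $\mathbb{P}^1$, and is in particular a rational curve on $X$.

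I will first verify that $\dim \mathcal{F} = 2$. Each $\mathcal{F}_s$ is $1$-dimensional, so this holds as soon as $s \mapsto \mathcal{F}_s$ is non-constant; and in the remaining case where $\mathrm{Supp}(D_s) = Y$ is independent of $s$ (only the multiplicities of $D_s$ vary), $\mathcal{F} = Y \times B$ is still $2$-dimensional. Then I will examine the projection $\pi_X\colon \mathcal{F} \to X$. If $\pi_X$ is dominant, then $X$ is covered by the rational curves appearing as components of the supports of the $D_s$, hence $X$ is uniruled — contradicting $\kappa(X) = 0$. If $\pi_X$ is not dominant, its image is a proper curve $Y \subset X$ with finitely many components $Y_1,\ldots,Y_k$, each $D_s$ decomposes as $\sum_i m_{i,s} Y_i$ with $m_{i,s} \in \mathbb{Z}_{\geq 0}$, and the numerical constraint $\sum_i m_{i,s}[Y_i] = L$ admits only finitely many non-negative integer solutions. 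Hence $D_s$ is locally constant in $s$, contradicting our assumption.

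The main obstacle is the care needed in handling components of $T_s$ that are contracted by $f_s$ (which may well move in a deformation but do not affect $(f_s)_*T_s$) and the multiplicity bookkeeping when different components of $Y$ may appear with different weights; the geometric core — the dichotomy between uniruledness of $X$ and the $D_s$ being pinned down by discrete numerical data on a fixed curve — is clean once the set-up is in place.
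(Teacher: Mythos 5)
Your proof is correct and follows the same high-level strategy as the paper's: both extract, from a hypothetical non-trivial deformation of $f_*T$, a family of irreducible rational curves sweeping out $X$, and derive a contradiction with the non-uniruledness of a K3 surface. The main difference is how that contradiction is realized. Where you cite the black-box fact that a uniruled variety has Kodaira dimension $-\infty$, the paper proves the needed non-uniruledness by hand: a dominant generically finite map $\pi\colon S = B\times\mathbb{P}^1 \to X$ would satisfy $K_S = \pi^*K_X + R = R$ with $R$ the (effective) ramification divisor, yet adjunction on a $\mathbb{P}^1$-fiber $F$ of the ruling $S\to B$ gives $-2 = F\cdot(F+K_S) = F\cdot R \geq 0$, a contradiction. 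That three-line Riemann--Hurwitz computation keeps the lemma self-contained and can be substituted for your citation. You are also more explicit than the paper in two respects: you separate out the degenerate case where $\pi_X\colon \mathcal{F}\to X$ is not dominant (the paper implicitly assumes that a deforming image divisor must have a genuinely moving irreducible component), and you point out that in that case the Chow map lands in the finite set of non-negative integer solutions of $\sum_i m_i[Y_i] = L$, so it is constant on a connected base. Both additions are legitimate and tighten an argument the paper treats tersely.
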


\begin{proof} If the image divisor $f_*T$ moves under a deformation
of $(T,f)$, a restriction of $f$ gives a dominant
map $S=B\times \mathbb{P}^1\xrightarrow{\pi} X$,
for some (possibly incomplete)
curve $B$. Letting ${\rm Ram}\subset S$
be the ramification divisor of the map $S\to X$, the Riemann-Hurwitz
formula gives $K_S= {\rm Ram}$. This contradicts the adjunction 
formula, because restricting to a general fiber $F=\{b\}\times \mathbb{P}^1$ gives
$-2=2g(F)-2 = F\cdot (F+{\rm Ram}) = F\cdot {\rm Ram}\geq 0.$
\end{proof}

Replicating this argument for a Kulikov surface
is the key to proving that $R^{\rm rc}$ is recognizable. 

\begin{definition}\label{coeff-of-rc}
Let $G\in |L|$ be a rational curve. Define $M_G(X)$
to be the union of the connected components of $M_0(X,L)$ for which
$f_*T=G$.
This is well-defined because $f_*T$ is constant
on any connected component by Lemma 
\ref{rc-constant-smooth}. Define
$$n_G:=\textstyle \deg_{M_G(X)} [M_0(X,L)]^{\rm vir,red}\in \Q.$$
\end{definition}

This quantity a priori only lies in $\Q$ because of stack-theoretic
issues.

\begin{proposition} \label{rc-alt-smooth}
For any smooth quasipolarized K3 surface $(X,L)$,
we have $$\textstyle R^{\rm rc}=\sum_{G\in|L|\,\rm{ rational}} n_GG.$$
Furthermore, $n_G$ is a non-negative integer for all rational curves $G\in |L|$.
\end{proposition}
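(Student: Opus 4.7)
The plan has three steps: first verify $n_C = 1$ on the generic locus $U$; second extend the formula to all smooth K3 surfaces by deformation invariance; third establish the non-negativity and integrality of $n_C$ for arbitrary smooth $(X,L)$.

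On $U$, any rational $C \in |L|$ is irreducible and nodal, so the unique stable map $f \colon T \to X$ with $f_* T = C$ is the normalization from $T = \mathbb{P}^1$. This map has no non-trivial automorphisms, so $M_C(X)$ consists of a reduced isolated point of $M_0(X, L)$. I would verify directly that the reduced obstruction bundle vanishes at this point: the classical obstruction space $H^1(\mathbb{P}^1, f^* T_X)$ is cut down by the image of the semi-regularity map $H^1(N_{C/X}) \to H^2(\mathcal{O}_X) \cong \C$, leaving a zero-dimensional reduced obstruction matching the tangent space. Thus the contribution to $[M_0(X, L)]^{\rm vir,red}$ at this point is $n_C = 1$, and summing over the $n_d$ rational curves recovers the Yau-Zaslow formula (Theorem~\ref{yz-formula}) and establishes the formula $R^{\rm rc} = \sum_C n_C C$ on $U$.

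For arbitrary smooth $(X, L)$, Lemma~\ref{rc-constant-smooth} shows $f_* T$ is locally constant on $M_0(X, L)$, so $M_0(X, L) = \bigsqcup_C M_C(X)$ is a disjoint union of clopen substacks indexed by the finite set of rational curves in $|L|$, and each $n_C$ in Definition~\ref{coeff-of-rc} is well-defined. Deformation-invariance of the reduced GW invariant along $F_{2d}^{\rm q}$ implies $\sum_C n_C = n_d$, and the weighted sum $\sum_C n_C C$ gives a canonical extension of the rational section $R^{\rm rc}$ from $U$ to every smooth quasipolarized K3 surface, agreeing with Definition~\ref{naive-rc-def} on $U$ by the first step.

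The main obstacle is showing $n_C$ is a non-negative integer when $C$ is reducible or has worse-than-nodal singularities. My approach is to stratify $M_C(X)$ by the combinatorial type of the source curve $T$ and the induced map to each component of $C$, using the fact that any $f$ with $f_*T = C$ factors through the normalization followed by a cover. Each stratum contributes a factor computed by multiple cover formulas for rational curves on K3 surfaces (following Bryan-Leung and Klemm-Maulik-Pandharipande-Scheidegger), all of which are non-negative integers. Alternatively, one may argue semicontinuously: deform $(X, L)$ to a nearby point of $U$ along a one-parameter family, and express $n_C$ as the count (with multiplicity) of rational curves on the generic K3 fiber which specialize to $C$. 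By properness of the relative Kontsevich space over $F_{2d}^{\rm q}$ and the fact that each generic rational curve contributes $1$, this count is manifestly a non-negative integer. Either approach yields $n_C \in \Z_{\geq 0}$.
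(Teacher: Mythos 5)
Your outline---verify $n_C=1$ on $U$, degenerate, identify the flat limit---has the right shape and is close in spirit to the paper, and the direct computation of $n_C=1$ on $U$ via vanishing of the reduced obstruction is a reasonable alternative to the paper's appeal to Chen's theorem. The gap is in the identification that does all the work. You \emph{define} $n_C$ as the reduced virtual-class integral over $M_C(X)$, and then in both your second and third paragraphs you assert, without argument, that $n_C$ equals the number of rational curves on a nearby generic fiber $X_t$ that specialize to $C$. That equality is precisely what needs proof: once established, it simultaneously yields the flat-limit identification $R^{\rm rc}=\sum n_C C$ and the non-negativity and integrality of $n_C$. Deformation invariance of the reduced GW invariant alone only gives the total $\sum_C n_C = n_d$, not the individual coefficients; and properness only gives a well-defined closure $\overline{M_0(\mathcal{X}^*,\beta)}$ inside $M_0(\mathcal{X},\beta)$, which a priori need not carry the whole virtual class of the special fiber---the dimension of $M_0(X_0,L_0)$ can jump at $t=0$, producing components not lying in the closure whose virtual contribution must be shown to vanish. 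The paper closes the gap by restricting the relative perfect obstruction theory on $M_0(\mathcal{X},\beta)\to\Delta$ to each connected component $W$: the axioms survive restriction, so the constancy argument applies component-by-component and yields $\int_W[\cdot]^{\rm vir,red}=(\text{number of sheets of }M_0(X_t,\beta)\text{ closing up into }W)$, a manifest non-negative integer. Summing over the $W$ with image curve $C$ gives both conclusions at once.

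Your other alternative via multiple-cover formulas (Bryan--Leung, KMPS) does not apply as stated: those formulas govern maps of class $k[C']$ onto a fixed irreducible rational curve $C'$, whereas here $\beta=L$ is the primitive class and $C\in|L|$ may be reducible or non-reduced. The stable maps $f$ with $f_*T=C$ then distribute components of $T$ over the components of $C$ and are not multiple covers of any single curve, so there is no clean local formula of that type for their contributions. The relative obstruction-theory restriction is what makes the proof go through.
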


\begin{proof} Chen's theorem \cite{chen2000simple} implies that for a
sufficiently general $(X,L)\in U$, we have $n_G=1$ for all rational curves $G$.
Fix an $(X_0,L_0)\in F_{2d}^{\rm q}$ not in $U$ and consider a 
$1$-parameter deformation $(\mathcal{X} ,\mathcal{L})\to (C,0)$ over an
analytic disc $C$ for which
$X_t\in U$ for all $t\in C^*$. Consider the moduli space
of relative stable maps $M_0(\mathcal{X}, \beta)$ where $\beta$ is the class of
$L_0$ pushed forward to $\mathcal{X}$.

There is a proper morphism
$M_0(\mathcal{X}, \beta)\to C$ sending a curve to the fiber
it is supported on, but the fibers of this family
are in general poorly behaved. For instance, the dimension can and 
often does
suddenly jump at $t=0$. But by assumption, the fiber over
any point $t \in C^*$ is a reduced zero-dimensional scheme
consisting of exactly $n_d$ points. The proposition follows if we can
prove that the scheme-theoretic intersection
$\overline{M_0(\mathcal{X}^*,\beta)}\cap M_0(X_0,L_0)$ represents
the reduced virtual fundamental class, in homology.

The constancy of the reduced Gromov-Witten invariants
$\textstyle n_d = \deg_{M_0(X_t,\beta)} [M_0(X_t,\beta)]^{\rm vir,red}$ as one varies $t$
 follows from the existence of a relative perfect obstruction theory
 \cite[Sec.~7]{behrend1997intrinsic}, \cite[Rem.~3.1]{kool2011reduced-classes}.
 Without going into the details, this is a perfect two-term complex with a morphism
 to the relative cotangent complex, satisfying various axioms.
  
  Now let $W\subset M_0(\mathcal{X},\beta)$ be a connected
  component.
  The restriction of the axioms of a (relative) perfect obstruction theory still
  hold under restricting this two-term complex to $W$. Hence the
  constancy of reduced GW invariants still holds, i.e.
  $\deg_W [M_0(X_0,L_0)]^{\rm vir,red}$ will equal the number of sheets of 
   $M_0(X_t,L_t)$ whose closures over $t=0$ lie in $W$.
   This implies the first statement.
   
Summing these integrals over the components $W$
for which the image curve $f_*T=G$,
we also see that $n_G$ is a non-negative integer.  \end{proof}

\begin{remark}  A priori, the contribution $n_G$ could equal zero.
Perhaps no genus $0$ stable map with image $G$ deforms
to the general fiber $X_t$. Notably, this cannot occur when 
there is a component $W\subset M_G(X)$ of dimension $\dim W=0$,
see \cite[Ch.~13.2.3]{huybrechts2016lectures-on-k3} and
references therein.
\end{remark}

Proposition \ref{rc-alt-smooth} provides us with a definition of $R^{\rm rc}$
on {\it all} $(X,L)\in F_{2d}^{\rm q}$.

 \begin{definition} A quasipolarized K3 surface $(X,L)$ of degree $2d$
  is {\it unigonal} if it is elliptic, with section and fiber classes $s$, $f$
 and $L=s+(d+1)f$. \end{definition}
 
 As a Noether-Lefschetz locus of Picard rank $2$, the unigonal locus forms
 a divisor in $F_{2d}^{\rm q}$ isomorphic to the moduli space $F_H^{\rm q}$ 
 of elliptic K3s.
 
 \begin{proposition}\label{elliptic-correct} On a unigonal K3 surface $(X,L)$, the
 rational curve divisor is $$R^{\rm rc}:=n_d(s+\tfrac{d+1}{24}\textstyle \sum f_i)$$ where
 $f_i$ are the $24$ singular fibers in $|f|$, counted with multiplicity. \end{proposition}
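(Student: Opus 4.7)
The plan is to use the Gromov--Witten theoretic extension of $R^{\rm rc}$ to all smooth K3 surfaces furnished by Proposition~\ref{rc-alt-smooth}, since the unigonal locus is not contained in the open set $U$ where rational curves are irreducible.

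First I would identify every rational curve in $|L|$ on a generic unigonal K3. With $\mathrm{Pic}(X)=\mathbb{Z}s\oplus \mathbb{Z}f$, the class $s$ satisfies $s^{2}=-2$ and is a fixed component of $|L|$: explicitly, $|L|=s+\pi^{*}|\mathcal{O}_{\mathbb{P}^{1}}(d+1)|$, so every member of $|L|$ has the form $s+\sum_{j}F_{j}$ with each $F_{j}$ a fiber of the elliptic fibration $\pi\colon X\to\mathbb{P}^{1}$. For the divisor to be a rational curve, every fiber component $F_{j}$ must be rational, hence one of the $24$ singular ($I_{1}$) fibers $f_{1},\dots,f_{24}$. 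Thus every rational curve $C\in|L|$ has the form $C=s+\sum_{i=1}^{24}a_{i}f_{i}$ with $a_{i}\ge 0$ and $\sum a_{i}=d+1$.

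It follows that $R^{\rm rc}=\alpha s+\sum_{i=1}^{24}\beta_{i}f_{i}$ for nonnegative integers $\alpha,\beta_{i}$. Since $s$ appears with multiplicity one in every rational curve, $\alpha=\sum_{C}n_{C}$, which is the full reduced genus zero Gromov--Witten invariant of $(X,L)$, equal to $n_{d}$ by the Yau--Zaslow formula (Theorem~\ref{yz-formula}). To determine the $\beta_{i}$, I would work with the universal family $(\mathcal{X},\mathcal{L})\to F_{H}^{\rm q}$ over the unigonal moduli. There $\mathcal{R}^{\rm rc}\subset\mathcal{X}$ is a relative Cartier divisor supported on the tautological section $\mathcal{S}$ and the relative singular-fiber locus $\mathcal{F}$. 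The latter is irreducible, because the monodromy of the universal elliptic K3 family acts transitively on the $24$ singular fibers---classically, their images in the base $\mathbb{P}^{1}$ can be independently varied in moduli. Hence $\mathcal{R}^{\rm rc}=n_{d}\mathcal{S}+c\,\mathcal{F}$ for a single constant $c$, so on each fiber all $\beta_{i}$ are equal to $c$. Matching N\'eron--Severi classes, $\alpha s+c\sum_{i}f_{i}=n_{d}s+24c\cdot f$ must equal $n_{d}L=n_{d}s+n_{d}(d+1)f$, forcing $c=n_{d}(d+1)/24$ and completing the proof.

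The conceptually important ingredient, I expect, is the reliance on the virtual Gromov--Witten reformulation of $R^{\rm rc}$: on the unigonal locus the naive enumerative count of Theorem~\ref{yz-formula} breaks down because rational curves in $|L|$ are never irreducible, so the identity $\sum_{C}n_{C}=n_{d}$ must be obtained from the reduced Yau--Zaslow formula rather than from a direct count. By contrast the remaining input, namely monodromy transitivity on the $24$ singular fibers in the universal elliptic K3 family, is classical and the remaining checks reduce to matching divisor classes.
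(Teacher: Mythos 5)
Your proof is correct, and it takes a genuinely lighter route than the one the paper gestures at. The paper's proof is a bare citation to the main result of Bryan--Leung \cite{bryan2000enumerative-geometry}, which computes the reduced genus-zero Gromov--Witten contribution of each configuration $C = s + \sum a_i f_i$ explicitly as $n_C = \prod_i p(a_i)$; from that formula the symmetry in the $f_i$ is manifest, and one reads off the coefficient of $f_j$ by summing $\prod_i p(a_i)\cdot a_j$ and invoking the $24$-colored partition identity of Remark~\ref{comb-interp}. You sidestep the individual multiplicities entirely: you use only the total count $\sum_C n_C = n_d$ (deformation invariance of the reduced GW invariant, i.e.\ Theorem~\ref{yz-formula}), the monodromy transitivity of the universal elliptic K3 family on its $24$ nodal fibers to force all $\beta_i$ to coincide, and then N\'eron--Severi class matching in $|n_d L|$ to pin down the common coefficient. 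What Bryan--Leung buys the paper is that the symmetry and the coefficient come for free from the explicit formula; what your argument buys is that one does not need to know the multiplicities at all, only the (classical) fact that the $24$ discriminant points of a generic unigonal K3 can be independently moved in moduli, which gives the transitivity. Both are valid, but be aware that Theorem~\ref{yz-formula} as stated in the paper ultimately cites Chen, which in turn relies on Bryan--Leung, so the logical dependence on \cite{bryan2000enumerative-geometry} is not eliminated, only the amount of its output you invoke.
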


\begin{proof} The proposition follows immediately from Proposition \ref{rc-alt-smooth}
and the main result of \cite{bryan2000enumerative-geometry}, though historically
 \cite{chen2000simple} relies on \cite{bryan2000enumerative-geometry}.
\end{proof}

\subsection{Proof of Theorem~\ref{thm:main-rc-divisor}}
We now prove our second main result:

\begin{theorem}\label{thm:rc-divisor} The rational curve divisor $R^{\rm rc}$ is recognizable
for $F_{2d}$ for all $d>0$. \end{theorem}

\begin{proof}
Take a divisor model $(X,R)\to (C,0)$. We verify
Theorem \ref{thm:equiv-rec}(1) by showing that
the limiting curve $R_0\subset X_0$ satisfies some geometric
property ensuring its rigidity on $X_0$ even as we deform the smoothing
of $X_0$. Take a base change and standard
resolution of $(X,R)\to (C,0)$ so that the irreducible components $G_t$ of $R_t$ are
not permuted by monodromy. Then, Lemmas \ref{rc-rec1} and
\ref{rc-rec2} imply that the limit of any individual rational curve $G_t$
is rigid. \end{proof}

\begin{lemma}\label{rc-rec1}Let $X\to (C,0)$ be a Kulikov model and
let $G\subset X$ be a flat family of curves for which $G_t$ is an
irreducible rational curve for $t\neq 0$, and $G_0$ contains no strata.
Then, after a finite base change and resolution of $X\to (C,0)$, there
is a stable map $f:T\to X_0$ from a nodal,
genus $0$ curve (a tree of $\mathbb{P}^1$s) for which $f_*T=G_0$
and $(T,f)$ is {\rm predeformable} (see Def.~\ref{loc-cartier}). \end{lemma}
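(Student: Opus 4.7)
The plan is to combine semistable reduction of the fiberwise normalization of $R$ with a resolution of the induced rational map to an appropriate model of the target. The key observation will be that smoothness of the total space of the source prevents components of the central fiber from being contracted to points in the singular locus of the target, so the predeformability condition of Definition~\ref{loc-cartier} becomes vacuous.

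I would first form the fiberwise normalization $\nu\colon \widetilde R\to R$ and restrict over $C^*$; each $\widetilde R_t\cong \bP^1$ since $R_t$ is reduced, irreducible, and rational. Applying semistable reduction to the proper flat family $\widetilde R^*\to C^*$ produces, after a base change $(C',0)\to(C,0)$ of some order $k$ and blow-ups in the preimage of~$0$, a smooth relative surface $\widetilde R'\to C'$ with reduced nodal central fiber $\widetilde R'_0$; Zariski connectedness and constancy of arithmetic genus in a flat family then force $p_a(\widetilde R'_0)=0$, so $\widetilde R'_0$ is a tree of $\bP^1$s. Let $X[k]\to X\times_C C'$ be the standard toric resolution of Proposition~\ref{standard-resolution}. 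Since both source and target are smooth, the rational map $\widetilde R'\dashrightarrow X[k]$ has indeterminacy of codimension at least~$2$, and finitely many blow-ups of $\widetilde R'$ at indeterminacy points (which keep the source smooth and the central fiber reduced nodal) produce a morphism $f\colon \widetilde R'\to X[k]$ over $C'$. Let $T$ be the Kontsevich stabilization of $\widetilde R'_0$ with respect to $f$, obtained by successively contracting every $f$-contracted component carrying fewer than three special points. Then $T$ is a stable nodal curve of arithmetic genus~$0$, and $f_{\ast}T=R_0$ follows from flatness of $R$ over $C$ and birationality of $\nu$.

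The main remaining step is predeformability, which I anticipate to be the most delicate point but which turns out to be vacuous as a consequence of the following claim: for any morphism $f\colon \widetilde R'\to X[k]$ over $C'$ with $\widetilde R'$ smooth and $\widetilde R'_0$ reduced, no point of $\widetilde R'_0$ is mapped to a triple point of $X_0[k]$, and no component of $\widetilde R'_0$ is contracted by $f$ to any point on a double curve $D_{ij}$. Granting the claim, every maximal contracted subtree of $T$ collapses to an interior point of some component $V_i\subset X_0[k]$, so the class of subtrees quantified in Definition~\ref{loc-cartier} is empty and the predeformability condition holds vacuously. The proof of the claim is a direct local computation: near a double-curve point, in the local model $X[k]=\{xy=t\}$ with $D_{ij}=\{x=y=0\}$, the identity $f^{\ast}x\cdot f^{\ast}y=f^{\ast}t$ gives $\mathrm{ord}_q(f^{\ast}x)+\mathrm{ord}_q(f^{\ast}y)=\mathrm{ord}_q(f^{\ast}t)$ at every $q\in \widetilde R'$; since $\widetilde R'_0$ is reduced, $f^{\ast}t$ vanishes only to order~$1$ along every irreducible component of $\widetilde R'_0$, so a component $C\subset \widetilde R'_0$ with $f(C)=p\in D_{ij}$ would force both $f^{\ast}x$ and $f^{\ast}y$ to vanish along $C$, hence $f^{\ast}x\cdot f^{\ast}y$ to vanish to order at least~$2$ along~$C$---a contradiction. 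The analogous identity $f^{\ast}x\cdot f^{\ast}y\cdot f^{\ast}z=f^{\ast}t$ near a triple point combined with the bound $\mathrm{ord}_q(f^{\ast}t)\le 2$ (with equality only at a node of $\widetilde R'_0$) rules out any point of $\widetilde R'_0$ being sent to a triple point, completing the proof of the claim.
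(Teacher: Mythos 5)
Your approach is genuinely different from the paper's. The paper's proof is essentially a citation: it notes that $R_0$ avoids triple points, so $R_0$ lies in the non-singular plus double locus of $X_0$, and then invokes the properness of the space of predeformable stable maps from \cite[Thm.~3.10]{li2001stable}. You instead \emph{construct} the limit predeformable stable map by hand --- semistable reduction of the fiberwise normalization, elimination of indeterminacy into a toric resolution $X[k]$, and a clean local analysis of $f^\ast x\cdot f^\ast y = f^\ast t$ on a smooth source with reduced nodal central fiber. This is more elementary, self-contained, and arguably more illuminating than the black-box citation, since it shows directly where predeformability comes from. Two small remarks on the construction itself: the claim that finitely many point blow-ups ``keep the source smooth and the central fiber reduced nodal'' is not automatic when an indeterminacy point lies at a node (the exceptional curve then acquires multiplicity two), and should really be phrased as iterating blow-up with semistable reduction; and one should note that the cycle identity $f_\ast T = R_0$ uses that proper pushforward commutes with specialization.

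The genuine gap is the final sentence claiming that predeformability holds \emph{vacuously}. Per the Remark following Definition~\ref{loc-cartier}, an isolated node of $T$ lying over a double-curve point counts as a (degenerate) maximal contracted subtree, and the balance condition $\sum m_k=\sum n_\ell$ at such a node is a nontrivial constraint --- your claim shows there are no positive-dimensional contracted subtrees over the double locus, but it does not show the set of isolated nodes over the double locus is empty, and indeed it typically is not: the normalization of $R_0$ will generically have such nodes wherever $R_0$ crosses a double curve. Fortunately the same local identity you already use closes the gap. At a node $q$ of $\widetilde R'_0$ over $p\in D_{ij}$, with local coordinates $(u,v)$, one has $\mathrm{ord}_q(f^\ast x)+\mathrm{ord}_q(f^\ast y)=\mathrm{ord}_q(f^\ast t)=2$, and both summands are $\geq 1$ because $f^\ast x$ vanishes identically on the branch mapping into $V_j$ and $f^\ast y$ on the branch mapping into $V_i$; hence both equal $1$, so $f^\ast x$ and $f^\ast y$ are units times $u$ and $v$ respectively, and both branches meet the double curve with tangency order exactly one. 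Thus predeformability holds at every such node with $\sum m_k=\sum n_\ell=1$ --- not vacuously, but by the same computation.
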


\begin{definition}\label{loc-cartier} We say that $(T,f)$ is {\it predeformable}
 \cite[Def.~2.5]{li2001stable} 
if no component of $T$ is contracted into the double locus, and for each
node $p\in T$ with $f(p)\in D_{ij}$, the two arcs
$(T_k,p)$, $(T_\ell,p)$ with $f(T_k)\subset V_i$ and $f(T_\ell)\subset V_j$
satisfy
\begin{align*} &\textrm{the tangency order of }f(T_k,p)\textrm{ to }D_{ij} = 
\textrm{the tangency order of }f(T_\ell,p)\textrm{ to }D_{ji}.\end{align*}
\end{definition}

\begin{proof}[Proof of Lemma \ref{rc-rec1}]
 Because $G_0$ contains no strata, it maps into the complement
 of the triple points of $X_0$, i.e. the union of the non-singular locus
 and the double locus. Then, the result follows 
 from the properness over $(C,0)$ of the space of predeformable
 stable maps \cite[Thm.~3.10]{li2001stable} to varieties
 with only double crossings. \end{proof}

\begin{lemma}\label{rc-rec2} Let $f:T \to X_0\times B$ be a family of stable
maps over a local curve $B$, such that $T_b$ is a tree of
$\mathbb{P}^1$s of fixed combinatorial type for all $b\in B$, and
for which $(T_b, f_b)$ is predeformable. Then the image
curves $f_*(T_b)=G_0$ are constant.\end{lemma}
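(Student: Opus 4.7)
The plan is to argue by contradiction via Riemann-Hurwitz on an induced ruled surface, closely parallel to the proof of Lemma \ref{rc-constant-smooth}, combined with a tree-theoretic counting argument driven by predeformability. Suppose $R_0 = f_*(T_b)$ is non-constant. Then some irreducible component $P$ of the tree $T$ has moving image in its target component $V_i \subset X_0$, so the natural morphism $\phi \colon S := P \times B \to V_i$ sending $(p,b) \mapsto f_b(p)$ is generically finite and dominant. Since $(V_i, D_i)$ is an anticanonical pair (Proposition \ref{prop:kulikov}), one has $K_{V_i} + D_i \sim 0$, and Riemann-Hurwitz gives $R_\phi = K_S + \phi^*D_i$ for the ramification divisor $R_\phi \ge 0$. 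The hypotheses (predeformability of $(T_b, f_b)$ and $R_0$ containing no stratum) force $\phi^{-1}(D_i)$ to be supported on the sections $\gamma_j := \{n_j\} \times B$ indexed by the finitely many nodes $n_j \in P$ mapping into $D_i$, with multiplicity equal to the matching tangency orders $\mu_j$ (fixed in $b$ by constancy of the combinatorial type). The map $\phi$ has ramification index $\mu_j$ along $\gamma_j$, so $R_\phi \ge \sum_j (\mu_j - 1)\gamma_j$. Intersecting with a general fiber $F \cong \mathbb{P}^1$ of $S \to B$ and using $K_S \cdot F = -2$ yields the local bound $s(P) \ge 2$, where $s(P)$ is the number of nodes of $P$ mapping into $D$.

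The essential second step is a \emph{$D$-closure} property: every component of $T$ adjacent to a moving component through a node mapping to $D$ is itself moving. Indeed, if $P$ moves and $n$ is a node of $P$ with $f(n) \in D$, then the section $\gamma_n \subset S = P \times B$ satisfies $\gamma_n^2 = 0$, and any curve contracted by a generically finite morphism of smooth projective surfaces must have strictly negative self-intersection. Hence $\phi(\gamma_n)$ cannot be a point, so $f_b(n)$ varies on $D$ as $b$ varies, forcing the adjacent component $P'$ to move as well---otherwise the fixed image $f(P')$ would meet $D$ in a finite set of fixed points, excluding the moving point $f_b(n)$.

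Finally, combine the two steps. Let $T_{\mathrm{move}}$ be the set of moving components and $M = |T_{\mathrm{move}}| \ge 1$. By $D$-closure, every edge of $T$ that is both incident to $T_{\mathrm{move}}$ and maps into $D$ is entirely internal to $T_{\mathrm{move}}$. Summing the inequality $s(P) \ge 2$ over $P \in T_{\mathrm{move}}$, and noting that each internal $D$-edge is counted once from each of its two endpoints, one gets $2e^D_{mm} \ge 2M$, where $e^D_{mm}$ is the number of internal $D$-edges of $T_{\mathrm{move}}$. But $T_{\mathrm{move}}$, being a subgraph of the tree $T$ on $M$ vertices, is a forest and has at most $M - 1$ edges total; in particular $e^D_{mm} \le M - 1 < M$, the desired contradiction.

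The main obstacle is the $D$-closure step: the adjunction bound $s(P) \ge 2$ alone is insufficient, since it merely forbids ``leaf'' moving components but does not rule out a moving family of rational curves through fixed points on an anticanonical pair (as in examples on $\mathbb{P}^2$ with triangle boundary). The decisive input is the negative self-intersection obstruction to contracting sections, which upgrades the pointwise adjunction inequality into a global combinatorial rigidity on the moving subtree and thereby closes the argument.
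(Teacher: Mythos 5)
Your Step 1 (the adjunction bound for a generically finite moving component) is correct and shares the essential Riemann--Hurwitz ingredient with the paper's proof: both hinge on $K_{V_i}+D_i\sim0$ together with $K_S\cdot F=-2$, and your observation that $R_\phi\ge\sum_j(\mu_j-1)\gamma_j$ combined with $R_\phi\cdot F=-2+\sum_j\mu_j$ cleanly yields $s(P)\ge 2$, not just $\sum\mu_j\ge2$. Your Step 3 (forest counting over the moving subgraph) is a genuinely different combinatorial closure from the paper's; they instead label the dual complex by the types V0, V1b, V1f, V2, D0, D1 and v0, v1, d0, d1, extract a maximal subtree $\Gamma$ of V2- and D1-vertices joined by d1-edges, and apply a log Riemann--Hurwitz argument to a V2-leaf of $\Gamma$ after internal blowups of $V_i$.

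The gap is in Step 2, the $D$-closure. Your argument that $\gamma_n$ cannot be contracted relies on $\gamma_n^2=0$ (more precisely $\gamma_n^2\ge0$) plus Mumford negativity. But the node section $\gamma_n$ is a section of the $\mathbb{P}^1$-bundle $\mathcal{P}\to B$, which is \emph{not} trivial in general, and the self-intersection can be negative. A concrete illustration: take a pencil of conics in $\mathbb{P}^2$ through a fixed base point $p\in D$. The universal parametrizing surface is a conic bundle over $B$ with $(-1)$-sections $E_p$ over the base points, and $E_p\mapsto p$ is contracted even though the image curve family sweeps out $\mathbb{P}^2$. So a moving (V2) component genuinely can have a node mapping to a \emph{fixed} point of $D$; this is exactly what the paper's adjacency table calls a d0-edge emanating from a V2-vertex, and they devote the blowup step ($\widetilde{V}_i\to V_i$ hiding the d0-points) to handling precisely this case. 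Your proof would incorrectly conclude that such configurations do not occur, so the $D$-closure property fails, and with it the conclusion that every $D$-incidence of a moving component is internal to $T_{\mathrm{move}}$. The Euler-characteristic count in Step 3 therefore does not close. (If $B$ is not complete, the self-intersection argument also does not apply as stated, since the sections are not proper curves; the paper's intersection $(\omega_{T_k}+N)\cdot F$ only involves the complete fibers and is robust to this.) There is also a softer point: your ``moving'' notion must include contracted components whose image point moves along $D$ (D1 in the paper's notation); these fail the Riemann--Hurwitz bound but can be handled separately via stability ($\ge 3$ special points on a contracted component), so this is patchable, whereas the d0-edge gap is not.
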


\begin{proof} Let $T=\cup_k T_k$ be the components of $T$. We have
$T_k\cong \mathbb{P}^1\times B$. Let $N_{k\ell}=T_k\cap T_\ell$ be
the relative nodes over $B$. We label the vertices $\Gamma(T)^{[0]}$ of the
dual complex $\Gamma(T)$ as follows:
\begin{enumerate}
\item[(V0)] $T_k$ is contracted to a point inside  a component.
\item[(V1b)] $T_k$ is contracted along multisections to a curve inside
  a component.
\item[(V1f)] $T_k$ is contracted along fibers of $T_k\to B$ to a curve inside a component.
\item[(V2)] $T_k$ maps generically finitely to a component.
\end{enumerate}
These are the only possibilities, by noting that $T_k\to B$ is
proper and that the image of $f$ contains no triple points. Next,
we label the edges $\Gamma(T)^{[1]}$ of the dual complex $\Gamma(T)$ as follows:
\begin{enumerate}
\item[(v0)] $N_{k\ell}$ maps to a point in the interior of a component.
\item[(v1)] $N_{k\ell}$ maps to a curve in the interior of a component.
\item[(d0)] $N_{k\ell}$ maps to a point in a double curve.
\item[(d1)] $N_{k\ell}$ maps to a curve in a double curve.
\end{enumerate}

Table \ref{adjacencies} records 
the allowable adjacencies for the labeled dual complex $\Gamma(T)$, which
can be verified from predeformability by straightforward geometric arguments.

Let $\Gamma\subset \Gamma(T)$ be a maximal subtree consisting of
only V2-vertices and d1-edges. Let $T_\Gamma\subset T$ be the
sub-family of curves with dual complex $\Gamma$. Consider the restricted
family $f_\Gamma: T_\Gamma\to X_0\times B.$

\def\arraystretch{1.3}
\begin{table}
\begin{tabular}{c|cccccc}
 & V0 & V1b & V1f & V2 \\
 \hline 
V0 & v0 & v0 & --- & v0\\
V1b & & v0/v1/d0 & v1 & v0/v1/d0  \\
V1f & & & v1 & v1  \\
V2 & & & & all
\end{tabular}
\vspace{5pt}
\caption{Allowable adjacencies for the labeled dual complex $\Gamma(T)$.}
\vspace{-15pt}
\label{adjacencies}
\end{table}

The fibers $T_{\Gamma,b}$ may only fail to map in a predeformable
way to $X_0$ at the leaves of $\Gamma$ which are not leaves of 
$\Gamma(T)$. Consider the edges emanating from such a V2 leaf 
which are connected to the rest of $\Gamma(T)$. Disconnecting
 $\Gamma(T)$ at a v-edge does not interfere with the condition of 
 being predeformable, so consider only the d-edges. By Table
  \ref{adjacencies} and maximality of $\Gamma$, such a V2 leaf 
  of $\Gamma$ must connect by a d0-edge.
  
So fix one V2 leaf of $\Gamma$, associated to a component
$T_k\cong \mathbb{P}^1\times B\subset T_\Gamma$
 attached to the rest of $\Gamma(T)$ by
d0-edges. The further restriction $f_k:T_k\to V_i$
is now a map of smooth surfaces.

Each outgoing d0-edge corresponds to a relative node $N_{k\ell}$
of $T$ which maps under $f_k$ to a single point $p_{k\ell}\in D_{ij}$. 
There is at most one remaining relative node $N\subset T_k$
which attaches $T_k$ to the rest of $T_\Gamma$ and for which $f(N)$
is a curve in one boundary component of $V_i$.
Make an interior blow-up $\widetilde{V}_i\to V_i$ at each fixed attaching point $p_{k\ell}$.
Taking the strict transforms of the images of fibers of $T_k\to B$,
we can lift $f_k$ to a map $\widetilde{f}_k\colon T_k \to \widetilde{V}_i.$
If $\widetilde{f}_k$ still sends any $N_{k\ell}$ to a point in the new anticanonical boundary
$\widetilde{D}_{ij}$, we continue to blow up at the fixed attaching points, until
the lifted map satisfies the property $\widetilde{f}_k^{-1}(\widetilde{D}_i)=N$.

Since both $N$ and $\widetilde{D}_i$
are divisors with coefficient $1$, we have by 
Riemann-Hurwitz that
 $$\omega_{T_k}(N)=\widetilde{f}_k^*(\omega_{\widetilde{V}_i}(\widetilde{D}_i))
 \otimes \mathcal{O}({\rm Ram})\otimes \mathcal{O}(\textstyle \sum a_iE_i).$$ Here
 ${\rm Ram}\subset T_k$ is the interior ramification divisor,
 i.e.~the ramification away from 
 the boundary $\widetilde{D}_i$, and $E_i\subset T_k$ are the contracted curves of
 $\widetilde{f}_k$. Note that $a_i\geq 0$ because $\widetilde{V}_i$ is smooth.
 Since $\omega_{\widetilde{V}_i}(\widetilde{D}_i)=\mathcal{O}$ we conclude
 $\omega_{T_k}(N)$ is effective, implying
 $-1=\omega_{T_k}(N)\cdot \mathbb{P}^1 \geq 0$.
Contradiction.
\end{proof}

In analogy with Proposition~\ref{rc-alt-smooth},
Lemma~\ref{rc-rec2} allows
us to define $R^{\rm rc}$ for Kulikov surfaces inherently,
in terms of logarithmic Gromov-Witten invariants 
\cite{chen2014stable, abramovich2014stable, gross2013logarithmic}.

\subsection{The rational curve semifan} We first give some general results concerning
the Baily-Borel compactification of $F_{2d}$, following
\cite{scattone1987on-the-compactification-of-moduli}.

The number of $0$-cusps of
$\oF_{2d}^\bb$ is exactly $\lfloor \tfrac{N+2}{2}\rfloor$ where
$d = N^2d_0$ for a square-free integer $d_0$. As discussed in Section \ref{sec:bb},
they are in bijection with the $\Gamma$-orbits of primitive isotropic lattices $I=\Z\delta$
in the lattice $$L_{2d}:=\langle -2d\rangle \oplus H^{\oplus 2} \oplus E_8^{\oplus 2}
=v^\perp\subset L_{\rm K3}.$$ The $\Gamma$-orbit of a generator $\delta\in I$ is
determined by the following invariant:
$\delta^* = \frac{\delta}{p^*(\delta)}\in \Delta_{2d}:=L^*_{2d}/L_{2d},$
where $p^*$ is by definition the imprimitivity in $L_{2d}^*$. Then $\delta^*$
is an isotropic vector for the quadratic form on $\Delta_{2d}$ valued in $\tfrac{1}{2d}\Z/2\Z\subset \Q/2\Z$.
Identifying the source $\Delta_{2d}=\Z/2d\Z$ and the target with $\Z/4d\Z$, the quadratic form
is given by $x\mapsto x^2$. We must have $\delta^*=2qNd_0\in \Z /2d\Z$ for $q\in\Z/N\Z$.
So $I=\Z\delta$ is determined by $\{\pm q\}$ and we have
$$\delta^\perp/\delta = \langle \tfrac{-2d}{p^*(\delta)}\rangle \oplus H\oplus E_8^{\oplus 2}.$$

A semitoroidal compactification of $F_{2d}$ is determined by a collection
of $\Gamma_\delta$-invariant semifans $\mathfrak{F}_\delta$ decomposing
the rational closures $C_\delta^+$ of the positive cones of each lattice
$\delta^\perp/\delta$ as above, as one ranges over the
$\lfloor \tfrac{N+2}{2}\rfloor$ possible values of $\{\pm \delta^*\}$.
By Theorems \ref{thm:recognizable-semitoroidal} and \ref{thm:main-rc-divisor},
we may define: 

\begin{definition} Let $\mathfrak{F}^{\rm rc}$ be the
semifan for which $\nu\colon \oF_{2d}^{\mathfrak{F}^{\rm rc}}\to \oF_{2d}^{R^{\rm rc}}$
is the normalization map. \end{definition}

Some facts about the combinatorics of $\mathfrak{F}^{\rm rc}$ can be deduced
from Proposition \ref{elliptic-correct} and \cite{alexeev2022compactifications-moduli},
by restricting to the locus of elliptic K3 surfaces.

\begin{theorem}\label{restrict-ell}
Consider a cone $C_\delta^+$ with invariant $p^*(\delta)=1$, that is, 
where $\delta$ is primitive in $L_{2d}^*$
(all such $\delta$ are equivalent under $\Gamma$).
The restriction of $\mathfrak{F}^{\rm rc}$ to
$\langle -2d\rangle^\perp=H\oplus E_8^{\oplus 2}$,
or any $\Gamma_\delta$-orbit of it, is a fan.
Furthermore, $\langle -2d\rangle^\perp\cap C_\delta^+$
is a union of cones of $\mathfrak{F}^{\rm rc}_\delta$. \end{theorem}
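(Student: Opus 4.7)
The plan is to deduce the theorem from Proposition \ref{inclusion} applied to the lattice inclusion $\mathbb{Z}L \subset H$, combined with the explicit description of the rational curve semifan for elliptic K3 surfaces from Example \ref{ell-ex}. First I would verify the cusp identification: when $\delta^* = 0$, the vector $\delta$ is primitive in the unimodular summand $H^{\oplus 2} \oplus E_8^{\oplus 2} \subset L_{2d}$, which coincides with $H^\perp_{L_{K3}}$ for the ``extra'' copy $H = \langle s,f\rangle$ underlying the unigonal structure. Such a $\delta$ simultaneously defines the $\delta^*=0$ cusp of $F_{2d}$ and the unique $0$-cusp of the Noether--Lefschetz sublocus $F_H \subset F_{2d}$ parameterizing unigonal K3 surfaces. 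A direct calculation identifies the ambient lattice of the elliptic cusp, $\delta^\perp_{H^{\oplus 2} \oplus E_8^{\oplus 2}}/\delta = H \oplus E_8^{\oplus 2}$, with the hyperplane $\langle -2d\rangle^\perp$ inside $\delta^\perp_{L_{2d}}/\delta = \langle -2d\rangle \oplus H \oplus E_8^{\oplus 2}$. Proposition \ref{inclusion} then identifies the restriction of $\mathfrak{F}^{\rm rc}_\delta$ to this hyperplane with $\mathfrak{F}^{\rm rc}(H)$, which by Example \ref{ell-ex} (summarizing \cite{alexeev2020compactifications-moduli}) is a genuine fan: the $\Gamma_\delta$-orbit of a subdivision of a finite-covolume Coxeter chamber of $I\!I_{1,17}$ into $9$ subchambers. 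This proves the first assertion.

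For the second assertion, I would show that no cone of $\mathfrak{F}^{\rm rc}_\delta$ properly crosses the hyperplane. By Proposition \ref{stratum-function}, the open cones are precisely the maximal loci on which the slc combinatorial type function $\mathbb{S}$ is constant. A point $\lambda \in \langle -2d\rangle^\perp \cap C_\delta$ satisfies $\lambda \perp H$, so the Picard--Lefschetz transformation $N(x) = (x\cdot\lambda)\delta - (x\cdot\delta)\lambda$ preserves $H$, and $\lambda$ is realizable as the monodromy invariant of an $H$-quasipolarized degeneration. By Proposition \ref{elliptic-correct} the general fiber is then a unigonal K3 with $R^{\rm rc} = n_d(s + \tfrac{d+1}{24}\sum f_i)$, and the resulting stable slc limit inherits the section-plus-fibers-over-a-chain-of-$\mathbb{P}^1$s structure explicitly worked out in \cite{alexeev2020compactifications-moduli}. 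A point $\lambda' \notin \langle -2d\rangle^\perp$ has nonzero $\langle -2d\rangle$-component; any degeneration realizing $\lambda'$ has monodromy not preserving the ``extra'' $H$, so it cannot be $H$-quasipolarized, and on a generic Picard rank $1$ fiber its rational curve divisor is the sum of $n_d$ distinct irreducible rational curves in $|L|$ whose stable limit carries no compatible elliptic fibration. Hence $\mathbb{S}(\lambda) \ne \mathbb{S}(\lambda')$ and no cone of $\mathfrak{F}^{\rm rc}_\delta$ can contain both points in its interior, so $\langle -2d\rangle^\perp$ is swept out by cones.

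The hypothesis $d\ge 7$ is a technical condition inherited from Proposition \ref{elliptic-correct} and the elliptic K3 computation of \cite{alexeev2020compactifications-moduli}, ensuring that the unigonal locus behaves as a non-degenerate Noether--Lefschetz divisor of the expected codimension in $F_{2d}$. The main obstacle I anticipate is making the final combinatorial distinction between the unigonal and non-unigonal slc types fully rigorous: the argument above shows the stable limits differ qualitatively (one carries a relative elliptic structure and the other does not), but turning this into a formal separation $\mathbb{S}(\lambda) \ne \mathbb{S}(\lambda')$ will likely require extracting a specific invariant of the stable pair $(\overline{X}_0,\overline{R}_0)$, such as the dual graph $\Gamma(\overline{X}_0)$ decorated by the intersection multiplicities of the irreducible components of $\overline{R}_0$ with the strata, and verifying incompatibility case-by-case.
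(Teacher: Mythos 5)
Your treatment of the first assertion follows the paper's approach (cusp identification, Proposition \ref{inclusion}, Example \ref{ell-ex}) and is essentially correct. However, the second paragraph contains a genuine error, and your explanation of the $d\geq 7$ hypothesis is also incorrect.

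For the second assertion, you claim that for $\lambda$ in the unigonal hyperplane and $\lambda'$ off of it, the slc combinatorial types differ: $\mathbb{S}(\lambda)\ne\mathbb{S}(\lambda')$. This is false, and the paper explicitly says the opposite. When one deforms a unigonal Kulikov model from \cite{alexeev2020compactifications-moduli} off the elliptic locus (by making $\psi_X(f)\ne 1$), the recognizability of $R^{\rm rc}$ forces the limiting rational curve divisor to be a deformation of $n_d(s+\tfrac{d+1}{24}\sum f_i)$, and the stable model $(\oX_0,\oR_0)$ has the \emph{same} slc combinatorial type as the elliptic one. What changes is the surface itself: the intermediate components $\oV_i$, glued from two sections of $\mathbb{P}^1\times\mathbb{P}^1$, acquire a gluing shift of $\psi_X(f)$, so $\oX_0$ moves in its moduli. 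The correct argument is therefore not that the slc type changes, but that the slc stratum through $(\oX_0,\oR_0)$ is \emph{positive-dimensional}. Since the normalization $\oF^{\mathfrak{F}^{\rm rc}}_{2d}\to\oF^{R^{\rm rc}}_{2d}$ is finite and sends semitoroidal strata to slc strata (Corollary \ref{tor-to-slc-strata}), a full-dimensional cone $\sigma$ of $\mathfrak{F}^{\rm rc}_\delta$ would have a $0$-dimensional stratum mapping to a $0$-dimensional slc stratum, which is incompatible with $\sigma_H$ lying in $\mathrm{int}(\sigma)$. Hence the cone of $\mathfrak{F}^{\rm rc}_\delta$ containing $\mathrm{relint}(\sigma_H)$ is $18$-dimensional, lies in the hyperplane, and equals $\sigma_H$. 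Your approach of separating slc types by the presence or absence of an elliptic fibration would not survive close scrutiny for exactly the reason the paper describes: the fibration-theoretic structure is lost but the dual graph and component types are preserved.

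Your explanation of $d\geq 7$ is also off. The hypothesis is not about the unigonal locus being a ``non-degenerate Noether--Lefschetz divisor of the expected codimension'' (it always is); it is a quantitative constraint inherited from matching $R^{\rm rc}|_{F_H}$ with the family $R=s+m\sum f_i$ from \cite{alexeev2020compactifications-moduli}. The fan computed there is valid only when the weight $m$ is at least $\tfrac13$ (the threshold at which the polarizing divisor remains nef on the relevant Kulikov models, achieved when $\Gamma(X_0)$ has an $X_3$ end singularity). Since $R^{\rm rc}$ corresponds to $m=\tfrac{d+1}{24}$, one needs $\tfrac{d+1}{24}\geq\tfrac13$, i.e.\ $d\geq 7$.
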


We call such hyperplanes {\it unigonal}. The last statement in the theorem
implies that $\mathfrak{F}^{\rm rc}$ refines the
unigonal hyperplane arrangement in $C_\delta^+$.

\begin{proof} Suppose $(X,L)$ is in the unigonal locus, so that $L=s+(d+1)f$.
 The inclusion $\Z L\hookrightarrow H$ induces
an inclusion of moduli spaces $F_H\to F_{2d}$. So
the restriction of $R^{\rm rc}$ to $F_H$ is recognizable.
Suppose that $\delta\in H^\perp$ is primitive isotropic. There is a unique
isometry orbit of such and $\delta$
is primitive in $(L^\perp)^*$. So $\delta^{\perp}_{H^\perp}/\delta$
includes into the lattice $\delta^{\perp}_{L^\perp}/\delta$ corresponding an
isotropic  vector with invariant $\delta^*=0$. Concretely, it is the summand inclusion
$H\oplus E_8^{\oplus 2}\hookrightarrow \langle -2d\rangle\oplus H \oplus E_8^{\oplus 2}$. 

By Proposition \ref{inclusion},
the restriction of $\mathfrak{F}^{\rm rc}$ to $H\oplus E_8^{\oplus 2}$
is the semifan $\mathfrak{F}^{\rm rc}_H$ whose corresponding
semitoroidal compactification normalizes $\oF_H^{R^{\rm rc}}$.
By Proposition \ref{elliptic-correct}, the rational curve divisor, 
as in Definition \ref{naive-rc-def}, when extended to the unigonal locus,
is a multiple of $R = s+m\sum f_i$ for $m=\tfrac{d+1}{24}$.
\cite[Thm.~1.2]{alexeev2022compactifications-moduli} gives an explicit description 
of the fan $\mathfrak{F}^{\rm rc}_H$ modulo the following caveat:
The divisor models described in \cite[Sec.~7B]{alexeev2022compactifications-moduli}
require a threshold value of $m> \tfrac{1}{3}$ for the divisors $R_0\subset X_0$
constructed therein to be nef. The threshold is achieved when $\Gamma(X_0)$ has a
so-called $X_3$ end singularity. So it is automatic that {\it loc.cit} describes the restriction
of $\mathfrak{F}^{\rm rc}$ to the unigonal hyperplane when $d> 7$. For $m\leq \tfrac{1}{3}$ or $d\leq 7$,
the stable models only differ from those in {\it loc.cit.} in a minor way---one might
contract the section on one or both end surfaces. But the stratum function $\bS$ has
the same level sets and so $\fF_H^{\rm rc}$ is the same (Prop.~\ref{stratum-function}).

The fan $\mathfrak{F}_H^{\rm rc}$ consists
of six orbits of maximal cones \cite[Sec.~4C]{alexeev2022compactifications-moduli}.
To prove the final statement of the theorem,
we must show that all six of the $18$-dimensional cones
$\sigma_H\in \mathfrak{F}_H^{\rm rc}$
are themselves cones of $\mathfrak{F}^{\rm rc}$ and not simply slices
of the interior of some  $19$-dimensional cone $\sigma \in \mathfrak{F}^{\rm rc}$.

A maximal cone $\sigma_H$ corresponds to a $0$-stratum of the stable
pair compactification of elliptic K3s and hence to unique Type III elliptic stable K3 pair $(\oX_0,\oR_0)$.
If $(\oX_0,\oR_0)$ deforms out of the unigonal locus
as rational curve K3 pair, keeping the combinatorial type constant,
then $\sigma_H$ must be a cone of $\mathfrak{F}^{\rm rc}$. But if the elliptic
stable K3 pair $(\oX_0,\oR_0)$ is, as a rational curve K3 pair, rigid in its combinatorial type,
then $\sigma_H$ must be the slice of a larger dimensional cone of $\mathfrak{F}^{\rm rc}$.

Let $(X_0,R_0)$ be a Type III divisor model whose stable model is $(\oX_0, \epsilon \oR_0)$, see \cite[Sec.~7A]{alexeev2022compactifications-moduli} for an explicit description.
Let $L_0=\mathcal{O}_{X_0}(R_0)$.
We can deform $(X_0,L_0)$ to a non-elliptic, $d$-semistable Kulikov model $(X_0',L_0')$
by regluing double curves so that $\psi_{X_0'}(f)\neq 1$.
Concretely, comparing to \cite[Def.~7.10]{alexeev2022compactifications-moduli},
it corresponds to when a connected chain of fibers of vertical rulings {\it fails to glue}
to a closed cycle, destroying the elliptic fibration and the Cartierness of $f$.

Since $R^{\rm rc}$ is recognizable, the rational curve divisor on
such a deformed Kulikov model is necessarily a deformation of the curve
$n_d(s+ \tfrac{d+1}{24}\sum f_{i,0})$ (see \ref{ell-ex}) living in the linear system $|L_0'|$.
So the resulting stable model has the same combinatorial type as the elliptic one
$$(\oX_0',\epsilon \oR_0')=\textstyle\bigcup_{i=1}^r (\oV_i,\oD_i,\epsilon\oR_i)$$ for $r=18$, $19$, $20$
depending on the cone $\sigma_H$.

In the elliptic case, the intermediate components $\oV_i$ for $i\neq 1,r$ are
the result of gluing two sections
of $\mathbb{P}^1\times \mathbb{P}^1$ via the isomorphism provided
by the vertical fibration. But when we deform $X_0$ out
of the elliptic locus to the Kulikov surface $X_0'$,
the surface $\oV_i$ also
deforms: The gluing map between the two sections includes a shift exactly equal to
$\psi_{X_0'}(f)$.

Hence $(\oX_0,\epsilon\oR_0)$ is {\it not}
rigid in $\oF_{2d}^{R^{\rm rc}}$ within its slc combinatorial type.
Even forgetting the divisor,
the underlying surface $\oX_0$ is not rigid. We conclude that
$\sigma_H$ is a cone of $\mathfrak{F}^{\rm rc}$.
\end{proof}

\begin{remark}\label{imprim-case} The results of this section hold
for the {\it imprimitive rational curve divisor}
$$R^{\rm rc}(m)=\!\!\!\!\!\!\!\!\sum_{G\in |mL|\textrm{ rational}}\!\!\!\!\!\!\!\! n_GG$$
where the coefficients $n_C$ are defined using reduced GW invariants
as in Definition \ref{coeff-of-rc}. A naive version of Chen's theorem (that generically
all rational curves are nodal) is false: For instance one can take $mG$ for $G\in |L|$
rational. It is not clear whether one can recover Chen's theorem
by subtracting out these and other obvious non-reduced and non-irreducible
contributions to get a divisor $R^{\rm rc}_{\rm prim}(m)$.
Regardless, the above serves as a definition of $R^{\rm rc}(m)$ and produces a
canonical choice of polarizing divisor. The proof of recognizability,
Theorem~\ref{thm:rc-divisor}
applies verbatim because the normalization of any
irreducible component of $R^{\rm rc}(m)$
is $\mathbb{P}^1$.

Thus, there are semifans $\mathfrak{F}^{\rm rc}(m)$ for all $m\geq 1$ which give the normalization of the KSBA compactification associated to $R^{\rm rc}(m)$.  \end{remark}

\bibliographystyle{amsalpha}

\def\cprime{$'$}
\providecommand{\bysame}{\leavevmode\hbox to3em{\hrulefill}\thinspace}
\providecommand{\MR}{\relax\ifhmode\unskip\space\fi MR }
\providecommand{\MRhref}[2]{%
  \href{http://www.ams.org/mathscinet-getitem?mr=#1}{#2}
}
\providecommand{\href}[2]{#2}

\end{document}